\documentclass{amsart}
\usepackage{amssymb}
\usepackage{braket}
\usepackage{mathrsfs}
\usepackage[all]{xy}
\usepackage{graphicx}
\usepackage{fontspec}
\usepackage{stmaryrd}
\usepackage{zxjatype}
\usepackage[ipaex]{zxjafont}

\usepackage{color}
\usepackage[colorinlistoftodos]{todonotes}

\usepackage{tikz}
\usepackage{tikz-cd}

\newtheorem{thm}{Theorem}[section]
\newtheorem{cor}[thm]{Corollary}
\newtheorem{prop}[thm]{Proposition}
\theoremstyle{definition}
\newtheorem{dfn}[thm]{Definition}

\newtheorem{claim}[thm]{Claim}
\newtheorem{lem}[thm]{Lemma}
\newtheorem{fact}[thm]{Fact}
\newtheorem{caution}[thm]{Caution}

\newtheorem{cond}[thm]{Condition}

\newtheorem{nota}[thm]{Notation}

\theoremstyle{remark}
\newtheorem{rem}[thm]{Remark}
\newtheorem*{introthmA}{Theorem A}
\newtheorem*{introthmB}{Theorem B}

\newcommand{\bbk}{\Bbbk}
\newcommand\blank{\,\operatorname{-}\,}

\newcommand\perdg{\operatorname{per}_{\mathit{dg}}}
\newcommand\per{\operatorname{per}}

\newcommand\up{^{\Cdot}}
\newcommand\upf{^{\, \Cdot}}
\newcommand\upp{^{\prime\Cdot}}
\newcommand\uppf{^{\,\prime\Cdot}}
\newcommand\uppp{^{\prime\prime\Cdot}}

\newcommand\Cdot{\raisebox{0.2ex}{\scalebox{0.5}{$\bullet$}}}

\newcommand\ds{\oplus}

\newcommand\dbr[1]{\llbracket#1\rrbracket}

\newcommand{\A}{\scA}
\newcommand{\B}{\scB}

\newcommand{\E}{\bbE}
\newcommand{\F}{\bbF}
\newcommand{\Z}{\bbZ}

\newcommand{\bbE}{\mathbb{E}}
\newcommand{\bbF}{\mathbb{F}}

\newcommand{\bbZ}{\mathbb{Z}}

\newcommand{\Hom}{\mathrm{Hom}}

\newcommand{\Mod}{\mathrm{Mod}}

\newcommand{\ob}{\mathrm{ob}}
\newcommand{\Ab}{\mathit{Ab}}
\newcommand{\Sets}{\mathit{Set}}

\newcommand{\op}{^\mathrm{op}}
\newcommand{\ush}{^\sharp}
\newcommand{\ssh}{_\sharp}

\newcommand{\id}{\mathrm{id}}
\newcommand{\tw}{\mathrm{tw}}

\newcommand{\Ker}{\mathrm{Ker}}
\DeclareMathOperator{\Cone}{\mathrm{Cone}}
\DeclareMathOperator{\CoCone}{\mathrm{CoCone}}

\newcommand{\co}{\colon}
\newcommand{\ci}{\circ}

\newcommand{\uas}{^{\ast}}
\newcommand{\sas}{_{\ast}}

\newcommand{\Ima}{\mathrm{Im}}

\newcommand{\sfr}{\mathfrak{s}}

\newcommand{\scA}{\mathscr{A}}
\newcommand{\scB}{\mathscr{B}}

\newcommand{\scE}{\mathscr{E}}
\newcommand{\scF}{\mathscr{F}}

\newcommand{\calA}{\mathcal{A}}
\newcommand{\calB}{\mathcal{B}}

\newcommand{\calH}{\mathcal{H}}

\newcommand{\calN}{\mathcal{N}}

\newcommand{\calS}{\mathcal{S}}

\newcommand{\Cbf}{\mathrm{tw}}
\newcommand{\Cbff}{\mathbf{C}}

\newcommand{\Kbf}{\mathrm{htw}}
\newcommand{\Kbff}{\mathbf{K}}

\newcommand{\Xbf}{\mathbf{X}}
\newcommand{\Ybf}{\mathbf{Y}}

\newcommand{\ovl}{\overline}

\newcommand{\bsm}{\begin{smallmatrix}}
\newcommand{\esm}{\end{smallmatrix}}

\newcommand{\abf}{\mathbf{a}}
\newcommand{\cbf}{\mathbf{c}}
\newcommand{\sbf}{\mathbf{s}}

\newcommand{\al}{\alpha}
\newcommand{\be}{\beta}
\newcommand{\gam}{\gamma}

\newcommand{\om}{\omega}
\newcommand{\thh}{\theta}
\newcommand{\del}{\delta}

\newcommand{\sig}{\sigma}

\newcommand{\dtri}[2]{\langle #1,#2\rangle}

\newcommand{\Ar}[3]{\ar[from=#1,to=#2,#3]}
\newcommand{\scT}{\mathscr{T}}

\numberwithin{equation}{section}

\begin{document}

\title{Higher exact dg-categories}

\author[Mochizuki]{Nao Mochizuki}
\address{Graduate School of Mathematics, Nagoya University, Furocho, Chikusaku, Nagoya 464-8602, Japan}
\email{mochizuki.nao.n8@s.mail.nagoya-u.ac.jp} 
    
\author{Hiroyuki Nakaoka}
\address{Department of Mathematical Sciences, University of the Ryukyus 1 Senbaru, Nishihara, Okinawa 903-0213 Japan}

\begin{abstract}
We introduce the notion of an $n$-exact dg-category.
This notion provides a higher analogue of Chen's exact dg-category, in the sense that the case where $n$ equals~1 recovers exact dg-categories.

We prove that, under a suitable vanishing condition on the cohomologies of $\Hom$-complexes of an $n$-exact dg-category $\A$,
its homotopy category admits a natural $n$-exangulated structure.
Thus $n$-exact dg-categories provide dg-enhancements of
$n$-exangulated categories.
At the same time, our framework can be regarded as a dg-categorical generalization of $n$-exangulated categories applicable even without the vanishing condition.

In the latter part of the article, we show that an $n$-cluster tilting subcategory of an exact dg-category naturally carries the structure of an $n$-exact dg-category.
This result indicates that $n$-exact dg-structures provide an intrinsic dg-categorical axiomatization of $n$-cluster tilting subcategories, highlighting the advantages of studying dg-generalizations of $n$-exangulated categories.
\end{abstract}

\maketitle

\tableofcontents

\section{Introduction}

The notion of an $n$-exangulated category was introduced in \cite{HLN1}
as a common generalization of $(n+2)$-angulated categories, $n$-exact categories, and $n$-abelian categories.
Here, $(n+2)$-angulated categories and $n$-exact categories were
introduced in \cite{GKO} and \cite{J}, respectively, in order to formalize $n$-cluster tilting subcategories inside triangulated and abelian categories, which play a central role in higher dimensional Auslander--Reiten theory~\cite{I}.
When $n=1$, the notions of $1$-exangulated category, $3$-angulated category, $1$-exact category, and $1$-abelian category coincide, respectively, with extriangulated categories~\cite{NP1}, exact categories, and abelian categories.

For the case $n=1$, namely extriangulated categories, it is remarkable that, besides unifying exact and triangulated categories, they admit various natural operations such as taking extension-closed subcategories, relative theories, and localization~\cite{NOS}.
Moreover, extriangulated categories possess natural enhancement notions:
Barwick's exact $\infty$-categories~\cite{Ba} and Chen's exact dg-categories~\cite{C1} provide $\infty$-categorical and dg enhancements, respectively.
(The relation with exact dg-categories is established in~\cite{C1}, while the relation with exact $\infty$-categories can be found in \cite{NP2} or in the embedding into stable $\infty$-categories obtained in~\cite{Kle}.)
The existence of such natural enhancement notions indicates that the definition of extriangulated categories captures a mathematically natural structure.

The framework of $n$-exangulated categories not only likewise includes $(n+2)$-angulated categories and $n$-exact categories (see also Remark~\ref{rem:comparison_with_n-exact}),
but also it is known that several operations analogous to those in the case $n=1$ are available, such as taking relative theories~\cite{HLN1}, and $n$-extension-closed subcategories~\cite{Kla}.
Furthermore, it was shown in \cite{HLN2} that an $n$-cluster tilting subcategory of an extriangulated category naturally carries an $n$-exangulated structure under some conditions, which is an $n$-exangulated categorical unification of the results by \cite{GKO} and \cite{J}.

In this article, we propose a dg-enhancement notion for
$n$-exangulated categories.
More precisely, we consider $n$-exact sequences in a connective additive dg-category $\A$, and use them to formulate the notion of an \emph{$n$-exact dg-category}.
When $n=1$, our definition recovers Chen's exact dg-category, while when $\A$ is an ordinary category, it coincides with Jasso's notion of an $n$-exact category.

Our main result (Theorem~\ref{thm:H0_of_n-ex-dg}) shows that if $\A$ satisfies the vanishing condition
$H^i(\A)=0$ for all $-n<i<0$ for the cohomologies of $\Hom$-complexes, then the additive category $H^0(\A)$ admits a natural $n$-exangulated structure.
\begin{introthmA}($=$ Theorem~\ref{thm:H0_of_n-ex-dg})
Let $(\A,\calS)$ be an $n$-exact dg-category, with skeletally small $H^0(\A)$. Suppose that $\A$ satisfies $H^i\A(A,B)=0$ for all $A,B\in\ob(\A)$ and for all $-n<i<0$. Then, $H^0(\A)$ has a structure of an $n$-exangulated category induced by $\calS$.
\end{introthmA}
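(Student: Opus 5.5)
We define the triplet $(H^0(\A),\E,\sfr)$ directly from $\calS$ and check the axioms (EA1), (EA2), (EA2$\op$), (R0)--(R2), (R4) of an $n$-exangulated category from \cite{HLN1}.

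\textbf{The bifunctor.} For $C,A\in H^0(\A)$ we let $\E(C,A)$ be the set of equivalence classes of conflations in $\calS$ with end terms $A$ and $C$. Equivalently, and more conveniently, we identify $\E(C,A)$ with a suitable cohomology group $H^{n}$ of a Hom-complex computed in the pretriangulated hull (twisted complexes) $\Cbf(\A)$, via the ``connecting morphism'' $C\to \Sigma^{n}A$ of the homotopy-exact sequence $A\to X^1\to\cdots\to X^n\to C$. Functoriality in both variables comes from homotopy pullbacks along $C'\to C$ and homotopy pushouts along $A\to A'$, which exist in $\calS$ by the axioms of an $n$-exact dg-category. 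The vanishing hypothesis $H^i\A(A,B)=0$ for $-n<i<0$ is what makes this well defined. Morphisms of $n$-exact sequences in $H^0(\A)$ lift to morphisms of dg-diagrams, and two lifts differ by homotopies that can be ignored, because all obstructions sit in $H^{i}$ with $-n<i<0$. We will also check that $\E$ is additive; this uses closure of $\calS$ under direct sums and the split sequences.

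\textbf{The realization.} For $\delta\in\E(C,A)$ represented by $X=(A\to X^1\to\cdots\to X^n\to C)$, we set $\sfr(\delta)=[H^0(X)]$, the homotopy class of $n$-exact sequences in $H^0(\A)$ as in \cite{HLN1}. To show this is well defined, we first show that two $\calS$-conflations realizing the same $\delta$ have $H^0$-images that are homotopy equivalent as complexes in $H^0(\A)$. Here we compare both with a third one via the ``$n$-exact'' uniqueness of twisted-complex totalizations, again killing higher coherence data through the vanishing condition. Condition (R0), realizing morphisms of extensions, is obtained by taking a lift of the morphism to $\Cbf(\A)$ and applying $H^0$.

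\textbf{Exactness and the remaining axioms.} Axiom (R1) requires that $\sfr(\delta)$ is an $n$-exangle, i.e. that the sequences
\[
\Hom(-,X^\bullet)\to\E(-,A)\to\E(-,X^1)
\]
and the dual ones are exact. We derive this from the long exact cohomology sequence of $\A(-,X)$ for the totalization of $X$. In low degrees this sequence only involves $H^0$ and $H^{n}$-type terms, because the intermediate groups vanish. Axiom (R2) uses the trivial conflation $A=A\to0\cdots$. Axiom (R4) is the $n$-analogue of the octahedral/mapping cone axiom: we obtain it from closure of $\calS$ under composition of inflations together with the mapping-cone construction in $\Cbf(\A)$, and then apply $H^0$. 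Axioms (EA2) and (EA2$\op$) come from the homotopy pushout and pullback axioms, via the good lift condition that $H^0$ of the homotopy pushout diagram is a ``good lift'' in the sense of \cite{HLN1}.

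\textbf{Main obstacle.} We expect the main difficulty to lie in the well-definedness steps: showing that morphisms and homotopies in $H^0(\A)$ between end terms lift to morphisms of whole $n$-term sequences, uniquely enough. This is a step-by-step obstruction argument along the $n$ terms. At stage $k$ the obstruction lies in some $H^{-k}\A(-,-)$ with $0<k<n$, which vanishes by hypothesis. We plan to set this up once as a lifting lemma in $\Cbf(\A)$ and reuse it for all of (EA2), (R0) and the well-definedness of $\sfr$.
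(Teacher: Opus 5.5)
\textbf{There is a genuine gap, beginning with your description of $\E$.} The ``equivalent'' description of $\E(C,A)$ as a cohomology group of a Hom-complex in $\Cbf(\A)$, via a connecting morphism $C\to\Sigma^nA$, is false.

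\begin{itemize}
\item An admissible $n$-exact sequence is nonzero in $\Kbf(\A)$ unless it is split (Proposition~\ref{prp:characterization_split_seq}). So the triangles of Proposition~\ref{prop:alphabetagamma} produce no such connecting morphism.
\item In fact $\Kbf(\A)(C,A[n])\cong H^n\A(C,A)=0$ by connectivity.
\item More basically, $\E$ is built from $\calS$, and no Hom-group of $\Kbf(\A)$ sees $\calS$. For an ordinary $n$-exact category, where Condition~\ref{condition:H^i=0} holds automatically, $\Kbf(\A)=\Kbff(\A)$ has no nonzero maps $C\to A[k]$ for $k\neq0$, yet $\E(C,A)$ is typically nonzero.
\end{itemize}

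Your (R1) argument rests on this identification, so it only recovers what Proposition~\ref{prp:long_ex_seq} gives. That is the exactness of $H^0\A(-,X^0)\to\cdots\to H^0\A(-,X^{n+1})$, extended on the \emph{left} by $\vartheta_A^0$ from $H^{-n}\A(-,X^{n+1})$. It never gives the required exactness of $H^0\A(-,X^n)\to H^0\A(-,X^{n+1})\xrightarrow{\delta_\sharp}\E(-,X^0)$.

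The paper instead defines $\E(C,A)$ as the set of isoclasses in the fibre ${}_A\calS_C$ of the two-sided fibration $(s,t)\colon\calS\to H^0(\A)\times H^0(\A)$, with $\calS\subset\Kbf^{1,n,1}(\A)$, and makes it a group by Baer sum. It then proves the $\E$-exactness by hand:
\begin{itemize}
\item If $\cbf\uas\delta=0$, the $n$-pullback sequence is split, so its last differential has a section through which $\cbf$ factors.
\item One has $(\ovl{d}_X^n)\uas\delta=0$: for the $n$-pullback $g\up$ along $\ovl{d}_X^n$, $\CoCone((\sig_{\ge1}g\up)[1])\in\calS$ by Proposition~\ref{prop:pushout_inflation}, and its exact $H^0$-sequence produces the needed section.
\end{itemize}

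\textbf{Your lifting lemma also fails exactly where it matters.} For $(n+2)$-term sequences, the corner component $X^0\to Y^{n+1}$ has its existence obstruction in $H^{-n}\A(X^0,Y^{n+1})$. Comparing two lifts involves $H^{-n}$ and $H^{-n-1}$. None of these is assumed to vanish; this is why $f\up\mapsto\ovl{f}^0,\ovl{f}^{n+1}$ is not even well defined on $\Kbf^{n+2}(\A)$ (Remark~\ref{rem:non-functorial2}).

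So the vanishing hypothesis cannot be what makes $\E$ well defined. In the paper it plays no role there. Well-definedness and functoriality come from three ingredients: working in $\Kbf^{1,n,1}(\A)$, the universality of $n$-pullbacks (Proposition~\ref{prp:univ_npb}), and Corollary~\ref{cor:htpy_modif2}.

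The hypothesis is used only to lift in degrees $[1,n]$ and $[1,n+1]$ (Lemma~\ref{lem:comparison_H_plus}, Proposition~\ref{prop:comparison_H}). Then $X^0$ is re-attached through $g\up\ci\al_{X,1}\up$, i.e.\ using the \emph{original} attaching map, which avoids the $H^{-n}$ corner. This is Lemma~\ref{lem:nea-1}.

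That lemma is exactly what (EA1) needs. A deflation of $(H^0(\A),\E,\sfr)$ is $d^n_{\Ybf}$ for an \emph{arbitrary} complex $\Ybf\up$ in the class $\sfr(\delta)$. One must show it is $\ovl{d}$ of some $\calS$-deflation before ($n$-Ex1) can be applied. Your proposal never addresses this step. Also, ``(R4)'' is not an axiom in \cite{HLN1}; the axioms there are (R0)--(R2), (EA1), (EA2), (EA2$\op$).

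Your treatment of additivity via direct sums and split sequences, of $\sfr$ itself, and of (EA2) via the mapping cone of an $n$-pushout does match the paper in outline.
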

In other words, this theorem asserts that $n$-exact dg-categories satisfying an appropriate cohomological vanishing condition provide dg-enhancements of $n$-exangulated categories.
From this viewpoint, the result can be regarded as supporting the naturalness of the definition of $n$-exangulated categories.
In the other direction, our notion of an $n$-exact dg-category can be interpreted as a dg-categorical generalization of $n$-exangulated categories
to situations where the above vanishing condition is not imposed. Thus, by working with dg-categories rather than ordinary additive categories, we propose $n$-exact dg-categories as a framework capable of treating such more general situations.

At the end of the article, we establish another main result (Theorem~\ref{thm:n-CT_is_n-exact}), showing that an $n$-cluster tilting subcategory of an exact dg-category naturally carries the structure of an $n$-exact dg-category.
This theorem can be viewed as a dg-categorical analogue of the preceding result for ordinary $n$-exangulated categories. We remark that in previous works, in order to equip an $n$-cluster tilting subcategory $\mathcal{T}$ of an extriangulated category $\mathcal{C}$ with an $n$-exangulated structure, additional assumptions were required:
in~\cite{GKO}, when $\mathcal{C}$ is triangulated, $\mathcal{T}$ was assumed to be closed under $n$-shifts, while in~\cite{HLN2} a corresponding technical assumption was imposed,
namely that $\mathcal{T}$ is nicely embedded in the extriangulated category $\mathcal{C}$.
\begin{introthmB}($=$ Theorem~\ref{thm:n-CT_is_n-exact})
Let $\scE$ be an exact dg-category, and let $\scT\subset\scE$ be an $n$-cluster tilting subcategory.
Let $\calS$ be the class of $n$-exact sequences in $\scT$ whose left and right end differentials are, respectively, an inflation and a deflation in $\scE$.
Then $(\scT,\calS)$ becomes an $n$-exact dg-category. 
\end{introthmB}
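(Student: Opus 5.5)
The plan is to verify, one at a time, the axioms of an $n$-exact dg-category for $(\scT,\calS)$. These are the analogues of Jasso's axioms for $n$-exact categories, phrased homotopically: closure of $\calS$ under homotopy equivalence of $n$-exact sequences, identity/trivial sequences, composition of inflations and of deflations, and the existence of $n$-pushouts along inflations and $n$-pullbacks along deflations. The guiding model is the proof in \cite{J} (and \cite{HLN2}) that an $n$-cluster tilting subcategory of an exact category is $n$-exact. The key tool is the following construction. Given a conflation-type datum in $\scE$, we resolve it by objects of $\scT$ using $\scT$-approximations. The cluster tilting vanishing $H^0\scE(\scT,\Sigma^i\scT)=0$ for $0<i<n$ (phrased through the extension functors of the exact dg-category $\scE$) then guarantees that the resulting complex is an $n$-exact sequence in $\scT$.

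The first step is a dg-version of Jasso's lemma. Let
\[
X^0\to X^1\to\cdots\to X^{n+1}
\]
be a sequence in $\scT$ whose left end is an inflation in $\scE$ and whose right end is a deflation in $\scE$. I will show that it is $n$-exact in $\scT$ if and only if it splices from conflations $K^i\to X^{i+1}\to K^{i+1}$ in $\scE$. Here the $K^i$ are the intermediate objects obtained from the cones/cocones available in the exact dg-category (using the embedding of $\scE$ into its pretriangulated hull, where exactness is detected by homotopy (co)kernels). This reduces $n$-exactness, a homotopy-limit condition on $\Hom$-complexes into and out of $\scT$, to long exact sequences of $H^*\scE(T,-)$. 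By the Ext-vanishing, these long exact sequences collapse in the right degrees. I will use the characterization of $n$-exact sequences (earlier in the paper) as those whose totalization is acyclic against representables in the appropriate degrees.

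With this lemma in hand, the axioms follow. Closure of $\calS$ under homotopy equivalence and the trivial sequences are immediate. For compositions of deflations: inflations and deflations of $\scE$ compose, and a deflation $X^n\to X^{n+1}$ in $\calS$ is precisely a deflation of $\scE$ between objects of $\scT$ whose cocone admits a $\scT$-resolution of length $n-1$. So I resolve the cocone of the composite by iterated left $\scT$-approximations, and the vanishing guarantees termination after $n$ steps, exactly as in the $n$-cluster tilting argument. For the $n$-pushout axiom, given $\delta\in\calS$ and a morphism $f\colon X^0\to Y^0$ in $\scT$, I take the pushout conflation $Y^0\to E\to K^1$ in $\scE$ of the first conflation along $f$, and then resolve $E$ by right $\scT$-approximations. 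This produces $Y^0\to Y^1\to\cdots\to Y^n\to X^{n+1}$ in $\calS$ together with a morphism of sequences from $\delta$. Then I check that the mapping cone is $n$-exact, which is the dg form of Jasso's $n$-pushout diagram. The $n$-pullback axiom is dual.

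The main obstacle will be this last verification: showing that the constructed morphism is an $n$-pushout in the dg sense, i.e.\ that its mapping cone sequence is $n$-exact with the required homotopical coherence, rather than merely a commutative diagram in $H^0$. Here one has to control the higher homotopies in $\Hom$-complexes, since $\scE$ need not satisfy any vanishing in negative degrees. I would first try to deduce it from the lemma of the second paragraph applied to the cone: its splicing conflations are pushouts of the original ones, so exactness is inherited. If that fails, I would pass to the derived category of $\scE$, where $n$-exactness becomes a statement about iterated cones, and argue there with the octahedral axiom.
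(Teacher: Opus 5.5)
Your overall architecture matches the paper's. You characterize $\calS$ through spliced conflations with middle terms in $\scT$ (Propositions~\ref{prop:n-exactness_from_splice} and~\ref{prop:chara_conf}). You get ($n$-Ex1) by resolving the kernel of a composite deflation (Corollary~\ref{cor:defl_are_defl}); this needs right, not left, $\scT$-approximations and stops after $n-1$ steps. You check the $n$-pushout property by applying the characterization to the cone (Proposition~\ref{splicing_n_pb}): one-sided exactness of the cone comes from the triangle, and the relevant end differential is an inflation or deflation by the case $n=1$ of Proposition~\ref{prop:pushout_inflation}. So what you call the main obstacle is the easy step.

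\textbf{The gap is the construction of $Y$ in ($n$-Ex2).} Pushing out the first conflation along $f$ and then ``resolving $E$ by right $\scT$-approximations'' does not give $Y^0\to Y^1\to\cdots\to Y^n\to X^{n+1}$ with all $Y^i\in\scT$. To replace $E$ by an object of $\scT$ you need a left approximation $E\to Y^1$, and that changes the cokernel. You must therefore push out each later conflation $K^i\to X^{i+1}\to K^{i+1}$ along the induced comparison map and approximate again. At the last conflation $K^{n-1}\to X^n\to X^{n+1}$ you may not approximate, because the end term must stay $X^{n+1}$. You have to take an honest pushout there and then prove its middle term $Y^n$ lies in $\scT$. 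Dimension shifting gives $\E^i(Y^n,T)=0$ only for $1\le i\le n-2$. The top group $\E^{n-1}(Y^n,T)$ vanishes only because the comparison maps induce injections on extension groups, and this must be arranged in the earlier steps by the choice of approximations. In the dual (pullback) form this is exactly Proposition~\ref{prop:const_n_pb}, with its exact sequence $0\to\E(T,M')\to\E(T,N')$, combined with Claim~\ref{claim:LM}. Your plan has no substitute for this.

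Two further steps are not as you describe.

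First, the characterization cannot hold literally, since an element of $\calS$ carries arbitrary higher components $d_X^{i,j}$. It holds only up to isomorphism in $\Kbf^{1,n,1}(\scT)$. Its nontrivial direction also does not come from cocones in a pretriangulated hull, which need not be objects of $\scE$. You must show inductively that the induced maps $X^k\to K^k$ are deflations in $\scE$. Left $n$-exactness makes them right $H^0(\scT)$-approximations, and approximations are deflations by Lemma~\ref{lem:defl_approx}, which uses (WIC). At the end you must also show that $X^0\to K^0$ is an isomorphism in $H^0(\scE)$.

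Second, closure of $\calS$ under isomorphisms in $\Kbf^{1,n,1}(\scT)$ is not immediate. Such an isomorphism $f$ only gives $\ovl{d}_Y^n\ci\ovl{f}^n=\ovl{f}^{n+1}\ci\ovl{d}_X^n$, and concluding that $d_Y^{n,n+1}$ is a deflation again uses (WIC). The paper imposes (WIC) as a standing hypothesis in Section~\ref{section:Typical}, although the statement in the introduction omits it. Your argument needs it, or some replacement, at exactly these points.
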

This result shows that our notion of an $n$-exact dg-category provides an intrinsic and dg-categorical axiomatization of the concept of an $n$-cluster tilting subcategory. 
For $H^0(\scT)$ to become an $n$-exangulated category, the cohomological vanishing condition required in Theorem~\ref{thm:H0_of_n-ex-dg} is equivalent,
to the condition that the subcategory $\scT$ is closed under the $n$-shift, or equivalently that it is an $n\mathbb{Z}$-cluster tilting subcategory.

The above theorem suggests that even in the absence of such conditions, a natural structure still exists at the dg-level. In general situations of this kind, the homotopy category $H^0(\scT)$ need not admit a structure describable in terms of ordinary $n$-exangulated categories. Hence the structure captured by our approach appears to be accessible only through the dg-categorical framework developed in this article.

\section{Preliminaries}

Throughout this article, let $\bbk$ be a commutative ring, and let $\A$ denote a dg-category over $\bbk$.
Also, let $n$ be a positive integer. We are mainly interested in the case of $n\ge2$.

\begin{dfn}\label{dfn:connective}
$\A$ is said to be \emph{connective} if positive cohomologies of the $\Hom$-complexes vanish; that is, if $H^i\A(A,B)=0$ holds for any $A,B\in\ob(\A)$ and all $i>0$. If $\A$ satisfies $\A(A,B)^i=0$ for all $A,B\in\ob(\A)$ and $i>0$, then $\A$ is said to be \emph{strictly connective}.
\end{dfn}

\begin{dfn}\label{dfn:additive}
$\A$ is said to be \emph{additive} if $H^0(\A)$ is an additive category. If $Z^0(\A)$ is an additive category, then $\A$ is said to be \emph{strictly additive}.
\end{dfn}

In our main theorem (Theorem~\ref{thm:H0_of_n-ex-dg}), we will show that $H^0(\A)$ of any $n$-exact dg-category $\A$ can be endowed with a natural $n$-exangulated structure under some condition (Condition~\ref{condition:H^i=0}).
We note that if $\A$ is connective and additive, then there exists a strictly connective and strictly additive dg-category that is quasi-equivalent to $\A$ (see \cite[Section~1]{G}).
We also remark that Bennett--Tennenhaus and Shah have shown in \cite{B-TS} that $n$-exangulated structures can be transported along equivalences of categories.
Thus, in view of \cite[Theorem 3.9]{B-TS}, we may replace the dg-category $\A$ with a quasi-equivalent dg-category for the purpose of proving our main theorem.
For this reason, we henceforth assume that $\A$ is strictly connective and strictly additive.

Under this assumption, we can make use of the following concrete description of the category of one-sided twisted complexes. 
The definition of a one-sided twisted complex in a dg-category goes back to \cite{BK}. Concerning the sign conventions, although we work with bounded ones, we follow those used in the unbounded case in \cite{G}.

In what follows, we restrict the definitions in \cite[Section 2]{G} to the bounded setting, with notation adjusted to suit our purposes. The corresponding definition will be cited explicitly in each case.
\begin{dfn}\label{dfn:complexes}
(\cite[Definition~2.1]{G}) Let $\A$ be as above.
\begin{enumerate}
\item A \emph{one-sided twisted complex} $X\up=((X^i)_{i\in\bbZ},(d_X^{i,j})_{i,j\in\bbZ})$ in $\A$ consists of the following data.
\begin{enumerate}
\item A sequence of objects $(X^i)_{i\in\bbZ}$ in $\A$. In this article we always assume it is bounded, meaning that $X^i=0$ except for finitely many $i\in\bbZ$.
\item A family of morphisms $(d_X^{i,j})_{i,j\in\bbZ}$ with $d_X^{i,j}\in\A(X^i,X^j)^{i-j+1}$ that satisfies $d_X^{i,j}=0$ for all $i\ge j$ and
\begin{equation}\label{eq_d}
d_{\A}(d_X^{i,j})+(-1)^j\sum_{k\in\bbZ}d_X^{k,j}\ci d_X^{i,k}=0
\end{equation}
for all $i,j\in\bbZ$. Especially, each $d_X^{i,i+1}$ is a closed morphism of degree $0$ in $\A$.
\end{enumerate}
\item A morphism of degree $m$ between two one-sided twisted complexes $X\up$ and $Y\up$ in $\A$ is a family of morphisms $f\up=(f^{i,j})_{i,j\in\bbZ}$
with $f^{i,j}\in\A(X^i,Y^j)^{j-i+m}$ for all $i,j\in\bbZ$.
\item The composition of two morphisms $f\up\colon X\up\to Y\up$ and $g\up\colon Y\up\to Z\up$ is defined by
\[
(g\up\ci f\up)^{i,j}=\sum_{k\in\bbZ}g^{k,j}\ci f^{i,k}.
\]
\item For any morphism $f\up\co X\up\to Y\up$ of degree $m$, its differential $df\up$ is given by
\[
(df)^{i,j}=(-1)^jd_{\A}(f^{i,j})+\sum_{k\in\bbZ}\Big(d_Y^{k,j}\circ f^{i,k}-(-1)^mf^{k,j}\circ d_X^{i,k}\Big)
\]
for all $i,j\in\bbZ$.

\end{enumerate}

These data form a dg-category, which we denote by $\Cbf(\A)$ in this article. The \emph{homotopy category} $\Kbf(\A)$ is defined to be $\Kbf(\A)=H^0(\Cbf(\A))$. Namely, the category $\Kbf(\A)$ has the same objects as $\Cbf(\A)$, and its morphisms are defined by
$\Kbf(\A)(X\up,Y\up)=H^0(\Cbf(\A)(X\up,Y\up))$.
\end{dfn}

\begin{rem}\label{rem:comparison_with_H3t}
For a morphism $f\up\colon X\up\to Y\up$ of degree $0$ in $\Cbf(\A)$, it is closed if and only if
\[
d_{\A}(f^{i,j})=(-1)^{j+1}\sum_{i\le k\le j}\left(d_Y^{k,j}\circ f^{i,k}-f^{k,j}\circ d_X^{i,k}\right)
\]
holds for all $i,j\in\mathbb{Z}$.

Especially when $X^i=Y^i=0$ holds except for $i\in\{0,1,2\}$, it is closed if and only if the equations
\begin{itemize}
\item $d_{\A}(f^{i,i})=0$ for $i=0,1,2$,
\item $d_{\A}(f^{i,i+1})=(-1)^i(d_Y^{i,i+1}\circ f^{i,i}-f^{i+1,i+1}\circ d_X^{i,i+1})$ for $i=0,1$,
\item $d_{\A}(f^{0,2})=-d_Y^{0,2}\circ f^{0,0}-d_Y^{1,2}\circ f^{0,1}+f^{1,2}\circ d_X^{0,1}+f^{2,2}\circ d_X^{0,2}$
\end{itemize}
hold.
\end{rem}

The dg-category $\Cbf(\A)$ is equipped with the shifts and cones defined as follows. This endows $\Kbf(\A)$ with the structure of a triangulated category.
\begin{dfn}\label{dfn:shifts}
(\cite[p.22]{G}) Let $\A$ be as above. For any $X\up\in\Cbf(\A)$ and $m\in\bbZ$, its shift $X\up[m]=X\upp\in\Cbf(\A)$ is defined by 
$X^{\prime i}=X^{i+m}$ and $d_{X'}^{i,j}=(-1)^m d_X^{i+m,j+m}$
for $i,j\in\bbZ$.
We note that $X\up[m+m']=(X\up [m])[m']$ hold for all $m,m'\in\bbZ$.

Also, in this article we let
\begin{itemize}
\item $s_{-,X}\up\in\Cbf(\A)(X\up[1],X\up)$ denote the closed morphism of degree $1$
\item $s_{+,X}\up\in\Cbf(\A)(X\up,X\up[1])$ denote the closed morphism of degree $-1$
\end{itemize}
which correspond to identities.
\end{dfn}

\begin{dfn}\label{dfn:cocone}\label{dfn:cone}(\cite[Lemma~2.13]{G})
Let $\A$ be as before. For any $f\up\in Z^0(\Cbf(\A))(X\up,Y\up)$, its  \emph{cone} $C_f\up=\Cone f\up\in \Cbf(\A)$ is defined by the following.
\begin{itemize}
\item $C_f^i=X^{i+1}\oplus Y^i$ for $i\in\mathbb{Z}$,
\item $d_{C_f}^{i,j}=\left[\begin{array}{cc}
-d_X^{i+1,j+1}&0\\
f^{i+1,j}& d_Y^{i,j}
\end{array}\right]\colon
X^{i+1}\oplus Y^i\to X^{j+1}\oplus Y^j$ for $i,j\in\mathbb{Z}$.
\end{itemize}
It is equipped with the following morphisms
\[
\begin{tikzcd}[column sep = 20]
Y\up
\arrow[r, shift left=0.6ex, "v_f\up"]
&
C_f\up
\arrow[l, shift left=0.6ex, "q_f\up"]
\arrow[r, shift left=0.6ex, "p_f\up"]
&
X\up [1]
\arrow[l, shift left=0.6ex,  "u_f\up"]
\end{tikzcd}
\]
of degree $0$ in $\Cbf(\A)$ defined by
\[
p_f^{i,j}=\begin{cases}
[1\ 0]&\text{if}\ i=j\\
0&\text{otherwise}
\end{cases}\ \ ,\quad
u_f^{i,j}=\begin{cases}
\scriptstyle{\left[\begin{array}{c}1\\0\end{array}\right]}&\text{if}\ i=j\\
0&\text{otherwise}
\end{cases},
\]
\[
q_f^{i,j}=\begin{cases}
[0\ 1]&\text{if}\ i=j\\
0&\text{otherwise}
\end{cases}\ \ ,\quad
v_f^{i,j}=\begin{cases}
\scriptstyle{\left[\begin{array}{c}0\\1\end{array}\right]}&\text{if}\ i=j\\
0&\text{otherwise}
\end{cases},
\]
that satisfy
\[
q_f\up\circ v_f\up=\id_{Y\up}, \ \ 
p_f\up\circ u_f\up=\id_{X\up [1]}, \ \ 
u_f\up\circ p_f\up+v_f\up\circ q_f\up=\id_{C_f\up}
\]
and
\[
dp_f\up=0, \ \ 
dv_f\up=0, \ \
dq_f\up=-f\up\ci s_{-,X}\up\ci p_f\up, \ \ 
du_f\up=v_f\up\ci f\up\ci s_{-,X}\up
\]
in $\Cbf(\A)$.
Similarly, cocone $\CoCone f\up\in\Cbf(\A)$ of $f\up\in Z^0(\Cbf(\A))(X\up,Y\up)$ exists. They satisfy $\CoCone f\up\cong(\Cone f\up)[-1]$ in $\Kbf(\A)$. 
\end{dfn}

\begin{dfn}\label{dfn:n+2}
Let $\A$ be as before.
For any $a,b\in\Z$, we define as follows.
\begin{enumerate}
\item $\Cbf_{\ge a}(\A)=\Cbf_{[a,\infty)}(\A)$ denotes the full dg-subcategory of $\Cbf(\A)$ consisting of $X\up$ satisfying $X^i=0$ for all $i<a$. 
\item $\Cbf_{\le b}(\A)=\Cbf_{(-\infty,b]}(\A)$ denotes the full dg-subcategory of $\Cbf(\A)$ consisting of $X\up$ satisfying $X^i=0$ for all $i>b$. 
\item $\Cbf_{[a,b]}(\A)=\Cbf_{\ge a}(\A)\cap\Cbf_{\le b}(\A)$. That is, $\Cbf_{[a,b]}(\A)$ is the full dg-subcategory of $\Cbf(\A)$ consisting of $X\up$ satisfying $X^i=0$ for all $i<a$ and $i>b$. 
\end{enumerate}
We denote the corresponding homotopy categories by $\Kbf_{\ge a}(\A)$, $\Kbf_{\le b}(\A)$ and $\Kbf_{[a,b]}(\A)$, respectively. 
These are full subcategories of $\Kbf(\A)$.

Especially, we write $\Cbf^{n+2}(\A)=\Cbf_{[0,n+1]}(\A)$ and call it the \emph{dg-category of $(n+2)$-term one-sided twisted complexes}, which will play a central role in this article. Its homotopy category will be denoted by $\Kbf^{n+2}(\A)=\Kbf_{[0,n+1]}(\A)$.

Also, $\Cbf_{[0,0]}(\A)$ can be identified with $\A$. 
Accordingly, for each integer $i\in\bbZ$, object $A$ and morphism $a$ in $\A$, we write $A[i]$ and $a[i]$ for their shifts in $\Cbf(\A)$ and also in $\Kbf(\A)$.
In particular, $A[i]$ is the one-sided twisted complex concentrated in degree $i$ with value $A$.
\end{dfn}

\normalcolor

The truncations of one-sided twisted complexes are described in Definition~2.7 of \cite{G} and the subsequent remarks.
\begin{dfn}\label{dfn:co-t-str}
Let $k\in\bbZ$ be any integer. We define as follows.
\begin{enumerate}
\item For any $X\up\in\ob(\Cbf(\A))$, define $\sigma_{\ge k}X\up\in\ob(\Cbf_{\ge k}(\A))$ and $\sigma_{\le k}X\up\in\ob(\Cbf_{\le k}(\A))$ by
\[
(\sigma_{\ge k}X)^i=
\begin{cases}
X^i & \text{if}\ i\ge k\\
0 & \text{otherwise}
\end{cases}\quad,\qquad 
(\sigma_{\le k}X)^i
=\begin{cases}
X^i & \text{if}\ i\le k\\
0 & \text{otherwise}
\end{cases}
\]
and
\[
d_{(\sigma_{\ge k}X)}^{i,j}=
\begin{cases}
d_X^{i,j} & \text{if}\ \min(i,j)\ge k\\
0 & \text{otherwise}
\end{cases}\quad,\qquad 
d_{(\sigma_{\le k}X)}^{i,j}=
\begin{cases}
d_X^{i,j} & \text{if}\ \max(i,j)\le k\\
0 & \text{otherwise}
\end{cases}\ .
\]

\item For any morphism $f\up\co X\up\to Y\up$ in $\Cbf(\A)$, define morphisms 
$\sigma_{\ge k}f\up\co \sig_{\ge k}X\up\to\sig_{\ge k}Y\up$ and $\sigma_{\le k}f\up\co \sig_{\le k}X\up\to\sig_{\le k}Y\up$ by
$$(\sigma_{\ge k}f)^{i,j}=
\begin{cases}
f^{i,j} & \text{if}\ \min\{i,j\}\ge k\\
0 & \text{otherwise}\ 
\end{cases}\quad,\qquad 
(\sigma_{\le k}f)^{i,j}
=\begin{cases}
f^{i,j} & \text{if}\ \max\{i,j\}\le k\\
0 & \text{otherwise}
\end{cases}\ .
$$
\end{enumerate}
For $k,k'\in\Z$ satisfying $k\le k'$, we also use the notation $\sig_{[k,k']}X\up=\sig_{\ge k}(\sig_{\le k'}X\up)=\sig_{\le k'}(\sig_{\ge k}X\up)$ for objects and $\sig_{[k,k']}f\up=\sig_{\ge k}(\sig_{\le k'}f\up)=\sig_{\le k'}(\sig_{\ge k}f\up)$ for morphisms.

These assignments do \emph{not} form dg-functors, since compositions are not preserved. However, they still form functors
\begin{equation}\label{truncations}
\sig_{\ge k}\co Z^0(\Cbf(\A))\to Z^0(\Cbf_{\ge k}(\A))
\quad\text{and}\quad
\sig_{\le k}\co Z^0(\Cbf(\A))\to Z^0(\Cbf_{\le k}(\A)).
\end{equation}
We will use the same symbols for functors obtained by restricting them. Similarly for $\sig_{[k,k']}$. Especially, we have functors 
\[
\sig_{\ge k}\co Z^0(\Cbf^{n+2}(\A))\to Z^0(\Cbf_{[k,n+1]}(\A))
\quad\text{and}\quad
\sig_{\le k}\co Z^0(\Cbf^{n+2}(\A))\to Z^0(\Cbf_{[0,k]}(\A)).
\]
\end{dfn}

\begin{rem}\label{rem:non-functorial1}
The truncations $(\ref{truncations})$ do not induce functors on $\Kbf(\A)$, since they do not respect homotopy relations.
\end{rem}

The following construction, which represents $X\up$ via a distinguished triangle, can be found in \cite[Proposition~2.14]{G}.
Since it will be used repeatedly in what follows, we introduce here the following notation. When $X\up$ or $k$ is clear from the context, we may omit the corresponding subscript in $\al_{X,k}\up$ to write $\al_X\up$, $\al_k\up$ or simply $\al\up$. Similarly for $\be_{X,k}\up$ and $\gam_{X,k}\up$.
\begin{prop}\label{prop:alphabetagamma}
Let $X\up\in\Cbf(\A)$ be any object. For any $k\in\bbZ$, we define $\al\up=\al_{X,k}\up\in Z^0(\Cbf(\A))((\sig_{\le k-1}X\up)[-1],\sig_{\ge k}X\up)$ by
\[
\al_{X,k}^{i,j}=\begin{cases}
d_X^{i-1,j}&\text{if}\ i\le k\ \text{and}\ j\ge k\\
0&\text{otherwise}
\end{cases}.
\]
The following holds.
\begin{enumerate}
\item $\Cone \al\up=X\up$, up to the identification via the canonical isomorphisms $X^i\oplus 0\cong X^i\cong 0\oplus X^i$.
\item Define $\be_{X,k}\up\in Z^0(\Cbf(\A))(\sig_{\ge k}X\up,X\up)$ and $\gam_{X,k}\in Z^0(\Cbf(\A))(X\up,\sig_{\le k-1}X\up)$ to be 
$\be_{X,k}\up=v_{\al}\up$ and $\gam_{X,k}\up=p_{\al}\up$ in Definition~\ref{dfn:cone}. Then, we obtain a distinguished triangle
\[
(\sig_{\le k-1}X\up)[-1]\xrightarrow{\al_{X,k}\up} \sig_{\ge k}X\up\xrightarrow{\be_{X,k}\up}X\up\xrightarrow{\gam_{X,k}\up}\sig_{\le k-1}X\up \]
in $\Kbf(\A)$.
\end{enumerate}
\end{prop}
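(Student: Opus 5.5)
The plan is to verify everything directly from the formulas in Definitions~\ref{dfn:complexes}, \ref{dfn:shifts} and \ref{dfn:cone}, and only at the end invoke the standard fact that cone triangles are distinguished in $\Kbf(\A)$.

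\textbf{Objects.} Put $Y\up=(\sig_{\le k-1}X\up)[-1]$, so $Y^i=X^{i-1}$ for $i\le k$ and $Y^i=0$ otherwise, with $d_Y^{i,j}=-d_X^{i-1,j-1}$ when $\max(i-1,j-1)\le k-1$. Also put $Z\up=\sig_{\ge k}X\up$.

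\textbf{Step 1: $\al\up$ is a closed degree-$0$ morphism.} First I would check the degree: $\al^{i,j}=d_X^{i-1,j}$ should lie in $\A(Y^i,Z^j)^{j-i}$, and $d_X^{i-1,j}$ has degree $(i-1)-j+1=i-j$, which agrees. (The paper's convention for morphism degrees is $f^{i,j}\in\A(X^i,Y^j)^{j-i+m}$; I would re-index carefully here, since the signs and degree conventions are exactly where a slip could occur.)

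Next I would expand $(d\al)^{i,j}$ for $i\le k\le j$. It equals
\[
(-1)^jd_\A(d_X^{i-1,j})+\sum_{l\ge k} d_X^{l,j}\ci d_X^{i-1,l}+\sum_{l\le k} d_X^{l-1,j}\ci d_X^{i-1,l-1}.
\]
In the last sum the terms $d_Y^{l,j}$ carry a sign $-1$, which cancels the $-(-1)^0$. The two sums together range over all intermediate indices of $X$, so equation~\eqref{eq_d} for $X$ at $(i-1,j)$ gives $(d\al)^{i,j}=0$.

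\textbf{Step 2: identify $\Cone\al\up$ with $X\up$.} By Definition~\ref{dfn:cone}, $C^i=Y^{i+1}\oplus Z^i$. This is $X^i\oplus 0$ for $i\le k-1$ and $0\oplus X^i$ for $i\ge k$. The differential has three blocks:
\begin{itemize}
\item $-d_Y^{i+1,j+1}=d_X^{i,j}$ on the low part (the two minus signs cancel);
\item $d_Z^{i,j}=d_X^{i,j}$ on the high part;
\item $\al^{i+1,j}=d_X^{i,j}$ for $i\le k-1$, $j\ge k$, on the mixed part.
\end{itemize}
The remaining block vanishes because $d_X^{i,j}=0$ for $i\ge j$. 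So the cone equals $X\up$ on the nose, which proves~(1).

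\textbf{Step 3: the triangle.} Under the identification of Step~2, $v_\al\up$ and $p_\al\up$ become the canonical inclusion $\be_{X,k}\up$ and projection $\gam_{X,k}\up$. It remains to show that the standard cone sequence
\[
Y\up\xrightarrow{\al\up}Z\up\xrightarrow{v\up}C_\al\up\xrightarrow{p\up}Y\up[1]
\]
is a distinguished triangle. Here $Y\up[1]=\sig_{\le k-1}X\up$, using $X\up[m+m']=(X\up[m])[m']$. This is precisely how the triangulated structure on $\Kbf(\A)$ is defined through Definition~\ref{dfn:cone}, or alternatively it follows from \cite[Lemma~2.13]{G}.

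The main obstacle is sign bookkeeping in Step~1, in particular matching the shift sign $(-1)^{-1}$ against the $(-1)^m$ and $(-1)^j$ factors. I would check it first on the three-term case of Remark~\ref{rem:comparison_with_H3t}.
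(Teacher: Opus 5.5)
Your proposal is correct and is essentially the paper's approach: the paper's proof just cites \cite[Lemma~2.13, Proposition~2.14]{G}, and your Step~1 sign cancellation (the $-1$ in $d_Y^{i,l}=-d_X^{i-1,l-1}$ against $-(-1)^0$) and Step~2 block identification are exactly the computations behind that citation, with Step~3 deferring to the same source. One correction to your degree check: the convention $f^{i,j}\in\A(X^i,Y^j)^{j-i+m}$ printed in Definition~\ref{dfn:complexes} is a typo, and the differential formula and Remark~\ref{rem:comparison_with_H3t} force degree $i-j+m$. Under that convention $\al^{i,j}$ must have degree $i-j$, which $d_X^{i-1,j}$ has; as you wrote it, you claimed that $j-i$ agrees with $i-j$.
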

\begin{proof}
See Lemma~2.13 and Proposition~2.14 in \cite{G}.
\end{proof}

\normalcolor

We note that for any morphism $f\up\co X\up\to Y\up$ of degree $0$ in $\Cbf(\A)$, the squares on the right and in the middle of the following diagram are commutative for any $k\in\bbZ$.
\begin{equation}\label{triangle_sigma}
\begin{tikzcd}
(\sig_{\le k-1}X\up)[-1] & \sig_{\ge k}X\up & X\up & \sig_{\le k-1}X\up \\
(\sig_{\le k-1}Y\up)[-1] & \sig_{\ge k}Y\up & Y\up & \sig_{\le k-1}Y\up 
\Ar{1-1}{1-2}{"\al_{X,k}\up"}
\Ar{1-2}{1-3}{"\be_{X,k}\up"}
\Ar{1-3}{1-4}{"\gam_{X,k}\up"}
\Ar{1-1}{2-1}{"(\sig_{\le k-1}f\up){[}-1{]}"'}
\Ar{1-2}{2-2}{"\sig_{\ge k}f\up"}
\Ar{1-3}{2-3}{"f\up"}
\Ar{1-4}{2-4}{"\sig_{\le k-1}f\up"}
\Ar{2-1}{2-2}{"\al_{Y,k}\up"'}
\Ar{2-2}{2-3}{"\be_{Y,k}\up"'}
\Ar{2-3}{2-4}{"\gam_{Y,k}\up"'}
\end{tikzcd}
\end{equation}
It is shown in \cite[Proposition~2.14]{G} that if $f\up$ is closed then the left-hand square is commutative in $\Kbf(\A)$, and $(\ref{triangle_sigma})$ gives a morphism of distinguished triangles in $\Kbf(\A)$. 
Since this will be used in subsequent sections, we record it here as Proposition~\ref{prop:give_a_morph}, making the correspondence explicit.
\begin{prop}\label{prop:give_a_morph}
Let $X\up,Y\up\in\Cbf(\A)$ be any pair of objects, and let $f\up\co X\up\to Y\up$ be any morphism of degree $0$ in $\Cbf(\A)$. For any $k\in\bbZ$, the following are equivalent.
\begin{enumerate}
\item $f\up$ is a closed morphism in $\Cbf(\A)$.
\item The morphism $\varphi\up\co (\sig_{\le k-1}X\up)[-1]\to\sig_{\ge k}Y\up$ of degree $-1$ in $\Cbf(\A)$ defined by
\begin{equation}\label{f_to_varphi}
\varphi^{i,j}=\begin{cases}
f^{i-1,j}&\text{if}\ i\le k\le j \\
0&\text{otherwise}
\end{cases}
\end{equation}
satisfies $d\varphi\up=\al_{Y,k}\up\ci ((\sig_{\le k-1}f\up) [-1])-(\sig_{\ge k}f\up)\ci\al_{X,k}\up$ in $\Cbf(\A)$.
\end{enumerate}
Moreover, whenever these equivalent conditions are satisfied, the diagram 
\[
\begin{tikzcd}
(\sig_{\le k-1}X\up)[-1] & \sig_{\ge k}X\up  \\
(\sig_{\le k-1}Y\up)[-1] & \sig_{\ge k}Y\up
\Ar{1-1}{1-2}{"\al_{X,k}\up"}
\Ar{1-1}{2-1}{"(\sig_{\le k-1}f\up){[}-1{]}"'}
\Ar{1-2}{2-2}{"\sig_{\ge k}f\up"}
\Ar{2-1}{2-2}{"\al_{Y,k}\up"'}
\end{tikzcd}
\]
becomes commutative in $\Kbf(\A)$, and $(\ref{triangle_sigma})$ gives a morphism of distinguished triangles in $\Kbf(\A)$. 
In particular, if $f\up$ is closed and if $\sig_{\le k-1}f\up$ and $\sig_{\ge k}f\up$ are isomorphisms in $\Kbf(\A)$, then $f\up$ is an isomorphism in $\Kbf(\A)$.
\end{prop}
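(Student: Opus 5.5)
The plan is a direct computation of $d\varphi\up$ from the formula in Definition~\ref{dfn:complexes}(4), compared component by component with the closedness condition of Remark~\ref{rem:comparison_with_H3t}. Write $X'\up=(\sig_{\le k-1}X\up)[-1]$. Then $X'^i=X^{i-1}$ is nonzero only for $i\le k$, and $d_{X'}^{i,j}=-d_X^{i-1,j-1}$ for $i,j\le k$. Also $Y''\up=\sig_{\ge k}Y\up$.

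\textbf{Computing $d\varphi\up$.} The morphism $\varphi\up$ has degree $m=-1$, so for $i\le k\le j$,
\[
(d\varphi)^{i,j}=(-1)^jd_{\A}(f^{i-1,j})+\sum_{k\le l\le j}d_Y^{l,j}\ci f^{i-1,l}-\sum_{l}\varphi^{l,j}\ci d_{X'}^{i,l}.
\]
In the last sum only $i\le l\le k$ contributes, giving $+\sum_{i\le l\le k}f^{l-1,j}\ci d_X^{i-1,l-1}$. All other components vanish, since both sides of the desired identity are supported on $i\le k\le j$.

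\textbf{The right-hand side.} Similarly,
\[
\bigl(\al_{Y,k}\ci(\sig_{\le k-1}f)[-1]\bigr)^{i,j}=\sum_{l\le k}d_Y^{l-1,j}\ci f^{i-1,l-1},
\]
where one must track the sign of the shifted morphism $f[-1]$ using the convention in \cite{G}. Likewise,
\[
\bigl(\sig_{\ge k}f\ci\al_{X,k}\bigr)^{i,j}=\sum_{l\ge k}f^{l,j}\ci d_X^{i-1,l}.
\]
After reindexing, the difference $(d\varphi)^{i,j}-\text{RHS}^{i,j}$ equals $(-1)^j$ times the $(i-1,j)$-component of $df\up$. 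The $Y$-differential terms split as indices $l\ge k$ (coming from $d\varphi$) and $l\le k-1$ (coming from $\al_Y$). The $X$-differential terms split as $l\le k-1$ (coming from $d\varphi$) and $l\ge k$ (coming from $\al_X$). Together these reconstruct the full sums $\sum_l$.

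\textbf{Covering all components.} The pairs $(i-1,j)$ with $i\le k\le j$ range over exactly those components $(a,j)$ of $df\up$ with $a\le k-1<k\le j$. The remaining components, with both indices $\le k-1$ or both $\ge k$, are the components of $d(\sig_{\le k-1}f\up)$ and $d(\sig_{\ge k}f\up)$. So the identity in (2) says that the off-diagonal block of $df\up$ vanishes. The equivalence therefore needs the diagonal blocks to vanish as well. I would check the precise statement here. If (2) is meant to include closedness of the truncations, or if the truncations are closed automatically, then (2)$\Leftrightarrow$(1) follows; otherwise (2) is read as including $d(\sig_{\le k-1}f\up)=0$ and $d(\sig_{\ge k}f\up)=0$, which are the diagonal blocks. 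I expect the main obstacle to be this sign and index bookkeeping, above all the sign convention for $f[-1]$ and for $d_{X'}$.

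\textbf{The consequences.} Once (2) holds, the square commutes in $\Kbf(\A)$ because $\varphi\up$ is a homotopy. The other two squares of (\ref{triangle_sigma}) commute strictly: $\be=v_\al$ and $\gam=p_\al$ are the canonical inclusion and projection, and $f\up$ restricted along them agrees with the truncations. Hence (\ref{triangle_sigma}) is a morphism of distinguished triangles (Proposition~\ref{prop:alphabetagamma}). The final claim then follows from the five-lemma for triangulated categories applied in $\Kbf(\A)$.
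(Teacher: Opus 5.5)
You follow the paper's route: a componentwise comparison, which the paper only calls straightforward, then the triangle argument. Your treatment of the consequences is fine. The middle and right squares of (\ref{triangle_sigma}) commute strictly because strict connectivity forces $f^{i,j}=0$ for $i>j$, the left square commutes up to $\pm\varphi\up$, and the five lemma finishes.

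The gap is the central sign computation, which as written does not close. For $m=-1$ the last term of Definition~\ref{dfn:complexes}(4) is $+\varphi^{l,j}\ci d_{X'}^{i,l}$; you dropped the factor $(-1)^m$. With $d_{X'}^{i,l}=-d_X^{i-1,l-1}$ it therefore contributes $-\sum_{l\le k-1}f^{l,j}\ci d_X^{i-1,l}$. Write $g\up=\sig_{\le k-1}f\up$ and $e\up=\sig_{\ge k}f\up$. Then for $i\le k\le j$ the correct bookkeeping gives
\[
(df)^{i-1,j}=(d\varphi)^{i,j}+\bigl(\al_{Y,k}\up\ci g\up[-1]\bigr)^{i,j}-\bigl(e\up\ci\al_{X,k}\up\bigr)^{i,j}.
\]
So the off-diagonal block of $df\up$ vanishes if and only if $d\varphi\up=e\up\ci\al_{X,k}\up-\al_{Y,k}\up\ci g\up[-1]$. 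This is the identity in (2) with the opposite sign, and your claim that $d\varphi-\mathrm{RHS}$ equals $(-1)^j(df)^{i-1,j}$ cannot be right.

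No convention for shifting morphisms rescues the printed sign. Take $k=1$, $X\up=(X^0\xrightarrow{d}X^1)$, $Y\up=(X^0\xrightarrow{a}X^1)$ with $d-a=d_{\A}(\phi)$, and $f\up$ with $f^{0,0}=\id$, $f^{1,1}=\id$, $f^{0,1}=\phi$; this is the situation of Corollary~\ref{cor:inflation_replace}. Here $g\up$ and $e\up$ are identities and the printed (2) holds, yet $(df)^{0,1}=2(a-d)$. So your method proves the equivalence with $\varphi\up$ replaced by $-\varphi\up$. That is harmless for the square in $\Kbf(\A)$ and for Corollary~\ref{cor:give_a_morph}, but you have to derive the sign, not assert it.

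Your worry about the diagonal blocks is justified, and you should settle it rather than hedge: the truncations are not automatically closed. If $k$ exceeds the top degree of $Y\up$, then $\sig_{\ge k}Y\up=0$, and (2) holds vacuously for every degree-$0$ $f\up$, closed or not. Hence (2)$\Rightarrow$(1) requires (2) to include $d(\sig_{\le k-1}f\up)=0$ and $d(\sig_{\ge k}f\up)=0$. This is exactly how Corollary~\ref{cor:give_a_morph} is set up, with $g\up$ and $e\up$ assumed closed. Conversely, (1) gives these two equations, since $f^{i,j}=0$ for $i>j$ makes the diagonal blocks of $df\up$ coincide with $d(\sig_{\le k-1}f\up)$ and $d(\sig_{\ge k}f\up)$.
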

\begin{proof}
The equivalence of {\rm (1)} and {\rm (2)} follows straightforwardly from the definition of the differential in $\Cbf(\A)$.
The second assertion is from \cite[Proposition~2.14]{G}.
The last assertion is immediate from the fact that $(\ref{triangle_sigma})$ is a morphism of distinguished triangles. 
\end{proof}

\begin{cor}\label{cor:give_a_morph}
Let $X\up,Y\up\in\Cbf(\A)$ be any pair of objects, and let $k\in\bbZ$ be any integer. Suppose that we are given morphisms $g\up\in Z^0(\Cbf(\A))(\sig_{\le k-1}X\up,\sig_{\le k-1}Y\up)$ and $e\up\in Z^0(\Cbf(\A))(\sig_{\ge k}X\up,\sig_{\ge k}Y\up)$.
Then, to give either of the following is equivalent.
\begin{enumerate}
\item A morphism $f\up\in Z^0(\Cbf(\A))(X\up,Y\up)$ that satisfies $\sig_{\le k-1}f\up=g\up$ and $\sig_{\ge k}f\up=e\up$ in $\Cbf(\A)$.
\item A morphism $\varphi\up\in\Cbf(\A)(X\up,Y\up)^{-1}$ that satisfies 
$d\varphi\up=\al_{Y,k}\up\ci (g\up [-1])-e\up\ci\al_{X,k}\up$
in $\Cbf(\A)$.
\end{enumerate}
Indeed, such $f\up$ and $\varphi\up$ correspond bijectively via the same correspondence as $(\ref{f_to_varphi})$. 
Moreover, if $g\up$ and $e\up$ are isomorphisms in $\Kbf(\A)$, then $f\up$ in {\rm (1)} also becomes an isomorphism in $\Kbf(\A)$.
\end{cor}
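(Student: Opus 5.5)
The plan is to reduce everything to Proposition~\ref{prop:give_a_morph}, applied to the pair $(X\up,Y\up)$ and the integer $k$. Given the data $g\up$ and $e\up$, and a morphism $f\up$ of degree $0$ in $\Cbf(\A)(X\up,Y\up)$, the condition $\sig_{\le k-1}f\up=g\up$ and $\sig_{\ge k}f\up=e\up$ fixes every component $f^{i,j}$ with $\max\{i,j\}\le k-1$ or $\min\{i,j\}\ge k$. Since $f^{i,j}=0$ is not required for $i>j$ but the relevant components for the correspondence are those with $i\le k-1<k\le j$, I would first check which components remain free. I expect that, in the setting of the paper (where only components with $i\le j$ matter up to the conventions), the only undetermined components of $f\up$ are those $f^{i,j}$ with $i\le k-1$ and $j\ge k$.

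Those free components are exactly recorded by $\varphi\up$ through $(\ref{f_to_varphi})$, namely $\varphi^{i,j}=f^{i-1,j}$ for $i\le k\le j$. Conversely, any $\varphi\up$ of degree $-1$ supported in that range determines them. I would regard $\varphi\up$ as a morphism $(\sig_{\le k-1}X\up)[-1]\to \sig_{\ge k}Y\up$, and identify it with a degree $-1$ morphism $X\up\to Y\up$ via the same components, as in the statement. Degree bookkeeping is a routine check: $f^{i-1,j}$ has degree $j-i+1$, which matches degree $-1$ for a map from the $i$-th term of the shift $(\sig_{\le k-1}X\up)[-1]$, whose $i$-th term is $X^{i-1}$. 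This gives a bijection between the set of degree-$0$ morphisms $f\up$ with the prescribed truncations and the set of such $\varphi\up$.

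Under this bijection, Proposition~\ref{prop:give_a_morph} says that $f\up$ is closed if and only if
\[
d\varphi\up=\al_{Y,k}\up\ci((\sig_{\le k-1}f\up)[-1])-(\sig_{\ge k}f\up)\ci\al_{X,k}\up .
\]
Substituting $g\up$ and $e\up$ for the truncations, this is exactly condition (2). The closedness of $g\up$ and $e\up$ is consistent with this, since truncations of a closed morphism are closed. The main subtle point is making precise that the truncations and $\varphi\up$ together exhaust all components of $f\up$, and that $\varphi\up$ viewed as a morphism $X\up\to Y\up$ has the same differential as when viewed between the truncations. I would handle this by noting that the differential formula in Definition~\ref{dfn:complexes} only involves $d_X^{i,k}$ and $d_Y^{k,j}$ with indices lying in the appropriate ranges, after comparing the sign $(-1)^j$ under the shift.

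Finally, the isomorphism claim follows directly from the last assertion of Proposition~\ref{prop:give_a_morph}. Indeed, $f\up$ is closed with $\sig_{\le k-1}f\up=g\up$ and $\sig_{\ge k}f\up=e\up$ isomorphisms in $\Kbf(\A)$, so the five-lemma applied to the morphism of triangles $(\ref{triangle_sigma})$ shows that $f\up$ is an isomorphism.
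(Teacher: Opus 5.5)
Your proposal is correct and is essentially the paper's own proof: the paper likewise sets up the componentwise bijection $f^{i,j}\leftrightarrow\varphi^{i+1,j}$ (for $i\le k-1<k\le j$) between degree-$0$ morphisms with the prescribed truncations and degree-$(-1)$ morphisms $(\sig_{\le k-1}X\up)[-1]\to\sig_{\ge k}Y\up$ (the intended reading of {\rm (2)}), and then invokes Proposition~\ref{prop:give_a_morph} both for the closedness criterion and for the isomorphism claim. The point you left as an expectation follows from strict connectivity: a degree-$0$ component $f^{i,j}$ with $i>j$ lies in $\A(X^i,Y^j)^{i-j}$ with $i-j>0$, so it vanishes. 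Note also that the degree convention the paper actually uses is $f^{i,j}\in\A(X^i,Y^j)^{i-j+m}$, so $f^{i-1,j}$ has degree $i-j-1$ rather than $j-i+1$, and it is this degree that matches $\varphi^{i,j}$.
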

\begin{proof}
Let $g\up$ and $e\up$ be as in the statement.
Then, giving $f\up\in\Cbf(\A)(X\up,Y\up)^0$ satisfying $\sig_{\le k-1}f\up=g\up$, $\sig_{\ge k}f\up=e\up$ in $\Cbf(\A)$ is equivalent to giving $\varphi\up\in\Cbf(\A)(\sig_{\le k-1}X\up)[-1],\sig_{\ge k}Y\up)^{-1}$.
Indeed, given $\varphi\up$, we obtain $f\up$ by setting
\[
f^{i,j}=\begin{cases}
g^{i,j} & \text{if}\ i,j\le k-1\\
e^{i,j} & \text{if}\ i,j\ge k\\
\varphi^{i+1,j} & \text{if}\ i\le k-1\ \text{and}\ j\ge k\\
0 & \text{otherwise}
\end{cases}.
\]
Conversely, the inverse assignment is given in the same way as $(\ref{f_to_varphi})$, and the two constructions yield a bijection.
Proposition~\ref{prop:give_a_morph} shows that $f\up$ is closed if and only if $\varphi\up$ satisfies $d\varphi\up=\al_{Y,k}\up\ci ((\sig_{\le k-1}f\up) [-1])-(\sig_{\ge k}f\up)\ci\al_{X,k}\up$ in $\Cbf(\A)$. 
The last part is also from Proposition~\ref{prop:give_a_morph}.
\end{proof}

\normalcolor

\subsection{Functor $\overline{(\blank)}$}

Let $\A$ be as before.
Note that, since $\A$ is assumed to be strictly connective, 
any morphism of degree $0$ in $\A$ is closed and we have $\A(A,B)^0=Z^0\A(A,B)$ for all $A,B\in\ob(\A)$.
This allows us to define as follows.
\begin{dfn}\label{dfn:bar_functor1}
A dg-functor $\ovl{(\blank)}\co\A\to H^0(\A)$ is defined by the following. Here, $H^0(\A)$ is regarded as a dg-category whose morphisms are concentrated in degree $0$, equipped with trivial differentials.
\begin{enumerate}
\item The correspondence for objects is the identity.
\item Any morphism $f\in\A(A,B)^i$ of degree $i$ is sent to
$\ovl{f}$, which denotes the residue class $\ovl{f}\in H^0(\A)(A,B)$ of $f$ modulo $d_{\A}(\A(A,B)^{-1})$ if $i=0$, while we put $\ovl{f}=0$ if $i\ne 0$.
\end{enumerate}
\end{dfn}

Since $H^0(\A)$ regarded as a dg-category
satisfies the relevant assumptions,
we may apply the definitions and notation in the previous section. In particular, we have dg-category $\Cbf(H^0(\A))$ and categories $Z^0(\Cbf(H^0(\A)))$, $\Kbf(H^0(\A))$. We note that $\Cbf(H^0(\A))$ and $\Kbf(H^0(\A))$ can be naturally identified with the dg-category of bounded complexes $\Cbff(H^0(\A))$ in $H^0(\A)$ and the homotopy category $\Kbff(H^0(\A))$. Namely, we identify as
\[
\Cbf(H^0(\A))=\Cbff(H^0(\A))\quad\text{and}\quad \Kbf(H^0(\A))=\Kbff(H^0(\A)).
\]
The notation introduced in Definition~\ref{dfn:n+2} will also be used for $\Cbff(H^0(\A))$ and $\Kbff(H^0(\A))$, such as $\Cbff^{n+2}(H^0(\A))$ and $\Kbff^{n+2}(H^0(\A))$.

For objects and morphisms in $\Cbff(H^0(\A))$, we use the following, usual indexing conventions for complexes. Similarly for $\Kbff(H^0(\A))$ and other related categories.
\begin{itemize}
\item For an object $\Xbf\up\in \Cbff(H^0(\A))$, we denote its differential $d_{\Xbf}^{i,i+1}$ simply by $d_{\Xbf}^i \co \Xbf^i \to \Xbf^{i+1}$, since the other terms are $0$.
\item For a morphism $\mathbf{f}\upf\co\Xbf\up\to \Ybf\up$ of degree $m$ in $\Cbff(H^0(\A))$, 
we denote its $i$-th component $\mathbf{f}^{\, i,i+m}$ simply by $\mathbf{f}^{\, i}\co \Xbf^i \to \Ybf^{i+m}$, by the same reason.
\end{itemize}

One-sided twisted complexes in $\A$ and complexes in $H^0(\A)$ can be related by the following.
\begin{dfn}\label{dfn:bar_functor2}
We define $\ovl{(\blank)}\co\Cbf(\A)\to\Cbff(H^0(\A))$ to be the dg-functor induced by $\ovl{(\blank)}\co\A\to H^0(\A)$, and denote it by the same symbol $\ovl{(\blank)}$. More precisely, the correspondences are as follows, and we use the following notation. 
\begin{enumerate}
\item Any object $X\up\in\Cbf(\A)$ is sent to $\ovl{X}\up=(\ovl{X}^i,\ovl{d}_X^i)_{i\in\bbZ}\in \Cbff(H^0(\A))$, where we put $\ovl{X}^i=\ovl{X^i}=X^i$ and the differential is given by $\ovl{d}_X^i=\ovl{d_X^{i,i+1}}$ for each $i\in\bbZ$.
\item Any morphism $f\up\co X\up\to Y\up$ of degree $m$ in $\Cbf(\A)$ is sent to the morphism $\ovl{f}\up=(\ovl{f}^i)_{i\in\bbZ}\co \ovl{X}\up\to\ovl{Y}\up$ of degree $m$ in $\Cbff(H^0(\A))$, where we put $\ovl{f}^i=\ovl{f^{i,i+m}}$ for each $i\in\bbZ$.
\end{enumerate}
We use the same symbol $\ovl{(\blank)}$ for dg-functors obtained by restricting to the dg-subcategories introduced in Definition~\ref{dfn:n+2}. Especially, we have a dg-functor $\ovl{(\blank)}\co \Cbf^{n+2}(\A)\to\Cbff^{n+2}(H^0(\A))$. We note that these functors $\overline{(\blank)}$ commute strictly with shifts and truncations in Definition~\ref{dfn:co-t-str}.
\end{dfn}

\begin{dfn}\label{dfn:bar_functor3}
We define $\ovl{(\blank)}\co\Kbf(\A)\to\Kbff(H^0(\A))$ to be the triangle functor induced by the dg-functor $\ovl{(\blank)}\co\Cbf(\A)\to\Cbff(H^0(\A))$, and denote it by the same symbol $\ovl{(\blank)}$.

We use the same symbol $\ovl{(\blank)}$ for additive functors obtained for the homotopy categories of the dg-subcategories of $\Cbf(\A)$ introduced in Definition~\ref{dfn:n+2}. Especially, we have an additive functor $\ovl{(\blank)}\co \Kbf^{n+2}(\A)\to\Kbff^{n+2}(H^0(\A))$.
\end{dfn}

\begin{rem}\label{rem:non-functorial2}
In the above definition, though the map
\[
\Kbf^{n+2}(\A)(X\up,Y\up)\to \Kbff^{n+2}(H^0(\A))(X\up,Y\up)\ ;\ f\up\mapsto \ovl{f}\up
\]
is well-defined, the correspondence taking each of its terms
\[
\Kbf^{n+2}(\A)(X\up,Y\up)\to H^0\A(X^i,Y^i)\ ;\ f\up\mapsto \ovl{f}^i
\]
is \emph{not} a well-defined map either for $i=0,n+1$.
In particular, they do not give functors $\Kbf^{n+2}(\A)\to H^0(\A)$.
\end{rem}

The following is a Corollary of Proposition~\ref{prop:give_a_morph}.
\begin{cor}\label{cor:devide_f}
For any $f\up\in Z^0(\Cbf^{n+2}(\A))(X\up,Y\up)$, the following holds.
\begin{enumerate}
\item We have a morphism
\[
\begin{tikzcd}
X^0{[}-1{]}&\sig_{\ge 1}X\up & X\up & X^0 \\
Y^0{[}-1{]}&\sig_{\ge 1}Y\up & Y\up & Y^0 
\Ar{1-1}{1-2}{"\al_X\up"}
\Ar{1-2}{1-3}{"\be_X\up"}
\Ar{1-3}{1-4}{"\gam_X\up"}
\Ar{1-1}{2-1}{"(\sig_{\le 0}f\up){[}-1{]}"'}
\Ar{1-2}{2-2}{"\sig_{\ge 1}f\up"}
\Ar{1-3}{2-3}{"f\up"}
\Ar{1-4}{2-4}{"\sig_{\le 0}f\up"}
\Ar{2-1}{2-2}{"\al_Y\up"'}
\Ar{2-2}{2-3}{"\be_Y\up"'}
\Ar{2-3}{2-4}{"\gam_Y\up"'}
\end{tikzcd}
\]
of distinguished triangles in $\Kbf(\A)$.
\item If $\ovl{f}^0$ is an isomorphism in $H^0(\A)$, then there is an isomorphism $\CoCone(\sig_{\ge 1}f\up)\xrightarrow{\cong}\CoCone f\up$ in $\Kbf(\A)$. 
\end{enumerate}
\end{cor}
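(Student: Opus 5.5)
Part (1) is the case $k=1$ of Proposition~\ref{prop:give_a_morph}. For $X\up\in\Cbf^{n+2}(\A)$ we have $\sig_{\le 0}X\up=X^0$, regarded as a complex concentrated in degree $0$ as in Definition~\ref{dfn:n+2}. Likewise $\sig_{\le 0}f\up$ is the single component $f^{0,0}$, viewed as a morphism $X^0\to Y^0$. Since $f\up$ is closed, Proposition~\ref{prop:give_a_morph} shows directly that diagram (\ref{triangle_sigma}) with $k=1$ is a morphism of distinguished triangles in $\Kbf(\A)$. The rows are the triangles of Proposition~\ref{prop:alphabetagamma}(2), so nothing further needs checking.

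For (2), I would use that $\Kbf(\A)$ is triangulated, together with the morphism of triangles from (1). The map $(\sig_{\le 0}f\up)[-1]=f^{0,0}[-1]$ is a morphism $X^0[-1]\to Y^0[-1]$ in $\Cbf_{[1,1]}(\A)$. The subcategory $\Cbf_{[0,0]}(\A)$ is identified with $\A$, so $\Kbf_{[0,0]}(\A)$ is identified with $H^0(\A)$. Under this identification the class of $f^{0,0}$ in $\Kbf(\A)$ is exactly $\ovl{f}^0$. Hence, if $\ovl{f}^0$ is an isomorphism in $H^0(\A)$, then $\sig_{\le 0}f\up$ and its shift $(\sig_{\le 0}f\up)[-1]$ are isomorphisms in $\Kbf(\A)$.

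It remains to compare cocones. Write the middle square of (1) as a commutative square in $\Kbf(\A)$,
\[
\be_Y\up\ci\sig_{\ge 1}f\up=f\up\ci\be_X\up .
\]
I would apply the octahedral axiom, or equivalently the standard lemma that a homotopy cartesian square has isomorphic cocones on parallel sides. The cleanest route is to complete the morphism of triangles in (1) to a $3\times3$ diagram by the $3\times 3$ lemma. In that diagram the column over $\gam$ has cone of $\sig_{\le 0}f\up$ equal to $0$, so the row of cocones gives an isomorphism $\CoCone(\sig_{\ge 1}f\up)\to\CoCone f\up$.

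More concretely, I would form the composite $\sig_{\ge1}X\up\to Y\up$ in two ways and apply the octahedral axiom to each factorization:
\begin{itemize}
\item $\be_Y\up\ci\sig_{\ge1}f\up$: since $\CoCone(\be_Y\up)\cong Y^0[-1]$, this gives a triangle $\CoCone(\sig_{\ge1}f\up)\to\CoCone(\be_Y\up\ci\sig_{\ge 1}f\up)\to Y^0[-1]\to$;
\item $f\up\ci\be_X\up$: this gives a triangle $X^0[-1]\to\CoCone(f\up\be_X\up)\to\CoCone f\up\to$.
\end{itemize}
The identification of the two octahedra, via the map induced by $f^{0,0}[-1]$, is the delicate step. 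If that becomes messy, I would instead work directly at the dg level. There $\Cone$ of a closed map is strictly functorial on $Z^0$, and Corollary~\ref{cor:give_a_morph} builds the comparison map $\CoCone(\sig_{\ge1}f\up)\to\CoCone f\up$ explicitly, as the morphism whose $\sig_{\ge1}$-part is the identity-like inclusion. That corollary also guarantees it is an isomorphism in $\Kbf(\A)$ once its two truncated parts are. Those truncated parts are the identity on the $\sig_{\ge1}$ pieces and a map involving the contractible cone of the isomorphism $f^{0,0}$, so the isomorphism criterion applies.
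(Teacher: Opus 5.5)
Part (1) is exactly the paper's proof. For (2) you use the same key input as the paper: $\ovl{f}^0$ is invertible iff $\sig_{\le 0}f\up$ is an isomorphism in $\Kbf(\A)$.

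\textbf{The gap.} The step that fails as written is the passage from the morphism of triangles to the cocones. The $3\times 3$ lemma starts from a commutative square and produces its own third vertical map; you cannot prescribe it. Applied to the middle square $\be_Y\up\ci\sig_{\ge1}f\up=f\up\ci\be_X\up$, it yields only some $h\co X^0\to Y^0$ such that $(\sig_{\ge1}f\up,f\up,h)$ is a morphism of triangles. Such an $h$ need not be $\sig_{\le0}f\up$, or even invertible. For example, let $\A$ be vector spaces, $X\up=(k\xrightarrow{1}k)$ and $Y\up=(k\xrightarrow{0}k)$ in degrees $0,1$, with $f^{0,0}=1$ and $f^{1,1}=0$. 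Then $h=0$ is also a valid third map. So the claim that ``the column over $\gam$ has zero cone'' is not justified. The same non-uniqueness of third maps is what blocks the identification of your two octahedra.

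\textbf{The fix.} Your dg-level fallback closes this, but it needs to be stated precisely; it is cleanest to compare cones rather than cocones.
\begin{itemize}
\item $\Cone(\sig_{\ge1}f\up)$ is obtained from $\Cone f\up$ (terms $X^{i+1}\oplus Y^i$) by deleting $X^0$ in degree $-1$ and $Y^0$ in degree $0$.
\item All twisted differentials raise the index, so the componentwise inclusion $\iota\up\co\Cone(\sig_{\ge1}f\up)\to\Cone f\up$ is already a closed degree-$0$ morphism. Its $\sig_{\ge1}$-part is the identity.
\item Its $\sig_{\le0}$-part is $X^1\to(X^0\to X^1\oplus Y^0)$. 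After an elementary change of basis splitting off the contractible $X^1\xrightarrow{1}X^1$, the cone of this map is the two-term complex $X^0\xrightarrow{f^{0,0}}Y^0$. This is zero in $\Kbf(\A)$ exactly when $\ovl{f}^0$ is invertible.
\item The last assertion of Proposition~\ref{prop:give_a_morph} with $k=1$ then makes $\iota\up$ an isomorphism in $\Kbf(\A)$. Applying $[-1]$ gives the claim.
\end{itemize}
If you work directly with $\CoCone$, whose terms are $X^i\oplus Y^{i-1}$, the cut must be at $k=2$. Cutting at $k=1$ fails, since $\sig_{\le0}\CoCone(\sig_{\ge1}f\up)=0$ while $\sig_{\le0}\CoCone f\up=X^0$.

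Compared with the paper's one-line appeal to a triangulated-category argument, this route produces an explicit comparison map and avoids the non-uniqueness of third maps entirely.
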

\begin{proof}
{\rm (1)} is from Proposition~\ref{prop:give_a_morph} applied to $k=1$.
{\rm (2)} is shown in a straightforward argument in a triangulated category, since $\ovl{f}^0$ is an isomorphism in $H^0(\A)$ if and only if $\sig_{\le 0}f\up\co X^0\to Y^0$ is an isomorphism in $\Kbf(\A)$.
\end{proof}

The following corollary will be used later in Section~\ref{section:n-exact-dg}.
\begin{cor}\label{cor:inflation_replace}
Let $X\up\in\Cbf_{\ge0}(\A)$ be any object. Suppose that a morphism $a\in Z^0\A(X^0,X^1)$ satisfies $\ovl{d}_X^0=\ovl{a}$ in $H^0(\A)$.
Then, there exist $X\upp\in\Cbf_{\ge0}(\A)$ and an isomorphism $f\up\co X\up\xrightarrow{\cong}X\upp$ in $\Cbf_{\ge0}(\A)$ that satisfy the following.
\begin{itemize}
\item $\sig_{\ge 1}X\up=\sig_{\ge 1}X\upp$ in $\Cbf(\A)$, and $X^0=X^{\prime 0}$, $d_{X'}^{0,1}=a$ in $\A$,
\item $\sig_{\ge 1}f\up=\id_{\sig_{\ge 1}X\up}$ in $\Cbf(\A)$, and $f^{0,0}=\id_{X^0}$ in $\A$.
\end{itemize}
\end{cor}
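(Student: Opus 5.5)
The plan is to realize $X\up$ as a cone, as in Proposition~\ref{prop:alphabetagamma}, and then perturb the attaching morphism by a null-homotopic term. The morphism $f\up$ is then produced by Corollary~\ref{cor:give_a_morph} with $k=1$.

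First I produce a homotopy. Since $\A$ is strictly connective, $d_X^{0,1}$ and $a$ are closed of degree $0$, and $H^0\A(X^0,X^1)$ is $\A(X^0,X^1)^0$ modulo $d_{\A}(\A(X^0,X^1)^{-1})$. So the hypothesis $\ovl{d}_X^0=\ovl{a}$ gives $h\in\A(X^0,X^1)^{-1}$ with $d_{\A}(h)=d_X^{0,1}-a$.

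Next, take $k=1$, so that $\sig_{\le 0}X\up=X^0$, and consider $\al_X\up=\al_{X,1}\up\co X^0[-1]\to\sig_{\ge1}X\up$. By Proposition~\ref{prop:alphabetagamma}(1), $X\up=\Cone\al_X\up$.

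Define $\psi\up\in\Cbf(\A)(X^0[-1],\sig_{\ge1}X\up)^{-1}$ to have the single nonzero component $\psi^{1,1}=\epsilon h$, with a sign $\epsilon=\pm1$ fixed below. The degree count $j-i+m=1-1-1=-1$ matches $h$. Compute $(d\psi)^{1,1}$ from Definition~\ref{dfn:complexes}(4). The terms $d^{k,1}_{\sig_{\ge1}X}\ci\psi^{1,k}$ vanish, since $d^{1,1}=0$ and $\sig_{\ge1}X$ has nothing in degree $<1$. The terms $\psi^{k,1}\ci d^{1,k}_{X^0[-1]}$ vanish because $X^0[-1]$ has trivial differential. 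Hence $(d\psi)^{1,1}=-\epsilon\, d_{\A}(h)$, and taking $\epsilon=1$ makes this equal to $a-d_X^{0,1}$. (This sign is the only point needing care, and it is checked directly against the formula for $df\up$.)

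Set $\al'\up=\al_X\up+d\psi\up$, which is closed of degree $0$, and put $X\upp=\Cone\al'\up$, identified as in Proposition~\ref{prop:alphabetagamma}(1). By Definition~\ref{dfn:cone}, the terms of $X\upp$ are $X^0$ in degree $0$ and $X^i$ in degrees $i\ge1$. Its differential restricted to degrees $\ge1$ is that of $\sig_{\ge1}X\up$, so $\sig_{\ge1}X\upp=\sig_{\ge1}X\up$. Its components $d_{X'}^{0,j}$ are the components $\al'^{1,j}$. In particular $d_{X'}^{0,1}=\al'^{1,1}=d_X^{0,1}+(a-d_X^{0,1})=a$. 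Clearly $X\upp\in\Cbf_{\ge0}(\A)$.

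Now apply Corollary~\ref{cor:give_a_morph} to the pair $X\up,X\upp$ with $k=1$, $g\up=\id_{X^0}$ and $e\up=\id_{\sig_{\ge1}X\up}$. We need $\varphi\up$ of degree $-1$ with
\[
d\varphi\up=\al_{X',1}\up\ci(g\up[-1])-e\up\ci\al_{X,1}\up=\al'\up-\al_X\up=d\psi\up,
\]
so $\varphi\up=\psi\up$ works. This yields a closed $f\up\co X\up\to X\upp$ with $\sig_{\ge1}f\up=\id$ and $f^{0,0}=\id_{X^0}$. Corollary~\ref{cor:give_a_morph} already makes $f\up$ an isomorphism in $\Kbf(\A)$.

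The remaining issue, which I expect to be the main obstacle, is upgrading this to an isomorphism in $\Cbf_{\ge0}(\A)$, i.e.\ in $Z^0$. I do this by building a strict inverse. Apply Corollary~\ref{cor:give_a_morph} in the other direction, from $X\upp$ to $X\up$, with identities and $-\psi\up$; this is valid because $d(-\psi\up)=\al_X\up-\al'\up$. This gives a closed $f'\up$. Both $f\up$ and $f'\up$ are the identity plus a single off-diagonal block from $X^0$ to $\sig_{\ge1}$, with entries $\pm\psi^{1,j}$ placed in the components $(0,j)$. In the composites, the products of two such blocks vanish, since there is no component from $\sig_{\ge1}$ back to $X^0$. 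Hence $f'\up\ci f\up=\id$ and $f\up\ci f'\up=\id$ strictly, so $f\up$ is an isomorphism in $\Cbf_{\ge0}(\A)$, which is what the corollary asserts.
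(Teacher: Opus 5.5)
Your proposal is correct and follows essentially the same route as the paper. Both perturb $\al_{X,1}$ by $d\varphi$ with the single component $\varphi^{1,1}=h$, identify $\Cone(\al_{X,1}+d\varphi)$ with $X'$, apply Corollary~\ref{cor:give_a_morph} to get $f$ with $f^{0,1}=h$, and use the map built from $-h$ as a strict two-sided inverse; your sign choice $\epsilon=1$ also agrees with the paper's.
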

\begin{proof}
Let $\al\up_{X,1}\co X^0[-1]\to\sig_{\ge1}X\up$ be the morphism in Proposition~\ref{prop:alphabetagamma}.
By assumption, there exists $\phi\in\A(X^0,X^1)^{-1}$ such that $d\phi=d_X^{0,1}-a$ in $\A$.
Define $\varphi\up\in\Cbf(\A)(X^0[-1],\sig_{\ge1}X\up)^{-1}$ by
\[
\varphi^{i,j}=\begin{cases}
\phi & \text{if}\ (i,j)=(1,1)\\
0 & \text{otherwise}
\end{cases}
\]
and put $x\up=\al\up_{X,1}+d\varphi\up$. Then, up to the identification via the canonical isomorphisms $X^i\oplus 0\cong X^i\cong 0\oplus X^i$, $\Cone x\up$ agrees with the object $X\upp\in\Cbf_{\ge0}(\A)$ given by
\[
X^{\prime i}=X^i
\quad \text{and} \quad
d_{X'}^{i,j}=\begin{cases}
x^{i,j} & \text{if}\ i=0,\\
d_X^{i,j} & \text{otherwise}
\end{cases}
\]
for all $i,j\in\bbZ$.
In particular $\sig_{\ge 1}X\upp=\sig_{\ge 1}X\up$ holds. Then we have $\al\up_{X',1}=x\up$, and thus by Corollary~\ref{cor:give_a_morph}, we obtain a morphism $f\up\in Z^0(\Cbf(\A))(X\up,X\upp)$ given by
\[
f^{i,j}=\begin{cases}
\id & \text{if}\ i=j=0\ \text{or}\ \min(i,j)\ge1\\
\phi & \text{if}\ (i,j)=(0,1)\\
0 & \text{otherwise}
\end{cases}
\]
which corresponds to $\varphi\up$.
Similarly, we have a morphism $g\up\in Z^0(\Cbf(\A))(X\upp,X\up)$ given by
\[
g^{i,j}=\begin{cases}
\id & \text{if}\ i=j=0\ \text{or}\ \min(i,j)\ge1\\
-\phi & \text{if}\ (i,j)=(0,1)\\
0 & \text{otherwise}
\end{cases}\ \ .
\]
They satisfy $g\up\ci f\up=\id_{X\up}$ and $f\up\ci g\up=\id_{X\upp}$ strictly in $\Cbf(\A)$, hence $f\up$ is an isomorphism in $\Cbf(\A)$.
\end{proof}

\normalcolor

The following lemmas will be used later.
\begin{lem}\label{lem:4A}
Let $f\up\in Z^0(\Cbf(\A))(X\up,Y\up)$, $a\in Z^0(\A)(X^0,Y^0)$ and $c\in Z^0(\A)(X^{n+1},Y^{n+1})$ be any triplet of morphisms. If $f\up$ satisfies $\ovl{f}^0=\ovl{a}$ and $\ovl{f}^{n+1}=\ovl{c}$ in $H^0(\A)$, then there exists $f\upp\in Z^0(\Cbf(\A))(X\up,Y\up)$ that satisfies $f\up=f\upp$ in $\Kbf(\A)$ and $f^{0,0}=a$, $f^{n+1,n+1}=c$ in $\A$. 
\end{lem}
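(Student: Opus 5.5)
Since $\ovl{f}^0=\ovl{a}$ and $\A$ is strictly connective, there is $h_0\in\A(X^0,Y^0)^{-1}$ with $d_{\A}(h_0)=f^{0,0}-a$. Likewise there is $h_{n+1}\in\A(X^{n+1},Y^{n+1})^{-1}$ with $d_{\A}(h_{n+1})=f^{n+1,n+1}-c$. The plan is to build a homotopy $h\up\in\Cbf(\A)(X\up,Y\up)^{-1}$ whose only nonzero components are $h^{0,0}$ and $h^{n+1,n+1}$, chosen as suitable signed multiples of $h_0$ and $h_{n+1}$. We then set $f\upp=f\up-dh\up$. This $f\upp$ is closed of degree $0$ and equals $f\up$ in $\Kbf(\A)$, so everything reduces to computing the diagonal entries of $dh\up$. (We may assume $X\up,Y\up\in\Cbf^{n+2}(\A)$, which is implicit in the statement. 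In general we only use that the indices $0$ and $n+1$ are the relevant ones.)

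By Definition~\ref{dfn:complexes}(4), with $m=-1$ we have
\[
(dh)^{i,j}=(-1)^jd_{\A}(h^{i,j})+\sum_k\big(d_Y^{k,j}\ci h^{i,k}+h^{k,j}\ci d_X^{i,k}\big).
\]
Since $d_X^{i,k}=0$ and $d_Y^{k,j}=0$ unless the first index is strictly smaller, the sum contributes nothing on the diagonal $i=j$. Hence $(dh)^{0,0}=d_{\A}(h^{0,0})$ and $(dh)^{n+1,n+1}=(-1)^{n+1}d_{\A}(h^{n+1,n+1})$. We therefore take $h^{0,0}=h_0$ and $h^{n+1,n+1}=(-1)^{n+1}h_{n+1}$, and all other components zero. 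This gives $f^{\prime 0,0}=f^{0,0}-(f^{0,0}-a)=a$ and $f^{\prime n+1,n+1}=c$.

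The off-diagonal entries of $dh\up$ are nonzero in general; for instance $(dh)^{0,j}$ contains $d_Y^{0,j}\ci h_0$. These only modify the components $f^{\prime i,j}$ with $i<j$, which are unconstrained, so they cause no problem. Closedness of $f\upp$ is automatic because $d^2=0$ in the dg-category $\Cbf(\A)$.

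The only point needing care is the sign bookkeeping in the differential formula, in particular the sign $(-1)^j$ at $j=n+1$ and the fact that $m=-1$ turns the minus into a plus. There is no substantive obstacle: the statement is essentially the observation that diagonal entries of a boundary are boundaries in $\A$.
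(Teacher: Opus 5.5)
Your proposal is correct and is essentially the paper's proof. The paper likewise takes the degree $-1$ morphism with only components $\phi_0$ at $(0,0)$ and $(-1)^{n+1}\phi_{n+1}$ at $(n+1,n+1)$, and sets $f\upp=f\up-d\xi\up$.

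One small precision: the diagonal of $dh\up$ reduces to $(-1)^i d_{\A}(h^{i,i})$ because your $h\up$ is itself diagonal. Strict upper-triangularity of the differentials alone would not kill the sum.
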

\begin{proof}
By assumption, there exist $\phi_0\in\A(X^0,Y^0)^{-1}$ and $\phi_{n+1}\in\A(X^{n+1},Y^{n+1})^{-1}$ such that $f^{0,0}-a=d_{\A}(\phi_0)$ and $f^{n+1,n+1}-c=d_{\A}(\phi_{n+1})$. If we define  $\xi\up\in\Cbf^{n+2}(\A)(X\up,Y\up)^{-1}$ by
\[
\xi^{i,j}=\begin{cases}
\phi_0&\text{if}\ (i,j)=(0,0)\\
(-1)^{n+1}\phi_{n+1}&\text{if}\ (i,j)=(n+1,n+1)\\
0&\text{otherwise}
\end{cases},
\]
then $f\upp=f\up-d\xi\up$ satisfies the required properties. 
\end{proof}

\begin{lem}\label{lem:comparison_H}
For any $X\up\in\Cbf_{\ge 0}(\A)$ and $B\in\ob(\A)$, the homomorphism of abelian groups 
\[
\ovl{(\blank)}\co\Kbf(\A)(X\up,B)\to \Kbff(H^0(\A))(\ovl{X}\up,B)
\]
induced by the functor $\ovl{(\blank)}\co\Kbf(\A)\to\Kbff(H^0(\A))$, is an isomorphism.
\end{lem}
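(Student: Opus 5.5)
We argue by induction on the length of $X\up$, that is, on the number of indices $i\ge 0$ with $X^i\ne 0$; since $X\up$ is bounded this number is finite. When $X\up=0$ both sides vanish and there is nothing to prove. When $X\up$ is concentrated in a single degree $i\ge 0$, so that $X\up=A[-i]$ for an object $A$, we compute both sides directly. On the left, $\Cbf(\A)(A[-i],B)$ is, up to sign, the complex $\A(A,B)$ shifted by $i$. Hence $\Kbf(\A)(A[-i],B)\cong H^{-i}\A(A,B)$. On the right, $\Kbff(H^0(\A))(A[-i],B)$ equals $H^0(\A)(A,B)$ if $i=0$ and is $0$ if $i>0$.

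The case $i>0$ is the delicate one, because $H^{-i}\A(A,B)$ need not vanish. We therefore re-examine the indexing convention in Definition~\ref{dfn:complexes}: a morphism $f\up\co X\up\to Y\up$ of degree $m$ has components $f^{i,j}\in\A(X^i,Y^j)^{j-i+m}$. For $Y\up=B$ concentrated in degree $0$ and $m=0$, a component $f^{i,0}$ lies in $\A(X^i,B)^{-i}$. With $i>0$ this is a morphism of negative degree, so the map really does land in $H^{-i}$, and the naive base case fails. I would therefore first double-check that the statement genuinely concerns $\Cbf_{\ge 0}$, and not $\Cbf_{\le 0}$.

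Once that is settled, the right approach is to compute $\Cbf(\A)(X\up,B)$ directly rather than to rely on a clean base case. Its degree $0$ part is $\prod_{i\ge0}\A(X^i,B)^{-i}$. Strict connectivity gives $\A(X^i,B)^{-i+1}=0$ for $i\ge 2$, so the degree $1$ part is $\A(X^0,B)^{1}\oplus\A(X^1,B)^0$, and the first summand is $0$. The $(-1)$-part is $\prod_i\A(X^i,B)^{-i-1}$.

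The key step is then to write down explicitly the $0$-cocycles and the coboundaries in this complex. A closed $f\up=(f^{i,0})$ must satisfy $d_{\A}(f^{i+1,0})=\pm\big(f^{i,0}\ci d_X^{i,i+1}+\cdots\big)$. Moreover the component $f^{0,0}$ is compared via the degree-$1$ condition $f^{0,0}\ci d_X^{0,1}=0$ plus corrections from $d_{\A}(f^{1,0})$, the latter being a $d_{\A}$ of a degree $-1$ morphism. After applying $\ovl{(\blank)}$ this becomes $\ovl{f^{0,0}}\ci\ovl{d}_X^0=0$. I would then show that the map $f\up\mapsto\ovl{f^{0,0}}$, which is what $\ovl{f}\up$ amounts to, is surjective and injective modulo homotopy. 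Surjectivity: given $\ovl{a}$ with $\ovl{a}\ci\ovl{d}_X^0=0$, pick $a$ and then $f^{1,0}$ with $d_{\A}f^{1,0}=\pm a\ci d_X^{0,1}$. The main obstacle is extending the choice to the higher $f^{i,0}$, where obstructions lie in $H^{-i+1}$ of Hom-complexes.

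For this extension I would induct via the triangle of Proposition~\ref{prop:alphabetagamma} with $k=1$. Applying $\Kbf(\A)(\blank,B)$ gives a long exact sequence involving $\Kbf(\A)(\sig_{\ge1}X\up,B)$ and $\Kbf(\A)(\sig_{\ge1}X\up[1],B)$. The same applies on the $H^0$ side, and the five lemma finishes the argument, provided that $\Kbf(\A)(Z\up,B)$ and $\Kbf(\A)(Z\up[1],B)$ vanish for $Z\up$ concentrated in degrees $\ge1$. This vanishing is the crux. I expect it to follow from the convention that objects in degree $i\ge 1$ receive maps landing in nonpositive-degree Hom-groups in such a way that no nonzero closed degree $0$ morphism to $B$ exists modulo homotopy. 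If the signs instead produce $H^{-i}$, then I would conclude that the indexing makes morphisms $X^i\to B$ have degree $i$, which is positive, and strict connectivity kills them. That reading gives the needed vanishing, and so the whole argument reduces to verifying this degree convention carefully.
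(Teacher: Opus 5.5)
There is a genuine gap, and it originates in the degree convention. You read Definition~\ref{dfn:complexes}(2) literally, with $f^{i,j}\in\A(X^i,Y^j)^{j-i+m}$, but that exponent is a misprint for $i-j+m$. The formula for $df\up$ in Definition~\ref{dfn:complexes}(4) is homogeneous only with $\deg f^{i,j}=i-j+m$; compare the degrees of $d_{\A}(f^{i,j})$ and $d_Y^{k,j}\ci f^{i,k}$. This is also the convention used everywhere else in the paper:
\begin{itemize}
\item $\al_{X,k}^{i,j}=d_X^{i-1,j}$, which has degree $i-j$, is a closed morphism of degree $0$;
\item the cone differential $d_{C_f}^{i,j}$ has $f^{i+1,j}$ as a component;
\item Lemma~\ref{lem:cohom_vanish} uses $\Kbf(\A)(A[-u],X^k[-k])\cong H^{u-k}\A(A,X^k)$.
\end{itemize}
Consequently $\Kbf(\A)(A[-i],B)\cong H^{i}\A(A,B)$, which vanishes for $i>0$ by connectivity.

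With your identification $H^{-i}\A(A,B)$, the statement would already fail for $X\up=A[-1]$ whenever $H^{-1}\A(A,B)\neq0$. So no argument built on that reading can succeed. Your proposal never settles the point; it ends by saying everything ``reduces to verifying this degree convention.'' The intermediate computation is also inconsistent on its own terms. With components of degree $-i$, strict connectivity does not kill $\A(X^i,B)^{-i+1}$ for $i\ge2$, since those degrees are negative.

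With the correct convention, your ``key step'' is the whole proof, and it is the paper's argument. A degree-$0$ morphism $X\up\to B$ has components $f^{i,0}$ of degree $i$. So only $f^{0,0}$ survives, and it is automatically closed: $Z^0(\Cbf(\A))(X\up,B)=\A(X^0,B)^0$. A morphism of degree $-1$ has only the components $\varphi^{0,0}\in\A(X^0,B)^{-1}$ and $\varphi^{1,0}\in\A(X^1,B)^0$, with $(d\varphi\up)^{0,0}=d_{\A}(\varphi^{0,0})+\varphi^{1,0}\ci d_X^{0,1}$. Hence $\Kbf(\A)(X\up,B)\cong\mathrm{Coker}\bigl(H^0\A(X^1,B)\xrightarrow{\blank\ci\ovl{d}_X^0}H^0\A(X^0,B)\bigr)$. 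This is exactly $\Kbff(H^0(\A))(\ovl{X}\up,B)$, compatibly with $\ovl{(\blank)}$. There is no extension problem for higher $f^{i,0}$ and no obstruction in $H^{-i+1}$.

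Your fallback via the triangle of Proposition~\ref{prop:alphabetagamma} rests on a false vanishing. $\Kbf(\A)(Z\up[1],B)$ is not zero for $Z\up$ in degrees $\ge1$; for $Z\up=X^1[-1]$ it equals $H^0\A(X^1,B)$. If it were zero, you would get $\Kbf(\A)(X\up,B)\cong H^0\A(X^0,B)$, which is wrong. That route can be repaired as follows:
\begin{itemize}
\item use only $\Kbf(\A)(\sig_{\ge1}X\up,B)=0=\Kbff(H^0(\A))(\sig_{\ge1}\ovl{X}\up,B)$;
\item apply the induction hypothesis to $(\sig_{\ge1}X\up)[1]$, which has fewer nonzero terms when $X^0\neq0$;
\item compare the resulting right exact rows.
\end{itemize}
But that is not the argument you gave.
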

\begin{proof}
By the definition of morphisms in $\Cbf(\A)$, we see that the maps
\[
v\co Z^0(\Cbf(\A))(X\up,B)\to Z^0(\A)(X^0,B)\ ;\ f\up\mapsto f^{0,0},
\]
\[
w\co\Cbf(\A)(X\up,B)^{-1}\to\A(X^0,B)^{-1}\oplus\A(X^1,B)^0\ ;\ \varphi\up\mapsto (\varphi^{0,0},\varphi^{1,0})
\]
are isomorphisms of abelian groups. 
Also, by the definition of $\Kbff(H^0(\A))$, we have an exact sequence
\[
H^0(\A)(X^1,B)\xrightarrow{-\ci \ovl{d}_X^0}H^0(\A)(X^0,B)\xrightarrow{u}\Kbff(H^0(\A))(\ovl{X}\up,B)\to 0,
\]
where $u$ is the map which sends each $\abf\in H^0(\A)(X^0,B)$ to the complex morphism $u(\abf)\up\co \ovl{X}\up\to B$ given by
\[
u(\abf)^i=\begin{cases}
\abf& \text{if}\ i=0,\\
0&\text{otherwise}.
\end{cases}
\]
If we let $u'\co Z^0(\A)(X^0,B)\to\Kbff(H^0(\A))(\ovl{X}\up,B)$ denote the composition of 
\[
Z^0(\A)(X^0,B)\xrightarrow{\ovl{(\blank)}}H^0(\A)(X\up,B)\xrightarrow{u}\Kbff(H^0(\A))(\ovl{X}\up,B),
\]
then, the diagram
\[
\begin{tikzcd}
\Cbf(\A)(X\up,B)^{-1} & Z^0(\Cbf(\A))(X\up,B) & \Kbf(\A)(X\up,B) & 0\\
\A(X^0,B)^{-1}\oplus\A(X^1,B)^0 & Z^0(\A)(X^0,B) & \Kbff(H^0(\A))(\ovl{X}\up,B) & 0
\Ar{1-1}{1-2}{"d"}
\Ar{1-2}{1-3}{"\mathit{quot}"}
\Ar{1-3}{1-4}{}
\Ar{1-1}{2-1}{"w"',"\cong"}
\Ar{1-2}{2-2}{"v"',"\cong"}
\Ar{1-3}{2-3}{"\ovl{(\blank)}"}
\Ar{2-1}{2-2}{" {[}d_{\A}\ \,\blank\!\ci d_X^{0,0}{]}"'}
\Ar{2-2}{2-3}{"u'"'}
\Ar{2-3}{2-4}{}
\end{tikzcd}
\]
becomes commutative, where $\mathit{quot}$ denotes the canonical quotient map.
Since both rows are exact, the rightmost vertical map
$\ovl{(\blank)}\co\Kbf(\A)(X\up,B)\to \Kbff(H^0(\A))(\ovl{X}\up,B)$
is also an isomorphism.
\end{proof}

\subsection{Category $\Kbf^{1,n,1}(\A)$}

To resolve the non-functoriality observed in Remarks~\ref{rem:non-functorial1} and~\ref{rem:non-functorial2}, we introduce the following.
\begin{dfn}\label{dfn:K1n1}
We define categories $\Kbf^{1,n,1}(\A),\Kbf^{n,1}(\A),\Kbf^{1,n}(\A)$ by the following.
\begin{enumerate}
\item The category $\Kbf^{1,n,1}(\A)$ is
the quotient of $Z^0(\Cbf^{n+2}(\A))$ by the ideal $\calN$ defined by 
\[
\calN(X\up,Y\up)=\left\{d\varphi\up \in Z^0(\Cbf^{n+2}(\A))(X\up,Y\up) \;\middle|\; \begin{array}{l}
\varphi\up\in\Cbf^{n+2}(\A)(X\up,Y\up)^{-1},\\
\varphi^{1,0}=0\ \text{and}\ \varphi^{n+1,n}=0\ \text{in}\ \A
\end{array}\right\}
\]
for all $X\up, Y\up\in \Cbf^{n+2}(\A)$.  Namely, we define $\Kbf^{1,n,1}(\A)$ by
\begin{itemize}
\item $\ob(\Kbf^{1,n,1}(\A))=\ob(\Cbf^{n+2}(\A))$,
\item $\Kbf^{1,n,1}(\A)(X\up, Y\up)=Z^0(\Cbf^{n+2}(\A))(X\up,Y\up)/\calN(X\up,Y\up)$ for all objects $X\up,Y\up$.
\end{itemize}
\item The category $\Kbf^{n,1}(\A)$ is
the quotient of $Z^0(\Cbf_{[1,n+1]}(\A))$ by the ideal $\calN'$ defined by 
\[
\calN'(X\up,Y\up)=\left\{d\varphi\up \in Z^0(\Cbf_{[1,n+1]}(\A))(X\up,Y\up) \;\middle|\; \begin{array}{l}
\varphi\up\in\Cbf_{[1,n+1]}(\A)(X\up,Y\up)^{-1},\\
\varphi^{n+1,n}=0\ \text{in}\ \A
\end{array}\right\}
\]
for all $X\up, Y\up\in \Cbf_{[1,n+1]}(\A)$.
\item The category $\Kbf^{1,n}(\A)$ is
the quotient of $Z^0(\Cbf_{[0,n]}(\A))$ by the ideal $\calN^{\prime\prime}$ defined by 
\[
\calN^{\prime\prime}(X\up,Y\up)=\left\{d\varphi\up \in Z^0(\Cbf_{[0,n]}(\A))(X\up,Y\up) \;\middle|\; \begin{array}{l}
\varphi\up\in\Cbf_{[0,n]}(\A)(X\up,Y\up)^{-1},\\ \varphi^{1,0}=0\ \text{in}\ \A
\end{array}\right\}
\]
for all $X\up, Y\up\in \Cbf_{[0,n]}(\A)$.
\end{enumerate}
By definition, we have functors $\Cbf^{n+2}(\A)\to\Kbf^{1,n,1}(\A)\to\Kbf^{n+2}(\A)$, $\Cbf_{[1,n+1]}(\A)\to\Kbf^{n,1}(\A)\to\Kbf_{[1,n+1]}(\A)$ and $\Cbf_{[0,n]}(\A)\to\Kbf^{1,n}(\A)\to\Kbf_{[0,n]}(\A)$ canonically induced by the universality of the ideal quotients. 
\end{dfn}

\begin{dfn}\label{def:K1n1_truncations}
We define as follows.
\begin{enumerate}
\item We define  $\sig_{\ge 1}\co\Kbf^{1,n,1}(\A)\to\Kbf^{n,1}(\A)$ to be the additive functor induced from $\sig_{\ge 1}\co Z^0(\Cbf^{n+2}(\A))\to Z^0(\Cbf_{[1,n+1]})(\A)$ by the universality of the ideal quotient. Also, we define $\sig_{\le n}\co\Kbf^{1,n,1}(\A)\to\Kbf^{1,n}(\A)$ to be the additive functor induced from  $\sig_{\le n}\co Z^0(\Cbf^{n+2}(\A))\to Z^0(\Cbf_{[0,n]}(\A))$.
\item Similarly, we define $\sig_{\ge 1}\co\Kbf^{1,n}(\A)\to\Kbf_{[1,n]}(\A)$ and $\sig_{\le n}\co\Kbf^{n,1}(\A)\to\Kbf_{[1,n]}(\A)$ to be the additive functors induced from  $\sig_{\ge 1}\co Z^0(\Cbf_{[0,n]}(\A))\to Z^0(\Cbf_{[1,n]}(\A))$ and $\sig_{\le n}\co Z^0(\Cbf_{[1,n+1]})(\A)\to Z^0(\Cbf_{[1,n]})(\A)$, respectively.
\end{enumerate}
We note that
\[
\begin{tikzcd}[column sep=2em, row sep=0.8em]
 & \Kbf^{1,n,1}(\A) & \\
\Kbf^{1,n}(\A) & & \Kbf^{n,1}(\A) \\
 & \Kbf_{[1,n]}(\A) & 
\Ar{1-2}{2-1}{"\sig_{\le n}"'}
\Ar{1-2}{2-3}{"\sig_{\ge 1}"}
\Ar{2-1}{3-2}{"\sig_{\ge 1}"'}
\Ar{2-3}{3-2}{"\sig_{\le n}"}
\end{tikzcd}
\]
is commutative.
\end{dfn}

\begin{dfn}\label{def:K1n1_st}
We define as follows.
\begin{enumerate}
\item We define functors $s,t\co\Kbf^{1,n,1}(\A)\to H^0(\A)$ by the following.
\begin{itemize}
\item $s(X\up)=X^0$ and $t(X\up)=X^{n+1}$ for any object $X\up$ in $\Kbf^{1,n,1}(\A)$,
\item $s(f\up)=\ovl{f}^0$ and $t(f\up)=\ovl{f}^{n+1}$ for any morphism  $f\up$ in $\Kbf^{1,n,1}(\A)$.
\end{itemize}
\item Similarly, functors $s\co\Kbf^{1,n}(\A)\to H^0(\A)$ and $t\co\Kbf^{n,1}(\A)\to H^0(\A)$, which we denote by the same symbols as in {\rm (1)}, are defined by the following.
\begin{itemize}
\item $s(X\up)=X^0$ for any object $X\up$ in $\Kbf^{1,n}(\A)$, and 
$s(f\up)=\ovl{f}^0$ for any morphism  $f\up$ in $\Kbf^{1,n}(\A)$.
\item $t(X\up)=X^{n+1}$ for any object $X\up$ in $\Kbf^{n,1}(\A)$, and 
$t(f\up)=\ovl{f}^{n+1}$ for any morphism  $f\up$ in $\Kbf^{n,1}(\A)$.
\end{itemize}
\end{enumerate}
By definition, the diagrams
\[
\begin{tikzcd}
\Kbf^{1,n}(\A) & \Kbf^{1,n,1}(\A)\\
 & H^0(\A)
\Ar{1-2}{1-1}{"\sig_{\le n}"'}
\Ar{1-1}{2-2}{"s"'}
\Ar{1-2}{2-2}{"s"}
\end{tikzcd}
\quad\text{and}\quad
\begin{tikzcd}
\Kbf^{1,n,1}(\A) & \Kbf^{n,1}(\A)\\
H^0(\A) & 
\Ar{1-1}{1-2}{"\sig_{\ge 1}"}
\Ar{1-1}{2-1}{"t"'}
\Ar{1-2}{2-1}{"t"}
\end{tikzcd}
\]
are commutative.
\end{dfn}

\begin{prop}\label{prop:9-A}
Let $f\up\in Z^0(\Cbf^{n+2}(\A))(X\up,Y\up)$ be any morphism such that $\ovl{f}^0=\id$ and $\ovl{f}^{n+1}=\id$ in $H^0(\A)$. Suppose that $g\up\in Z^0(\Cbf^{n+2}(\A))(Y\up,X\up)$ gives the inverse of $f\up$ in $\Kbf^{1,n,1}(\A)$. Then, we have $\ovl{g}^0=\id$ and $\ovl{g}^{n+1}=\id$ in $H^0(\A)$.
\end{prop}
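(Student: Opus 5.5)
The plan is to use the functors $s,t\co\Kbf^{1,n,1}(\A)\to H^0(\A)$ of Definition~\ref{def:K1n1_st}. The main point to establish is that they really are well-defined functors on the ideal quotient. Once that is in place, the proposition follows from functoriality alone.

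First I would check well-definedness. On $Z^0(\Cbf^{n+2}(\A))$, the assignments $f\up\mapsto\ovl{f}^0=\ovl{f^{0,0}}$ and $f\up\mapsto\ovl{f}^{n+1}=\ovl{f^{n+1,n+1}}$ respect composition. Indeed, $(g\up\ci f\up)^{0,0}=\sum_k g^{k,0}\ci f^{0,k}$, and $f^{0,k}$ has degree $k$. For $k>0$ this component vanishes, by strict connectivity or because the complexes are concentrated in $[0,n+1]$. For $k<0$ the object $X^k$ is $0$. Hence $(g\up\ci f\up)^{0,0}=g^{0,0}\ci f^{0,0}$. The same argument gives $(g\up\ci f\up)^{n+1,n+1}=g^{n+1,n+1}\ci f^{n+1,n+1}$, using $Y^k=0$ for $k>n+1$ together with the vanishing of positive-degree components. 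Identities are clearly preserved.

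Next I would show that $s$ and $t$ kill the ideal $\calN$. Take $d\varphi\up$ with $\varphi^{1,0}=0$ and $\varphi^{n+1,n}=0$. By Definition~\ref{dfn:complexes}(4),
\[
(d\varphi)^{0,0}=d_{\A}(\varphi^{0,0})+\sum_k\big(d_Y^{k,0}\ci\varphi^{0,k}+\varphi^{k,0}\ci d_X^{0,k}\big).
\]
The term $d_Y^{k,0}$ is nonzero only if $k<0$, where $Y^k=0$. The term $d_X^{0,k}$ needs $k\ge1$, and then $\varphi^{k,0}$ has degree $-k-1\le -2$. Such components vanish for $k\ge2$ only after passing to $\ovl{(\blank)}$, since only degree-$0$ morphisms survive there. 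For $k=1$ the relevant term is $\varphi^{1,0}\ci d_X^{0,1}$, which is zero by hypothesis. For $k\ge2$ the composite $\varphi^{k,0}\ci d_X^{0,k}$ has degree $0$, but it still vanishes whenever one factor vanishes. I expect to need to double-check these degrees carefully. If $\ovl{(\blank)}$ does not dispose of them, I would verify that they are exact or absent in each case. This degree bookkeeping is the main obstacle. Granting it, $(d\varphi)^{0,0}$ equals $d_{\A}(\varphi^{0,0})$, so $\ovl{(d\varphi)^{0,0}}=0$. Symmetrically, the condition $\varphi^{n+1,n}=0$ gives $\ovl{(d\varphi)^{n+1,n+1}}=0$. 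Therefore $s$ and $t$ descend to functors $\Kbf^{1,n,1}(\A)\to H^0(\A)$.

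Finally I would conclude. Since $g\up$ is inverse to $f\up$ in $\Kbf^{1,n,1}(\A)$, functoriality gives $s(g\up)\ci s(f\up)=\id$. By hypothesis $s(f\up)=\ovl{f}^0=\id$, so $\ovl{g}^0=s(g\up)=\id$. The same argument with $t$ gives $\ovl{g}^{n+1}=\id$.
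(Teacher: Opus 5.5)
Your proposal is the paper's proof: the statement follows immediately from the functoriality of $s,t\colon\Kbf^{1,n,1}(\A)\to H^0(\A)$, which the paper simply asserts in Definition~\ref{def:K1n1_st}.

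In your extra well-definedness check, the unresolved terms $\varphi^{k,0}\circ d_X^{0,k}$ for $k\ge 2$ come from a degree slip. The components of a degree-$m$ morphism satisfy $\varphi^{i,j}\in\A(X^i,Y^j)^{i-j+m}$. The printed exponent $j-i+m$ in Definition~\ref{dfn:complexes} is a misprint, as the differential formula and the identity $\ovl{f}^i=\ovl{f^{i,i+m}}$ show. Hence $\varphi^{k,0}$ has degree $k-1>0$ and vanishes by strict connectivity, so $(d\varphi)^{0,0}=d_{\A}(\varphi^{0,0})$ with nothing left to grant. Likewise, $\varphi^{n+1,k}$ has degree $n-k>0$ for $k<n$, and the same misprint harmlessly swaps the degrees in your composition check. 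Alternatively, since $\ovl{(\blank)}$ is a dg-functor, you get $\ovl{(d\varphi)^{0,0}}=\ovl{\varphi^{1,0}}\circ\ovl{d}_X^0=0$ directly.
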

\begin{proof}
This is immediate from the functoriality of $s,t\co\Kbf^{1,n,1}(\A)\to H^0(\A)$.
\end{proof}

When $n=1$, the category $\Kbf^{1,n,1}(\A)$ is equivalent to the category $\calH_{3t}(\A)$ introduced by Chen in \cite{C1}, as follows.
\begin{prop}\label{prop:K111=H3t}
There is an equivalence of categories $F\co\Kbf^{1,1,1}(\A)\xrightarrow{\cong}\calH_{3t}(\A)$. Here, $\calH_{3t}(\A)$ denotes the \emph{homotopy category of $3$-term $h$-complexes} in 
\cite[Definition~3.14]{C1}.
\end{prop}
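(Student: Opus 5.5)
The plan is to show that $F$ is in fact an isomorphism of categories, built directly from the data. I will not pass through any abstract characterization. When $n=1$, an object $X\up\in\Cbf^{3}(\A)=\Cbf_{[0,2]}(\A)$ consists of objects $X^0,X^1,X^2$ together with three morphisms:
\begin{itemize}
\item two closed morphisms $d_X^{0,1}$, $d_X^{1,2}$ of degree $0$;
\item a morphism $d_X^{0,2}$ of degree $-1$.
\end{itemize}
Equation (\ref{eq_d}) then reduces to the single relation $d_{\A}(d_X^{0,2})=\pm\, d_X^{1,2}\circ d_X^{0,1}$. This is precisely the shape of a $3$-term $h$-complex in the sense of \cite[Definition~3.14]{C1}: a pair of composable closed degree-$0$ morphisms together with a degree $-1$ null-homotopy of their composite. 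I therefore define $F$ on objects by sending $X\up$ to the $h$-complex $(X^0\xrightarrow{d_X^{0,1}}X^1\xrightarrow{d_X^{1,2}}X^2,\ \epsilon\, d_X^{0,2})$. Here $\epsilon\in\{\pm1\}$ is a sign, fixed once and for all, chosen so that the relation matches Chen's convention. With this choice $F$ is visibly bijective on objects.

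Next I treat morphisms. By Remark~\ref{rem:comparison_with_H3t}, a closed degree-$0$ morphism $f\up\colon X\up\to Y\up$ in $\Cbf^3(\A)$ consists of:
\begin{itemize}
\item three closed degree-$0$ components $f^{0,0},f^{1,1},f^{2,2}$;
\item two degree $-1$ components $f^{0,1},f^{1,2}$, which are homotopies for the two squares;
\item one degree $-2$ component $f^{0,2}$, satisfying the displayed higher coherence equation.
\end{itemize}
Chen's morphisms of $3$-term $h$-complexes are given by exactly this list of data, subject to the same three kinds of equations up to sign. So I send $f\up$ to the tuple $(f^{0,0},f^{1,1},f^{2,2},\pm f^{0,1},\pm f^{1,2},\pm f^{0,2})$, again with signs chosen to match. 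I will then check that this assignment is a bijection on the level of cocycles. I will also check that it respects composition, using the composition formula $(g\up\circ f\up)^{i,j}=\sum_k g^{k,j}\circ f^{i,k}$ and comparing it with Chen's composition rule.

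The substantive step is to compare the ideals. A degree $-1$ morphism $\varphi\up$ in $\Cbf^3(\A)$ has components $\varphi^{i,j}$ of degree $j-i-1$. Those below the diagonal are $\varphi^{1,0}$ and $\varphi^{2,1}$, of degree $-2$. The ideal $\calN$ of Definition~\ref{dfn:K1n1} consists of the boundaries $d\varphi\up$ with $\varphi^{1,0}=0=\varphi^{2,1}$. The remaining components are:
\begin{itemize}
\item $\varphi^{0,0},\varphi^{1,1},\varphi^{2,2}$, of degree $-1$;
\item $\varphi^{0,1},\varphi^{1,2}$, of degree $-2$;
\item $\varphi^{0,2}$, of degree $-3$.
\end{itemize}
I will expand $d\varphi\up$ using the formula for the differential in Definition~\ref{dfn:complexes}(4). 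I expect the resulting expressions to coincide, up to the same sign twists, with Chen's definition of null-homotopic morphisms in $\calH_{3t}(\A)$. There, homotopies have no "backward" components, so the diagonal terms are $d$ of degree $-1$ maps and the off-diagonal terms are corrected accordingly. If this matches, $F$ descends to an isomorphism on hom-sets. Since $F$ is also bijective on objects, it is an isomorphism of categories, and in particular an equivalence.

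I expect the main obstacle to be purely bookkeeping of signs. Our conventions come from \cite{G}, with the differential twisted by $(-1)^j$ and the terms $d_{\Cbf}$ and $d_{\A}$ mixed; Chen's conventions for $h$-complexes and their homotopies may differ. If a uniform sign choice $\epsilon$ does not work on both objects and morphisms, I will first try rescaling individual components by signs depending on their position $(i,j)$, for instance $(-1)^{j}$ or $(-1)^{i(j-i)}$, so that the cocycle equations and composition are preserved. Failing that, I will allow $F$ to be an equivalence that is identity on underlying objects and maps but involves a sign automorphism on the $h$-complex side.
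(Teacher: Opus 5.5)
Your proposal follows the paper's proof: an explicit dictionary on objects and on closed degree-$0$ morphisms, then a check that $d\varphi$ with $\varphi^{1,0}=\varphi^{2,1}=0$ corresponds exactly to Chen's homotopy relation. In fact no sign is needed on objects ($h=d_X^{0,2}$), and the paper uses $(f^{0,0},f^{1,1},f^{2,2},f^{0,1},-f^{1,2},f^{0,2})$ for morphisms and $(\varphi^{0,0},-\varphi^{1,1},\varphi^{2,2},-\varphi^{0,1},-\varphi^{1,2},\varphi^{0,2})$ for homotopies, so your sign fallbacks are unnecessary.

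One correction to your degree count: the convention actually used in the paper (see e.g.\ the map $w$ in Lemma~\ref{lem:comparison_H}) gives $f^{i,j}$ degree $i-j+m$ for a degree-$m$ morphism. So $\varphi^{1,0},\varphi^{2,1}$ have degree $0$ rather than $-2$, and $\varphi^{2,0}$ has degree $+1$ and vanishes by strict connectivity. This is what leaves exactly six components, of degrees $-1,-1,-1,-2,-2,-3$, to match Chen's homotopy data. Your stated formula $j-i-1$ would instead leave an unconstrained degree $-3$ component $\varphi^{2,0}$, and it contradicts the degrees you then list for the remaining components.
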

\begin{proof}
For details of the definition of category $\calH_{3t}(\A)$ and the notation, we refer to \cite{C1}.
Here, we describe only the explicit correspondence of objects and morphisms. It is straightforward to verify from the definitions of $\Kbf^{1,1,1}(\A)$ and $\calH_{3t}(\A)$ that this correspondence yields an equivalence of categories.

For any object $X\up\in\Kbf^{1,1,1}(\A)$, we define $F(X\up)$ to be
\begin{equation}\label{diag_3t}
\begin{tikzcd}
X^0 \arrow[r,"{\scriptstyle d_X^{0,1}}"] \arrow[rr, bend right=30, dashed, "{\scriptstyle d_X^{0,2}}"'] & X^1 \arrow[r,"{\scriptstyle d_X^{1,2}}"] & X^2
\end{tikzcd}
\end{equation}
which is indeed a $3$-term $h$-complex by $(\ref{eq_d})$, and hence an object in $\calH_{3t}(\A)$. This gives a correspondence between $\ob(\Kbf^{1,1,1}(\A))$ and $\ob(\calH_{3t}(\A))$.

For any morphism $f\up\in Z^0(\Cbf^3(\A))(X\up,Y\up)$, we associate a $6$-tuple $(r_0,r_1,r_2,s_1,s_2,t)$ by
$r_i=f^{i,i}\ (i=0,1,2)$, $s_j=(-1)^{j-1}f^{j-1,j}$ $(j=1,2)$, and $t=f^{0,2}$. By Remark~\ref{rem:comparison_with_H3t}, we have
\begin{eqnarray*}
&d_{\A}(r_i)=0\quad(i=0,1,2),&\\
&d_{\A}(s_1)=d_Y^{0,1}\ci r_0-r_1\ci d_X^{0,1},\quad
d_{\A}(s_2)=d_Y^{1,2}\ci r_1-r_2\ci d_X^{1,2},&\\
&d_{\A}(t)=-d_Y^{0,2}\ci r_0-d_Y^{1,2}\ci s_1-s_2\ci d_X^{0,1}+r_2\ci d_X^{0,2},&
\end{eqnarray*}
which is nothing but the equalities required in \cite[Definition~3.14]{C1}.
We define $F(f\up)\in\calH_{3t}(\A)(F(X\up),F(Y\up))$ to be the equivalence class with respect to the \emph{homotopy equivalence} given in \cite[Definition~3.14]{C1}.

In fact, the homotopy equivalence relation in \cite[Definition~3.14]{C1} agrees with the relation introduced in Definition~\ref{dfn:K1n1} {\rm (1)} via the above correspondence. More precisely, let $f\upp\in Z^0(\Cbf^3(\A))(X\up,Y\up)$ be another morphism, and associate $(r'_0,r'_1,r'_2,s'_1,s'_2,t')$ by 
$r'_i=f^{\prime i,i}\ (i=0,1,2)$, $s'_j=(-1)^{j-1}f^{\prime j-1,j}$ $(j=1,2)$, $t'=f^{\prime 0,2}$. By definition, $f\up=f\upp$ holds in $\Kbf^{1,1,1}(\A)$ if and only if there exists $\varphi\up\in\Cbf^3(\A)(X\up,Y\up)^{-1}$ such that $\varphi^{1,0}=0,\varphi^{2,1}=0$ in $\A$ and $f\up-f\upp=d\varphi\up$ in $\Cbf^3(\A)$.
Existence of such $\varphi\up$ is equivalent to the existence of a $6$-tuple 
\[
(r_0^{\prime\prime},r_1^{\prime\prime},r_2^{\prime\prime},s_1^{\prime\prime},s_2^{\prime\prime},t^{\prime\prime})=(\varphi^{0,0},-\varphi^{1,1},\varphi^{2,2},-\varphi^{0,1},-\varphi^{1,2},\varphi^{2,2})
\]
of morphisms in $\A$ that satisfies
\begin{eqnarray*}
&r_i-r'_i=d_{\A}(r^{\prime\prime}_i)\quad (i=0,1,2),&\\
&s_j-s'_j=d_{\A}(s^{\prime\prime}_j)+d_Y^{j-1,j}\ci r^{\prime\prime}_{j-1}-r^{\prime\prime}_j\ci d_X^{j-1,j}\quad (j=1,2),&\\
&t-t'=d_{\A}(t^{\prime\prime})+d_Y^{0,2}\ci r^{\prime\prime}_0-d_Y^{1,2}\ci s^{\prime\prime}_1+r^{\prime\prime}_2\ci d_X^{0,2}-s^{\prime\prime}_2\ci d_X^{0,1},& 
\end{eqnarray*}
which is equivalent to the homotopy equivalence defined in \cite[Definition~3.14]{C1}.

Thus $F$ gives a well-defined bijection $\Kbf^{1,1,1}(\A)(X\up,Y\up)\xrightarrow{\cong}(\calH_{3t}(\A))(F(X\up),F(Y\up))$ for each $X\up,Y\up\in\Kbf^{1,1,1}(\A)$. It is straightforward to check that this correspondence preserves composition of morphisms and identity morphisms. Hence we obtain an equivalence of categories $F\co\Kbf^{1,1,1}(\A)\xrightarrow{\cong}\calH_{3t}(\A)$.
\end{proof}

\normalcolor

\section{$n$-exact sequences and related notions}

\subsection{$n$-exact sequences}

\begin{dfn}\label{dfn:n-(co)-kernel}
Let $X\up$ be any object in $\Cbf^{n+2}(\A)$.
\begin{enumerate}
\item $X\up$ is \emph{left $n$-exact} if
$\Kbf(\A)(A[-i],X\up)=0$ for all $A\in\ob(\A)$ and for all $i\le n$.
\item $X\up$ is \emph{right $n$-exact} if $\Kbf(\A)(X\up,A[i-n])=0$ for all $A\in\ob(\A)$ and for all $i\le n$.
\item $X\up$ is \emph{$n$-exact} if it is left $n$-exact and right $n$-exact.
\end{enumerate}
\end{dfn}

\begin{rem}\label{rem:isom_n-ex}
By definition, if $X\up,X\upp\in\Cbf^{n+2}(\A)$ are isomorphic in $\Kbf^{n+2}(\A)$, then $X\up$ is left $n$-exact (respectively, right $n$-exact or $n$-exact) if and only if so is $X\upp$.
\end{rem}

If $\calA$ is an ordinary additive category, we may regard $\calA$ as a dg-category with $\calA^i=0$ whenever $i\ne0$, in which case $H^0(\calA)$ agrees with $\calA$. Under this identification, the functor $\ovl{(\blank)}\co Z^0(\Cbf(\calA)) \to Z^0(\Cbf(H^0(\calA)))=Z^0(\Cbff(\calA))$ may be viewed as the identity functor, and we identify $Z^0(\Cbf(\calA))$ with
the category of bounded complexes in $\calA$.
The same applies to the functor $\ovl{(\blank)}\co \Kbf(\calA) \to \Kbf(H^0(\calA))=\Kbff(\calA)$, as well as to their restrictions to full subcategories.
With these identifications, we will denote $\ovl{f}^i$ simply by $f^i$ for any morphism and any $i\in\bbZ$ below.
The following holds.
\begin{lem}\label{lem:n-exact_ordinary}
Suppose that $\calA$ is an ordinary additive category.
Let $i\in\bbZ$ be any integer.
For any $X\up\in\Cbff(\calA)$ and any $A\in\ob(\calA)$, the following are equivalent.
\begin{enumerate}
\item $\Kbff(\calA)(A[-i],X\up)=0$.
\item For the complex $(\calA(A,X^k),\calA(A,d_{X}^k))_{k\in\bbZ}$, that is, 
\[
\cdots\to
\calA(A,X^{k-1})\xrightarrow{d_X^{k-1}\ci\blank}
\calA(A,X^k)\xrightarrow{d_X^k\ci\blank}
\calA(A,X^{k+1})\xrightarrow{d_X^{k+1}\ci\blank}\cdots
\]
in $\Ab$, its $i$-th cohomology group is isomorphic to $0$.
\end{enumerate}
\end{lem}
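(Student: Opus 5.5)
The plan is to identify $\Kbff(\calA)(A[-i],X\up)$ directly with the $i$-th cohomology of the Hom complex $\calA(A,X\up)$, by unwinding what a chain map and a homotopy out of a one-term complex are. Recall that $A[-i]$ is the complex concentrated in degree $i$ with value $A$. Under the identification $\Cbf(\calA)=\Cbff(\calA)$ explained just before the statement, the computation takes place in the ordinary homotopy category of bounded complexes. The only subtlety is the sign convention of Definition~\ref{dfn:shifts}, where shifting $A$ (with zero differential) introduces only a sign on a zero differential, so it is harmless.

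\textbf{Step 1: chain maps.} A degree-$0$ morphism $f\up\colon A[-i]\to X\up$ in $\Cbff(\calA)$ has a single component $f^i\in\calA(A,X^i)$. By the formula for the differential in Definition~\ref{dfn:complexes}~(4), specialized to an ordinary category where $d_{\A}=0$ and only the $d^{k,k+1}$ survive, closedness means exactly $d_X^i\ci f^i=0$. Hence $Z^0(\Cbff(\calA))(A[-i],X\up)\cong\Ker\big(d_X^i\ci\blank\colon\calA(A,X^i)\to\calA(A,X^{i+1})\big)$.

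\textbf{Step 2: null-homotopic maps.} A degree-$(-1)$ morphism $\varphi\up\colon A[-i]\to X\up$ has a single component $\varphi^i\in\calA(A,X^{i-1})$. Its differential has the one component $d_X^{i-1}\ci\varphi^i$, up to a sign depending only on the convention. The other possible term, involving $d_{A[-i]}$, vanishes. So the boundaries are exactly $\Ima(d_X^{i-1}\ci\blank)$.

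Combining the two steps, $\Kbff(\calA)(A[-i],X\up)$ is isomorphic to the $i$-th cohomology of the complex in~(2), which gives the equivalence of (1) and (2).

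The main obstacle is only bookkeeping. One must check that the signs $(-1)^j$ in the differential formula do not alter the images and kernels. They do not, because multiplying by $\pm1$ preserves both subgroups. One must also check the indexing under the shift convention, that is, that $A[-i]$ sits in degree $i$; this was stated in Definition~\ref{dfn:n+2}.
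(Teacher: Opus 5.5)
Your proposal is correct and follows the paper's proof. The paper likewise identifies degree-$0$ morphisms $A[-i]\to X\up$ with $\calA(A,X^i)$ via $f\up\mapsto f^i$, closed ones with $\Ker(d_X^i\ci\blank)$, and null-homotopic ones with $\Ima(d_X^{i-1}\ci\blank)$ via a homotopy component in $\calA(A,X^{i-1})$.
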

\begin{proof}
Since $\calA$ is an ordinary additive category, the additive map
\[
\Cbff(\calA)(A[-i],X\up)^0\to \calA(A,X^i)\ ;\ f\up\mapsto f^i
\]
is an isomorphism of abelian groups.
We see that $f\up$ is closed in $\Cbff(\calA)$ if and only if $d_X^i\ci f^i=0$ holds in $\calA$. Moreover, for a closed morphism $f\up$, we have $f\up=0$ in $\Kbff(\calA)$ if and only if there exists $h\in \calA(A,X^{i-1})$ such that $f^i=d_X^{i-1}\ci h$. This shows that {\rm (1)} and {\rm (2)} are equivalent.
\end{proof}

\begin{prop}\label{prop:n-exact_ordinary}
Let $\calA$ be an ordinary additive category.
For any $X\up\in\Cbff^{n+2}(\calA)$, the following are equivalent.
\begin{enumerate}
\item $X\up$ is left $n$-exact in the sense of Definition~\ref{dfn:n-(co)-kernel}.
\item $X\up$ is a left $n$-exact sequence in $\calA$, in the sense of \cite[Definition~2.2]{J}.
\end{enumerate}
Similarly for right $n$-exact (respectively, $n$-exact) sequences. 
\end{prop}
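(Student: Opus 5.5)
We reduce the statement to Lemma~\ref{lem:n-exact_ordinary} together with Jasso's definition. Recall from \cite[Definition~2.2]{J} that a complex $X^0\xrightarrow{d^0}X^1\to\cdots\to X^n\xrightarrow{d^n}X^{n+1}$ in $\calA$ is a left $n$-exact sequence when $(d^0,\dots,d^{n-1})$ is an $n$-kernel of $d^n$. Equivalently, for every $A\in\ob(\calA)$ the sequence
\[
0\to\calA(A,X^0)\to\calA(A,X^1)\to\cdots\to\calA(A,X^n)\to\calA(A,X^{n+1})
\]
is exact in $\Ab$. This is the standard Yoneda reformulation: $d^0$ is a monomorphism, each $d^k$ is a weak kernel of $d^{k+1}$ for $1\le k\le n-1$, and $d^{n-1}$ is a weak kernel of $d^n$. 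If Jasso's definition is phrased differently, we first record this reformulation.

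\textbf{Left $n$-exactness.} Exactness of the sequence above means that the complex $(\calA(A,X^k))_{k}$, with $X^k=0$ outside $[0,n+1]$, has vanishing $k$-th cohomology for every $k\le n$:
\begin{itemize}
\item for $k<0$ this is automatic;
\item at $k=0$ it says $\calA(A,d^0)$ is injective;
\item for $1\le k\le n$ it is exactness at $\calA(A,X^k)$.
\end{itemize}
By Lemma~\ref{lem:n-exact_ordinary}, this vanishing is equivalent to $\Kbff(\calA)(A[-k],X\up)=0$ for all $k\le n$. Under the identification $\Kbf(\calA)=\Kbff(\calA)$, that is exactly Definition~\ref{dfn:n-(co)-kernel}(1). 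So (1) and (2) are equivalent.

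\textbf{Right $n$-exactness.} We need the dual of Lemma~\ref{lem:n-exact_ordinary}: $\Kbff(\calA)(X\up,A[j])=0$ if and only if the complex $(\calA(X^{-k},A))_k$ (contravariant Hom) has vanishing cohomology in the appropriate degree. It is proved the same way: a closed degree-$0$ map $X\up\to A[j]$ is a morphism $X^{-j}\to A$ killing $d^{-j-1}$, and a null-homotopy amounts to factoring through $d^{-j}$.

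With $j=i-n$ and $i\le n$, the degrees $-j=n-i$ range over $\ge0$. The condition thus becomes exactness of
\[
0\to\calA(X^{n+1},A)\to\calA(X^n,A)\to\cdots\to\calA(X^0,A)
\]
at each $\calA(X^m,A)$ with $m\ge 0$ and $m\ne 0$ handled correctly. The one point needing care is the endpoint $m=0$: exactness is required at $\calA(X^{n+1},A),\dots,\calA(X^1,A)$ but not at $\calA(X^0,A)$. We must check that $m=n-i$ together with the shift convention of Definition~\ref{dfn:shifts} yields precisely this range.

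This range is Jasso's condition that $(d^1,\dots,d^n)$ is an $n$-cokernel of $d^0$. The $n$-exact case then follows by combining the two.

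The main obstacle is purely bookkeeping. We must match the indices $i\le n$ and the shift $A[i-n]$ with exactly the degrees where exactness is required, making sure the endpoints $0$ and $n+1$ are included or excluded correctly. We must also keep the identification of $\Kbf(\calA)$ with $\Kbff(\calA)$ consistent with the shift sign conventions, which are irrelevant for vanishing.
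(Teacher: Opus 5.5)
Your left $n$-exactness argument is the paper's argument: Lemma~\ref{lem:n-exact_ordinary} applied for all $i\le n$, together with the observation that Jasso's $n$-kernel condition is exactness of $0\to\calA(A,X^0)\to\cdots\to\calA(A,X^{n+1})$ for every $A$. One small slip: your verbal gloss of that sequence drops the condition that $d^0$ is a weak kernel of $d^1$. The displayed sequence you actually use is correct.

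The right-hand case has a genuine gap. You stop exactly at the only nontrivial point, and the check you postpone fails for the definition as printed. By Definition~\ref{dfn:shifts}, $A[i-n]$ is concentrated in degree $n-i$. So the case $i=n$ of Definition~\ref{dfn:n-(co)-kernel}~(2) demands $\Kbff(\calA)(X\up,A)=0$ for all $A$. By your own dual lemma, this says every morphism $X^0\to A$ factors through $d^0$; taking $A=X^0$ forces $d^0$ to be a split monomorphism. Jasso's right $n$-exactness imposes nothing of the kind. For $n=1$ and $\calA=\Ab$, the sequence $\mathbb{Z}\xrightarrow{2}\mathbb{Z}\to\mathbb{Z}/2$ is right $1$-exact in Jasso's sense, yet $\Kbff(\calA)(X\up,\mathbb{Z})\cong\mathbb{Z}/2\neq0$. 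So no bookkeeping makes $m=n-i$ with $i\le n$ range over $\{1,\dots,n+1\}$. Your phrase ``$m\ge0$ and $m\ne0$ handled correctly'' papers over a real mismatch, not a sign convention.

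The missing step is to read right $n$-exactness as the actual dual of left $n$-exactness: $\Kbff(\calA)(X\up,A[i-n-1])=0$ for all $i\le n$, so that $A$ sits in degrees $n+1-i\ge1$. The paper's one-word ``Dually'' tacitly relies on this reading. It matches the indexing in the definition of $n$-pushout morphisms (Definition~\ref{dfn:pull_of_admissible}~(2), which uses $A[i-n-1]$). It also matches Proposition~\ref{prp:characterization_split_seq}~(3), where the split-mono hypothesis would otherwise be redundant. With this reading, your dual lemma gives exactness of $0\to\calA(X^{n+1},A)\to\cdots\to\calA(X^0,A)$ precisely at $\calA(X^{n+1},A),\dots,\calA(X^1,A)$. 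That is Jasso's $n$-cokernel condition, and the proof closes.
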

\begin{proof}
By Lemma~\ref{lem:n-exact_ordinary}, $\Kbff^{n+2}(\calA)(A[-i],X\up)=0$ holds for all $A\in\ob(\calA)$ and all $i\le n$ if and only if the sequence
\[
0\to \calA(\blank,X^0)\xrightarrow{d_X^0\ci\blank}
\calA(\blank,X^1)\xrightarrow{d_X^1\ci\blank}
\cdots
\xrightarrow{d_X^n\ci\blank}
\calA(\blank,X^{n+1})
\]
is exact, which means that $X\up$ is a left $n$-exact sequence in the sense of \cite[Definition~2.2]{J}. Dually for the right $n$-exactness.
\end{proof}

\normalcolor

\begin{rem}\label{rem:1-exact}
When $n=1$, through the equivalence of categories $F\co\Kbf^{1,1,1}(\A)\xrightarrow{\cong}\calH_{3t}(\A)$ in Proposition~\ref{prop:K111=H3t}, object $X\up\in\Kbf^{1,1,1}(\A)$ is a (respectively, left/right) $1$-exact sequence if and only if $F(X\up)$ is a \emph{homotopy (resp.\ left/right) short exact sequence} in \cite[Definition~3.18]{C1}.
\end{rem}

We now return to the general case of a strictly connective and strictly additive dg-category $\A$.
By an argument similar to that in the proof of Lemma~\ref{lem:4A}, we obtain the following.
\begin{lem}\label{lem:htpy_modif}
Let $h\up\in Z^0(\Cbf(\A))(X\up,Y\up)$ be any morphism. Assume that $\varphi\up\in\Cbf(\A)(X\up,Y\up)^{-1}$ satisfies $h\up=d\varphi\up$.
Then, there exist $h\upp\in Z^0(\Cbf(\A))(X\up,Y\up)$ and  $\varphi\upp\in\Cbf(\A)(X\up,Y\up)^{-1}$ that satisfy the following conditions.
\begin{itemize}
\item $h\upp=h\up$ holds in $\Kbf(\A)$.
\item $h^{\prime n+1,n+1}=h^{n+1,n+1}-(-1)^{n+1}d_{\A}(\varphi^{n+1,n+1}$).
\item $\varphi^{\prime i,j}=\varphi^{i,j}$ for all $(i,j)$ except for $(i,j)=(n+1,n+1)$. In addition, $\varphi^{\prime\, n+1,n+1}=0$.
\item $h\upp=d\varphi\upp$.
\end{itemize}
\end{lem}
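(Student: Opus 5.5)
The idea is to subtract from $\varphi\up$ its $(n+1,n+1)$-component and correct $h\up$ by the corresponding boundary, exactly as in the proof of Lemma~\ref{lem:4A}. Concretely, define $\eta\up\in\Cbf(\A)(X\up,Y\up)^{-1}$ by $\eta^{n+1,n+1}=\varphi^{n+1,n+1}$ and $\eta^{i,j}=0$ otherwise. Then put $\varphi\upp=\varphi\up-\eta\up$ and $h\upp=h\up-d\eta\up$.

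The third condition is immediate from the definition of $\varphi\upp$. The fourth follows from $d\varphi\upp=d\varphi\up-d\eta\up=h\up-d\eta\up=h\upp$. Since $d\eta\up$ is a boundary, it is closed, so $h\upp$ is closed. Moreover $h\upp-h\up=-d\eta\up$ is null-homotopic, so $h\upp=h\up$ in $\Kbf(\A)$, which gives the first condition.

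For the second condition we compute $(d\eta)^{n+1,n+1}$ from the formula in Definition~\ref{dfn:complexes}(4) with $m=-1$:
\[
(d\eta)^{n+1,n+1}=(-1)^{n+1}d_{\A}(\eta^{n+1,n+1})+\sum_k\big(d_Y^{k,n+1}\ci\eta^{n+1,k}+\eta^{k,n+1}\ci d_X^{n+1,k}\big).
\]
The only nonzero component of $\eta\up$ is at $(n+1,n+1)$, so each summand requires $k=n+1$. Both resulting terms then involve $d^{n+1,n+1}=0$, because $d^{i,j}=0$ for $i\ge j$. Hence $(d\eta)^{n+1,n+1}=(-1)^{n+1}d_{\A}(\varphi^{n+1,n+1})$, and $h^{\prime n+1,n+1}=h^{n+1,n+1}-(-1)^{n+1}d_{\A}(\varphi^{n+1,n+1})$ as required.

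The only step needing care is this sign bookkeeping and the check that no other terms contribute to the $(n+1,n+1)$-entry. For objects of $\Cbf^{n+2}(\A)$ there are no indices beyond $n+1$, and for general bounded complexes the vanishing $d^{i,j}=0$ for $i\ge j$ still applies. The other components of $h\upp$ change as well, but the statement imposes no condition on them.
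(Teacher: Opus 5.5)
Your proposal is correct and is exactly the paper's argument: the paper defines the same degree $-1$ morphism (called $\xi\up$ there), concentrated in the $(n+1,n+1)$-entry with value $\varphi^{n+1,n+1}$, and sets $h\upp=h\up-d\xi\up$ and $\varphi\upp=\varphi\up-\xi\up$. Your explicit check that $(d\eta)^{n+1,n+1}=(-1)^{n+1}d_{\A}(\varphi^{n+1,n+1})$, using $d_X^{i,j}=d_Y^{i,j}=0$ for $i\ge j$, supplies the sign verification that the paper leaves to the reader.
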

\begin{proof}
Define $\xi\up\in\Cbf(\A)(X\up,Y\up)^{-1}$ by
\[
\xi^{i,j}=\begin{cases}
\varphi^{n+1,n+1}&\text{if}\ (i,j)=(n+1,n+1)\\
0&\text{otherwise}
\end{cases}.
\]
Then, $h\upp=h\up-d\xi\up$ and $\varphi\upp=\varphi\up-\xi\up$ satisfy the required properties.
\end{proof}

The following is an analog of \cite[Claim~2.19]{HLN1}.
\begin{prop}\label{prp:htpy_modif}
Let $X\up,Y\up\in\Cbf^{n+2}(\A)$ be any pair of objects. For a morphism $h\up\in Z^0(\Cbf(\A))(X\up,Y\up)$, assume that $\varphi\up\in\Cbf(\A)(X\up,Y\up)^{-1}$ satisfies $h\up=d\varphi\up$.
\begin{enumerate}
\item Assume that $X\up,Y\up$ satisfy $\Kbf(\A)(X^{n+1}[-n],Y\up)=0$ $($for instance, when $Y\up$ is left $n$-exact$)$. Then, the following holds.
\begin{enumerate}
\item If $\overline{h}^{n+1}=0$ in $H^0(\A)$, then there exists $\psi\up\in\Cbf(\A)(X\up,Y\up)^{-1}$ such that $\psi^{i,j}=\varphi^{i,j}$ for $j\le n-2$, $\psi^{n+1,n}=0$, and $h\up=d\psi\up$.
\item If $h^{n+1,n+1}=0$ holds strictly in $\A$, then the morphism $\psi\up$ in the above {\rm (a)} can be chosen to satisfy $\psi^{n+1,n+1}=0$ too.
\end{enumerate}
\item Assume that $X\up,Y\up$ satisfy $\Kbf(\A)(X\up,Y^0[-1])=0$ $($for instance, when $X\up$ is right $n$-exact$)$. Then the following holds.
\begin{enumerate}
\item If $\overline{h}^0=0$ in $H^0(\A)$, then there exists $\psi\up\in\Cbf(\A)(X\up,Y\up)^{-1}$ such that $\psi^{i,j}=\varphi^{i,j}$ for $i\ge 3$, $\psi^{1,0}=0$, and $h\up=d\psi\up$.
\item If $h^{0,0}=0$ holds strictly in $\A$, then the morphism $\psi\up$ in the above {\rm (a)} can be chosen to satisfy $\psi^{0,0}=0$ too.
\end{enumerate}
\item Assume $n\ge 2$. If both the assumptions in {\rm (1)} and {\rm (2)} are satisfied, then the following holds.
\begin{enumerate}
\item If $\overline{h}^0=0$ and $\overline{h}^{n+1}=0$ in $H^0(\A)$, then there exists $\psi\up\in\Cbf(\A)(X\up,Y\up)^{-1}$ such that  $\psi^{1,0}=0$, $\psi^{n+1,n}=0$, and $h\up=d\psi\up$.
\item If $h^{0,0}=0$ and $h^{n+1,n+1}=0$ hold strictly in $\A$, then the morphism $\psi\up$ in the above {\rm (a)} can be chosen to satisfy $\psi^{0,0}=0$ and $\psi^{n+1,n+1}=0$ too.
\end{enumerate}
\end{enumerate}
\end{prop}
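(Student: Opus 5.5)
The plan is to prove {\rm (1)} directly, obtain {\rm (2)} by the dual argument, and deduce {\rm (3)} by applying {\rm (1)} and then {\rm (2)}. Throughout, the only freedom we use is to replace $\varphi\up$ by $\varphi\up-c\up$ for a \emph{closed} morphism $c\up\in\Cbf(\A)(X\up,Y\up)^{-1}$. Any such replacement still satisfies $h\up=d(\varphi\up-c\up)$. The task in {\rm (1)} is therefore to produce a closed $c\up$ with $c^{n+1,n}=\varphi^{n+1,n}$ and $c^{i,j}=0$ for $j\le n-2$.

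To build $c\up$, I would look at the ``last row'' of $\varphi\up$. Let $\iota\up\co X^{n+1}[-n-1]\to X\up$ be the closed inclusion of the top term, namely $\be_{X,n+1}\up$ of Proposition~\ref{prop:alphabetagamma}. Then $h\up\ci\iota\up=d(\varphi\up\ci\iota\up)$, and $\varphi\up\ci\iota\up$ has the components $(\varphi^{n+1,j})_{j\le n+1}$. Using $\ovl{h}^{n+1}=0$, choose $\eta\in\A(X^{n+1},Y^{n+1})^{-1}$ with $h^{n+1,n+1}=d_{\A}(\eta)$. Subtracting $\pm\eta$ from the $(n+1,n+1)$-entry, in the spirit of Lemma~\ref{lem:htpy_modif}, converts the data $(\varphi^{n+1,j})_j$ into a degree-$0$ morphism $w\up\co X^{n+1}[-n]\to Y\up$. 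I expect $w\up$ to be closed by the component equations of $h\up=d\varphi\up$ in row $n+1$.

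By the hypothesis $\Kbf(\A)(X^{n+1}[-n],Y\up)=0$, there is a $\lambda\up$ of degree $-1$ with $w\up=d\lambda\up$. I would then set
\[
c\up=\bigl(w\up-d\lambda'\up\bigr)\ci\gam\up ,
\]
where $\gam\up$ is the projection of $X\up$ onto its top term (shifted appropriately) and $\lambda'\up$ is $\lambda\up$ with its components into $Y^j$ for $j\ge n-1$ removed. This choice makes $c\up$ closed, gives $c^{n+1,n}=\varphi^{n+1,n}$, and kills the entries into $Y^j$ for $j\le n-2$ after the homotopy correction. Part {\rm (b)} is the case $\eta=0$: here the $(n+1,n+1)$-entry is never touched, so one can additionally arrange $\psi^{n+1,n+1}=0$ by a final application of Lemma~\ref{lem:htpy_modif}.

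The main obstacle is the sign and index bookkeeping needed to verify two things. First, that $w\up$ is genuinely closed: its differential picks up the components $h^{n+1,j}$ for $j\le n+1$, and these must cancel against the $d_Y$-terms. Second, that the truncated correction alters only the entries $(n+1,n)$ and $(n+1,n+1)$ and none with $j\le n-2$. I would carry this out using the explicit differential in Definition~\ref{dfn:complexes}(4) with $m=-1$, where the formula has a $+$ sign in front of $f^{k,j}\ci d_X^{i,k}$.

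For {\rm (3)}, first apply {\rm (1)} to obtain $\psi\up$ with $\psi^{n+1,n}=0$. Then apply {\rm (2)} to this $\psi\up$. Part {\rm (2)} preserves every entry with $i\ge 3$, and $n+1\ge 3$ because $n\ge 2$, so the vanishing $\psi^{n+1,n}=0$ survives while $\psi^{1,0}=0$ is achieved. The strict versions in {\rm (3)(b)} combine in the same way: {\rm (2)(b)} does not touch $\psi^{n+1,n+1}$ since $n+1\ge 3$, and {\rm (1)(b)} does not touch $\psi^{0,0}$ since $0\le n-2$.
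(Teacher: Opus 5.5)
Reducing the problem to finding a closed $c\up$ of degree $-1$ with $c^{n+1,n}=\varphi^{n+1,n}$ and $c^{i,j}=0$ for $j\le n-2$ is the right framing. The gap is that the $c\up$ you write down is not closed, and this is the heart of {\rm (1)}.

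First, $\lambda\upp$ is automatically zero. The component of $\lambda\up\co X^{n+1}[-n]\to Y\up$ into $Y^j$ has degree $n-j-1$, which is positive for $j\le n-2$, so it vanishes by strict connectivity. Hence $c\up=w\up\ci\gam\up$, and the null-homotopy $\lambda\up$ never enters. Moreover $c\up$ is supported on the entries $c^{i,j}$ with $i=n+1$ (call $i$ the row and $j$ the column); as you say yourself, only $(n+1,n)$ and $(n+1,n+1)$ change.

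Such a morphism cannot be closed in general. The top term $X^{n+1}[-(n+1)]$ is a sub-object of $X\up$ (the inclusion $\be_{X,n+1}\up$ is closed), not a quotient. So the projection $\gam\up$ is not closed, and $d(w\up\ci\gam\up)=w\up\ci d\gam\up\neq0$. Explicitly, if $c^{i,j}=0$ for all $i\le n$, the differential formula with $m=-1$ gives $(dc)^{n,n}=c^{n+1,n}\ci d_X^{n,n+1}$. Closedness would therefore force $\varphi^{n+1,n}\ci d_X^{n,n+1}=0$.

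Here is a concrete failure. Take an ordinary additive category, $n=2$, and let $X\up=Y\up$ be the contractible complex $A\to A\oplus A\to A$ in degrees $1,2,3$ with $A\neq0$: first the inclusion of the first summand, then the projection onto the second. Let $\varphi\up$ have the single nonzero entry $\varphi^{3,2}$, the inclusion of the first summand. Then $h^{3,3}=0$ but $h^{2,2}\neq0$, your $c\up$ equals $\varphi\up$, and $\psi\up=0$.

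The missing idea is that the null-homotopy must act through $d\gam\up$. Put instead $c\up=d(\lambda\up\ci\gam\up)$. It is exact, hence closed, and $\psi\up=\varphi\up-c\up$ satisfies $d\psi\up=h\up$ for free. By the Leibniz rule, $c\up=w\up\ci\gam\up-\lambda\up\ci d\gam\up$. Here $d\gam\up$ is, up to sign, $\al_{X,n+1}\up[1]\ci p_{\al}\up$ in the notation of Definition~\ref{dfn:cone}; it vanishes on row $n+1$ and has entries $\pm d_X^{i,n+1}$ for $i\le n$. Consequently:
\begin{itemize}
\item Row $n+1$ of $c\up$ is still that of $w\up\ci\gam\up$. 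This gives $\psi^{n+1,n}=0$, and also $\psi^{n+1,n+1}=0$ when $h^{n+1,n+1}=0$, so {\rm (b)} needs no extra step.
\item The new term $\lambda\up\ci d\gam\up$ sits in rows $i\le n$. Since $\lambda\up$ only has components into $Y^{n-1},Y^n,Y^{n+1}$, it also sits only in columns $j\ge n-1$, so $\psi^{i,j}=\varphi^{i,j}$ for $j\le n-2$.
\end{itemize}
This is precisely the paper's construction $\psi\up=\varphi\up\ci u_{\al}\up\ci p_{\al}\up+\eta\up\ci\al\up[1]\ci p_{\al}\up$, whose $\eta\up$ is your $\lambda\up$.

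Your proposed final use of Lemma~\ref{lem:htpy_modif} in {\rm (b)} would not be legitimate in any case, because that lemma replaces $h\up$ by a homotopic $h\upp$. Your deduction of {\rm (3)} from {\rm (1)} and {\rm (2)} is correct and matches the paper.
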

\begin{proof}
{\rm (1)} By Lemma~\ref{lem:htpy_modif}, we see that {\rm (a)} follows from {\rm (b)}. Let us show {\rm (b)}. Let $\al\up=\al_{X,n+1}\up\co(\sig_{\le n}X\up)[-1]\to X^{n+1}[-(n+1)]$ be the morphism in Proposition~\ref{prop:alphabetagamma}.
We have morphisms
\[
\begin{tikzcd}[column sep = 20]
X^{n+1}[-(n+1)]
\arrow[r, shift left=0.6ex, "v_{\al}\up"]
&
X\up
\arrow[l, shift left=0.6ex, "q_{\al}\up"]
\arrow[r, shift left=0.6ex, "p_{\al}\up"]
&
\sig_{\le n}X\up
\arrow[l, shift left=0.6ex,  "u_{\al}\up"]
\end{tikzcd}
\]
of degree $0$ in $\Cbf(\A)$
that satisfy the equations described in Definition~\ref{dfn:cocone}. We also have a commutative square \[
\begin{tikzcd}[column sep=2.4em, row sep=2em]
\sig_{\le n}X\up
  \arrow[r, "\scriptstyle\al\up {[}1{]}"]
  \arrow[d, "\scriptstyle s_-\upp"'] &
X^{n+1}{[}-n{]}
  \arrow[d, "\scriptstyle s_-\up"] \\
\sig_{\le n}X\up{[}-1{]}
  \arrow[r, "\scriptstyle\al\up"'] &
X^{n+1}{[}-(n+1){]}
\end{tikzcd}
\]
in $\Cbf(\A)$, where we put
$s_-\up=s_{-,X^{n+1}[-(n+1)]}\up\co X^{n+1}[-n]\to X^{n+1}[-(n+1)]$ and
$s_-\upp=s_{-,(\sig_{\le n}X\up)[-1]}\up\co \sig_{\le n}X\up\to (\sig_{\le n}X\up)[-1]$ for the morphisms introduced in Definition~\ref{dfn:shifts}.

By $h^{n+1,n+1}=0$, we have $h\up\ci v_{\al}\up=0$. Thus, $\varphi\up\ci v_{\al}\up\ci s_-\up\in\Cbf(\A)(X^{n+1}[-n],Y\up)^0$ satisfies
$d(\varphi\up\ci v_{\al}\up\ci s_-\up)=(d\varphi\up)\ci v_{\al}\up\ci s_-\up=h\up\ci v_{\al}\up\ci s_-\up=0$, hence it is closed.
Since $\Kbf(\A)(X^{n+1}[-n],Y\up)=0$ holds by assumption, there exists some $\eta\up\in\Cbf(\A)(X^{n+1}[-n],Y\up)^{-1}$ such that $\varphi\up\ci v_{\al}\up\ci s_-\up=d\eta\up$.
Put $\varphi\upp=\varphi\up\ci u_{\al}\up\ci p_{\al}\up$, $\zeta\up=\eta\up\ci\al\up[1]\ci p_{\al}\up$ and $\psi\up=\varphi\upp+\zeta\up$. Then $\psi\up$ satisfies
\begin{eqnarray*}
d\psi\up &=& (d\varphi\up)\ci u_{\al}\up\ci p_{\al}\up-\varphi\up\ci (du_{\al}\up)\ci p_{\al}\up+(d\eta\up)\ci\al\up[1]\ci p_{\al}\up\\
&=& h\up\ci(\id_{X\up}-v_{\al}\up\ci q_{\al}\up)-\varphi\up\ci(v_{\al}\up\ci s_-\upp\ci\al\up)\ci p_{\al}\up+(\varphi\up\ci v_{\al}\ci s_-\up)\up\ci\al\up[1]\ci p_{\al}\up\\
&=& h\up.
\end{eqnarray*}

It remains to check the other conditions.
Since $\eta^{i,j}=0$ for $j\le n-2$ and $(\al\up[1]\ci p_{\al}\up)^{n,j}=0$ for all $j$, we have $\zeta^{i,j}=0$ for $j\le n-2$ and $\zeta^{n+1,n}=0$, $\zeta^{n+1,n+1}=0$.
Also, since
\[
(u_{\al}\up\ci p_{\al}\up)^{i,j}=\begin{cases}
\id_{X^j}&\text{if}\ i=j\le n\\
0&\text{oteherwise}
\end{cases},
\]
we have
\[
\varphi^{\prime i,j}=\begin{cases}
\varphi^{i,j}&\text{if}\ i\le n\\
0&\text{oteherwise}
\end{cases}. 
\]
Since $\varphi\up$ is of degree $-1$, especially we have $\varphi^{\prime i,j}=0$ for $j\le n-2$.
Thus their sum $\psi\up$ satisfies $\psi^{i,j}=\varphi^{i,j}$ for $j\le n-2$ and $\psi^{n+1,n}=0$, $\psi^{n+1,n+1}=0$.

{\rm (2)} can be shown in a dual manner. {\rm (3)} follows immediately from {\rm (1)} and {\rm (2)}.
\end{proof}

\begin{cor}\label{cor:htpy_modif}
Assume $n\ge 2$.
Let $X\up,Y\up\in\Cbf^{n+2}(\A)$ be any pair of objects. Assume that $X\up$ is right $n$-exact and $Y\up$ is left $n$-exact. Suppose that $f\up,g\up\in Z^0(\Cbf^{n+2}(\A))(X\up,Y\up)$ satisfy $\ovl{f}^0=\ovl{g}^0$ and $\ovl{f}^{n+1}=\ovl{g}^{n+1}$. Then $f\up=g\up$ holds in $\Kbf^{n+2}(\A)(X\up,Y\up)$ if and only if $f\up=g\up$ holds in $\Kbf^{1,n,1}(\A)(X\up,Y\up)$.
\end{cor}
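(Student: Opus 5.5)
The plan is to reduce both implications to Proposition~\ref{prp:htpy_modif}~(3)(a) applied to the difference $h\up=f\up-g\up$.

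The implication from $\Kbf^{1,n,1}(\A)$ to $\Kbf^{n+2}(\A)$ is immediate. The ideal $\calN(X\up,Y\up)$ consists of coboundaries $d\varphi\up$, so it is contained in the ideal of null-homotopic morphisms. Hence equality in $\Kbf^{1,n,1}(\A)$ gives equality under the canonical functor $\Kbf^{1,n,1}(\A)\to\Kbf^{n+2}(\A)$ of Definition~\ref{dfn:K1n1}.

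For the converse, suppose $f\up=g\up$ in $\Kbf^{n+2}(\A)$ and set $h\up=f\up-g\up\in Z^0(\Cbf^{n+2}(\A))(X\up,Y\up)$. Since $\Cbf^{n+2}(\A)$ is a full dg-subcategory of $\Cbf(\A)$, there is $\varphi\up\in\Cbf(\A)(X\up,Y\up)^{-1}$ with $h\up=d\varphi\up$. By hypothesis $\ovl{h}^0=\ovl{f}^0-\ovl{g}^0=0$ and $\ovl{h}^{n+1}=0$ in $H^0(\A)$. We then check the assumptions of Proposition~\ref{prp:htpy_modif}~(3):
\begin{itemize}
\item $\Kbf(\A)(X^{n+1}[-n],Y\up)=0$, which is the case $i=n$, $A=X^{n+1}$ of the left $n$-exactness of $Y\up$ in Definition~\ref{dfn:n-(co)-kernel};
\item $\Kbf(\A)(X\up,Y^0[-1])=0$, which is the case $i=n-1$, $A=Y^0$ of the right $n$-exactness of $X\up$, since $i-n=-1$;
\item $n\ge 2$, which is assumed.
\end{itemize}
Part~(3)(a) then yields $\psi\up$ of degree $-1$ with $\psi^{1,0}=0$, $\psi^{n+1,n}=0$ and $h\up=d\psi\up$. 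Thus $h\up\in\calN(X\up,Y\up)$, i.e.\ $f\up=g\up$ in $\Kbf^{1,n,1}(\A)$.

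There is no real obstacle here; everything was done in Proposition~\ref{prp:htpy_modif}. The only points that need care are the index bookkeeping above and the fact that the hypothesis $n\ge2$ is exactly what allows parts~(1) and~(2) of that proposition to be combined without the two modifications interfering.
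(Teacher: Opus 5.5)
Your proposal is correct and follows the paper's argument, which simply applies Proposition~\ref{prp:htpy_modif}~(3)(a) to the difference of the two morphisms (the paper uses $g\up-f\up$, an immaterial sign change). Your derivation of the two vanishing hypotheses from left/right $n$-exactness (with $i=n$ and $i=n-1$) is correct. So is the reverse implication via $\calN(X\up,Y\up)$ being contained in the null-homotopic morphisms, which the paper leaves implicit.
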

\begin{proof}
This follows immediately from Proposition~\ref{prp:htpy_modif} {\rm (3)(a)} applied to $h\up=g\up-f\up$.
\end{proof}

\begin{cor}\label{cor:htpy_modif2}
Let $X\up,Y\up\in\Cbf^{n+2}(\A)$ be any pair of objects with $X^0=Y^0$ and $X^{n+1}=Y^{n+1}$, and let $f\up \in Z^0(\Cbf^{n+2}(\A))(X\up,Y\up)$ be a morphism satisfying $\ovl{f}^0=\id$ and $\ovl{f}^{n+1}=\id$ in $H^0(\A)$. 
Assume that $f\up$ is an isomorphism in $\Kbf^{n+2}(\A)$. If $X\up$ or $Y\up$ is $n$-exact, then $f\up$ is an isomorphism in $\Kbf^{1,n,1}(\A)$. In particular, there exists a morphism $u\up\in Z^0(\Cbf^{n+2}(\A))(Y\up,X\up)$ that gives the inverse of $f\up$ in $\Kbf^{1,n,1}(\A)$, and such $u\up$ should satisfy $\ovl{u}^0=\id$ and $\ovl{u}^{n+1}=\id$ in $H^0(\A)$.
\end{cor}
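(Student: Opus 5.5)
The plan is to first replace a homotopy inverse of $f\up$ by one with identity end terms, and then upgrade the homotopies to ones in $\calN$ using Proposition~\ref{prp:htpy_modif}~(3). I treat $n\ge 2$. For $n=1$ I would appeal instead to the corresponding statement for $\calH_{3t}(\A)$ in \cite{C1}, via Proposition~\ref{prop:K111=H3t} and Remark~\ref{rem:1-exact}.

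Since $f\up$ is an isomorphism in $\Kbf^{n+2}(\A)$, Remark~\ref{rem:isom_n-ex} shows that if one of $X\up,Y\up$ is $n$-exact then both are. Pick $g\up\in Z^0(\Cbf^{n+2}(\A))(Y\up,X\up)$ inverse to $f\up$ in $\Kbf^{n+2}(\A)$, and homotopies $\varphi\up,\varphi\upp$ of degree $-1$ with $\id_{X\up}-g\up\ci f\up=d\varphi\up$ and $\id_{Y\up}-f\up\ci g\up=d\varphi\upp$.

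The first step fixes the end terms of $g\up$, which need not be identities, since $\ovl{(\blank)}^0$ is not well defined on $\Kbf^{n+2}(\A)$ (Remark~\ref{rem:non-functorial2}).
\begin{itemize}
\item Reading off the $(0,0)$-component of $d\varphi\upp$ by Definition~\ref{dfn:complexes}, using $\ovl{f}^0=\id$ and that $X^0=Y^0$, gives $\ovl{g}^0=\id\pm\ovl{\varphi^{\prime 1,0}\ci d_Y^{0,1}}$. The only surviving terms are $d_{\A}(\varphi^{\prime 0,0})$ and $\varphi^{\prime 1,0}\ci d_Y^{0,1}$.
\item Similarly, the $(n+1,n+1)$-component of $d\varphi\up$ together with $\ovl{f}^{n+1}=\id$ gives $\ovl{g}^{n+1}=\id\pm\ovl{d_X^{n,n+1}\ci\varphi^{n+1,n}}$.
\item Define $\xi\up\in\Cbf^{n+2}(\A)(Y\up,X\up)^{-1}$ with $\xi^{1,0}=\pm\varphi^{\prime 1,0}$, $\xi^{n+1,n}=\pm\varphi^{n+1,n}$, and all other components zero. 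These two components are distinct when $n\ge2$, and $X^0=Y^0$, $X^{n+1}=Y^{n+1}$ makes both meaningful.
\item Put $g\uppf=g\up-d\xi\up$ with signs chosen so that the extra terms cancel. Then $g\uppf=g\up$ in $\Kbf^{n+2}(\A)$ and $\ovl{g}^{\prime 0}=\id$, $\ovl{g}^{\prime n+1}=\id$ in $H^0(\A)$.
\end{itemize}
Checking that the $(0,0)$- and $(n+1,n+1)$-components of $d\xi\up$ are exactly the needed correction terms (plus $d_{\A}$-boundaries) is a sign-bookkeeping step. I expect it to be the main technical point.

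Now $h\up=\id_{X\up}-g\uppf\ci f\up$ is null-homotopic, with $\ovl{h}^0=0$ and $\ovl{h}^{n+1}=0$. Since $X\up$ is both right and left $n$-exact, the hypotheses of Proposition~\ref{prp:htpy_modif}~(1) and (2) hold, in particular $\Kbf(\A)(X^{n+1}[-n],X\up)=0$. Proposition~\ref{prp:htpy_modif}~(3)(a) then gives $\psi\up$ with $\psi^{1,0}=0$, $\psi^{n+1,n}=0$ and $h\up=d\psi\up$. Thus $g\uppf\ci f\up=\id$ in $\Kbf^{1,n,1}(\A)$; this is Corollary~\ref{cor:htpy_modif} with $X\up$ in both slots. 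The same argument with the roles of $X\up$ and $Y\up$ exchanged gives $f\up\ci g\uppf=\id$ in $\Kbf^{1,n,1}(\A)$.

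Hence $f\up$ is an isomorphism in $\Kbf^{1,n,1}(\A)$, with inverse $u\up=g\uppf$. Finally, any inverse $u\up$ of $f\up$ in $\Kbf^{1,n,1}(\A)$ satisfies $\ovl{u}^0=\id$ and $\ovl{u}^{n+1}=\id$ by Proposition~\ref{prop:9-A}.
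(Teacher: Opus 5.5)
\textbf{Case $n\ge2$.} Your argument is correct here and is essentially the paper's: normalize a homotopy inverse $g\up$ so that its end terms are identities in $H^0(\A)$, then upgrade both null-homotopies to elements of $\calN$ via Proposition~\ref{prp:htpy_modif}~(3). Your normalization is slightly more economical than the paper's. The paper first makes $f^{0,0},f^{n+1,n+1}$ strictly identities by Lemma~\ref{lem:4A} and then corrects $g\up$ strictly in two steps. By strict connectivity, $(d\varphi\upp)^{0,0}=d_{\A}(\varphi^{\prime 0,0})+\varphi^{\prime 1,0}\ci d_Y^{0,1}$ and $(d\varphi\up)^{n+1,n+1}=(-1)^{n+1}d_{\A}(\varphi^{n+1,n+1})+d_X^{n,n+1}\ci\varphi^{n+1,n}$. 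So $g\up+d\xi\up$ with $\xi^{1,0}=\varphi^{\prime 1,0}$, $\xi^{n+1,n}=\varphi^{n+1,n}$ does the job. This step also works for $n=1$, since the components $(1,0)$ and $(2,1)$ are distinct.

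\textbf{The gap is $n=1$.} The statement includes this case, and the paper uses it there, e.g.\ in Proposition~\ref{prop:NtoX} and Corollary~\ref{cor:S_groupoid}, before the restriction $n\ge2$ of Section~\ref{section:induced_biadditive}. You defer it to an unspecified result of \cite{C1}. The paper does not rely on any such result, and your own mechanism breaks exactly here. Proposition~\ref{prp:htpy_modif}~(3) needs $n\ge2$: part (1) only preserves components $\psi^{i,j}$ with $j\le n-2$, so for $n=1$ killing $\psi^{2,1}$ can destroy $\psi^{1,0}=0$. Hence you cannot conclude that a single $g\up$ is a two-sided inverse in $\Kbf^{1,1,1}(\A)$.

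The missing idea is to produce a left inverse and a right inverse separately, each a different $\calN$-class inside the same $\Kbf^3$-class.
\begin{enumerate}
\item Make $f^{0,0}=\id$ and $f^{2,2}=\id$ strictly by Lemma~\ref{lem:4A}. The correction there is $d\xi\up$ with $\xi\up$ concentrated in $(0,0)$ and $(2,2)$, so it lies in $\calN$ and does not change $f\up$ in $\Kbf^{1,1,1}(\A)$. Normalize $g\up$ strictly in the same way.
\item Take $\eta\up$ with $\id_{X\up}-g\up\ci f\up=d\eta\up$. Use Proposition~\ref{prp:htpy_modif}~(2), via right $1$-exactness of $X\up$, to arrange $\eta^{1,0}=0$.
\item Let $\xi\up\in\Cbf(\A)(Y\up,X\up)^{-1}$ be concentrated in $(2,1)$ with $\xi^{2,1}=\eta^{2,1}$. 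Put $h\up=g\up+d\xi\up$ and $\zeta\up=\eta\up-\xi\up\ci f\up$.
\item Then $\id_{X\up}-h\up\ci f\up=d\zeta\up$ with $\zeta^{1,0}=0$ and $\zeta^{2,1}=\eta^{2,1}-\eta^{2,1}\ci f^{2,2}=0$. This is where the strict equality $f^{2,2}=\id$ is needed. So $h\up\ci f\up=\id$ in $\Kbf^{1,1,1}(\A)$.
\item Dually, $f\up$ has a right inverse in $\Kbf^{1,1,1}(\A)$, hence $f\up$ is an isomorphism there.
\end{enumerate}
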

\begin{proof}
Since $f\up$ an isomorphism in $\Kbf^{n+2}(\A)$, both $X\up$ and $Y\up$ are $n$-exact in either case. The final statement of the corollary follows from Proposition~\ref{prop:9-A}, so it
suffices to show that $f\up$ is an isomorphism in $\Kbf^{1,n,1}(\A)$.

By Lemma~\ref{lem:4A}, there exists $f\upp\in Z^0(\Cbf(\A))(X\up,Y\up)$ that satisfies $f\up=f\upp$ in $\Kbf(\A)$ and $f^{\prime 0,0}=\id$, $f^{\prime n+1,n+1}=\id$ in $\A$. Thus, replacing $f\up$ with $f\upp$ if necessary, we may assume that $f\up$ satisfies $f^{0,0}=\id$ and $f^{n+1,n+1}=\id$ from the beginning.

Suppose that $g\up \in Z^0(\Cbf(\A))(Y\up,X\up)$ gives the inverse of $f\up$ in $\Kbf^{n+2}(\A)$.
First, we show that we can modify $g\up$
so that it moreover satisfies $\ovl{g}^{n+1}=\id$. Put $X^{n+1}=Y^{n+1}=C$.
Since $g\up\ci f\up=\id$ holds in $\Kbf^{n+2}(\A)$, there should exist $\varphi\up\in \Cbf(\A)(X\up,X\up)^{-1}$ such that
$d\varphi\up=\id_{X\up}-g\up\ci f\up$. Since
$f^{n+1,n+1}=\id_C$, it follows
$\id_C-g^{n+1,n+1}=(d\varphi)^{n+1,n+1}$ in $\A$. Define $\psi\up\in\Cbf(\A)(Y\up,X\up)^{-1}$ by
\[
\psi^{i,j}=\begin{cases}
\varphi^{n+1,n}&\text{if}\ (i,j)=(n+1,n)\\
\varphi^{n+1,n+1}&\text{if}\ (i,j)=(n+1,n+1)\\
0&\text{otherwise}
\end{cases}
\]
and put $g\upp=g\up+d\psi\up\in Z^0(\Cbf^{n+2}(\A))(Y\up,X\up)$.
Then, since $(d\psi)^{0,0}=0$ and $(d\psi)^{n+1,n+1}=(d\varphi)^{n+1,n+1}$, we see that $g\upp$ satisfies $g^{\prime0,0}=g^{0,0}$ and $g^{\prime n+1,n+1}=\id_C$ in $\A$. Since $g\up=g\upp$ holds in $\Kbf^{n+2}(\A)$, it still gives the inverse of $f\up$ in $\Kbf^{n+2}(\A)$.

In a dual manner, by using $f\up\ci g\up=\id$ in $\Kbf^{n+2}(\A)$, we can modify $g\upp$ to obtain $g\uppp\in Z^0(\Cbf^{n+2}(\A))(Y\up,X\up)$ that gives the inverse of $f\up$ in $\Kbf^{n+2}(\A)$ and moreover satisfies $g^{\prime\prime 0,0}=\id$ and $g^{\prime\prime n+1,n+1}=g^{\prime n+1,n+1}=\id$ in $\A$.
Thus, replacing $g\up$ with $g\uppp$ if necessary, we may assume that $g\up$ satisfies $g^{0,0}=\id$ and $g^{n+1,n+1}=\id$ in $\A$ from the beginning.

When $n\ge 2$, by Corollary~\ref{cor:htpy_modif}, the equalities $g\up\ci f\up=\id$ and $f\up\ci g\up=\id$ in $\Kbf^{n+2}(\A)$ imply $g\up\ci f\up=\id$ and $f\up\ci g\up=\id$ in $\Kbf^{1,n,1}(\A)$. This means that $f\up$ is an isomorphism in $\Kbf^{1,n,1}(\A)$.

It remains to consider the case $n=1$. By the above argument, a morphism $g\up\in Z^0(\Cbf(\A))(Y\up,X\up)$ which gives the inverse of $f\up$ in $\Kbf^3(\A)$ can be taken to satisfy $g^{0,0}=\id$ and $g^{2,2}=\id$. Let $\eta\up\in\Cbf(\A)(X\up,X\up)^{-1}$ be a morphism such that $\id_{X\up}-g\up\ci f\up=d\eta\up$.
By Proposition~\ref{prp:htpy_modif} {\rm (2)}, we may assume that $\eta\up$ satisfies $\eta^{0,0}=0$ and $\eta^{1,0}=0$. Define $\xi\up\in\Cbf(\A)(Y\up,X\up)^{-1}$ by
\[ 
\xi^{i,j}=\begin{cases}
\eta^{2,1} & \text{if}\ (i,j)=(2,1)\\
0 & \text{otherwise}
\end{cases}
\]
and put $h\up=g\up+d\xi\up\in Z^0(\Cbf(\A))(Y\up,X\up)$, $\zeta\up=\eta\up-\xi\up\ci f\up\in\Cbf(\A)(X\up,X\up)^{-1}$.
Then they satisfy
\[
\zeta^{1,0}=\eta^{1,0}-\xi^{1,0}\ci f^{1,1}=\eta^{1,0}=0,
\]
\[
\zeta^{2,1}=\eta^{2,1}-\xi^{2,1}\ci f^{2,2}=\eta^{2,1}-\eta^{2,1}=0
\]
and
\[
\id_{X\up}-h\up\ci f\up=\id_{X\up}-\big(g\up\ci f\up+(d\xi\up)\ci f\up\big)
=d\eta\up-d(\xi\up\ci f\up)=d\zeta\up.
\]
This means that $h\up\ci f\up=\id_{X\up}$ holds in $\Kbf^{1,1,1}(\A)$. Thus $f\up$ is a split monomorphism in $\Kbf^{1,1,1}(\A)$. A dual argument shows that $f\up$ is also a split epimorphism in $\Kbf^{1,1,1}(\A)$, hence an isomorphism.
\end{proof}

\begin{cor}\label{cor:htpy_modif3}
Let $X\up,Y\up\in\Cbf^{n+2}(\A)$ be any pair of objects with $X^0=Y^0=A$ and $X^{n+1}=Y^{n+1}=C$, and let $f\up \in \Kbf(\A)(X\up,Y\up)$ be any morphism. Assume that $f\up$ is an isomorphism in $\Kbf^{n+2}(\A)$, and that $X\up$ or $Y\up$ is $($hence both are$)$ $n$-exact. 
If $f\up$ makes the diagrams
\begin{equation}\label{comm_two}
\begin{tikzcd}[row sep = 22, column sep = 14]
X\up &   & Y\up \\
     & A &   
\Ar{1-1}{1-3}{"f\up"}
\Ar{1-1}{2-2}{"\gam_{X,1}\up"'}
\Ar{1-3}{2-2}{"\gam_{Y,1}\up"}
\end{tikzcd}
\quad\text{and}\quad
\begin{tikzcd}[row sep = 22, column sep = -4]
     & C[-(n+1)] &      \\
X\up &   & Y\up 
\Ar{1-2}{2-1}{"\be_{X,n+1}\up"'}
\Ar{1-2}{2-3}{"\be_{Y,n+1}\up"}
\Ar{2-1}{2-3}{"f\up"'}
\end{tikzcd}
\end{equation}
commutative in $\Kbf(\A)$, then $f\up$ is an isomorphism in $\Kbf^{1,n,1}(\A)$. Here, the morphisms in $(\ref{comm_two})$ other than $f\up$ are those introduced in Proposition~\ref{prop:alphabetagamma}.
\end{cor}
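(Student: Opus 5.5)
The plan is to reduce to Corollary~\ref{cor:htpy_modif2}. That corollary requires a representative of $f\up$ in $Z^0(\Cbf^{n+2}(\A))(X\up,Y\up)$ with $\ovl{f}^0=\id_A$ and $\ovl{f}^{n+1}=\id_C$ in $H^0(\A)$. So the whole task is to read off these two identities from the commutativity of the diagrams $(\ref{comm_two})$.

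Choose any closed representative $f\up$ of the given class. Since it is a genuine degree-$0$ closed morphism, the squares of $(\ref{triangle_sigma})$ for $k=1$ commute strictly in $Z^0(\Cbf(\A))$. In particular, the right-hand square gives $\gam_{Y,1}\up\ci f\up=(\sig_{\le 0}f\up)\ci\gam_{X,1}\up$. Here $\sig_{\le 0}f\up=f^{0,0}$ is viewed as a morphism $A\to A$ in $\Cbf_{[0,0]}(\A)=\A$.

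The first triangle in $(\ref{comm_two})$ says $\gam_{Y,1}\up\ci f\up=\gam_{X,1}\up$ in $\Kbf(\A)$. Combining the two, we get $(f^{0,0}-\id_A)\ci\gam_{X,1}\up=0$ in $\Kbf(\A)(X\up,A)$. Now apply Lemma~\ref{lem:comparison_H} with $B=A$, which is valid since $X\up\in\Cbf_{\ge0}(\A)$. It turns this into the vanishing of the chain map $\ovl{X}\up\to A$ given by $\ovl{f^{0,0}}-\id$ in degree $0$, as a morphism in $\Kbff(H^0(\A))$.

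By the exact sequence in that lemma, this means $\ovl{f}^0-\id$ factors through $\ovl{d}_X^0\co X^0\to X^1$ in $H^0(\A)$. We then need to kill this factorization, and this is where right $n$-exactness of $X\up$ enters. Taking $i=n-1$ (allowed since $n-1\le n$) in the definition gives $\Kbf(\A)(X\up,A[-1])=0$. This says the map $-\ci\ovl{d}_X^0\co H^0(\A)(X^1,A)\to H^0(\A)(X^0,A)$ is zero modulo the appropriate terms. More precisely, I would apply Lemma~\ref{lem:comparison_H}-type reasoning to $\sig_{\ge0}$ of the shift, or argue directly via a degree $-1$ morphism $X\up\to A[-1]$ whose only component is $X^1\to A$. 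Such a morphism is closed exactly when its composite with $d_X^{0,1}$ vanishes up to the relevant homotopies.

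The cleanest route, which I would try first, uses the triangle $X^0[-1]\xrightarrow{\al}\sig_{\ge1}X\up\to X\up\xrightarrow{\gam}X^0$ from Corollary~\ref{cor:devide_f}. Morphisms $X\up\to A$ killed by $\gam$-precomposition come from $\sig_{\ge1}X\up[1]\to A$. Right $n$-exactness should show that $\Hom(\gam_{X,1}\up,A)$ is injective. Then $(f^{0,0}-\id)\ci\gam=0$ forces $f^{0,0}=\id$ in $\Kbf(\A)(A,A)=H^0(\A)(A,A)$.

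The condition $\ovl{f}^{n+1}=\id_C$ follows dually from the second triangle. We use the left square of $(\ref{triangle_sigma})$ for $k=n+1$, left $n$-exactness of $Y\up$, and the dual of Lemma~\ref{lem:comparison_H}, which holds by the symmetric argument. Then Corollary~\ref{cor:htpy_modif2} finishes the proof.

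The main obstacle is the injectivity of $-\ci\gam_{X,1}\up$ on $\Kbf(\A)(X\up,A)$. I expect to get it from the long exact sequence of the triangle in Proposition~\ref{prop:alphabetagamma}. This should reduce to showing that $\Kbf(\A)(\sig_{\ge1}X\up,A[-1])$ maps trivially, which I would verify using right $n$-exactness with $i=n-1$ together with $\Kbf(\A)(X^0,A[-1])=H^{-1}\A(X^0,A)$.
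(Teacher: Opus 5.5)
Your proposal has a genuine gap at the step meant to ``kill'' the factorization. You correctly derive, via Lemma~\ref{lem:comparison_H}, that $\ovl{f^{0,0}}-\id_A=\ovl{h}\ci\ovl{d}_X^0$ for some $h$. But $\blank\ci\gam_{X,1}\up\colon\Kbf(\A)(A,A)\to\Kbf(\A)(X\up,A)$ is not injective in general, and right $n$-exactness does not make it so. The triangle of Proposition~\ref{prop:alphabetagamma} gives the exact sequence $\Kbf(\A)(X\up,A[-1])\to\Kbf(\A)(\sig_{\ge1}X\up,A[-1])\to\Kbf(\A)(A,A)\to\Kbf(\A)(X\up,A)$. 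Right $n$-exactness kills the first term, so the second map becomes \emph{injective}, not zero. Its image is exactly the set of composites $\ovl{h}\ci\ovl{d}_X^0$, which is nonzero in general.

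Here is a concrete counterexample. Take $X\up=Y\up={}_AN\up_C$ from Definition~\ref{dfn:splitN}. Since $N\up\cong0$ in $\Kbf(\A)$, the closed morphism $f\up=0$ represents the identity, so it is an isomorphism and both triangles commute trivially. Yet $\ovl{f}^0=0\neq\id_A$ when $A\neq0$. More conceptually, Remark~\ref{rem:non-functorial2} says $\ovl{f}^0$ depends on the chosen representative. So no argument can show that an \emph{arbitrary} closed representative has $\ovl{f}^0=\id_A$. The same problem occurs at the right end.

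The missing idea is to change the representative rather than trying to prove the property for the one you picked; your factorization is exactly what makes this possible. Choose $h\in Z^0(\A)(X^1,A)$ and $\phi_0\in\A(A,A)^{-1}$ with $f^{0,0}-\id_A-h\ci d_X^{0,1}=d_{\A}(\phi_0)$. Let $\xi\up$ be the degree $-1$ morphism with $\xi^{0,0}=\phi_0$, $\xi^{1,0}=h$ and all other components zero. Then $(d\xi)^{0,0}=d_{\A}(\phi_0)+h\ci d_X^{0,1}$ and $(d\xi)^{n+1,n+1}=0$. Hence $f\up-d\xi\up$ lies in the same homotopy class and has $(0,0)$-component exactly $\id_A$.

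Dually, a direct computation of $\Kbf(\A)(C[-(n+1)],Y\up)$ shows the second triangle gives $\ovl{f^{n+1,n+1}}-\id_C=\ovl{d}_Y^n\ci\ovl{k}$ for some $k$. A correction supported on the components $(n+1,n)$ and $(n+1,n+1)$, as in Lemmas~\ref{lem:4A} and~\ref{lem:extend_morph}, fixes the right end without disturbing the $(0,0)$-component. Corollary~\ref{cor:htpy_modif2} then applies to this new representative. This is precisely the paper's proof. Exactness of $X\up$ plays no role in identifying the end components; it is used only inside Corollary~\ref{cor:htpy_modif2}.
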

\begin{proof}
By the commutativity of $(\ref{comm_two})$, there exists $f\upp\in Z^0(\Cbf(\A))(X\up,Y\up)$ such that $f\up=f\upp$ in $\Kbf(\A)$ and  $f^{\prime 0,0}=\id_A$, $f^{\prime n+1,n+1}=\id_C$ in $\A$. Thus, we may apply Corollary~\ref{cor:htpy_modif2} to $f\upp$.
\end{proof}

\normalcolor

\begin{lem}\label{lem:extend_morph}
Let $X\up\in\Cbf_{\le n+1}(\A)$ and $Y\up\in\Cbf^{n+2}(\A)$ be any pair of objects. Assume that $Y\up$ is left $n$-exact.
Then, the following holds.
\begin{enumerate}
\item For any $g\up\in Z^0(\Cbf(\A))(\sig_{\ge n}X\up,Y\up)$, there exists $f\up\in Z^0(\Cbf(\A))(X\up,Y\up)$ such that $f\up\ci \be_{X,n}\up=g\up$ in $\Kbf(\A)$ and $\ovl{f}^{n+1}=\ovl{g}^{n+1}$ in $H^0(\A)$ hold. Here, the morphism $\be_{X,n}\up$ is that in Proposition~\ref{prop:alphabetagamma}.
\item Let $f\up \in Z^0(\Cbf(\A))(X\up,Y\up)$ be any morphism. If there exist $b_n\in\A(X^{n+1},Y^n)^0$ and $b_{n+1}\in\A(X^{n+1},Y^{n+1})^{-1}$ satisfying $f^{n+1,n+1}=d_Y^{n,n+1}\ci b_n+d_{\A}(b_{n+1})$, then $f\up$ is equal to $0$ in $\Kbf(\A)$.
\end{enumerate}
\end{lem}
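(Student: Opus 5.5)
Both parts come down to the triangle of Proposition~\ref{prop:alphabetagamma} with $k=n+1$ for $X\up$:
\[
(\sig_{\le n}X\up)[-1]\xrightarrow{\al}X^{n+1}[-(n+1)]\xrightarrow{\be_{X,n+1}}X\up\xrightarrow{\gam}\sig_{\le n}X\up ,
\]
together with the left $n$-exactness of $Y\up$. The idea is to kill, or extend over, the part of $X\up$ sitting in degrees $\le n$. By d\'evissage, using the filtration of $\sig_{\le n}X\up$ by truncations, $\sig_{\le n}X\up$ is built from the objects $X^i[-i]$ with $i\le n$. Left $n$-exactness gives $\Kbf(\A)(X^i[-i],Y\up)=0$ for all $i\le n$, and hence $\Kbf(\A)(\sig_{\le n}X\up[m],Y\up)=0$ for all $m\ge 0$. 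The shifts $[m]$ with $m\ge0$ only lower degrees; note that $X\up\in\Cbf_{\le n+1}(\A)$ may have terms in negative degrees, but those are still of the form $A[-i]$ with $i\le n$.

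For (2), I will apply $\Kbf(\A)(-,Y\up)$ to the triangle. By the vanishing just described, restriction along $\be_{X,n+1}$,
\[
\Kbf(\A)(X\up,Y\up)\to\Kbf(\A)(X^{n+1}[-(n+1)],Y\up),
\]
is injective. So it suffices to show that $f\up\ci\be_{X,n+1}=0$ in $\Kbf(\A)$. This composite is the closed morphism $X^{n+1}[-(n+1)]\to Y\up$ whose only component is $f^{n+1,n+1}$, plus possibly components $f^{n+1,j}$ for $j>n+1$, which vanish since $Y^j=0$ there. It is null-homotopic via the degree $-1$ morphism with components $b_n$ at $(n+1,n)$ and $\pm b_{n+1}$ at $(n+1,n+1)$; the signs are fixed by the differential formula of Definition~\ref{dfn:complexes}, as in Lemma~\ref{lem:4A}. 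Checking this is a short computation.

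For (1), I will use the triangle with $k=n$, namely $(\sig_{\le n-1}X\up)[-1]\xrightarrow{\al_{X,n}}\sig_{\ge n}X\up\xrightarrow{\be_{X,n}}X\up$. Since $\Kbf(\A)((\sig_{\le n-1}X\up)[-1],Y\up)$ has $\sig_{\le n-1}X\up[-1]$ built from $X^i[-(i+1)]$ with $i+1\le n$, it vanishes by left $n$-exactness. Hence $g\up\ci\al_{X,n}=0$, and $g\up$ extends along $\be_{X,n}$ to some $f\up$ with $f\up\ci\be_{X,n}=g\up$ in $\Kbf(\A)$. To realize this concretely and control $\ovl f^{n+1}$, I will use Corollary~\ref{cor:give_a_morph}. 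Choose a null-homotopy $\varphi\up$ with $d\varphi\up=-g\up\ci\al_{X,n}$, and define $f\up$ by $\sig_{\ge n}f\up=g\up$, $\sig_{\le n-1}f\up=0$, with the off-diagonal block given by $\varphi\up$. Then $f^{n+1,n+1}=g^{n+1,n+1}$ strictly, so $\ovl f^{n+1}=\ovl g^{n+1}$. In fact $f\up\ci\be_{X,n}=g\up$ holds strictly, since $\be_{X,n}$ is the inclusion $v_\al$.

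The main obstacle is the d\'evissage vanishing: one has to show rigorously that $\Kbf(\A)(Z\up,Y\up)=0$ for every bounded $Z\up\in\Cbf_{\le n}(\A)$, and its shift used in (1). I will prove this by induction on the number of nonzero terms, splitting off the lowest term with Proposition~\ref{prop:alphabetagamma} and using the long exact $\Hom$ sequence in the triangulated category $\Kbf(\A)$. A secondary point is the sign bookkeeping in the null-homotopy for (2).
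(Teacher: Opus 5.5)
Your part (2), and the first half of (1) (extending $g\up$ along $\be_{X,n}\up$ because $\Kbf(\A)((\sig_{\le n-1}X\up)[-1],Y\up)=0$), are the paper's argument, and your null-homotopy for $f\up\ci\be_{X,n+1}\up$ is correct.

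\textbf{The gap.} It is in your concrete construction that is meant to control $\ovl{f}^{n+1}$. You apply Corollary~\ref{cor:give_a_morph} with $k=n$, upper block $g\up$ and lower block $0$. This needs $\varphi\up\in\Cbf(\A)((\sig_{\le n-1}X\up)[-1],\sig_{\ge n}Y\up)^{-1}$ with $d\varphi\up=-g\up\ci\al_{X,n}\up$ \emph{as maps into $\sig_{\ge n}Y\up$}. Left $n$-exactness only says that $g\up\ci\al_{X,n}\up$ vanishes in $\Kbf(\A)((\sig_{\le n-1}X\up)[-1],Y\up)$. The null-homotopy you get is therefore a map into all of $Y\up$, and in general it has components into $Y^j$ with $j\le n-1$. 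Discarding them breaks the equation: the differential of the remaining part picks up a term $\al_{Y,n}\up\ci(\cdots)$. Nor can you find another null-homotopy into $\sig_{\ge n}Y\up$, since $\Kbf(\A)((\sig_{\le n-1}X\up)[-1],\sig_{\ge n}Y\up)$ need not vanish.

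A counterexample: take $X\up=Y\up$ and $g\up=\be_{Y,n}\up$. An $f\up$ with $\sig_{\ge n}f\up=\id$ and $\sig_{\le n-1}f\up=0$ exists iff $\al_{Y,n}\up=0$ in $\Kbf(\A)$. This fails, for instance, for $n=1$, $\A=\Ab$ and the left $1$-exact $Y\up=(\bbZ\xrightarrow{2}\bbZ\to\bbZ/2)$, where $\al_{Y,1}\up$ is multiplication by $2$ and admits no homotopy. The correct extension $\id_{Y\up}$ has $\sig_{\le 0}$-part $\id\neq 0$.

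\textbf{Two repairs.} The first keeps every component of the null-homotopy. Write $X\up=\Cone\al_{X,n}\up$ and choose $\eta\up$ of degree $-1$ with $d\eta\up=g\up\ci\al_{X,n}\up$ in $\Cbf(\A)((\sig_{\le n-1}X\up)[-1],Y\up)$. Put $f\up=g\up\ci q_{\al}\up+\eta\up\ci s_{-}\up\ci p_{\al}\up$, where $s_{-}\up\co\sig_{\le n-1}X\up\to(\sig_{\le n-1}X\up)[-1]$ is as in Definition~\ref{dfn:shifts}.
\begin{itemize}
\item The identities in Definition~\ref{dfn:cone}, in particular $dq_{\al}\up=-\al\up\ci s_-\up\ci p_{\al}\up$, give $df\up=(d\eta\up-g\up\ci\al\up)\ci s_-\up\ci p_{\al}\up=0$.
\item $f\up\ci\be_{X,n}\up=g\up$ holds strictly, since $q_{\al}\up v_{\al}\up=\id$ and $p_{\al}\up v_{\al}\up=0$.
\item $f^{n+1,n+1}=g^{n+1,n+1}$, because $p_{\al}\up$ vanishes on $X^{n+1}$.
\end{itemize}
Here $\sig_{\le n-1}f\up$ is in general nonzero.

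The second repair is the paper's route. Take any $f\upp$ with $f\upp\ci\be_{X,n}\up=g\up$ in $\Kbf(\A)$. From the homotopy, read off $h\in\A(X^{n+1},Y^n)^0$ with $\ovl{g}^{n+1}=\ovl{f'}^{n+1}+\ovl{d}_Y^n\ci\ovl{h}$. Then replace $f\upp$ by $f\upp+d\xi\up$, where $\xi\up$ is concentrated at $(n+1,n)$ with value $h$.
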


\begin{proof}
{\rm (1)} By Proposition~\ref{prop:alphabetagamma}, there is a distinguished triangle
\[ (\sig_{\le n-1}X\up)[-1]\xrightarrow{\al\up_{X,n}} \sig_{\ge n}X\up\xrightarrow{\be\up_{X,n}}X\up\xrightarrow{\gam\up_{X,n}}\sig_{\le n-1}X\up \]
in $\Kbf(\A)$.
Since $Y\up$ is left $n$-exact, we have $\Kbf(\A)((\sig_{\le n-1}X\up)[-1],Y\up)=0$, hence there exists $f\upp\in Z^0(\Cbf(\A))(X\up,Y\up)$ such that $f\upp\ci \be_{X,n}\up=g\up$ in $\Kbf(\A)$. In particular, there exists some $h\in\A(X^{n+1},Y^n)^0$ such that $\ovl{g}^{n+1}=\ovl{f'}^{n+1}+\ovl{d}_Y^n\ci\ovl{h}$ in $H^0(\A)$. Define $\xi\up\in\Cbf(\A)(X\up,Y\up)^{-1}$ by
\[
\xi^{i,j}=\begin{cases}
h&\text{if}\ (i,j)=(n+1,n)\\
0&\text{otherwise}
\end{cases},
\]
and put $f\up=f\upp+d\xi\up\in Z^0(\Cbf(\A))(X\up,Y\up)$. Then, $f\up$ satisfies the required properties.

{\rm (2)} The assumed existence of $b_n$ and $b_{n+1}$ implies that the morphism $f\up\ci\be_{X,n+1}\up\in Z^0(\Cbf(\A))(\sig_{\ge n+1}X\up,Y\up)$ becomes equal to $0$ in $\Kbf(\A)$.
Since there is a distinguished triangle
\[ (\sig_{\le n}X\up)[-1]\xrightarrow{\al\up_{X,n+1}} \sig_{\ge n+1}X\up\xrightarrow{\be\up_{X,n+1}}X\up\xrightarrow{\gam\up_{X,n+1}}\sig_{\le n}X\up \]
by Proposition~\ref{prop:alphabetagamma}, the morphism $f\up$ should factor through $\sig_{\le n}X\up$ in $\Kbf(\A)$. This forces $f\up$ to be $0$ in $\Kbf(\A)$, since we have $\Kbf(\A)(\sig_{\le n}X\up,Y\up)=0$ by the left $n$-exactness of $Y\up$.
\end{proof}

\begin{prop}\label{prp:uniqueness_n-kernel}
Let $X\up,Y\up\in\Cbf^{n+2}(\A)$ be any pair of left $n$-exact sequences. For any $g\up\in Z^0(\Cbf_{[n,n+1]}(\A))(\sig_{\ge n}X\up,\sig_{\ge n}Y\up)$,
there exists $f\up\in Z^0(\Cbf^{n+2}(\A))(X\up,Y\up)$ which makes
\[
\begin{tikzcd}[column sep=2.4em, row sep=2em]
\sig_{\ge n}X\up
  \arrow[r, "\scriptstyle\be_{X,n}\up"]
  \arrow[d, "\scriptstyle g\up"'] &
X\up
  \arrow[d, "\scriptstyle f\up"] \\
\sig_{\ge n}Y\up
  \arrow[r, "\scriptstyle\be_{Y,n}\up"'] &
Y\up
\end{tikzcd}
\]
commutative in $\Kbf(\A)$ and satisfies $\ovl{f}^{n+1}=\ovl{g}^{n+1}$ in $H^0(\A)$. Moreover, if $g\up$ gives an isomorphism in $\Kbf(\A)$, then such $f\up$ becomes an isomorphism in $\Kbf(\A)$.
\end{prop}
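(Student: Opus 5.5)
Existence follows from Lemma~\ref{lem:extend_morph}~(1), applied to the composite $g' = \be_{Y,n}\up\ci g\up \in Z^0(\Cbf(\A))(\sig_{\ge n}X\up,Y\up)$; this is allowed because $X\up\in\Cbf^{n+2}(\A)\subset\Cbf_{\le n+1}(\A)$ and $Y\up$ is left $n$-exact. The lemma gives $f\up\in Z^0(\Cbf(\A))(X\up,Y\up)$ with $f\up\ci\be_{X,n}\up=\be_{Y,n}\up\ci g\up$ in $\Kbf(\A)$. It also gives $\ovl{f}^{n+1}=\ovl{g'}^{n+1}$. Since $\be_{Y,n}\up=v_\al\up$ is the inclusion with identity components in degrees $\ge n$, we have $\ovl{g'}^{n+1}=\ovl{g}^{n+1}$. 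Finally, $f\up$ automatically lies in $Z^0(\Cbf^{n+2}(\A))$, because both objects lie in $\Cbf^{n+2}(\A)$, which is a full subcategory.

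For the isomorphism claim, assume $g\up$ is invertible in $\Kbf(\A)$. Let $g\upp\in Z^0(\Cbf_{[n,n+1]}(\A))(\sig_{\ge n}Y\up,\sig_{\ge n}X\up)$ be a representative of its inverse. Since $X\up$ is also left $n$-exact, the first part with the roles of $X\up$ and $Y\up$ exchanged gives $f\upp\co Y\up\to X\up$ with $f\upp\ci\be_{Y,n}\up=\be_{X,n}\up\ci g\upp$ in $\Kbf(\A)$. It then suffices to show that $e\up:=f\upp\ci f\up-\id_{X\up}$ is zero in $\Kbf(\A)$; the other composite is handled symmetrically.

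First, $e\up\ci\be_{X,n}\up=\be_{X,n}\up\ci(g\upp\ci g\up-\id)=0$ in $\Kbf(\A)$. Next, use the distinguished triangle
\[
(\sig_{\le n-1}X\up)[-1]\xrightarrow{\al_{X,n}\up}\sig_{\ge n}X\up\xrightarrow{\be_{X,n}\up}X\up\xrightarrow{\gam_{X,n}\up}\sig_{\le n-1}X\up
\]
of Proposition~\ref{prop:alphabetagamma}. Since $e\up$ vanishes on $\be_{X,n}\up$, it factors through $\gam_{X,n}\up$, that is, through $\sig_{\le n-1}X\up\in\Kbf_{[0,n-1]}(\A)$. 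It remains to see that $\Kbf(\A)(\sig_{\le n-1}X\up,X\up)=0$. The object $\sig_{\le n-1}X\up$ is built by iterated cones from the objects $X^k[-k]$ with $0\le k\le n-1$. For each such $k$, left $n$-exactness of $X\up$ gives $\Kbf(\A)(X^k[-k],X\up)=0$, since $k\le n$. Dévissage through the triangles of Proposition~\ref{prop:alphabetagamma}, applied repeatedly to truncations, then yields the vanishing. Hence $e\up=0$, and $f\up$ is an isomorphism in $\Kbf(\A)$.

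I expect the main subtlety to be bookkeeping rather than mathematics. One point is checking that the $(n+1)$-st component condition of Lemma~\ref{lem:extend_morph} transfers correctly through $\be_{Y,n}\up$; this relies on the explicit form of $v_\al\up$. The other is carrying out the dévissage: the shift convention $A[-i]$ must match the degree placement in $\sig_{\le n-1}X\up$, so that only the indices $i\le n-1$, which left $n$-exactness covers, are needed.
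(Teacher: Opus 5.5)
Your proposal is correct. The existence part is the same as the paper's: Lemma~\ref{lem:extend_morph}~(1) applied to $\be_{Y,n}\up\ci g\up$. Your check that $\ovl{\be_{Y,n}\up\ci g\up}^{\,n+1}=\ovl{g}^{n+1}$ supplies a detail the paper leaves implicit.

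For the isomorphism part you take a slightly different route.

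\textbf{The paper's route.} It passes to components. From $f\up\ci f\upp\ci\be_{Y,n}\up=\be_{Y,n}\up$ in $\Kbf(\A)$ it extracts $b_n$ and $b_{n+1}$ with $f^{n+1,n+1}\ci f^{\prime n+1,n+1}-\id=d_Y^{n,n+1}\ci b_n+d_{\A}(b_{n+1})$. It then invokes Lemma~\ref{lem:extend_morph}~(2). Internally, that lemma passes to the triangle at level $n+1$ and uses the vanishing of $\Kbf(\A)(\sig_{\le n}Y\up,Y\up)$.

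\textbf{Your route.} You stay entirely in $\Kbf(\A)$. The relation $(f\upp\ci f\up-\id)\ci\be_{X,n}\up=0$ lets you factor through $\gam_{X,n}\up$ directly, using the level-$n$ triangle of Proposition~\ref{prop:alphabetagamma}. You then need only $\Kbf(\A)(\sig_{\le n-1}X\up,X\up)=0$, and you spell out its d\'evissage, which the paper uses for vanishings of this kind without comment.

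\textbf{Comparison.} Your version is more self-contained and avoids the component bookkeeping. The paper's version saves effort by reusing Lemma~\ref{lem:extend_morph}~(2). That lemma is reused later, for instance for split sequences and in Proposition~\ref{prop:pushout_inflation}. Part of its usefulness there is that its hypothesis is only a condition on the single component $f^{n+1,n+1}$, which is weaker than vanishing after $\be_{n}\up$ in $\Kbf(\A)$.
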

\begin{proof}
The first part is immediate from Lemma~\ref{lem:extend_morph} {\rm (1)}.
For the latter part, suppose that $g\up$ is an isomorphism in $\Kbf(\A)$. Then, there is $g\upp\in Z^0(\Cbf_{[n,n+1]}(\A))(\sig_{\ge n}Y\up,\sig_{\ge n}X\up)$ that gives the inverse of $g\up$ in $\Kbf(\A)$. By the same argument, there also exists $f\upp\in Z^0(\Cbf^{n+2}(\A))(Y\up,X\up)$ such that $f\upp\ci\be_{Y,n}\up=\be_{X,n}\up\ci g\upp$ holds in $\Kbf(\A)$. In $\Kbf(\A)$, since $g\up\ci g\upp=\id$ holds, we have $f\up\ci f\upp\ci\be_{Y,n}\up=\be_{Y,n}\up$. Hence there exist $b_n\in\A(Y^{n+1},Y^n)^0$ and $b_{n+1}\in\A(Y^{n+1},Y^{n+1})^{-1}$ satisfying $f^{n+1,n+1}\ci f^{\prime n+1,n+1}-\id_{Y^{n+1}}=d_Y^{n,n+1}\ci b_n+d_{\A}(b_{n+1})$. By Lemma~\ref{lem:extend_morph} {\rm (2)} we obtain $f\up\ci f\upp=\id$ in $\Kbf(\A)$. Similarly for $f\upp\ci f\up=\id$, thus $f\up$ is an isomorphism in $\Kbf(\A)$.
\end{proof}

\normalcolor

\begin{cor}\label{cor:uniqueness_n-kernel}
Let $X\up,X\upp\in\Cbf^{n+2}(\A)$ be left $n$-exact sequences. If $d_X^{n,n+1}=d_{X'}^{n,n+1}$, then $X\up\cong X\upp$ in $\Kbf^{n+2}(\A)$. More precisely, there are $f\up\in Z^0(\A)(X\up,X\upp)$ and $f\upp\in Z^0(\A)(X\upp,X\up)$ which are mutually inverse in $\Kbf^{n+2}(\A)$, and moreover satisfies $\ovl{f}^{n+1}=\ovl{f'}^{n+1}=\id$ in $H^0(\A)$.
\end{cor}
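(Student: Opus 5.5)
The plan is to apply Proposition~\ref{prp:uniqueness_n-kernel} with $Y\up=X\upp$, taking $g\up$ to be the identity of the two-term truncation. Since $d_X^{n,n+1}=d_{X'}^{n,n+1}$, the two truncations agree as one-sided twisted complexes. Indeed, both objects lie in $\Cbf^{n+2}(\A)$, so $\sig_{\ge n}X\up$ has nonzero terms only in degrees $n$ and $n+1$, and its only possibly nonzero differential is $d_X^{n,n+1}$. We must also have $X^n=X^{\prime n}$ and $X^{n+1}=X^{\prime n+1}$ as objects; I read this as part of the hypothesis, since the equality of the differentials only makes sense if source and target coincide. Hence $\sig_{\ge n}X\up=\sig_{\ge n}X\upp$ in $\Cbf_{[n,n+1]}(\A)$, and $g\up=\id_{\sig_{\ge n}X\up}$ is a closed degree-$0$ morphism. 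It is an isomorphism in $\Kbf(\A)$, and it satisfies $\ovl{g}^{n+1}=\id$.

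Proposition~\ref{prp:uniqueness_n-kernel} then yields $f\up\in Z^0(\Cbf^{n+2}(\A))(X\up,X\upp)$ with $f\up\ci\be_{X,n}\up=\be_{X',n}\up$ in $\Kbf(\A)$ and $\ovl{f}^{n+1}=\ovl{g}^{n+1}=\id$. Because $g\up$ is an isomorphism, the same proposition says $f\up$ is an isomorphism in $\Kbf(\A)$. Since $\Kbf^{n+2}(\A)$ is a full subcategory of $\Kbf(\A)$, $f\up$ is also an isomorphism in $\Kbf^{n+2}(\A)$. (The statement writes $Z^0(\A)(X\up,X\upp)$; this should be read as $Z^0(\Cbf^{n+2}(\A))(X\up,X\upp)$.)

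For the inverse, I would not take an arbitrary inverse of $f\up$, because its last component need not be the identity in $H^0(\A)$. Instead I apply Proposition~\ref{prp:uniqueness_n-kernel} symmetrically, with the roles of $X\up$ and $X\upp$ swapped and again $g\up=\id$. This gives $f\upp$ with $\ovl{f'}^{n+1}=\id$ and $f\upp\ci\be_{X',n}\up=\be_{X,n}\up$ in $\Kbf(\A)$. It remains to show that $f\up$ and $f\upp$ are mutually inverse, which is the step needing care.

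For this I follow the argument in the proof of Proposition~\ref{prp:uniqueness_n-kernel}. We have $f\upp\ci f\up\ci\be_{X,n}\up=\be_{X,n}\up$ in $\Kbf(\A)$, so $(f\upp\ci f\up-\id_{X\up})\ci\be_{X,n}\up=0$ in $\Kbf(\A)$. Examining the $(n+1,n+1)$-component of a null-homotopy, as done there, produces $b_n$ and $b_{n+1}$ with $(f\upp\ci f\up-\id)^{n+1,n+1}=d_X^{n,n+1}\ci b_n+d_{\A}(b_{n+1})$. Lemma~\ref{lem:extend_morph}~(2), applied with $Y\up=X\up$ (which is left $n$-exact), then gives $f\upp\ci f\up=\id$ in $\Kbf(\A)$. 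The symmetric argument gives $f\up\ci f\upp=\id$.
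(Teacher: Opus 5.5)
Your proposal is correct and is the paper's own argument: the paper proves this by applying Proposition~\ref{prp:uniqueness_n-kernel} and its proof to the identity $\sigma_{\ge n}X\to\sigma_{\ge n}X'$, and you have unpacked that by building the reverse morphism symmetrically (so both last components are the identity in $H^0(\A)$) and checking mutual inverseness via the $(n+1,n+1)$-component and Lemma~\ref{lem:extend_morph}~(2). Your reading of $Z^0(\A)(X,X')$ as $Z^0(\Cbf^{n+2}(\A))(X,X')$ is the intended one.
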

\begin{proof}
This follows from Proposition~\ref{prp:uniqueness_n-kernel} and its proof, applied to $\id\co \sig_{\ge n}X\up\to \sig_{\ge n}X\upp$.
\end{proof}

\begin{rem}\label{rem:n-kernel}
Corollary~\ref{cor:uniqueness_n-kernel} means that a left $n$-exact sequence $X\up$ is determined by $d_X^{n,n+1}$, uniquely up to isomorphism in $\Kbf^{n+2}(\A)$. From this reason, we may call $X\up$ the \emph{$n$-kernel} of $d_X^{n,n+1}$. Similarly, if $X\up$ is right $n$-exact, we call $X\up$ the \emph{$n$-cokernel} of $d_X^{0,1}$.
\end{rem}

\begin{lem}\label{lem:vanishing}
Let $X\up\in\Kbf^{n+2}(\A)$. Then the following conditions are equivalent:
\begin{enumerate}
    \item $X\up$ is a left $n$-exact sequence in $\A$. 
    \item The morphism $X^0[-1]\xrightarrow{\al\up_{X,1}}\sig_{\ge 1}X\up$ in $\Kbf(\A)$ obtained in Proposition~\ref{prop:alphabetagamma} induces an isomorphism
    $$\al\up_{X,1}\ci\blank\co\Kbf(\A)(A[-i], X^0[-1])\xrightarrow{\cong} \Kbf(\A)(A[-i], \sig_{\ge 1}X\up)$$
  for any $i\le n$ and any $A\in\ob(\A)$.
\end{enumerate}
\end{lem}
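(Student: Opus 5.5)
The plan is to apply the cohomological functor $\Kbf(\A)(A[-i],\blank)$ to the distinguished triangle of Proposition~\ref{prop:alphabetagamma} with $k=1$,
\[
X^0[-1]\xrightarrow{\al_{X,1}\up}\sig_{\ge 1}X\up\xrightarrow{\be_{X,1}\up}X\up\xrightarrow{\gam_{X,1}\up}X^0 .
\]
Here we use that $\sig_{\le 0}X\up=X^0$ for $X\up\in\Cbf^{n+2}(\A)$, so that $(\sig_{\le 0}X\up)[-1]=X^0[-1]$. This gives, for each $i$ and $A$, a long exact sequence
\[
\Kbf(\A)(A[-i],X^0[-1]\,[-1])\to\Kbf(\A)(A[-i],X^0[-1])\xrightarrow{\al\ci}\Kbf(\A)(A[-i],\sig_{\ge1}X\up)\xrightarrow{\be\ci}\Kbf(\A)(A[-i],X\up)\xrightarrow{\gam\ci}\Kbf(\A)(A[-i],X^0)\to\cdots.
\]
The two sides of the equivalence become statements about the vanishing of terms in this sequence, once we know which Hom groups between shifted objects of $\A$ vanish.

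The key input is connectivity. We have $\Kbf(\A)(A[-i],B[-j])\cong H^{i-j}\A(A,B)$, up to the shift sign conventions, and by strict connectivity this is $0$ whenever $i>j$. Taking $B=X^0$, $j=1$, the group $\Kbf(\A)(A[-i],X^0)=H^i\A(A,X^0)$ vanishes for $i>0$. Likewise $\Kbf(\A)(A[-i],X^0[-1][1])=\Kbf(\A)(A[-(i-1)],X^0[-1])$ vanishes when $i-1>1$, that is, when $i>2$.

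(1)$\Rightarrow$(2): take $i\le n$. The term $\Kbf(\A)(A[-i],X\up)$ vanishes by left $n$-exactness, so $\al\ci$ is surjective. Its kernel is the image of $\Kbf(\A)(A[-i],X\up[-1])=\Kbf(\A)(A[-(i-1)],X\up)$, which also vanishes because $i-1\le n$. Hence $\al\ci$ is an isomorphism.

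(2)$\Rightarrow$(1): fix $i\le n$. Exactness at $\Kbf(\A)(A[-i],X\up)$ places it between the cokernel of $\al\ci$ at level $i$, which is $0$ by (2), and the kernel of $\Kbf(\A)(A[-i],X^0)\to\Kbf(\A)(A[-i],\sig_{\ge1}X\up[1])$. This second map is identified with the map $\al\ci$ at level $i+1$, that is, $\Kbf(\A)(A[-(i+1)],X^0[-1])\to\Kbf(\A)(A[-(i+1)],\sig_{\ge1}X\up)$.

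For $i+1\le n$ this map is injective by (2), so $\Kbf(\A)(A[-i],X\up)=0$. In the boundary case $i=n$, the group $\Kbf(\A)(A[-n],X^0)=H^n\A(A,X^0)$ is $0$ since $n\ge1$, so again the vanishing follows. More generally, for every $i\ge1$ the group $\Kbf(\A)(A[-i],X^0)$ is already $0$ by connectivity, so injectivity at level $i+1$ is only needed when $i\le 0$, and then $i+1\le1\le n$.

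The main obstacle is bookkeeping. One must identify the connecting maps with $\al\ci$ at the shifted index, since $\al[1]$ appears after rotating the triangle, and one must check that the signs in the shift isomorphisms $\Kbf(\A)(A[-i],Z\up[1])\cong\Kbf(\A)(A[-(i+1)],Z\up)$ do not affect injectivity or surjectivity. To handle this, first verify that $\Kbf(\A)(\blank[1],\blank[1])$ is a natural isomorphism on the triangulated category $\Kbf(\A)$. The rest is the standard long exact sequence argument.
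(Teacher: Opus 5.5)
Your argument is correct and follows the paper's own route: apply $\Kbf(\A)(A[-i],\blank)$ to the triangle $X^0[-1]\to\sig_{\ge1}X\up\to X\up\to X^0$ of Proposition~\ref{prop:alphabetagamma} and read off the long exact sequence. For the boundary case $i=n$ in (2)$\Rightarrow$(1), which the paper leaves implicit, your appeal to connectivity ($\Kbf(\A)(A[-i],X^0)\cong H^i\A(A,X^0)=0$ for $i\ge1$) is exactly the missing detail.

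Two cosmetic slips do not affect the logic. The first term of your displayed sequence should be $\Kbf(\A)(A[-i],X\up[-1])$, which is what you actually use later. Also, $\Kbf(\A)(A[-i],X^0)$ corresponds to $\Kbf(\A)(A[-(i+1)],X^0[-1])$, not to $\Kbf(\A)(A[-(i-1)],X^0[-1])$; that remark plays no role in the proof.
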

\begin{proof}
Consider the distinguished triangle
$$X^0[-1]\xrightarrow{\al\up_{X,1}} \sig_{\ge 1}X\up\to X\up\to X^0$$
and apply the cohomological functor $\Kbf(\A)(A[-i],\blank)$ to this triangle, for arbitrary $A\in\A$ and $i\le n$. Then the statement follows from the resulting long exact sequence.
\end{proof}

\normalcolor

\subsection{Split sequences}
The following characterization of zero objects in $\Kbf^{n+2}(\A)$ is obtained from Lemma~\ref{lem:extend_morph}. 
\begin{prop}\label{prp:characterization_split_seq}
For any $X\up\in\Cbf^{n+2}(\A)$, the following are equivalent.
\begin{enumerate}
  \item $X\up$ is a zero object in $\Kbf^{n+2}(\A)$.
  \item $X\up$ is left $n$-exact and $\ovl{d}_X^n$ is a split epimorphism in $H^0(\A)$.
  \item $X\up$ is right $n$-exact and $\ovl{d}_X^0$ is a split monomorphism in $H^0(\A)$.
\end{enumerate}
\end{prop}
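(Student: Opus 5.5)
I will prove (1)$\Leftrightarrow$(2); the equivalence (1)$\Leftrightarrow$(3) is dual, obtained by exchanging the roles of $\Kbf(\A)(A[-i],\blank)$ and $\Kbf(\A)(\blank,A[i-n])$, and of $\be$ and $\gam$.

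\emph{(1)$\Rightarrow$(2).} If $X\up\cong 0$ in $\Kbf^{n+2}(\A)$, then all the groups $\Kbf(\A)(A[-i],X\up)$ vanish, so $X\up$ is left $n$-exact. For the splitting, I would use the triangle of Proposition~\ref{prop:alphabetagamma} with $k=n+1$:
\[
(\sig_{\le n}X\up)[-1]\to X^{n+1}[-(n+1)]\xrightarrow{\be} X\up\to\sig_{\le n}X\up .
\]
Since $X\up\cong0$, the map $\be$ is zero in $\Kbf(\A)$. In degree $n+1$, a null-homotopy of $\be$ consists of a component $b_n\in\A(X^{n+1},X^n)^0$ and a component $b_{n+1}$ of degree $-1$. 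The $(n+1,n+1)$ entry of $d(\text{homotopy})=\be$ then reads
\[
\id_{X^{n+1}}=d_X^{n,n+1}\ci b_n\pm d_\A(b_{n+1}),
\]
up to sign conventions. Passing to $H^0$ gives $\ovl{d}_X^n\ci\ovl{b_n}=\id$, so $\ovl{d}_X^n$ is a split epimorphism.

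\emph{(2)$\Rightarrow$(1).} Choose $b\in\A(X^{n+1},X^n)^0$ with $\ovl{d}_X^n\ci\ovl{b}=\id$. Then there is $b'\in\A(X^{n+1},X^{n+1})^{-1}$ with
\[
\id_{X^{n+1}}=d_X^{n,n+1}\ci b+d_\A(b').
\]
Apply Lemma~\ref{lem:extend_morph}~(2) with $X\up=Y\up$ and $f\up=\id_{X\up}$; the hypothesis of that lemma is exactly the displayed equation, and $Y\up=X\up$ is left $n$-exact. The lemma gives $\id_{X\up}=0$ in $\Kbf(\A)$, so $X\up$ is a zero object of $\Kbf^{n+2}(\A)$. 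Note that Lemma~\ref{lem:extend_morph}~(2) requires the identity $f^{n+1,n+1}=d_Y^{n,n+1}\ci b_n+d_\A(b_{n+1})$ to hold strictly in $\A$. This is fine here, because $\ovl{d}_X^n\ci\ovl b=\id$ holds in $H^0$, so the difference from $\id$ is a coboundary, which is precisely what supplies $b'$.

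\emph{Expected main obstacle.} The only delicate step is the first: extracting the splitting from the null-homotopy of $\be_{X,n+1}$. This needs careful tracking of the matrix entries of $\be=v_\al$ and of the differential formula in Definition~\ref{dfn:complexes}. I expect only the $(n+1,n)$ and $(n+1,n+1)$ components of the homotopy to contribute to the $(n+1,n+1)$ entry, but this has to be checked. Alternatively, one can note that $\id_{X\up}=0$ is a null-homotopy $\varphi\up$ of $\id_{X\up}$, and read off its $(n+1,n+1)$ entry directly:
\[
\id=d_X^{n,n+1}\ci\varphi^{n+1,n}+(-1)^{n+1}d_\A(\varphi^{n+1,n+1}).
\]
The term $\varphi^{n+2,n+1}\ci d_X^{n+1,n+2}$ that would otherwise appear vanishes because $X^{n+2}=0$. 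This version is cleaner, and I would use it.
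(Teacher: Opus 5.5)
Your proposal is correct and, in the version you say you would actually use, it matches the paper's proof essentially verbatim. For (2)$\Rightarrow$(1), apply Lemma~\ref{lem:extend_morph}~(2) to $\id_{X\up}$. For (1)$\Rightarrow$(2), read off the $(n+1,n+1)$ entry $\id_{X^{n+1}}=(-1)^{n+1}d_{\A}(\varphi^{n+1,n+1})+d_X^{n,n+1}\ci\varphi^{n+1,n}$ of a null-homotopy $\varphi\up$ of $\id_{X\up}$. The equivalence (1)$\Leftrightarrow$(3) then follows by duality.
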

\begin{proof}
Since $(1)\Leftrightarrow(3)$ can be shown in a dual manner, we only show $(1)\Leftrightarrow(2)$.

Suppose that $X\up$ is left $n$-exact and $\ovl{d}_X^n$ is a split epimorphism in $H^0(\A)$. By applying Lemma~\ref{lem:extend_morph} {\rm (2)} to $\id_{X\up}$, we obtain $\id_{X\up}=0$ in $\Kbf(\A)$, which means $X\up\cong 0$ in $\Kbf^{n+2}(\A)$. 

Conversely, suppose that $X\up\cong 0$ holds in $\Kbf(\A)$. The left $n$-exactness of $X\up$ is obvious by Remark~\ref{rem:isom_n-ex}. Since $\id_{X\up}=0$ holds as morphisms in $\Kbf(\A)$, there exists $\varphi\up\in\Cbf(\A)(X\up,X\up)^{-1}$ such that $d\varphi\up=\id_{X\up}$. In particular, we have $\id_{X^{n+1}}=(-1)^{n+1}d_{\A}(\varphi^{n+1,n+1})+d_X^{n,n+1}\ci\varphi^{n+1,n}$, hence $\id_{X^{n+1}}=\ovl{d}_X^n\ci \ovl{\varphi}^{n+1}$ holds in $H^0(\A)$. This shows that $\ovl{d}_X^n$ is a split epimorphism in $H^0(\A)$.
\end{proof} 

\begin{dfn}\label{dfn:split_seq}
We call $X\up\in \Cbf^{n+2}(\A)$ a \emph{split sequence} if the equivalent condition in Proposition~\ref{prp:characterization_split_seq} is satisfied. 
\end{dfn}

By definition, any split sequence is $n$-exact.
For each $A,C\in\ob(\A)$, the following gives an explicit description of a specified split sequence that starts at $A$ and ends with $C$. 
\begin{dfn}\label{dfn:splitN}
Let $A,C\in\ob(\A)$ be any pair of objects. There exists a split sequence $N\up={}_AN\up_C\in\Cbf^{n+2}(\A)$ satisfying $N^0=A$ and $N^{n+1}=C$ given by the following.
\begin{enumerate}
\renewcommand{\labelenumi}{(\roman{enumi})}
\item If $n\ge 2$, then 
$N^i=\begin{cases}
A&\text{if}\ i=0,1\\
C&\text{if}\ i=n,n+1\\ 
0&\text{otherwise}
\end{cases}$\ \ , \ \ 
$d_N^{i,j}=\begin{cases}
\id_A&\text{if}\ (i,j)=(0,1)\\
\id_C&\text{if}\ (i,j)=(n,n+1)\\ 
0&\text{otherwise}
\end{cases}$\ \ .

\item If $n=1$, then $N^i=\begin{cases}
A&\text{if}\ i=0\\
A\oplus C&\text{if}\ i=1\\ 
C&\text{if}\ i=2\\ 
0&\text{otherwise}
\end{cases}$\ \ , \ \ 
$d_N^{i,j}=\begin{cases}
\begin{bmatrix}1\\0\end{bmatrix}
&\text{if}\ (i,j)=(0,1)\\
[0\ 1]&\text{if}\ (i,j)=(1,2)\\ 
0&\text{otherwise}
\end{cases}$\ \ .
\end{enumerate}
Indeed, such $N\up$ obviously satisfies conditions {\rm (2)} and {\rm (3)} in Proposition~\ref{prp:characterization_split_seq}.
\end{dfn}

Since $N\up$ is a split sequence, as in Proposition~\ref{prp:characterization_split_seq}, morphism ${}_AN_{C}\up\to X\up$ is unique in $\Kbf^{n+2}(\A)$. We may take its specified representative by the following lemma.
\begin{lem}\label{lem:NtoX}
Let $A,C\in\ob(\A)$ be any pair of objects, and let ${}_AN\up_C$ be the split sequence in Definition~\ref{dfn:splitN}. For any object $X\up\in\Cbf^{n+2}(\A)$, the following holds.
\begin{enumerate}
\item If $X^0=A$, then there is a morphism $f\up\in Z^0(\Cbf(\A))({}_AN_C\up,X\up)$ such that $f^{0,0}=\id_A$ and $f^{n+1,n+1}=0$ in $\A$.
\item If $X\up$ is a split sequence that satisfies $X^0=A$ and $X^{n+1}=C$, then there is a morphism $g\up\in Z^0(\Cbf(\A))({}_AN_C\up,X\up)$ such that $g^{0,0}=\id_A$ and $g^{n+1,n+1}=\id_C$ in $\A$. 
\end{enumerate}
\end{lem}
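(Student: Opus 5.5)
For {\rm (1)}, I would write the morphism down explicitly and check closedness with Remark~\ref{rem:comparison_with_H3t}-type formulas. The idea is to use the first two terms $A\xrightarrow{\id}A$ of $N\up$ (case $n\ge2$). Set $f^{0,0}=\id_A$ and $f^{1,1}=d_X^{0,1}$. For $j\ge 2$, set $f^{1,j}=\pm d_X^{0,j}$, with signs chosen below, and put every other component equal to $0$. In particular $f^{n+1,n+1}=0$, and $f^{n,j}=0$, so the $C$-part of $N\up$ is sent to zero.

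The closedness condition $(df)^{i,j}=0$ splits into cases.
\begin{itemize}
\item For $i\ge n$ every term vanishes. The only possible survivor is $f^{n+1,j}\ci d_N^{n,n+1}$, and $f^{n+1,j}=0$.
\item For $i=1$, closedness amounts to $d_{\A}(d_X^{0,j})$ being expressed through $\sum_k d_X^{k,j}\ci d_X^{0,k}$. This is exactly $(\ref{eq_d})$, provided the signs of the $f^{1,j}$ are chosen to match.
\item For $i=0$, the condition reads $f^{1,j}\ci d_N^{0,1}=f^{1,j}$ against $d_X^{0,j}\ci\id_A+(-1)^j d_{\A}(f^{0,j})$ plus the $f^{0,k}$ terms. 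Since $f^{0,j}=0$ for $j\ge1$, this forces $f^{1,j}=d_X^{0,j}$ up to the same sign.
\end{itemize}
So the main work is a sign bookkeeping: choose $f^{1,j}=\epsilon_j d_X^{0,j}$ so that both the $i=0$ and $i=1$ equations hold. I expect the $i=0$ equation to fix $\epsilon_j$, and the $i=1$ equation then to follow from $(\ref{eq_d})$. If the signs fail to match, I would instead take $f^{1,j}$ to be components of the canonical map $\sig_{\ge1}$-cone description, using Corollary~\ref{cor:give_a_morph}.

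For $n=1$ the same construction works with $N^1=A\oplus C$. Take $f^{1,1}=[d_X^{0,1}\ 0]$ and $f^{1,2}=[\pm d_X^{0,2}\ 0]$, and set the other components to zero.

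For {\rm (2)}, I would combine {\rm (1)} with its dual. Dually to {\rm (1)}, using the last two terms $C\xrightarrow{\id}C$ of $N\up$, build $f'\up$ with $f^{\prime n+1,n+1}=\id_C$ and $f^{\prime 0,0}=0$. For this I would set $f^{\prime i,n}=\pm d_X^{i,n+1}$ on the $C$-summand. Then $g_0\up=f\up+f'\up$ is closed, with $g_0^{0,0}=\id_A$ and $g_0^{n+1,n+1}=\id_C$.

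This dual construction needs no exactness, so the split hypothesis appears unnecessary. Still, I would double-check that $f'\up$ is a well-defined map out of ${}_AN_C\up$. The dual requires $X^{n+1}=C$, which the hypothesis of {\rm (2)} provides. If a subtlety arises, a fallback is available. Both $N\up$ and $X\up$ are zero in $\Kbf^{n+2}(\A)$. Given a closed $g\up$ whose end terms are right only up to homotopy, I would use Lemma~\ref{lem:4A} to correct $g^{0,0}$ and $g^{n+1,n+1}$ strictly.

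The main obstacle I anticipate is the sign verification in {\rm (1)} against the conventions of Definition~\ref{dfn:complexes}(4).
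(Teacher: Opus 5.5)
Your part~(1) is the paper's construction, and the sign question resolves with no signs at all. With $f^{0,0}=\id_A$ and $f^{1,j}=d_X^{0,j}$, the $(0,j)$-component of $df\up$ is $d_X^{0,j}-f^{1,j}=0$. The $(1,j)$-component is $(-1)^j$ times the left-hand side of $(\ref{eq_d})$ for $i=0$.

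Part~(2) has a genuine gap. The ``dual'' morphism you describe, with $f^{\prime i,n}=\pm d_X^{i,n+1}$, has components $X^i\to C=N^n$. It is therefore a morphism $X\up\to{}_AN\up_C$, not ${}_AN\up_C\to X\up$, so the sum $f\up+f\upp$ makes no sense.

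More seriously, your claim that the split hypothesis is unnecessary is false. Take any closed $g\up\colon{}_AN\up_C\to X\up$ with $n\ge2$. Strict connectivity kills $g^{n,k}$ for $k<n$, and $d_N^{n,n+1}=\id_C$. So the $(n,n+1)$-component of $dg\up=0$ reads
\[
g^{n+1,n+1}=d_X^{n,n+1}\ci g^{n,n}+(-1)^{n+1}d_{\A}(g^{n,n+1}).
\]
Hence $g^{n+1,n+1}=\id_C$ forces $\ovl{d}_X^n$ to be a split epimorphism in $H^0(\A)$. For instance, if $X^i=0$ for $1\le i\le n$ and $C\not\cong0$ in $H^0(\A)$, no such $g\up$ exists.

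Your fallback does not close the gap either. Knowing that both objects vanish in $\Kbf^{n+2}(\A)$ does not produce a closed representative with $\ovl{g}^{n+1}=\id_C$; the obvious representative is $0$. This is because $\ovl{g}^{n+1}$ is not an invariant of the homotopy class (Remark~\ref{rem:non-functorial2}). Lemma~\ref{lem:4A} only corrects end terms that are already correct in $H^0(\A)$.

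The missing idea is to use the split hypothesis at exactly this point. By Proposition~\ref{prp:characterization_split_seq}, $\ovl{d}_X^n$ is a split epimorphism. So there are $\phi_n\in Z^0(\A)(C,X^n)$ and $\phi_{n+1}\in\A(C,X^{n+1})^{-1}$ with $\id_C-d_X^{n,n+1}\ci\phi_n=d_{\A}(\phi_{n+1})$. Put $h^{n,n}=\phi_n$, $h^{n,n+1}=(-1)^{n+1}\phi_{n+1}$, $h^{n+1,n+1}=\id_C$, and all other components zero. For $n=1$, place $\phi_1,\phi_2$ on the $C$-summand of $N^1=A\oplus C$. The only nontrivial component of $dh\up$ is the $(n,n+1)$ one, and it vanishes by the displayed identity. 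Then $g\up=f\up+h\up$, with $f\up$ from~(1), satisfies $g^{0,0}=\id_A$ and $g^{n+1,n+1}=\id_C$. This is the paper's proof.
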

\begin{proof}
{\rm (1)} It is straightforward to show that $f\up\co {}_AN_C\up\to X\up$ defined by
\[
f^{i,j}=\begin{cases}
\id_A&\text{if}\ (i,j)=(0,0)\\
d_X^{0,j}&\text{if}\ i=1\\
0&\text{otherwise}
\end{cases}
\]
if $n\ge2$, and by
\[
f^{i,j}=\begin{cases}
\id_A&\text{if}\ (i,j)=(0,0)\\
[d_X^{0,j}\ 0]&\text{if}\ i=1\\
0&\text{otherwise}
\end{cases}
\]
if $n=1$, gives such a morphism.

{\rm (2)} Since $X\up$ is a split sequence, $\ovl{d}_X^n$ is a split epimorphism in $H^0(\A)$. Thus there exist $\phi_n\in Z^0(\A)(C,X^n)$ and $\phi_{n+1}\in\A(C,X^{n+1})^{-1}$ such that $\id_C-d_X^{n,n+1}\ci\phi_n=d_{\A}(\phi_{n+1})$ in $\A$.
Define $h\up\co {}_AN_C\up\to X\up$ by
\[
h^{i,j}=\begin{cases}
\phi_n&\text{if}\ (i,j)=(n,n)\\
(-1)^{n+1}\phi_{n+1}&\text{if}\ (i,j)=(n,n+1)\\
\id_C&\text{if}\ (i,j)=(n+1,n+1)\\
0&\text{otherwise}
\end{cases}
\]
if $n\ge2$, and by
\[
h^{i,j}=\begin{cases}
[0\ \phi_1]&\text{if}\ (i,j)=(1,1)\\
[0\ \phi_2]&\text{if}\ (i,j)=(n,2)\\
\id_C&\text{if}\ (i,j)=(2,2)\\
0&\text{otherwise}
\end{cases}
\]
if $n=1$.
We may check that it gives a morphism $h\up\in Z^0(\Cbf(\A))({}_AN_C\up,X\up)$. If we put $g\up=f\up+h\up$, where $f\up$ is a morphism obtained in {\rm (1)}, then $g\up$ satisfies the required properties.
\end{proof}

\begin{prop}\label{prop:NtoX}
Let $X\up\in\Cbf^{n+2}(\A)$ be any object, with $X^0=A$ and $X^{n+1}=C$.
The following are equivalent.
\begin{enumerate}
\item $X\up$ is a split sequence.
\item $X\up$ is isomorphic to ${}_AN_C\up$ in $\Kbf^{1,n,1}(\A)$, where ${}_AN_C\up$ is the split sequence given in Definition~\ref{dfn:splitN}.
\end{enumerate}
Moreover, if either of these equivalent conditions {\rm (1),(2)} is satisfied, then a morphism $f\up\in Z^0(\Cbf(\A))({}_AN_C\up,X\up)$ that gives an isomorphism in $\Kbf^{1,n,1}(\A)$ can be taken to satisfy $f^{0,0}=\id_A$ and $f^{n+1,n+1}=\id_C$ in $\A$.
\end{prop}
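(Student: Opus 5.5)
For (2)$\Rightarrow$(1), I would compose with the canonical functor $\Kbf^{1,n,1}(\A)\to\Kbf^{n+2}(\A)$ of Definition~\ref{dfn:K1n1}. This shows that $X\up$ is isomorphic to ${}_AN\up_C$ in $\Kbf^{n+2}(\A)$. Since ${}_AN\up_C$ is a zero object there, so is $X\up$, and hence $X\up$ is split by Proposition~\ref{prp:characterization_split_seq}.

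For (1)$\Rightarrow$(2), together with the final assertion, I would take the morphism $g\up\in Z^0(\Cbf(\A))({}_AN\up_C,X\up)$ supplied by Lemma~\ref{lem:NtoX}~(2), which satisfies $g^{0,0}=\id_A$ and $g^{n+1,n+1}=\id_C$ strictly in $\A$.
\begin{itemize}
\item Both ${}_AN\up_C$ and $X\up$ are split, hence zero objects in $\Kbf^{n+2}(\A)$, so $g\up$ is trivially an isomorphism in $\Kbf^{n+2}(\A)$.
\item Both objects are $n$-exact, since every split sequence is $n$-exact.
\item The two objects share the end terms $A$ and $C$, and $\ovl{g}^0=\id$, $\ovl{g}^{n+1}=\id$ hold in $H^0(\A)$.
\end{itemize}
These are exactly the hypotheses of Corollary~\ref{cor:htpy_modif2}, which then shows that $g\up$ is an isomorphism in $\Kbf^{1,n,1}(\A)$. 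This proves (2), and $f\up=g\up$ satisfies the strict conditions $f^{0,0}=\id_A$ and $f^{n+1,n+1}=\id_C$ required in the final assertion.

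The only point I expect to need care is whether Corollary~\ref{cor:htpy_modif2} covers the case $n=1$, since the modification lemmas (Corollary~\ref{cor:htpy_modif}) assume $n\ge2$. Its proof, however, treats $n=1$ separately, so the argument above applies uniformly for all $n$. The substantive work was already done in Lemma~\ref{lem:NtoX}, where the strict identities on the end terms are arranged by hand.
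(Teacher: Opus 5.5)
Your proposal is correct and follows essentially the same route as the paper. Both arguments take the morphism from Lemma~\ref{lem:NtoX}~(2), note that it is an isomorphism in $\Kbf^{n+2}(\A)$ because both objects are zero there, and upgrade it to an isomorphism in $\Kbf^{1,n,1}(\A)$ via Corollary~\ref{cor:htpy_modif2}; the direction (2)$\Rightarrow$(1) is handled, as in the paper, by passing to $\Kbf^{n+2}(\A)$.
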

\begin{proof}
$(2)\Rightarrow(1)$ is obvious. Let us show $(1)\Rightarrow(2)$.
Suppose that $X\up$ is a split sequence. By Lemma~\ref{lem:NtoX} {\rm (2)}, there exists $f\up\in Z^0(\Cbf(\A))({}_AN_C\up,X\up)$ such that $f^{0,0}=\id_A$ and $f^{n+1,n+1}=\id_C$ in $\A$. Since both $X\up$ and ${}_AN\up_C$ are zero objects in $\Kbf^{n+2}(\A)$, it follows that $f\up$ is an isomorphism in $\Kbf^{n+2}(\A)$.
Then, Corollary~\ref{cor:htpy_modif2} shows that $f\up$ is an isomorphism in $\Kbf^{1,n,1}(\A)$.

The last assertion is obvious from the above argument.
\end{proof}

\begin{cor}\label{cor:NtoX}
Let $A,C\in\ob(\A)$ be any pair of objects.
All split sequences $X\up\in\Cbf^{n+2}(\A)$ with $X^0=A$ and $X^{n+1}=C$ are mutually isomorphic in $\Kbf^{1,n,1}(\A)$.
\end{cor}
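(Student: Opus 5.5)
This follows directly from Proposition~\ref{prop:NtoX}, using ${}_AN_C\up$ from Definition~\ref{dfn:splitN} as a common reference object. Let $X\up, X\upp\in\Cbf^{n+2}(\A)$ be split sequences with $X^0=X^{\prime 0}=A$ and $X^{n+1}=X^{\prime n+1}=C$.

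Applying the implication $(1)\Rightarrow(2)$ of Proposition~\ref{prop:NtoX} to each of $X\up$ and $X\upp$ gives isomorphisms
\[
f\up\co {}_AN_C\up\to X\up
\quad\text{and}\quad
f\upp\co {}_AN_C\up\to X\upp
\]
in $\Kbf^{1,n,1}(\A)$. Moreover, by the final assertion of Proposition~\ref{prop:NtoX}, both can be chosen with identity $(0,0)$- and $(n+1,n+1)$-components.

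Since $\Kbf^{1,n,1}(\A)$ is a category (an ideal quotient of $Z^0(\Cbf^{n+2}(\A))$), isomorphisms compose and invert. Hence $f\upp\ci (f\up)^{-1}\co X\up\to X\upp$ is an isomorphism in $\Kbf^{1,n,1}(\A)$.

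Because $f\up$ and $f\upp$ have identity end components, the functors $s,t$ of Definition~\ref{def:K1n1_st} send them to identities. Proposition~\ref{prop:9-A} then shows that the inverse has $\ovl{(\blank)}^0=\id$ and $\ovl{(\blank)}^{n+1}=\id$. So the resulting isomorphism $X\up\to X\upp$ also induces the identity on $A$ and $C$ in $H^0(\A)$.

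The statement asks only for isomorphism in $\Kbf^{1,n,1}(\A)$, so this last observation is not needed for the corollary itself; it is recorded for possible later use. No step is a real obstacle, since the substantive work was already done in Corollary~\ref{cor:htpy_modif2} and Proposition~\ref{prop:NtoX}.
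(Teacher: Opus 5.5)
Your proof is correct and is essentially the paper's own argument: the paper also deduces the corollary directly from Proposition~\ref{prop:NtoX}, using ${}_AN_C\up$ as a common reference object and composing the resulting isomorphisms in $\Kbf^{1,n,1}(\A)$. Your additional remark, via Proposition~\ref{prop:9-A}, that the composite induces identities on $A$ and $C$ is valid, but as you note it is not needed for the statement.
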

\begin{proof}
This is immediate from Proposition~\ref{prop:NtoX}. Indeed, any split sequence $X\up\in\Cbf^{n+2}(\A)$ with $X^0=A$ and $X^{n+1}=C$ is isomorphic to ${}_AN\up_C$ in $\Kbf^{1,n,1}(\A)$.
\end{proof}

\normalcolor

\subsection{$n$-pullback morphisms}

\begin{dfn}\label{dfn:pull_of_admissible}
Let $f\up\in Z^0(\Cbf^{n+2}(\A))(X\up,Y\up)$ be any morphism.
\begin{enumerate}
\item $f\up$ is called an \emph{$n$-pullback morphism} if 
$\Kbf(\A)(A[-i],\CoCone f\up)=0$ holds for all $A\in\A$ whenever $i\le n+1$, and satisfies $\ovl{f}^0=\id_{X^0}$ in $H^0(\A)$.
\item $f\up$ is called an \emph{$n$-pushout morphism} if $\Kbf(\A)(\Cone f\up,A[i-n-1])=0$ holds for all $A\in\A$ whenever $i\le n+1$, and satisfies $\ovl{f}^{n+1}=\id_{X^{n+1}}$ in $H^0(\A)$. 
\end{enumerate}
\end{dfn}

\begin{lem}\label{lem:equivalent_n-pbm1}
Let $X\up,Y\up\in\Cbf^{n+2}(\A)$ be any pair of objects.
Suppose that a morphism $f\up\in Z^0(\Cbf^{n+2}(\A))(X\up,Y\up)$ satisfies $\ovl{f}^0=\id$ in $H^0(\A)$. Then, the following are equivalent.
\begin{enumerate}
\item $f\up$ is an $n$-pullback morphism.
\item $\Kbf(\A)(W\up,\CoCone f\up)=0$ for all $W\up\in\Cbf_{\le n+1}(\A)$.
\item $\CoCone((\sig_{\ge 1}f\up)[1])$ is left $n$-exact.
\end{enumerate}
\end{lem}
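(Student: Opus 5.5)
We prove $(1)\Leftrightarrow(2)$ by a dévissage of $W\up$ along its components, and $(1)\Leftrightarrow(3)$ by comparing $\CoCone f\up$ with $\CoCone(\sig_{\ge1}f\up)$ via Corollary~\ref{cor:devide_f}. Throughout, we use that $\ovl{f}^0=\id$ means $\sig_{\le 0}f\up$ is an isomorphism in $\Kbf(\A)$.

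For $(2)\Rightarrow(1)$, note that $A[-i]\in\Cbf_{\le n+1}(\A)$ whenever $i\le n+1$, so (1) is the special case $W\up=A[-i]$ of (2); the condition $\ovl{f}^0=\id$ is assumed. For $(1)\Rightarrow(2)$, we induct on the number of nonzero components of the bounded object $W\up\in\Cbf_{\le n+1}(\A)$. If $W\up$ is concentrated in a single degree $i\le n+1$, then $W\up=A[-i]$ and the vanishing is (1). Otherwise, let $k$ be the largest index with $W^k\neq0$ and apply Proposition~\ref{prop:alphabetagamma}. This gives a distinguished triangle
\[
(\sig_{\le k-1}W\up)[-1]\to \sig_{\ge k}W\up\to W\up\to \sig_{\le k-1}W\up
\]
in which $\sig_{\ge k}W\up=W^k[-k]$ and $\sig_{\le k-1}W\up\in\Cbf_{\le n+1}(\A)$ has fewer nonzero components. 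Applying the cohomological functor $\Kbf(\A)(\blank,\CoCone f\up)$, the outer terms vanish by induction, hence so does the middle term.

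For $(1)\Leftrightarrow(3)$, Corollary~\ref{cor:devide_f}~(2) gives $\CoCone f\up\cong\CoCone(\sig_{\ge1}f\up)$ in $\Kbf(\A)$, since $\ovl{f}^0$ is an isomorphism. Hence (1) is equivalent to $\Kbf(\A)(A[-i],\CoCone(\sig_{\ge1}f\up))=0$ for all $A$ and all $i\le n+1$. Next we identify $\CoCone((\sig_{\ge1}f\up)[1])$ with $\CoCone(\sig_{\ge1}f\up)[1]$, up to the sign conventions for shifts of cones, which only change the differentials by signs and do not affect the isomorphism class in $\Kbf(\A)$. Then
\[
\Kbf(\A)(A[-i],\CoCone((\sig_{\ge1}f\up)[1]))\cong\Kbf(\A)(A[-i-1],\CoCone(\sig_{\ge1}f\up)),
\]
and the reindexing $i\le n\leftrightarrow i+1\le n+1$ matches the two ranges. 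We must also check that $\CoCone((\sig_{\ge1}f\up)[1])$ actually lies in $\Cbf^{n+2}(\A)$, since left $n$-exactness is only defined there. Its components are $(\sig_{\ge 1}X)^{i+1}\oplus(\sig_{\ge1}Y)^{i}$, which are nonzero only for $0\le i\le n+1$. Under the identification above, the two vanishing conditions are then literally the same.

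The main obstacle is this last bookkeeping step: verifying that the explicit cocone (whose definition is only sketched, "similarly to the cone") has components in $[0,n+1]$ and agrees up to isomorphism in $\Kbf(\A)$ with the shift of $\CoCone(\sig_{\ge1}f\up)$. If the degree ranges do not match exactly, I would fall back on Remark~\ref{rem:isom_n-ex} together with the isomorphism $\CoCone\cong\Cone[-1]$ from Definition~\ref{dfn:cone} to transport the condition.
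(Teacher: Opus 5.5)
Your proof is correct and is essentially the paper's own argument: $(1)\Leftrightarrow(2)$ is the d\'evissage along the triangles of Proposition~\ref{prop:alphabetagamma}, and $(1)\Leftrightarrow(3)$ is Corollary~\ref{cor:devide_f}~(2) combined with the shift reindexing $i\le n\leftrightarrow i+1\le n+1$. The bookkeeping step you flagged as the main obstacle does go through. The components of $\CoCone((\sig_{\ge1}f\up)[1])$ lie in degrees $[0,n+1]$, as you computed. Moreover, $\CoCone((\sig_{\ge1}f\up)[1])\cong(\CoCone\sig_{\ge1}f\up)[1]$ in $\Kbf(\A)$: use $\CoCone\cong\Cone[-1]$ and the fact that $\Cone(g\up[1])$ and $(\Cone g\up)[1]$ differ only by the sign of the $g$-entry, which the degreewise diagonal isomorphism $\mathrm{diag}(1,-1)$ absorbs.
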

\begin{proof}
$(1)\Leftrightarrow(2)$ is shown by the usual argument in a triangulated category, using Proposition~\ref{prop:alphabetagamma}.
$(1)\Leftrightarrow(3)$ follows from Corollary~\ref{cor:devide_f} {\rm (2)}.
\end{proof}

\begin{prop}\label{prop:npb_of_split}
If $f\up\in Z^0(\Cbf^{n+2}(\A))(X\up,Y\up)$ is an $n$-pullback morphism and if $Y\up$ is a split sequence, then $X\up$ is also a split sequence.
\end{prop}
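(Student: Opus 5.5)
We will use characterization (3) of Proposition~\ref{prp:characterization_split_seq}: we show that $X\up$ is right $n$-exact and that $\ovl{d}_X^0$ is a split monomorphism in $H^0(\A)$. Alternatively, we can aim directly at (1), namely that $X\up\cong 0$ in $\Kbf^{n+2}(\A)$. The cleanest route is through the distinguished triangle
\[
\CoCone f\up\to X\up\xrightarrow{f\up} Y\up\to \CoCone f\up[1]
\]
in $\Kbf(\A)$. Since $Y\up\cong 0$ in $\Kbf(\A)$, we get $X\up\cong\CoCone f\up$ in $\Kbf(\A)$.

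Next we show $\Kbf(\A)(X\up,X\up)=0$. We have $X\up\in\Cbf_{\le n+1}(\A)$, indeed $X\up\in\Cbf^{n+2}(\A)$. By Lemma~\ref{lem:equivalent_n-pbm1} {\rm (2)}, the $n$-pullback property gives $\Kbf(\A)(W\up,\CoCone f\up)=0$ for all $W\up\in\Cbf_{\le n+1}(\A)$. Taking $W\up=X\up$ gives $\Kbf(\A)(X\up,\CoCone f\up)=0$. Composing with the isomorphism $\CoCone f\up\cong X\up$ yields $\Kbf(\A)(X\up,X\up)=0$. Hence $\id_{X\up}=0$, so $X\up$ is a zero object of $\Kbf^{n+2}(\A)$. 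This is condition (1) of Proposition~\ref{prp:characterization_split_seq}, so $X\up$ is a split sequence by Definition~\ref{dfn:split_seq}.

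The only point needing care is that the cocone triangle is genuinely distinguished in $\Kbf(\A)$ with the stated maps. This is recorded in Definition~\ref{dfn:cocone} via $\CoCone f\up\cong(\Cone f\up)[-1]$ and the standard cone triangle $X\up\to Y\up\to\Cone f\up\to X\up[1]$. Rotating that triangle gives $\Cone f\up[-1]\to X\up\to Y\up$, and the vanishing of $Y\up$ makes the first map an isomorphism. Note that the hypothesis $\ovl{f}^0=\id$ is not even needed for this argument. I expect no real obstacle: the main step is simply invoking Lemma~\ref{lem:equivalent_n-pbm1} {\rm (2)} with $W\up=X\up$.
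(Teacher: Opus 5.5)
Your proposal is correct and is essentially the paper's proof. Both arguments use the cocone triangle together with Lemma~\ref{lem:equivalent_n-pbm1} to obtain $\Kbf(\A)(X\up,\CoCone f\up)=0$. The paper then deduces $\Kbf(\A)(X\up,X\up)=0$ from exactness of $\Kbf(\A)(X\up,\CoCone f\up)\to\Kbf(\A)(X\up,X\up)\to\Kbf(\A)(X\up,Y\up)$ with $\Kbf(\A)(X\up,Y\up)=0$, whereas you deduce it from the isomorphism $\CoCone f\up\cong X\up$; this is only a cosmetic difference.
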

\begin{proof}
Since $Y\up[-1]\to \CoCone f\up\to X\up\xrightarrow{f\up}Y\up$ is a distinguished triangle in $\Kbf(\A)$, the sequence
\[
\Kbf(\A)(\blank,\CoCone f\up)\to 
\Kbf(\A)(\blank,X\up)\to
\Kbf(\A)(\blank,Y\up) 
\]
is exact. Since $Y\up$ is a split sequence, we have $\Kbf(\A)(X\up,Y\up)=0$. Also, since $f\up$ is an $n$-pullback morphism, we have $\Kbf(\A)(X\up,\CoCone f\up)=0$ by Lemma~\ref{lem:equivalent_n-pbm1}. Thus, the above exactness implies $\Kbf(\A)(X\up,X\up)=0$,  and hence $X\up\cong 0$ holds in $\Kbf(\A)$.
\end{proof}

\normalcolor

\normalcolor

\begin{prop}\label{prp:univ_npb}
Assume that $Y\up\in\Cbf^{n+2}(\A)$ is a left $n$-exact sequence
and $f\up\co X\up\to Y\up$ is an $n$-pullback morphism.
Let $W\up\in\Cbf^{n+2}(\A)$ be any object. Suppose that for a morphism $g\up\in Z^0(\Cbf^{n+2}(\A))(W\up,Y\up)$, there exist $b\in Z^0(\A)(W^{n+1},X^{n+1})$ and $\phi_n\in\A(W^{n+1},Y^n)^0$ satisfying $\ovl{g}^{n+1}=\ovl{f}^{n+1}\circ\ovl{b}+\ovl{d}_Y^n\circ\ovl{\phi_n}$ in $H^0(\A)$.
Then, there exists $e\up\in Z^0(\Cbf^{n+2}(\A))(W\up,X\up)$ that satisfies $g\up=f\up\ci e\up$ in $\Kbf^{n+2}(\A)$. Moreover, such $e\up$ is unique in $\Kbf^{n+2}(\A)$.
Furthermore, $e\up$ can be chosen satisfy $\ovl{e}^0=\ovl{g}^0$ and $\ovl{e}^{n+1}=\ovl{b}$ in $H^0(\A)$ too.
\end{prop}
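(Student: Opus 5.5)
The plan is to work in the triangulated category $\Kbf(\A)$ with the distinguished triangle
\[
\CoCone f\up\to X\up\xrightarrow{f\up}Y\up\xrightarrow{\ \delta\ }(\CoCone f\up)[1]\cong\Cone f\up .
\]
Existence of $e\up$ amounts to showing $\delta\ci g\up=0$ in $\Kbf(\A)$. Uniqueness will come from the vanishing $\Kbf(\A)(W\up,\CoCone f\up)=0$, which holds by Lemma~\ref{lem:equivalent_n-pbm1} {\rm (2)} because $W\up\in\Cbf_{\le n+1}(\A)$. Indeed, if $f\up\ci e\up=f\up\ci e\upp$, then $e\up-e\upp$ factors through $\CoCone f\up\to X\up$, so it is $0$.

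\textbf{Existence.} I would use the triangle of Proposition~\ref{prop:alphabetagamma} for $W\up$ with $k=n+1$:
\[
(\sig_{\le n}W\up)[-1]\to W^{n+1}[-(n+1)]\xrightarrow{\be_{W,n+1}\up}W\up\xrightarrow{\gam_{W,n+1}\up}\sig_{\le n}W\up .
\]
First, $\Kbf(\A)(\sig_{\le n}W\up,\Cone f\up)\cong\Kbf(\A)((\sig_{\le n}W\up)[-1],\CoCone f\up)=0$. This holds by Lemma~\ref{lem:equivalent_n-pbm1} {\rm (2)}, since $(\sig_{\le n}W\up)[-1]$ is concentrated in degrees $[1,n+1]$. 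So it suffices to show $\delta\ci g\up\ci\be_{W,n+1}\up=0$, and for this it is enough to show that $g\up\ci\be_{W,n+1}\up$ factors through $f\up$.

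Take the candidate lift $\be_{X,n+1}\up\ci b[-(n+1)]\co W^{n+1}[-(n+1)]\to X\up$. The difference $g\up\ci\be_{W,n+1}\up-f\up\ci\be_{X,n+1}\up\ci b[-(n+1)]$ is a closed morphism $W^{n+1}[-(n+1)]\to Y\up$. Its only nonzero components are those with source $W^{n+1}$, and its $(n+1,n+1)$ component is $g^{n+1,n+1}-f^{n+1,n+1}\ci b$. By hypothesis this equals $d_Y^{n,n+1}\ci\phi_n+d_\A(\phi_{n+1})$ for some $\phi_{n+1}$ of degree $-1$. I would then write down an explicit null-homotopy, exactly as in Lemma~\ref{lem:extend_morph} {\rm (2)}. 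Its components are $\pm\phi_{n+1}$ at $(n+1,n+1)$ and $\phi_n$ at $(n+1,n)$, and the lower components with $j<n$ are killed or absorbed using the left $n$-exactness of $Y\up$: by the triangle argument there, the morphism vanishes once its top component is of this form. Hence $g\up\ci\be_{W,n+1}\up=f\up\ci(\be_{X,n+1}\up\ci b[-(n+1)])$ in $\Kbf(\A)$, and $e\up$ exists with $f\up\ci e\up=g\up$.

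\textbf{Normalizing the end terms.} This is where the remaining care lies, since by Remark~\ref{rem:non-functorial2} the components $\ovl{e}^0$ and $\ovl{e}^{n+1}$ are not invariants of the class in $\Kbf(\A)$. I plan to fix each end separately by adding $d\xi\up$, where $\xi\up$ is supported only at $(1,0)$ and $(n+1,n)$. Such a modification does not change $e\up$ in $\Kbf(\A)$. It changes $\ovl{e}^0$ by a term of the form $\ovl{\xi^{1,0}}\ci\ovl{d}_W^0$, and $\ovl{e}^{n+1}$ by a term of the form $\ovl{d}_X^n\ci\ovl{\xi^{n+1,n}}$. These two positions are distinct for every $n\ge1$, so the two corrections do not interfere.

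At degree $0$: write $f\up\ci e\up-g\up=d\varphi\up$. Reading off the $(0,0)$ component and using $\ovl{f}^0=\id$ gives $\ovl{e}^0-\ovl{g}^0=\pm\ovl{\varphi^{1,0}}\ci\ovl{d}_W^0$ in $H^0(\A)$. Taking $\xi^{1,0}=\mp\varphi^{1,0}$ then yields $\ovl{e}^0=\ovl{g}^0$.

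At degree $n+1$: the lift constructed above, through the triangle, satisfies $e\up\ci\be_{W,n+1}\up=\be_{X,n+1}\up\ci b[-(n+1)]$ in $\Kbf(\A)$. Reading off the top component gives $\ovl{e}^{n+1}=\ovl{b}+\ovl{d}_X^n\ci\ovl{\psi}$ for some $\psi$. Taking $\xi^{n+1,n}=-\psi$ then gives $\ovl{e}^{n+1}=\ovl{b}$.

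I expect the main obstacle to be the explicit sign and homotopy bookkeeping, both in showing the difference of the two lifts of $g\up\ci\be_{W,n+1}\up$ is null-homotopic and in tracking these end-component corrections.
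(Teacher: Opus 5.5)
Your proposal is correct and follows essentially the paper's route. The shared ingredients are:
\begin{itemize}
\item the triangle for $W\up$ at $k=n+1$;
\item the explicit lift $\be_{X,n+1}\up\ci b[-(n+1)]$ of $g\up\ci\be_{W,n+1}\up$ (by strict connectivity the difference has only its $(n+1,n+1)$ component, so no ``lower components'' need absorbing and the explicit homotopy suffices);
\item the vanishing of $\Kbf(\A)(\blank,\CoCone f\up)$ on $\Cbf_{\le n+1}(\A)$;
\item correction of the end terms by adding an exact term.
\end{itemize}
One small point: $e\up\ci\be_{W,n+1}\up=\be_{X,n+1}\up\ci b[-(n+1)]$ does not come ``through the triangle'' automatically. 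As in the paper, it follows from injectivity of $f\up\ci\blank$ on $\Kbf(\A)(W^{n+1}[-(n+1)],X\up)$, that is, from $\Kbf(\A)(W^{n+1}[-(n+1)],\CoCone f\up)=0$.
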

\begin{proof}
By a similar way as in the proof of Lemma~\ref{lem:4A}, we may assume $f^{0,0}=\id$ from the beginning.
Let $\CoCone f\up\to X\up\xrightarrow{f\up}Y\up\xrightarrow{f\upp}(\CoCone f\up)[1]$ be the distinguished triangle in $\Kbf(\A)$ associated to $f\up$.
Uniqueness of $e\up$ follows from the exactness of
\[
0\to\Kbf(\A)(W\up,X\up)\xrightarrow{f\up\ci\blank}\Kbf(\A)(W\up,Y\up).
\]
As in Proposition~\ref{prop:alphabetagamma}, we also have a distinguished triangle
\[
(\sig_{\le n}W\up)[-1]\xrightarrow{\al_{W,n+1}}W^{n+1}[-(n+1)]\xrightarrow{\be_{W,n+1}\up}W\up\xrightarrow{\gam_{W,n+1}\up}\sig_{\le n}W\up
\]
in $\Kbf(\A)$.
By the assumption, there exists $\phi_{n+1}\in\A(W^{n+1},Y^{n+1})^{-1}$ such that \begin{equation}\label{eq_gfp}
g^{n+1,n+1}=f^{n+1,n+1}\ci b+d_Y^{n,n+1}\ci\phi_n+d_{\A}(\phi_{n+1})
\end{equation}
in $\A$.
Define $g\upp\in Z^0(\Cbf(\A))(W^{n+1}[-(n+1)],X\up)$ and $\varphi\up\in\Cbf(\A)(W^{n+1}[-(n+1)],Y\up)^{-1}$ by
\[
g^{\prime i,j}=\begin{cases}
b & \text{if}\ (i,j)=(n+1,n+1)\\
0 & \text{otherwise}
\end{cases}
\quad,\quad
\varphi^{i,j}=\begin{cases}
\phi_n & \text{if}\ (i,j)=(n+1,n)\\
\phi_{n+1} & \text{if}\ (i,j)=(n+1,n+1)\\
0 & \text{otherwise}
\end{cases}\ .
\]
Then by $(\ref{eq_gfp})$, we have $g\up\ci\be_{W,n+1}\up-f\up\ci g\upp=d\varphi\up$ in $\Cbf(\A)$. Thus $g\up\ci\be_{W,n+1}\up=f\up\ci g\upp$ holds in $\Kbf(\A)$, hence we obtain a morphism of distinguished triangles
$$
\begin{tikzcd}
(\sig_{\le n}W\up)[-1] & W^{n+1}[-(n+1))] & W\up & \sig_{\le n}W\up \\ 
\CoCone f\up & X\up & Y\up & (\CoCone f\up)[1]
\Ar{1-1}{1-2}{"\al\up_{W,n+1}"}
\Ar{1-2}{1-3}{"\be\up_{W,n+1}"}
\Ar{1-3}{1-4}{"\gam\up_{W,n+1}"}
\Ar{2-1}{2-2}{}
\Ar{2-2}{2-3}{"f\up"'}
\Ar{2-3}{2-4}{"f\upp"'}
\Ar{1-1}{2-1}{}
\Ar{1-2}{2-2}{"g\upp"}
\Ar{1-3}{2-3}{"g\up"}
\Ar{1-4}{2-4}{}
\end{tikzcd}
$$
in $\Kbf(\A)$.
Since $f\up$ is an $n$-pullback morphism, we have $\Kbf(\A)(\sig_{\le n}W\up,(\CoCone f\up)[1])\cong\Kbf(\A)((\sig_{\le n}W\up)[-1],\CoCone f\up)=0$. This implies $f\upp\ci g\up=0$, hence there exists $e\up\in Z^0(\Cbf^{n+2}(\A))(W\up,X\up)$ that satisfies $g\up=f\up\ci e\up$ in $\Kbf(\A)$.
Then, since $f\up\ci(e\up\ci\be\up_{W,n+1}-g\upp)=g\up\ci\be\up_{W,n+1}-g\up\ci\be\up_{W,n+1}=0$, morphism $e\up\ci\be\up_{W,n+1}-g\upp$ should factor through $\CoCone f\up$ in $\Kbf(\A)$. By $\Kbf(\A)(W^{n+1}[-(n+1)],\CoCone f\up)=0$, this implies $e\up\ci\be\up_{W,n+1}=g\upp$ in $\Kbf(\A)$.

Let us show that we may modify $e\up$ to moreover satisfy $\ovl{e}^0=\ovl{g}^0$ and $\ovl{e}^{n+1}=\ovl{b}$ in $H^0(\A)$.
By $e\up\ci\be\up_{W,n+1}=g\upp$, there exist $\eta_n\in\A(W^{n+1},X^n)^0$ and $\eta_{n+1}\in\A(W^{n+1},X^{n+1})^{-1}$ such that
\[
b-e^{n+1,n+1}=d_X^{n,n+1}\ci\eta_n+d_{\A}(\eta_{n+1}).
\]
Also, by $g\up=f\up\ci e\up$ in $\Kbf^{n+2}(\A)$, there exists $\psi\up\in\Cbf^{n+2}(\A)(W\up,Y\up)^{-1}$ that satisfies 
$d\psi\up=g\up-f\up\ci e\up$.
In particular we have $(d\psi)^{0,0}=g^{0,0}-e^{0,0}$.
If we define $\mu\up\in\Cbf(\A)(W\up,X\up)^{-1}$ by
\[
\mu^{i,j}=\begin{cases}
\psi^{i,j}&\text{if}\ j=0\\
\eta_n&\text{if}\ (i,j)=(n+1,n)\\
(-1)^{n+1}\eta_{n+1}&\text{if}\ (i,j)=(n+1,n+1)\\
0&\text{otherwise}
\end{cases}
\]
then it satisfies $(d\varphi\up)^{0,0}=(d\mu\up)^{0,0}$.
Then, $e\upp=e\up+d\mu\up$ satisfies $g\up=f\up\ci e\upp$ in $\Kbf^{n+2}(\A)$ and
$\ovl{e'}^0=\ovl{g}^0$, $\ovl{e'}^{n+1}=\ovl{b}$ in $H^0(\A)$ as intended.
\end{proof}

\begin{cor}\label{cor:univ_npb}
Let $Y\up\in\Cbf^{n+2}(\A)$ be a left $n$-exact sequence, and let $f_1\up\co X_1\up\to Y\up$, $f_2\up\co X_2\up\to Y\up$ be $n$-pullback morphisms.
Suppose that there is $b\in Z^0(\A)(X_1^{n+1},X_2^{n+1})$ satisfying $\ovl{f_2}^{n+1}\ci\ovl{b}=\ovl{f_1}^{n+1}$ in $H^0(\A)$. Then, the following holds.
\begin{enumerate}
\item There exists $e\up\in Z^0(\Cbf^{n+2}(\A))(X_1\up,X_2\up)$ satisfying $f_2\up\ci e\up=f_1\up$ in $\Kbf^{n+2}(\A)$, which is unique in $\Kbf^{n+2}(\A)$. Moreover, $e\up$ can be chosen to satisfy $\ovl{e}^0=\id$ and $\ovl{e}^{n+1}=\ovl{b}$ in $H^0(\A)$ too.
\item If $\ovl{b}$ is an isomorphism in $H^0(\A)$, then $e\up$ obtained in {\rm (1)} is an isomorphism in $\Kbf^{n+2}(\A)$.
\item If $X\up$ is right $n$-exact and $\ovl{b}=\id$ $($hence $\ovl{f_1}^{n+1}=\ovl{f_2}^{n+1}$$)$ in $H^0(\A)$, then $e\up$ in {\rm (1)} is an isomorphism in $\Kbf^{1,n,1}(\A)$.
\end{enumerate}
\end{cor}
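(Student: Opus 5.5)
For (1), I will apply Proposition~\ref{prp:univ_npb} with $W\up=X_1\up$, $f\up=f_2\up$ and $g\up=f_1\up$. The hypotheses of that proposition are all available. First, $Y\up$ is left $n$-exact and $f_2\up$ is an $n$-pullback morphism. Second, with $b$ as given and $\phi_n=0$, the required identity $\ovl{f_1}^{n+1}=\ovl{f_2}^{n+1}\ci\ovl{b}$ is exactly our assumption. The proposition then produces $e\up$ with $f_2\up\ci e\up=f_1\up$ in $\Kbf^{n+2}(\A)$, unique there, and with $\ovl{e}^0=\ovl{f_1}^0=\id$ and $\ovl{e}^{n+1}=\ovl{b}$. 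Here $\ovl{f_1}^0=\id$ holds because $f_1\up$ is an $n$-pullback morphism.

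For (2), let $\ovl{b'}$ be the inverse of $\ovl{b}$ in $H^0(\A)$, with $b'\in Z^0(\A)(X_2^{n+1},X_1^{n+1})$. Then $\ovl{f_1}^{n+1}\ci\ovl{b'}=\ovl{f_2}^{n+1}$. Applying (1) with the roles of $f_1\up$ and $f_2\up$ exchanged gives $e\upp\co X_2\up\to X_1\up$ with $f_1\up\ci e\upp=f_2\up$ in $\Kbf^{n+2}(\A)$. Now $f_2\up\ci(e\up\ci e\upp)=f_2\up=f_2\up\ci\id$. Proposition~\ref{prp:univ_npb} applies to $W\up=X_2\up$, $g\up=f_2\up$ with $b=\id$, and it gives uniqueness of factorizations through $f_2\up$. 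Hence $e\up\ci e\upp=\id$ in $\Kbf^{n+2}(\A)$. Symmetrically, $e\upp\ci e\up=\id$, so $e\up$ is an isomorphism in $\Kbf^{n+2}(\A)$.

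For (3), I read ``$X\up$'' as $X_1\up$ or $X_2\up$. By (2), $e\up$ is an isomorphism in $\Kbf^{n+2}(\A)$ with $\ovl{e}^0=\id$ and $\ovl{e}^{n+1}=\id$, and also $X_1^0=X_2^0$ and $X_1^{n+1}=X_2^{n+1}$. I then want to invoke Corollary~\ref{cor:htpy_modif2}, which requires one of $X_1\up,X_2\up$ to be $n$-exact. So the main obstacle is left $n$-exactness of, say, $X_1\up$, given its right $n$-exactness.

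I will get left $n$-exactness from the distinguished triangle $\CoCone f_1\up\to X_1\up\to Y\up\to$. For $A\in\ob(\A)$ and $i\le n$, we have $\Kbf(\A)(A[-i],\CoCone f_1\up)=0$ because $i\le n+1$ and $f_1\up$ is an $n$-pullback morphism. We also have $\Kbf(\A)(A[-i],Y\up)=0$ because $Y\up$ is left $n$-exact. The long exact sequence then gives $\Kbf(\A)(A[-i],X_1\up)=0$. The same argument applies to $X_2\up$. Hence both $X_1\up$ and $X_2\up$ are $n$-exact (given right exactness of either one), and Corollary~\ref{cor:htpy_modif2} shows that $e\up$ is an isomorphism in $\Kbf^{1,n,1}(\A)$.

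Note that Corollary~\ref{cor:htpy_modif2} literally assumes $X^0=Y^0$ and $X^{n+1}=Y^{n+1}$ as objects. If $b$ is not an endomorphism, the hypothesis $\ovl{b}=\id$ forces $X_1^{n+1}=X_2^{n+1}$, and likewise $\ovl{e}^0=\id$ forces $X_1^0=X_2^0$. So the corollary applies.
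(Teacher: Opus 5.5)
Your proposal is correct and follows essentially the same route as the paper. Part (1) is Proposition~\ref{prp:univ_npb} with $\phi_n=0$, part (2) is the two-sided uniqueness argument using a representative $b'$ of $\ovl{b}^{-1}$, and part (3) reduces to Corollary~\ref{cor:htpy_modif2}. One point in your favour: you derive left $n$-exactness of $X_1\up$ and $X_2\up$ explicitly from the triangle $\CoCone f_1\up\to X_1\up\to Y\up\to$, whereas the paper leaves this step implicit and only cites the assumption and Remark~\ref{rem:isom_n-ex}.
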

\begin{proof}
{\rm (1)} This is immediate from Proposition~\ref{prp:univ_npb}. 

{\rm (2)}
If $\ovl{b}$ is an isomorphism, then there exists $b'\in Z^0(\A)(X_2^{n+1},X_1^{n+1})$ that gives the inverse $\ovl{b'}$ of $\ovl{b}$ in $H^0(\A)$. By {\rm (1)} applied to $b'$, there exists $e\upp\in Z^0(\Cbf^{n+2}(\A))(X_2\up,X_1\up)$ such that $f_1\up\ci e\upp=f_2\up$ in $\Kbf^{n+2}(\A)$.
Then, since $f\up\ci(e\upp\ci e\up)=f\up$ holds in $\Kbf^{n+2}(\A)$,
we obtain $e\upp\ci e\up=\id$ in $\Kbf^{n+2}(\A)$ by the uniqueness. Similarly, we have $e\up\ci e\upp=\id$ in $\Kbf^{n+2}(\A)$, hence $e\up$ is an isomorphism in $\Kbf^{n+2}(\A)$.

{\rm (3)} By {\rm (1)} and {\rm (2)}, $e\up$ is an isomorphism in $\Kbf^{n+2}(\A)$ satisfying $\ovl{e}^0=\id$ and $\ovl{e}^{n+1}=\id$ in $H^0(\A)$. By assumption and Remark~\ref{rem:isom_n-ex}, $X\up$ and $Y\up$ are $n$-exact, hence Corollary~\ref{cor:htpy_modif2} shows that $e\up$ is an isomorphism in $\Kbf^{1,n,1}(\A)$.
\end{proof}

\begin{cor}\label{cor:iso-npbm}
Let $Y\up\in\Cbf^{n+2}(\A)$ be a left $n$-exact sequence, and let $f\up\co X\up\to Y\up$ be an $n$-pullback morphism. If $\ovl{f}^{n+1}$ is an isomorphism in $H^0(\A)$, then $f\up$ is an isomorphism in $\Kbf^{n+2}(\A)$.
\end{cor}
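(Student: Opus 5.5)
The plan is to show that $\CoCone f\up$ is a zero object in $\Kbf(\A)$. Then $f\up$ is an isomorphism in $\Kbf(\A)$, hence in the full subcategory $\Kbf^{n+2}(\A)$.

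First I reduce to a statement about the cocone. By Definition~\ref{dfn:pull_of_admissible} together with Lemma~\ref{lem:equivalent_n-pbm1}, we have $\Kbf(\A)(W\up,\CoCone f\up)=0$ for all $W\up\in\Cbf_{\le n+1}(\A)$. Concretely, the cocone of $f\up\co X\up\to Y\up$ has terms $X^i\oplus Y^{i-1}$ (up to the standard identifications). Its nonzero terms therefore lie in degrees $[0,n+2]$, so $\CoCone f\up\in\Cbf_{\le n+2}(\A)$. Applying the triangle of Proposition~\ref{prop:alphabetagamma} to $Z\up=\CoCone f\up$ with $k=n+2$ gives
\[
(\sig_{\le n+1}Z\up)[-1]\to \sig_{\ge n+2}Z\up\to Z\up\xrightarrow{\gam} \sig_{\le n+1}Z\up .
\]
Here $\sig_{\ge n+2}Z\up=Y^{n+1}[-(n+2)]$. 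The composite $Z\up\to\sig_{\le n+1}Z\up$ cannot be used directly. Instead I use the first map: every morphism from an object of $\Cbf_{\le n+1}(\A)$ into $Z\up$ vanishes. In particular, $\be\co\sig_{\ge n+2}Z\up\to Z\up$ induces a surjection onto $\Kbf(\A)(\blank,Z\up)$ when tested on $Z\up$ itself. It therefore suffices to show that $\id_{Z\up}$ factors through something that vanishes, or more directly that $\Kbf(\A)(Y^{n+1}[-(n+2)],Z\up)=0$ and $\Kbf(\A)(Z\up,Z\up)$ is controlled by these.

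A cleaner route uses the rotated triangle $Y\up[-1]\to Z\up\to X\up\xrightarrow{f\up}Y\up$ and the shift-by-one version. From $\Kbf(\A)(W\up,Z\up)=0$ for all $W\up\in\Cbf_{\le n+1}(\A)$, applied to $W\up=X\up$ and to $W\up=\sig_{\le n}Y\up[-1]$ (still in degrees $\le n+1$), the map $f\up\ci\blank\co\Kbf(\A)(X\up,X\up)\to\Kbf(\A)(X\up,Y\up)$ is injective. To produce an inverse, I would apply Proposition~\ref{prp:univ_npb} with $W\up=Y\up$, $g\up=\id_{Y\up}$, and $b$ a representative of $(\ovl{f}^{n+1})^{-1}$, taking $\phi_n=0$. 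The hypothesis $\ovl{g}^{n+1}=\ovl{f}^{n+1}\ci\ovl{b}$ holds by the choice of $b$. This yields $e\up\co Y\up\to X\up$ with $f\up\ci e\up=\id_{Y\up}$ in $\Kbf^{n+2}(\A)$.

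Then $f\up\ci(e\up\ci f\up)=f\up=f\up\ci\id_{X\up}$. By the injectivity of $f\up\ci\blank$ on $\Kbf(\A)(X\up,X\up)$, which is the uniqueness clause of Proposition~\ref{prp:univ_npb} applied with $W\up=X\up$ and $g\up=f\up$, we get $e\up\ci f\up=\id_{X\up}$. Hence $f\up$ is an isomorphism. The uniqueness clause uses only that $\Kbf(\A)(W\up,\CoCone f\up)=0$ for $W\up\in\Cbf^{n+2}(\A)$, which Lemma~\ref{lem:equivalent_n-pbm1} supplies, so the preliminary cocone discussion is not actually needed.

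The only delicate step is checking the hypotheses of Proposition~\ref{prp:univ_npb}: that $Y\up$ is left $n$-exact (given), and that $b$ can be chosen in $Z^0(\A)$. The latter holds because $\A$ is strictly connective, so every degree-$0$ morphism is closed.
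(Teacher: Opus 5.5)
Your argument is correct and is essentially the paper's. The paper invokes Corollary~\ref{cor:univ_npb}~(2) with $f_1\up=f\up$ and $f_2\up=\id_{Y\up}$, and the proof of that corollary is exactly your combination of two facts: the existence part of Proposition~\ref{prp:univ_npb} (with $b$ representing $(\ovl{f}^{n+1})^{-1}$, giving $f\up\ci e\up=\id_{Y\up}$), and the injectivity of $f\up\ci\blank$ on $\Kbf(\A)(X\up,X\up)$, which comes from $\Kbf(\A)(X\up,\CoCone f\up)=0$. Your opening discussion of $\sig_{\ge n+2}\CoCone f\up$ is an abandoned and somewhat garbled detour; delete it, since the final argument never uses it.
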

\begin{proof}
This follows from Corollary~\ref{cor:univ_npb} {\rm (2)} applied to $f_1\up=f\up$ and $f_2\up=\id$.
\end{proof}

\section{$n$-exact dg-categories}\label{section:n-exact-dg}

As before, $\A$ denotes a strictly connective and strictly additive dg-category.

\normalcolor

\subsection{Definition of $n$-exact dg-categories}
\begin{dfn}\label{dfn:n-exact-dg}
Let $\A$ be as above, and let $\calS\subset\ob(\Cbf^{n+2}(\A))$ be a class consisting of some $n$-exact sequences.  
We call a morphism in $Z^0(\A)$ an $\calS$-\emph{inflation} (respectively, an $\calS$-\emph{deflation}) or simply an \emph{inflation} (resp.\ \emph{deflation}) if it appears as $d_X^{0,1}$ (resp.\ $d_X^{n,n+1}$) for some $X\up\in\calS$.
The pair $(\A,\calS)$ is called an \emph{$n$-exact dg-category} if it satisfies the following conditions.
\begin{enumerate}
\item[($n$-Ex0)] 
$\calS$ is closed by isomorphisms in $\Kbf^{1,n,1}(\A)$.
Moreover, there exists \emph{some} split sequence $U\up\in \Cbf^{n+2}(\A)$ which belongs to $\calS$.  
\normalcolor
\item[($n$-Ex1)] Deflations and inflations are closed under compositions in $\A$.
\item[($n$-Ex2)] For any $X\up\in\calS$ and any $a\in Z^0(\A)(X^0,A)$, there exist $Y\up\in\calS$ and an $n$-pushout morphism $g\up\colon X\up\to Y\up$ such that $Y^0=A$ and $\ovl{g}^0=\ovl{a}$ in $H^0(\A)$.
\item[($n$-Ex2$\op$)]  Dually, for any $Y\up\in\calS$ and any $c\in Z^0(\A)(C,Y^{n+1})$, there exist $X\up\in\calS$ and an $n$-pullback morphism $f\up\colon X\up\to Y\up$ such that $X^{n+1}=C$ and $\ovl{f}^{n+1}=\ovl{c}$ in $H^0(\A)$.
\end{enumerate}
An $n$-exact sequence $X\up$ is called an \emph{admissible $n$-exact sequence} if it belongs to $\calS$. 
\end{dfn}
In the sequel, we will mainly regard $\calS$ as a full subcategory of $\Kbf^{1,n,1}(\A)$. By restricting the functors $s,t$ in Definition~\ref{def:K1n1_st} to the full subcategory $\calS\subset\Kbf^{1,n,1}(\A)$, we have functors $s,t\colon\calS\to H^0(\A)$, which we denote by the same symbols.

\begin{rem}\label{rem:strict_npo}
\begin{enumerate}
\item By Lemma~\ref{lem:4A}, we may choose $g\up$ in {\rm ($n$-Ex2)} to satisfy $g^{0,0}=a$ and $g^{n+1,n+1}=\id$ strictly in $\A$. 
Similarly, $f\up$ in {\rm ($n$-Ex2$\op$)} can be chosen to satisfy $f^{0,0}=\id$ and $f^{n+1,n+1}=c$ strictly in $\A$.
\item  By Corollary~\ref{cor:inflation_replace}, if $a,a'\in Z^0(\A)(A,B)$ satisfies $\ovl{a}=\ovl{a'}$ in $H^0(\A)$, then $a$ is an inflation if and only if $a'$ is an inflation. Similarly for deflations.
\normalcolor
\end{enumerate}
\end{rem}

\begin{prop}\label{prop:closed_equiv}
Let $\A$ be as above, and let $\calS\subset\ob(\Cbf^{n+2}(\A))$ be a class consisting of some $n$-exact sequences. 
Assume that $\calS$ satisfies {\rm ($n$-Ex2)} and {\rm ($n$-Ex2$\op$)}.
Then, the following are equivalent.
\begin{enumerate}
\item $\calS\subset\Kbf^{1,n,1}(\A)$ is closed by isomorphisms.
\item Let $X\up,Y\up\in\Cbf^{n+2}(\A)$ be any pair of  $n$-exact sequences and let $f\up\in Z^0(\Cbf^{n+2}(\A))(X\up,Y\up)$ be a morphism inducing an isomorphism in $\Kbf^{n+2}(\A)$ such that $\ovl{f}^0=\id$ and $\ovl{f}^{n+1}=\id$ in $H^0(\A)$. Then $X\up\in\calS$ if and only if $Y\up\in\calS$.
\end{enumerate}
\end{prop}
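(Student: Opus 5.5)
The proof splits into two implications, and I would first fix what an isomorphism in $\Kbf^{1,n,1}(\A)$ looks like relative to the data in {\rm (2)}.

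\emph{{\rm (1)}$\Rightarrow${\rm (2)}.} Let $X\up,Y\up$ be $n$-exact and let $f\up$ be as in {\rm (2)}, an isomorphism in $\Kbf^{n+2}(\A)$ with $\ovl{f}^0=\id$ and $\ovl{f}^{n+1}=\id$. For $\ovl{f}^0=\id$ to make sense we need $X^0=Y^0$ and $X^{n+1}=Y^{n+1}$, so $X\up,Y\up,f\up$ satisfy the hypotheses of Corollary~\ref{cor:htpy_modif2}. That corollary shows $f\up$ is an isomorphism in $\Kbf^{1,n,1}(\A)$. Closedness under isomorphisms in $\Kbf^{1,n,1}(\A)$ then gives $X\up\in\calS\Leftrightarrow Y\up\in\calS$. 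This direction should be immediate.

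\emph{{\rm (2)}$\Rightarrow${\rm (1)}.} Suppose $X\up\in\calS$ and $f\up\co X\up\to Y\up$ is an isomorphism in $\Kbf^{1,n,1}(\A)$ with inverse $g\up$.
\begin{enumerate}
\item Since the quotient functor $\Kbf^{1,n,1}(\A)\to\Kbf^{n+2}(\A)$ preserves isomorphisms, $f\up$ is an isomorphism in $\Kbf^{n+2}(\A)$. By Remark~\ref{rem:isom_n-ex}, $Y\up$ is then $n$-exact.
\item The functors $s,t$ send $f\up$ to isomorphisms $\ovl{f}^0\co X^0\to Y^0$ and $\ovl{f}^{n+1}\co X^{n+1}\to Y^{n+1}$ in $H^0(\A)$. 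These need not be identities, and the ends need not even coincide as objects.
\item The main obstacle is to reduce to the case $\ovl{f}^0=\id$, $\ovl{f}^{n+1}=\id$ by factoring $f\up$ through an intermediate object $Z\up\in\calS$ that absorbs the end isomorphisms. Here ($n$-Ex2) and ($n$-Ex2$\op$) enter.
\end{enumerate}

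For the intermediate object, apply ($n$-Ex2) to $X\up$ and $a=f^{0,0}$. This gives $X_1\up\in\calS$ with $X_1^0=Y^0$ and an $n$-pushout morphism $p\up\co X\up\to X_1\up$ satisfying $\ovl{p}^0=\ovl{f}^0$ and $\ovl{p}^{n+1}=\id$; by Remark~\ref{rem:strict_npo}, $p^{0,0}=f^{0,0}$ strictly. Next apply ($n$-Ex2$\op$) to $X_1\up$ and $c=f^{n+1,n+1}\ci p^{n+1,n+1}$, or a representative of $\ovl{f}^{n+1}$ composed with an inverse. This gives $Z\up\in\calS$ with $Z^0=Y^0$ and $Z^{n+1}=Y^{n+1}$, together with an $n$-pullback morphism $q\up\co Z\up\to X_1\up$.

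I then need to build $h\up\co Z\up\to Y\up$ with $\ovl{h}^0=\id$ and $\ovl{h}^{n+1}=\id$ that is an isomorphism in $\Kbf^{n+2}(\A)$. The plan is to compose $f\up\ci g\up$-type maps with the universal properties: Proposition~\ref{prp:univ_npb}, together with its dual for $n$-pushout morphisms. These produce comparison maps with prescribed ends $\ovl{e}^0$ and $\ovl{e}^{n+1}$.
\begin{enumerate}
\item Show that $p\up$ and $q\up$ are isomorphisms in $\Kbf^{n+2}(\A)$. For $q\up$ this is Corollary~\ref{cor:iso-npbm}, since $\ovl{q}^{n+1}$ is an isomorphism; for $p\up$ use the dual of that corollary.
\item Obtain $h\up$ by invoking Proposition~\ref{prp:univ_npb} and then Corollary~\ref{cor:univ_npb}~{\rm (2)}, which certify that $h\up$ is an isomorphism in $\Kbf^{n+2}(\A)$.
\item Apply {\rm (2)} to $h\up$ to conclude $Y\up\in\calS$.
\end{enumerate}
The delicate point is tracking the classes $\ovl{(\blank)}^0$ and $\ovl{(\blank)}^{n+1}$ through these universal constructions so that both ends of $h\up$ are the identity. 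Corollary~\ref{cor:htpy_modif2} and Lemma~\ref{lem:4A} let me adjust representatives strictly when needed.
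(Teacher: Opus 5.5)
Your direction (1)$\Rightarrow$(2) is exactly the paper's, via Corollary~\ref{cor:htpy_modif2}. Your skeleton for (2)$\Rightarrow$(1) is also the paper's: fix the left end with ($n$-Ex2), fix the right end with ($n$-Ex2$\op$), observe that $p\up,q\up$ are isomorphisms in $\Kbf^{n+2}(\A)$ by Corollary~\ref{cor:iso-npbm} and its dual, and compare via a universal property with prescribed end classes. However, the proposal stops exactly at the step that carries the content, and one datum is misstated.

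\textbf{The choice of $c$.} It must be a morphism $Y^{n+1}\to X_1^{n+1}=X^{n+1}$ with $\ovl{c}=(\ovl{f}^{n+1})^{-1}$; for instance $c=g^{n+1,n+1}$ works, since $t$ is a functor on $\Kbf^{1,n,1}(\A)$. Your first candidate $f^{n+1,n+1}\ci p^{n+1,n+1}$ goes $X^{n+1}\to Y^{n+1}$, the wrong way. Since $\ovl{p}^{n+1}=\id$, nothing needs to be composed.

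\textbf{The construction of $h\up$.} You cannot take $f\up\ci(p\up)^{-1}\ci q\up$ in $\Kbf^{n+2}(\A)$, because end classes are not defined on morphisms of $\Kbf^{n+2}(\A)$ (Remark~\ref{rem:non-functorial2}). A representative with controlled ends has to be produced explicitly. The paper does this with the dual of Proposition~\ref{prp:univ_npb}, applied to the $n$-pushout morphism $p\up$ and to $f\up$ with $a=\id$.
\begin{itemize}
\item[(a)] This yields $e\up\co X_1\up\to Y\up$ with $e\up\ci p\up=f\up$ in $\Kbf^{n+2}(\A)$, $\ovl{e}^0=\id$ and $\ovl{e}^{n+1}=\ovl{f}^{n+1}$.
\item[(b)] $e\up$ is an isomorphism in $\Kbf^{n+2}(\A)$ because $p\up$ and $f\up$ are.
\item[(c)] Then $h\up=e\up\ci q\up\co Z\up\to Y\up$ satisfies $\ovl{h}^0=\id$ and $\ovl{h}^{n+1}=\ovl{f}^{n+1}\ci\ovl{c}=\id$.
\end{itemize}

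Your mention of Corollary~\ref{cor:univ_npb} also works, if made explicit. Put $k\up=p\up\ci g\up\co Y\up\to X_1\up$.
\begin{itemize}
\item[(a)] $k\up$ is an isomorphism in $\Kbf^{n+2}(\A)$, and $\ovl{k}^0=\ovl{f}^0\ci(\ovl{f}^0)^{-1}=\id$ by functoriality of $s$. Its cocone is zero, so $k\up$ is trivially an $n$-pullback morphism.
\item[(b)] Moreover $\ovl{k}^{n+1}=(\ovl{f}^{n+1})^{-1}=\ovl{q}^{n+1}$.
\item[(c)] Corollary~\ref{cor:univ_npb}~(1),(2), applied to $f_1\up=q\up$, $f_2\up=k\up$ and $b=\id$, gives $h\up\co Z\up\to Y\up$ with $\ovl{h}^0=\id$ and $\ovl{h}^{n+1}=\id$, and $h\up$ is an isomorphism in $\Kbf^{n+2}(\A)$.
\end{itemize}

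The paper's route is slightly leaner. It never uses the inverse $g\up$, only that $s(f\up)$ and $t(f\up)$ are invertible and that $f\up$ is an isomorphism in $\Kbf^{n+2}(\A)$. Yours uses the functoriality of $s,t$ on $g\up$ to control the ends. With either completion, applying (2) to $h\up$ (with $Z\up\in\calS$ and $Y\up$ $n$-exact) finishes the proof.
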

\begin{proof}
$(1)\Rightarrow (2)$ This is immediate from Corollary~\ref{cor:htpy_modif2}.

$(2)\Rightarrow (1)$ Suppose that $n$-exact sequences $X\up,Y\up\in\Cbf^{n+2}(\A)$ are isomorphic in $\Kbf^{1,n,1}(\A)$. Assuming {\rm (2)}, it suffices to show that $X\up\in\calS$ implies $Y\up\in\calS$. 
Take $f\up\in Z^0(\Cbf^{n+2}(\A))(X\up,Y\up)$ that gives an isomorphism in $\Kbf^{1,n,1}(\A)$. In particular $\ovl{f}^0=s(f\up)$ and $\ovl{f}^{n+1}=t(f\up)$ are isomorphisms in $H^0(\A)$.

By {\rm ($n$-Ex2)}, there exist $X\upp\in\calS$ and an $n$-pushout morphism $x\up\in Z^0(\Cbf^{n+2}(\A))(X\up,X\upp)$ such that $\ovl{x}^0=\ovl{f}^0$. Then, $x\up$ is an isomorphism in $\Kbf^{n+2}(\A)$ by the dual of Corollary~\ref{cor:iso-npbm}. 
By the dual of Proposition~\ref{prp:univ_npb}, there exists $e\up\in Z^0(\Cbf^{n+2}(\A))(X\upp,Y\up)$ such that $e\up\ci x\up=f\up$ in $\Kbf^{n+2}(\A)$ and $\ovl{e}^0=\id$, $\ovl{e}^{n+1}=\ovl{f}^{n+1}$ in $H^0(\A)$.  Since $f\up$ is an isomorphism in $\Kbf^{n+2}(\A)$, so is $e\up$.
 
 Let $c\in Z^0(\A)(Y^{n+1},X^{n+1})$ be a morphism which gives the inverse $\ovl{c}$ of $\ovl{f}^{n+1}$ in $H^0(\A)$.
 By {\rm ($n$-Ex2$\op$)}, there exist $X\uppp\in\calS$ and an $n$-pullback morphism $x\upp\in Z^0(\Cbf^{n+2}(\A))(X\uppp,X\upp)$ such that $\ovl{x'}^{n+1}=\ovl{c}$ in $H^0(\A)$.
Then, $x\upp$ is an isomorphism in $\Kbf^{n+2}(\A)$ by Corollary~\ref{cor:iso-npbm}.
If we put $f\upp=e\up\ci x\upp\in Z^0(\Cbf^{n+2}(\A))(X\uppp,Y\up)$, then it gives an isomorphism in $\Kbf^{n+2}(\A)$ and satisfies $\ovl{f'}^0=\id$, $\ovl{f'}^{n+1}=\id$ in $H^0(\A)$. Since $X\uppp\in\calS$, it follows $Y\up\in\calS$ by assumption.
\end{proof}

\begin{prop}\label{prop:any_split_belongs_to_S}
Let $(\A,\calS)$ be an $n$-exact dg-category. Then, \emph{any} split sequence belongs to $\calS$.
\end{prop}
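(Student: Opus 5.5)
The plan is to start from the split sequence $U\up\in\calS$ given by ($n$-Ex0), move its end terms to prescribed objects $A$ and $C$ using ($n$-Ex2) and ($n$-Ex2$\op$), and conclude with Corollary~\ref{cor:NtoX}. Let $X\up$ be an arbitrary split sequence with $X^0=A$ and $X^{n+1}=C$. By Corollary~\ref{cor:NtoX}, all split sequences with these end terms are isomorphic in $\Kbf^{1,n,1}(\A)$. Since $\calS$ is closed under isomorphisms in $\Kbf^{1,n,1}(\A)$ by ($n$-Ex0), it therefore suffices to produce \emph{one} split sequence in $\calS$ starting at $A$ and ending at $C$.

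First apply ($n$-Ex2$\op$) to $U\up$ and the zero morphism $0\co C\to U^{n+1}$. This yields $V\up\in\calS$ with $V^{n+1}=C$ and an $n$-pullback morphism $f\up\co V\up\to U\up$. Since $U\up$ is split, Proposition~\ref{prop:npb_of_split} shows that $V\up$ is split. Also $V^0=U^0$, since $\ovl{f}^0=\id$.

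Next apply ($n$-Ex2) to $V\up$ and the zero morphism $0\co V^0\to A$. This gives $W\up\in\calS$ with $W^0=A$ and an $n$-pushout morphism $g\up\co V\up\to W\up$. By definition $\ovl{g}^{n+1}=\id$, so $W^{n+1}=V^{n+1}=C$. By the dual of Proposition~\ref{prop:npb_of_split}, an $n$-pushout morphism out of a split sequence has split target, so $W\up$ is split. That dual is proved by the same argument: use the triangle $X\up\to Y\up\to\Cone f\up$ and the vanishing $\Kbf(\A)(\Cone g\up,W\up)=0$ from the dual of Lemma~\ref{lem:equivalent_n-pbm1}, since $W\up\in\Cbf_{\ge 0}$.

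Thus $W\up\in\calS$ is a split sequence with ends $A$ and $C$, and Corollary~\ref{cor:NtoX} together with ($n$-Ex0) gives $X\up\in\calS$.

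The only step needing care is the dual of Proposition~\ref{prop:npb_of_split}. It requires checking that the dual of Lemma~\ref{lem:equivalent_n-pbm1}(2) holds, namely $\Kbf(\A)(\Cone g\up,W\up)=0$ for all $W\up\in\Cbf_{\ge 0}(\A)$. The argument is the formal dual of the one given there, using the truncation triangles of Proposition~\ref{prop:alphabetagamma}, and I expect it to go through without difficulty.
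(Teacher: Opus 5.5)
Your proposal is correct and follows the paper's proof essentially step for step. You start from the split $U\up\in\calS$ given by ($n$-Ex0), move its end terms using ($n$-Ex2) and ($n$-Ex2$\op$) with zero morphisms, keep splitness via Proposition~\ref{prop:npb_of_split} and its dual, and finish with Corollary~\ref{cor:NtoX} together with isomorphism-closedness of $\calS$. The only difference is that you do the pullback step before the pushout step, whereas the paper does the pushout first; this is immaterial.
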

\begin{proof}
By {\rm ($n$-Ex0)}, there exists some split sequence $U\up$ which belongs to $\calS$.
Let $A,C\in\ob(\A)$ be any pair of objects. It suffices to show that any split sequence $X\up\in\Cbf^{n+2}(\A)$ with $X^0=A$ and $X^{n+1}=C$ belongs to $\calS$.

Applying {\rm ($n$-Ex2)} to $U\up$ and the morphism $0\in Z^0(\A)(U^0,A)$, we obtain $Y\up\in\calS$ with $Y^0=A$, $Y^{n+1}=U^{n+1}$ and an $n$-pushout morphism $g\up\co U\up\to Y\up$.
By the dual of Proposition~\ref{prop:npb_of_split}, such $Y\up$ is a split sequence. Then, by applying {\rm ($n$-Ex2$\op$)} to $Y\up$ and the morphism $0\in Z^0(\A)(C,U^{n+1})$, we obtain $X\upp\in\calS$ with $X^{\prime 0}=A$, $X^{\prime n+1}=C$ and an $n$-pullback morphism $f\up\co X\upp\to Y\up$. By Proposition~\ref{prop:npb_of_split}, such $X\upp$ is also a split sequence. In particular $X\upp$ is isomorphic to $X\up$ in $\Kbf^{1,n,1}(\A)$ by Corollary~\ref{cor:NtoX}.
Since $\calS\subset\Kbf^{1,n,1}(\A)$ is closed by isomorphisms, we obtain $X\up\in\calS$ as desired.
\end{proof}

\normalcolor

\begin{cor}\label{cor:deflation_examples}
Let $(\A,\calS)$ be an $n$-exact dg-category. For any $C,C'\in\ob(\A)$, the following holds.
\begin{enumerate}
\item Let $c\in Z^0(\A)(C',C)$ be any morphism. If $\ovl{c}$ is an isomorphism in $H^0(\A)$, then $c$ is a deflation.
\item The projection $[0\ 1]\co C'\oplus C\to C$ is a deflation. 
\end{enumerate}
\end{cor}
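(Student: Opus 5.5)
The plan is to realize each of the given morphisms as the last differential $d^{n,n+1}$ of some split sequence. By Proposition~\ref{prop:any_split_belongs_to_S}, every split sequence lies in $\calS$, so its last differential is a deflation. By Proposition~\ref{prp:characterization_split_seq}, an object $X\up\in\Cbf^{n+2}(\A)$ is split as soon as it is left $n$-exact and $\ovl{d}_X^n$ is a split epimorphism in $H^0(\A)$. So the task is to write down suitable left $n$-exact sequences.

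For {\rm (2)}, take $A=C'$ and consider the split sequence ${}_{C'}N\up_C$ of Definition~\ref{dfn:splitN}.
\begin{itemize}
\item When $n=1$, its last differential is exactly $[0\ 1]\co C'\oplus C\to C$, and we are done.
\item When $n\ge 2$, the last differential is $\id_C$. We modify the sequence by taking the direct sum, in $\Cbf^{n+2}(\A)$, of ${}_{0}N\up_C$ with the complex having $C'$ in degrees $n$ and $n+1$ and differential $\id_{C'}$.
\end{itemize}
In the second case the resulting object has degree $n$ term $C'\oplus C$, degree $n+1$ term $C'\oplus C$, and last differential the identity, so it does not directly give the projection. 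A cleaner choice is to use $N^n=C'\oplus C$, $N^{n+1}=C$ and $d^{n,n+1}=[0\ 1]$, with $C'$ placed also in degree $n-1$ and mapped by $\bigl[\begin{smallmatrix}1\\0\end{smallmatrix}\bigr]$; all other terms are as in ${}_0N\up_C$, and all higher components $d^{i,j}$ with $j-i\ge2$ are zero. One checks that $(\ref{eq_d})$ holds, since all compositions vanish: $[0\ 1]\bigl[\begin{smallmatrix}1\\0\end{smallmatrix}\bigr]=0$. This object is a direct sum of the elementary contractible complexes $C'\xrightarrow{\id}C'$ and $C\xrightarrow{\id}C$, so it is zero in $\Kbf^{n+2}(\A)$, hence split. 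When $n=2$, degree $n-1=1$ must be treated carefully, but the same construction works.

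For {\rm (1)}, take a morphism $c'\in Z^0(\A)(C,C')$ with $\ovl{c'}=\ovl{c}^{-1}$. The main step is to build a split sequence with last differential $c$. Start from the sequence $X\up$ with $X^{n}=C'$, $X^{n+1}=C$, $d^{n,n+1}=c$, and all other terms zero. Since $\ovl{c}$ is an isomorphism, Proposition~\ref{prp:characterization_split_seq} says it suffices to show left $n$-exactness, that is, that $\Kbf(\A)(A[-i],X\up)=0$ for all $i\le n$. Via the distinguished triangle of Proposition~\ref{prop:alphabetagamma}, $X\up$ is the cone of $c[-n-1]\co C'[-n-1]\to C[-n-1]$, up to shift. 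Because $c$ is an isomorphism in $H^0(\A)$, it is a homotopy equivalence in $\A$, so $c[m]$ is an isomorphism in $\Kbf(\A)$ and the cone vanishes. Thus $X\up\cong0$ in $\Kbf^{n+2}(\A)$, and $X\up$ is split by definition. Hence $X\up\in\calS$ and $c$ is a deflation.

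In fact this argument makes the delicate construction for {\rm (2)} unnecessary. For {\rm (2)} one can likewise use the two-term complex in degrees $n,n+1$ given by the direct sum of $C'\to0$ placed in degree $n$ and $C\xrightarrow{\id}C$. This complex is not contractible because of the stray $C'$ term, so instead we add $C'$ in degree $n-1$ with the identity map to $C'$ in degree $n$, which works for $n\ge1$ when degree $n-1\ge0$; for $n=1$ this places $C'$ in degree $0$, which is exactly ${}_{C'}N\up_C$. The resulting object is a sum of contractibles, hence zero in $\Kbf^{n+2}(\A)$ and split. The only real obstacle is verifying that the cone of a homotopy equivalence is zero in $\Kbf(\A)$, which is standard in the triangulated category $\Kbf(\A)$.
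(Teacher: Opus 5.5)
Your proposal is correct and is the paper's argument. For (1) you use the two-term complex $C'\xrightarrow{c}C$ in degrees $n,n+1$. For (2) you use ${}_{C'}N\up_C$ when $n=1$, and $C'\to C'\oplus C\to C$ in degrees $n-1,n,n+1$ when $n\ge2$. In each case you note that the complex is a split sequence and conclude by Proposition~\ref{prop:any_split_belongs_to_S}.

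Your cone argument in (1) justifies that the complex is split, a step the paper leaves implicit. In (2) you should delete the abandoned first construction and the repeated paragraph at the end.
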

\begin{proof}
{\rm (1)} This follows from Proposition~\ref{prop:any_split_belongs_to_S}, since $X\up\in\Cbf^{n+2}(\A)$ given by
\[
X^i=\begin{cases}
C'&\text{if}\ i=n\\ 
C&\text{if}\ i=n+1\\
0&\text{otherwise}
\end{cases}
\quad,\quad
d_X^{i,j}=\begin{cases}
c&\text{if}\ (i,j)=(n,n+1)\\ 
0&\text{otherwise}
\end{cases}
\]
is a split sequence.

{\rm (2)} Similarly as in {\rm (1)} the morphism in question appears as a deflation of a split sequence. Indeed, for $n=1$ the split sequence $N\up={}_{C'}N\up_C$ in Definition~\ref{dfn:splitN} satisfies $d_N^{n,n+1}=[0\ 1]$. For $n\ge2$, we may check that the object $X\up\in\Cbf^{n+2}(\A)$ given by
\[
X^i=\begin{cases}
C'&\text{if}\ i=n-1\\ 
C'\oplus C&\text{if}\ i=n\\ 
C&\text{if}\ i=n+1\\
0&\text{otherwise}
\end{cases}
\quad,\quad
d_X^{i,j}=\begin{cases}
\begin{bmatrix}1\\0\end{bmatrix}&\text{if}\ (i,j)=(n-1,n)\\ 
[0\ 1]&\text{if}\ (i,j)=(n,n+1)\\ 
0&\text{otherwise}
\end{cases}
\]
is a split sequence.
\end{proof}

\begin{lem}\label{lem:pushout_inflation}
Let $(\A,\calS)$ be an $n$-exact dg-category. For any $X\up\in\Cbf^{n+2}(\A)$, the following are equivalent.
\begin{enumerate}
\item $X\up\in\calS$.
\item $X\up$ is left $n$-exact and $d_X^{n,n+1}$ is a deflation.
\item $X\up$ is right $n$-exact and $d_X^{0,1}$ is an inflation.
\end{enumerate}
\end{lem}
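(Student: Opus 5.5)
It suffices to prove $(1)\Leftrightarrow(2)$; the equivalence $(1)\Leftrightarrow(3)$ is dual, using ($n$-Ex2) in place of ($n$-Ex2$\op$) and the duals of the results on $n$-pullback morphisms. The direction $(1)\Rightarrow(2)$ is immediate. Every member of $\calS$ is $n$-exact, hence left $n$-exact. Moreover, $d_X^{n,n+1}$ is a deflation by the very definition of an $\calS$-deflation.

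For $(2)\Rightarrow(1)$, assume $X\up$ is left $n$-exact and $c:=d_X^{n,n+1}$ is a deflation. Then $c=d_Y^{n,n+1}$ for some $Y\up\in\calS$, and $Y^n=X^n$, $Y^{n+1}=X^{n+1}$. Both $X\up$ and $Y\up$ are left $n$-exact and have the same last differential. Corollary~\ref{cor:uniqueness_n-kernel} therefore gives $f\up\in Z^0(\Cbf^{n+2}(\A))(X\up,Y\up)$ which is an isomorphism in $\Kbf^{n+2}(\A)$ with $\ovl{f}^{n+1}=\id$. By Remark~\ref{rem:isom_n-ex}, $X\up$ is then $n$-exact, since $Y\up$ is. The remaining issue is that $X^0$ and $Y^0$ need not coincide, and $\ovl{f}^0$ need not be the identity. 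So we cannot apply Proposition~\ref{prop:closed_equiv}(2) directly. Instead we pass to a version of $Y\up$ whose zeroth term is $X^0$.

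The plan for this step is as follows. First, by Corollary~\ref{cor:devide_f}(2) applied to the isomorphism $f\up$ (or via the uniqueness of $n$-kernels on truncations), $\ovl{f}^0\co X^0\to Y^0$ is an isomorphism in $H^0(\A)$. Choose $a\in Z^0(\A)(Y^0,X^0)$ with $\ovl{a}=(\ovl{f}^0)^{-1}$. Second, apply ($n$-Ex2) to $Y\up$ and $a$. This gives $Y\upp\in\calS$ with $Y^{\prime0}=X^0$ and an $n$-pushout morphism $g\up\co Y\up\to Y\upp$ with $\ovl{g}^0=\ovl{a}$ and $\ovl{g}^{n+1}=\id$. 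By the dual of Corollary~\ref{cor:iso-npbm}, since $\ovl{a}$ is an isomorphism, $g\up$ is an isomorphism in $\Kbf^{n+2}(\A)$. Third, the composite $h\up=g\up\ci f\up\co X\up\to Y\upp$ is an isomorphism in $\Kbf^{n+2}(\A)$ between $n$-exact sequences with equal end terms, and it satisfies $\ovl{h}^0=\ovl{a}\,\ovl{f}^0=\id$ and $\ovl{h}^{n+1}=\id$. Corollary~\ref{cor:htpy_modif2} now shows that $h\up$ is an isomorphism in $\Kbf^{1,n,1}(\A)$. Since $\calS$ is closed under such isomorphisms by ($n$-Ex0), we conclude $X\up\in\calS$.

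The main obstacle is justifying that $\ovl{f}^0$ is an isomorphism. One could try to read this off from the truncation triangles, but Corollary~\ref{cor:devide_f}(2) only gives the converse direction. A cleaner route is to use left $n$-exactness. For a left $n$-exact sequence, Lemma~\ref{lem:vanishing} and Lemma~\ref{lem:comparison_H}-type arguments identify $H^0\A(A,X^0)$ with the kernel-type group $\Kbf(\A)(A[-1],\sig_{\ge1}X\up)$, functorially in $A$. Under the isomorphism $\sig_{\ge1}f\up$ in $\Kbf(\A)$, this identification transports $\ovl{f}^0\ci\blank$ to a bijection, so $\ovl{f}^0$ is an isomorphism by Yoneda. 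If this is awkward, an alternative is to run the same construction with the inverse $f\upp$ from Corollary~\ref{cor:uniqueness_n-kernel} and compare the two zeroth components. Either way, this is the one point that needs care.
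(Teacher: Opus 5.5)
Your reduction to $(1)\Leftrightarrow(2)$ is fine, and so are the direction $(1)\Rightarrow(2)$ and the deduction that $X\up$ is $n$-exact. Your last two steps would also go through if $\ovl{f}^0$ were invertible. But the step you flag is not just delicate: for $n\ge2$ it is false. Take $X\up={}_0N\up_C$ and $Y\up={}_PN\up_C$ from Definition~\ref{dfn:splitN}, with $P\not\cong0$ in $H^0(\A)$. Both are split, hence in $\calS$ by Proposition~\ref{prop:any_split_belongs_to_S}, and both are left $n$-exact with $d^{n,n+1}=\id_C$. Yet no morphism $X\up\to Y\up$ can have $\ovl{f}^0\co 0\to P$ invertible. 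More generally, $Y\up$ and $Y\up\oplus{}_PN\up_0$ are both admissible with the same last differential. So the zeroth term of whichever $Y\up$ you pick is not determined by $X\up$, even up to isomorphism in $H^0(\A)$.

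Your Yoneda argument breaks exactly where it assumes, without proof, that $\sig_{\ge1}f\up$ is an isomorphism in $\Kbf(\A)$. In the example, $\sig_{\ge1}X\up\cong0$ while $\sig_{\ge1}Y\up\cong P[-1]$. Corollary~\ref{cor:devide_f}~(2) only goes in the other direction. The fallback via $f\upp$ also fails, because $\ovl{f'}^0\,\ovl{f}^0$ agrees with $\id_{X^0}$ only modulo morphisms factoring through $\ovl{d}_X^0$. This is the non-functoriality recorded in Remark~\ref{rem:non-functorial2}.

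The missing idea, which is the dual of the paper's argument for $(3)\Rightarrow(1)$, is to never invert $\ovl{f}^0$. Instead, let the universal property of $n$-pushouts produce a comparison map whose end components are exactly identities.

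Let $f\up\co X\up\to Y\up$ and $f\upp\co Y\up\to X\up$ be the mutually inverse maps of Corollary~\ref{cor:uniqueness_n-kernel}, with $\ovl{f}^{n+1}=\ovl{f'}^{n+1}=\id$. Apply ($n$-Ex2) to $Y\up$ and $a=f^{\prime 0,0}$. This gives $Y\upp\in\calS$ with $Y^{\prime 0}=X^0$ and an $n$-pushout morphism $g\up\co Y\up\to Y\upp$ with $\ovl{g}^0=\ovl{f'}^0$.

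The dual of Proposition~\ref{prp:univ_npb}, applied with $b=\id_{X^0}$, gives $e\up\co Y\upp\to X\up$ with $e\up\ci g\up=f\upp$ in $\Kbf(\A)$, $\ovl{e}^0=\id$ and $\ovl{e}^{n+1}=\id$. Then $e\up\ci(g\up\ci f\up)=f\upp\ci f\up=\id$. Also $(g\up\ci f\up\ci e\up-\id)\ci g\up=g\up\ci f\up\ci f\upp-g\up=0$, so $g\up\ci f\up\ci e\up-\id$ factors through $\Cone g\up$. It therefore vanishes, since $\Kbf(\A)(\Cone g\up,Y\upp)=0$ by the dual of Lemma~\ref{lem:equivalent_n-pbm1}.

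Hence $e\up$ is an isomorphism in $\Kbf^{n+2}(\A)$ with identity end components. Corollary~\ref{cor:htpy_modif2} together with ($n$-Ex0) then gives $X\up\in\calS$. Note that $\ovl{g}^0$ need not be invertible here, so the dual of Corollary~\ref{cor:iso-npbm} is not needed at all.
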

\begin{proof}
Since $(1)\Leftrightarrow(2)$ can be shown in a dual manner, we only show $(1)\Leftrightarrow(3)$. 

$(1)\Rightarrow(3)$ is obvious. Let us show the converse.
Suppose that $X\up$ is right $n$-exact and $d_X^{0,1}$ is an inflation. By the definition of inflations, there exists $Z\up\in\calS$ such that $d_Z^{0,1}=d_X^{0,1}$. 
By the dual of Corollary~\ref{cor:uniqueness_n-kernel}, there are $g\up\co X\up\to Z\up$ and $h\up\co Z\up\to X\up$ which are inverses to each other in $\Kbf^{n+2}(\A)$ and satisfy $\ovl{g}^0=\ovl{h}^0=\id$ in $H^0(\A)$. In particular $X\up$ is $n$-exact.

Put
$c=g^{n+1,n+1}$. By {\rm ($n$-Ex2$\op$)}, there exist $Y\up\in\calS$ and an $n$-pullback morphism $f\up\co Y\up\to Z\up$ such that $\ovl{f}^{n+1}=\ovl{c}$ in $H^0(\A)$.
By Proposition~\ref{prp:univ_npb}, there exists $e\up\co X\up\to Y\up$ such that $g\up=f\up\ci e\up$ in $\Kbf^{n+2}(\A)$, $\ovl{e}^0=\ovl{g}^0=\id$ and $\ovl{e}^{n+1}=\id$ in $H^0(\A)$.
We claim that $e\up$ is an isomorphism in $\Kbf^{n+2}(\A)$.
In fact, for $u\up=h\up\ci f\up$ we have $u\up\ci e\up=h\up\ci g\up=\id$ in $\Kbf(\A)$.
Besides, we have $f\up\ci(e\up\ci u\up-\id)=g\up\ci h\up\ci f\up-f\up=0$ in $\Kbf(\A)$. Thus $e\up\ci u\up-\id$ should factor through $\CoCone f\up$. Since we have $\Kbf(\A)(Y\up,\CoCone f\up)=0$ by Lemma~\ref{lem:equivalent_n-pbm1}, we obtain $e\up\ci u\up=\id$ in $\Kbf(\A)$. Thus $e\up$ is an isomorphism in $\Kbf^{n+2}(\A)$.

By Corollary~\ref{cor:htpy_modif2}, this shows that $e\up$ is an isomorphism in $\Kbf^{1,n,1}(\A)$, hence $X\up\in\calS$ follows by {\rm ($n$-Ex0)} since $Y\up\in\calS$.
\end{proof}

The following is an analog of \cite[Proposition~4.6]{J}.
\begin{prop}\label{prop:S_is_closed_by_oplus}
Let $(\A,\calS)$ be an $n$-exact dg-category. If $X\up,Y\up\in\calS$, then $X\up\oplus Y\up\in\calS$. Here, $Z\up=X\up\oplus Y\up\in\Cbf^{n+2}(\A)$ is the object defined by
\[
Z^i=X^i\oplus Y^i\quad\text{and}\quad d_Z^{i,j}=d_X^{i,j}\oplus d_Y^{i,j}=\begin{bmatrix}d_X^{i,j}&0\\0&d_Y^{i,j}\end{bmatrix}
\]
for all $i,j\in\bbZ$, using the direct sum in $\A$.
\end{prop}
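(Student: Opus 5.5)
We plan to mimic Jasso's argument: factor $X\up\oplus Y\up$ through a composite of pushout/pullback-type operations, and then use Lemma~\ref{lem:pushout_inflation} to certify membership in $\calS$. We will show that $Z\up=X\up\oplus Y\up$ is $n$-exact, and that $d_Z^{n,n+1}=d_X^{n,n+1}\oplus d_Y^{n,n+1}$ is a deflation. Lemma~\ref{lem:pushout_inflation} (2)$\Rightarrow$(1) then gives $Z\up\in\calS$.

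\emph{$n$-exactness.} The complex $Z\up$ is the biproduct of $X\up$ and $Y\up$ in $\Cbf(\A)$, hence also in $\Kbf(\A)$. So
\[
\Kbf(\A)(A[-i],Z\up)\cong\Kbf(\A)(A[-i],X\up)\oplus\Kbf(\A)(A[-i],Y\up)=0,
\]
and dually on the right. Thus $Z\up$ is $n$-exact by Definition~\ref{dfn:n-(co)-kernel}.

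\emph{$d_Z^{n,n+1}$ is a deflation.} Write $d_X^{n,n+1}\oplus d_Y^{n,n+1}$ as the composite
\[
(\id_{X^n}\oplus d_Y^{n,n+1})\ci(d_X^{n,n+1}\oplus\id_{Y^n}).
\]
By ($n$-Ex1), it suffices to show that a morphism of the form $d\oplus\id_W$ is a deflation whenever $d$ is a deflation, together with its symmetric variant $\id_W\oplus d$.

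\emph{$d\oplus\id_W$ is a deflation.} Take $X\up\in\calS$ with $d_X^{n,n+1}=d$, and let $N\up$ be the split sequence ${}_0N\up_W$ with $N^{n+1}=W$. The explicit model from Corollary~\ref{cor:deflation_examples} suits this: $N^n=W$, $N^{n+1}=W$, differential $\id_W$, all other terms zero. By Proposition~\ref{prop:any_split_belongs_to_S}, $N\up\in\calS$. The sum $X\up\oplus N\up$ is $n$-exact by the previous step, and its last differential is exactly $d\oplus\id_W$. This seems circular, so instead we realise $d\oplus\id_W$ via ($n$-Ex2$\op$), in two steps.

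First, the projection $p=[1\ 0]\co X^{n+1}\oplus W\to X^{n+1}$ is a deflation by Corollary~\ref{cor:deflation_examples}(2). Second, the composite $d\ci[1\ 0]\co X^n\oplus W\to X^{n+1}$ is a deflation by ($n$-Ex1). Now apply ($n$-Ex2$\op$) to $X\up$ along $c=[1\ 0]\co X^{n+1}\oplus W\to X^{n+1}$. This gives $X\upp\in\calS$ with $X^{\prime n+1}=X^{n+1}\oplus W$ and an $n$-pullback morphism $f\up\co X\upp\to X\up$.

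The main obstacle is identifying $d_{X'}^{n,n+1}$ with $d\oplus\id_W$ up to the freedom allowed by Remark~\ref{rem:strict_npo}(2). We plan to handle it via the uniqueness of $n$-kernels. First we check directly that $X\up\oplus N\up$ with its obvious morphism to $X\up$ (identity on the $X$-part, $[1\ 0]$ at degree $n+1$) is an $n$-pullback morphism. Its cocone is, up to homotopy, $N\up$ concentrated in degrees $n,n+1$, shifted, which is contractible; so the vanishing in Definition~\ref{dfn:pull_of_admissible} holds. Next, Corollary~\ref{cor:univ_npb}(3), with $\ovl b=\id$, makes $X\up\oplus N\up$ and $X\upp$ isomorphic in $\Kbf^{1,n,1}(\A)$. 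Then ($n$-Ex0) gives $X\up\oplus N\up\in\calS$, so $d\oplus\id_W$ is a deflation. The case $\id_W\oplus d$ follows by conjugating with the swap isomorphism, whose class is an isomorphism in $H^0(\A)$, and hence a deflation by Corollary~\ref{cor:deflation_examples}(1), together with ($n$-Ex1).
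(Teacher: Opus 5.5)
Your argument is correct, but it proves the key step by a longer route than the paper. Both proofs end with Lemma~\ref{lem:pushout_inflation}, applied after showing that one end differential of $Z\up$ is admissible. The paper works at the inflation end and you work at the deflation end; that difference is only duality.

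The real divergence is how you show that $d\oplus\id_W$ is a deflation. The paper uses one observation: adding a split sequence concentrated at one end leaves the differential at the other end unchanged. With $A=X^0$, the object $W\up={}_AN\up_0\oplus Y\up$ has $d_W^{n,n+1}=d_Y^{n,n+1}$. (For $n=1$ one gets $d_Y^{1,2}\ci[0\ \id_{Y^1}]$ instead, handled by Corollary~\ref{cor:deflation_examples} and ($n$-Ex1).) Hence $W\up\in\calS$ by Lemma~\ref{lem:pushout_inflation}, and then $d_W^{0,1}=\id_A\oplus d_Y^{0,1}$ is an inflation for free.

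The same observation removes the ``circularity'' you ran into. For $n\ge 2$ your $X\up\oplus N\up$ has $d^{0,1}=d_X^{0,1}$, which is already an inflation. So Lemma~\ref{lem:pushout_inflation} (3)$\Rightarrow$(1) puts it in $\calS$ immediately, and $d\oplus\id_W$ is a deflation.

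Your detour is nevertheless valid. It goes through ($n$-Ex2$\op$) along $[1\ 0]$, then the check that the projection $X\up\oplus N\up\to X\up$ is an $n$-pullback morphism (its cocone is isomorphic to $N\up\cong 0$ in $\Kbf(\A)$), then Corollary~\ref{cor:univ_npb}~(3) to get an isomorphism in $\Kbf^{1,n,1}(\A)$. It is uniform in $n$, with no separate $n=1$ case. The price is that it relies on the universal property of $n$-pullbacks and on Corollary~\ref{cor:htpy_modif2}, which the paper's argument does not need.

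Small slips:
\begin{itemize}
\item The first factor of your decomposition should be $\id_{X^{n+1}}\oplus d_Y^{n,n+1}$, not $\id_{X^n}\oplus d_Y^{n,n+1}$.
\item The two sentences about $[1\ 0]$ and $d\ci[1\ 0]$ being deflations are never used.
\item The cocone of the projection is $N\up$ itself rather than a shift. This is harmless, since $N\up$ is zero in $\Kbf(\A)$.
\end{itemize}
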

\begin{proof}
The same proof as that of \cite[Proposition~4.6]{J} works.
Put $X^0=A$ for simplicity. Since the split sequence ${}_AN\up_0$ in Definition~\ref{dfn:splitN} is $n$-exact, $W\up={}_AN\up_0\oplus Y\up$ is also $n$-exact.

When $n\ge 2$, we have $d_W^{n,n+1}=d_Y^{n,n+1}$ by definition.
When $n=1$, we have $d_W^{1,2}=[0\ d_Y^{1,2}]\co A\oplus Y^1\to Y^2$, hence $d_W^{1,2}=d_Y^{1,2}\ci [0\ \id_{Y^1}]$.
By Corollary~\ref{cor:deflation_examples} and {\rm ($n$-Ex1)}, it follows that $d_W^{n,n+1}$ is a deflation also in this case.
Thus in either case we have $W\up\in\calS$ by Lemma~\ref{lem:pushout_inflation}, hence the morphism $d_W^{0,1}=\id_A\oplus d_Y^{0,1}$ is an inflation.
A similar argument shows that $d_X^{0,1}\oplus \id_{Y^1}$ is also an inflation. By {\rm ($n$-Ex1)}, it follows that $d_Z^{0,1}=d_X^{0,1}\oplus d_Y^{0,1}=(d_X^{0,1}\oplus \id_{Y^1})\ci(\id_A\oplus d_Y^{0,1})$ is an inflation. Since $Z\up$ is $n$-exact, Lemma~\ref{lem:pushout_inflation} shows $Z\up\in\calS$.
\end{proof}
\normalcolor

\begin{prop}\label{prop:pushout_inflation}
Let $(\A,\calS)$ be an $n$-exact dg-category, let $f\up\in Z^0(\Cbf^{n+2}(\A))(X\up,Y\up)$ be any morphism, and put $C\up:=\CoCone ((\sigma_{\geq 1}f\up)[1])$.
If $\ovl{f}^0=\id$ in $H^0(\A)$ and $Y\up\in\calS$ and, then the following are equivalent.
\begin{enumerate}
\item $f\up$ is an $n$-pullback morphism.
\item $C\up\in\calS$.
\item $X\up\in\calS$.
\end{enumerate}
\end{prop}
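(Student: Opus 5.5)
The plan is to prove $(2)\Rightarrow(1)\Rightarrow(3)\Rightarrow(1)\Rightarrow(2)$, using Lemma~\ref{lem:equivalent_n-pbm1} as the bridge between (1) and left $n$-exactness of $C\up$. First I unwind $C\up$. Since $(\sig_{\ge1}X\up)[1]$ and $(\sig_{\ge1}Y\up)[1]$ live in degrees $[0,n]$, the cocone satisfies $C^i=X^{i+1}\oplus Y^i$. In particular $C^0=X^1\oplus Y^0$ and $C^{n+1}=Y^{n+1}$, so $C\up\in\Cbf^{n+2}(\A)$. Its last differential is $d_C^{n,n+1}=[\pm f^{n+1,n+1}\ \ d_Y^{n,n+1}]\co X^{n+1}\oplus Y^n\to Y^{n+1}$, the familiar Jasso-type mapping cone.

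\emph{$(2)\Rightarrow(1)$.} Every member of $\calS$ is $n$-exact, so $C\up$ is left $n$-exact, and Lemma~\ref{lem:equivalent_n-pbm1} $(3)\Rightarrow(1)$ applies.

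\emph{$(1)\Rightarrow(3)$.} Put $c=f^{n+1,n+1}$ and apply ($n$-Ex2$\op$) to $Y\up$ and $c$. This gives $X\upp\in\calS$ and an $n$-pullback morphism $f\upp\co X\upp\to Y\up$ with $\ovl{f'}^{n+1}=\ovl{c}$. By Corollary~\ref{cor:univ_npb} (1),(2) with $b=\id$, there is $e\up\co X\up\to X\upp$ with $f\upp\ci e\up=f\up$, $\ovl{e}^0=\id$ and $\ovl{e}^{n+1}=\id$, and $e\up$ is an isomorphism in $\Kbf^{n+2}(\A)$. Hence $X\up$ is $n$-exact by Remark~\ref{rem:isom_n-ex}. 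Corollary~\ref{cor:htpy_modif2} then upgrades $e\up$ to an isomorphism in $\Kbf^{1,n,1}(\A)$, and ($n$-Ex0) gives $X\up\in\calS$.

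\emph{$(3)\Rightarrow(1)$.} This is the step I expect to be the main obstacle. Construct $f\upp\co X\upp\to Y\up$ and $e\up\co X\up\to X\upp$ as before, now using Proposition~\ref{prp:univ_npb} with $W\up=X\up$, $g\up=f\up$, $b=\id$. This still gives $f\upp\ci e\up=f\up$ with $\ovl{e}^0=\id$ and $\ovl{e}^{n+1}=\id$. Once $e\up$ is an isomorphism in $\Kbf(\A)$, we get $\CoCone f\up\cong\CoCone f\upp$, and $f\up$ is an $n$-pullback morphism.

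The difficulty is that $X\up$ and $X\upp$ are both in $\calS$ but are not a priori compared by the universal property. I would first try to show $\Cone e\up\cong0$ directly. The long exact sequences for $\Kbf(\A)(A[-i],\blank)$ and $\Kbf(\A)(\blank,A[i-n])$, together with left $n$-exactness of $X\up,X\upp$ and right $n$-exactness of $X\up,X\upp$, kill all but the boundary degrees. The remaining boundary maps are controlled by $\ovl{e}^0=\id$ and $\ovl{e}^{n+1}=\id$, via Lemma~\ref{lem:comparison_H} and Lemma~\ref{lem:extend_morph}~(2) and its dual. As a fallback, I would apply the dual of Proposition~\ref{prp:univ_npb} to the $n$-pushout side. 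Using ($n$-Ex2) on $X\up$ with $a=\id$, I would build a morphism $X\upp\to X\up$ and check, by the uniqueness statements in Proposition~\ref{prp:univ_npb} and its dual, that the two composites are identities in $\Kbf^{n+2}(\A)$.

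\emph{$(1)\Rightarrow(2)$.} By Lemma~\ref{lem:equivalent_n-pbm1}, $C\up$ is left $n$-exact, so by Lemma~\ref{lem:pushout_inflation} it suffices to show that $d_C^{n,n+1}$ is a deflation. Write
\[
[\pm f^{n+1,n+1}\ \ d_Y^{n,n+1}]=d_Y^{n,n+1}\ci[\pm\phi\ \ \id]+d_{\A}(\text{--})
\]
up to homotopy, which is possible once we know $\ovl{f}^{n+1}$ factors suitably. Alternatively, and more robustly, I would use (3), already established from (1), together with Proposition~\ref{prop:S_is_closed_by_oplus} and Corollary~\ref{cor:deflation_examples}. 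The projection $X^{n+1}\oplus Y^n\to Y^n$ composed with $d_Y^{n,n+1}$ is a deflation by ($n$-Ex1). The matrix $\bigl[\begin{smallmatrix}\id&0\\ \pm f^{n+1,n+1}&d_Y^{n,n+1}\end{smallmatrix}\bigr]$-type automorphism of $X^{n+1}\oplus Y^{n+1}$, combined with the deflation $d_X^{n,n+1}\oplus d_Y^{n,n+1}$ of $X\up\oplus Y\up\in\calS$ and Remark~\ref{rem:strict_npo}~(2), should then exhibit $d_C^{n,n+1}$ as a composite of deflations.
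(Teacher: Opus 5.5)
Your (2)$\Rightarrow$(1) and (1)$\Rightarrow$(3) are correct and essentially match the paper. There is a genuine gap in (3)$\Rightarrow$(1). The reduction to showing that $e\up\co X\up\to X\upp$ is an isomorphism in $\Kbf(\A)$ is valid, but neither tactic proves it.

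\emph{First tactic.} On the $\Kbf(\A)(A[-i],\blank)$ side, the only group the long exact sequences fail to kill is $\Kbf(\A)(A[-n],\Cone e\up)$, the kernel of $\Kbf(\A)(A[-(n+1)],X\up)\to\Kbf(\A)(A[-(n+1)],X\upp)$. Both groups are quotients of $H^0\A(A,X^{n+1})$, and the map is induced by $\ovl{e}^{n+1}=\id$, so it is automatically \emph{surjective}. \emph{Injectivity} is the whole point; for instance, it forces $\ovl{d}_{X'}^{n}$ to factor through $\ovl{d}_X^{n}$. Neither Lemma~\ref{lem:comparison_H} nor Lemma~\ref{lem:extend_morph}~(2) addresses this. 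It is a short-five-lemma statement and does not follow from $n$-exactness plus identity ends alone. For ordinary categories it is the kind of statement recorded as the separate condition (EI) in Fact~\ref{fact:Prop4.23_HLN1}. In the paper it only appears later, as Proposition~\ref{prop:pullpush_factorization}~(4), whose proof uses the present proposition.

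\emph{Fallback.} The dual of Proposition~\ref{prp:univ_npb} only produces morphisms out of the target of an $n$-pushout morphism. Applying {\rm ($n$-Ex2)} to $X\up$ with $a=\id$ just yields an isomorphic copy of $X\up$, unrelated to $X\upp$. Producing $X\upp\to X\up$ over $Y\up$ would need the universal property of $f\up$, which is what you are trying to prove.

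The missing idea is to use $X\up\in\calS$ as a source of test sequences, applying {\rm ($n$-Ex2$\op$)} to $X\up$ itself. The paper argues directly as follows.
\begin{enumerate}
\item Left $n$-exactness of $X\up,Y\up$ gives $\Kbf(\A)(A[-i],\CoCone f\up)=0$ for $i\le n$. So take $h\up\co A[-(n+1)]\to\CoCone f\up$ and put $g\up=p\up\ci h\up$.
\item Apply {\rm ($n$-Ex2$\op$)} and Remark~\ref{rem:strict_npo} to $X\up$ along $g^{n+1,n+1}\co A\to X^{n+1}$. This gives $W\up\in\calS$ with $W^{n+1}=A$ and an $n$-pullback morphism $w\up\co W\up\to X\up$ with $w\up\ci\be_{W,n+1}\up=g\up$.
\item Then $f\up\ci w\up\ci\be_{W,n+1}\up=f\up\ci p\up\ci h\up=0$, so $f\up\ci w\up=0$ by Lemma~\ref{lem:extend_morph}~(2).
\item The $(0,0)$-component of a null-homotopy reads $\ovl{f}^0\ci\ovl{w}^0=\ovl{\varphi^{1,0}}\ci\ovl{d}_W^0$. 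Since $\ovl{f}^0=\ovl{w}^0=\id$, $\ovl{d}_W^0$ is split mono, so $W\up$ is split by Proposition~\ref{prp:characterization_split_seq}.
\item Hence $w\up=0$ and $g\up=0$. Finally $h\up=0$, because $\Kbf(\A)(A[-(n+1)],\CoCone f\up)\to\Kbf(\A)(A[-(n+1)],X\up)$ is injective when $Y\up$ is left $n$-exact.
\end{enumerate}
Running the same argument with $e\up$ in place of $f\up$ would also kill the kernel left over in your first tactic.

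A smaller point on (1)$\Rightarrow$(2). Your first rewrite needs $\ovl{f}^{n+1}$ to factor through $\ovl{d}_Y^{n}$, which fails in general. The route via $d_X^{n,n+1}\oplus d_Y^{n,n+1}$ produces maps with domain $X^n\oplus Y^n$, not $X^{n+1}\oplus Y^n$. The right input is $\id_{X^{n+1}}\oplus d_Y^{n,n+1}$. It is a deflation by Proposition~\ref{prop:S_is_closed_by_oplus}, applied to $Y\up$ and a split sequence ending in $\id_{X^{n+1}}$. Composing it with a sign automorphism, an elementary automorphism and the projection onto $Y^{n+1}$ (Corollary~\ref{cor:deflation_examples} and {\rm ($n$-Ex1)}) shows that $d_C^{n,n+1}$ is \emph{always} a deflation. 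So (1)$\Leftrightarrow$(2) follows from Lemmas~\ref{lem:pushout_inflation} and~\ref{lem:equivalent_n-pbm1} with no detour through (3).
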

\begin{proof}
$(1)\Leftrightarrow(2)$ Note that $d_C^{n,n+1}=\left[f^{n+1}\ -d_{Y}^{n,n+1}\right]\colon X^{n+1}\ds Y^n\to Y^{n+1}$ can be decomposed as
\[
\begin{bmatrix}f^{n+1}& -d^{n,n+1}_{Y}\end{bmatrix}=
\begin{bmatrix}0& 1\end{bmatrix}\circ \begin{bmatrix}1& 0\\f^{n+1}&1\end{bmatrix}\circ\begin{bmatrix}1& 0\\0&d^{n,n+1}_{Y}\end{bmatrix}\circ\begin{bmatrix}1& 0\\0&-1\end{bmatrix}.
\]
On the right hand side, since $\begin{bmatrix}0& 1\end{bmatrix}\co X^{n+1}\oplus Y^{n+1}\to Y^{n+1}$ is a projection and $\begin{bmatrix}1& 0\\f^{n+1}&1\end{bmatrix}$ , $\begin{bmatrix}1& 0\\0&-1\end{bmatrix}$ are isomorphisms, these are deflations by Corollary~\ref{cor:deflation_examples}.
Also, as $\begin{bmatrix}1& 0\\0&d^{n,n+1}_{Y}\end{bmatrix}\co X^{n+1}\oplus Y^n\to X^{n+1}\oplus Y^{n+1}$ is given by the direct sum of $\id_{X^{n+1}}$ and $d_{Y}^{n,n+1}$, it is also a deflation since $\calS$ is closed under finite direct sum by Proposition~\ref{prop:S_is_closed_by_oplus}. By {\rm ($n$-Ex1)}, their composition $d_C^{n,n+1}$ is also a deflation. 
Thus, by Lemma~\ref{lem:pushout_inflation}, $C\up\in\calS$ holds if and only if $C\up$ is left $n$-exact. By Lemma~\ref{lem:equivalent_n-pbm1}, this is equivalent to that $f\up$ is an $n$-pullback morphism.

$(1)\Leftrightarrow(3)$ Put $c=f^{n+1,n+1}$. Suppose that $f\up$ is an $n$-pullback morphism.
By {\rm ($n$-Ex2$\op$)}, there exist $X\upp\in\calS$ and an $n$-pullback morphism $f\upp\co X\up\to Y\up$ such that $\ovl{f'}^{n+1}=\ovl{c}$. By Corollary~\ref{cor:univ_npb} {\rm (3)} we have $X\up\cong X\upp$ in $\Kbf^{1,n,1}(\A)$, hence $X\up\in\calS$ by {\rm ($n$-Ex0)}.

Conversely, suppose that $X\up$ belongs to $\calS$. 
Similarly as in Definition~\ref{dfn:cocone}, we have a morphism $p\up\in Z^0(\Cbf(\A))(\CoCone f\up,X\up)$ that induces a distinguished triangle
\begin{equation}\label{dt_cocone}
Y\up[-1]\to\CoCone f\up\xrightarrow{p\up}X\up\xrightarrow{f\up}Y\up
\end{equation}
in $\Kbf(\A)$. Let $A\in\ob(\A)$ be any object. Since $X\up$ and $Y\up$ are left $n$-exact, we obtain $\Kbf(\A)(A[-i],\CoCone f\up)\cong\Kbf(\A)(A[-i],X\up)=0$ for all $i\le n$. It remains to show $\Kbf(\A)(A[-(n+1)],\CoCone f\up)=0$.

Let $h\up\in Z^0(\Cbf(\A))(A[-(n+1)],\CoCone f\up)$ be any morphism.
 Put $g\up=p\up\ci h\up\in Z^0(\Cbf(\A))(A[-(n+1)],X\up)$, and $c=g^{n+1,n+1}\in Z^0(\A)(A,X^{n+1})$. By {\rm ($n$-Ex2$\op$)} and Remark~\ref{rem:strict_npo} {\rm (1)}, there exist $W\up\in\calS$ and an $n$-pullback morphism $e\up\co W\up\to X\up$ such that $W^{n+1}=A$ and $e^{n+1,n+1}=c$ in $\A$.
Note that we have $\sig_{\ge n+1}W\up=A[-(n+1)]$ and $e\up\ci \be_{n+1}\up=g\up$ for the morphism $\be_{n+1}\up\co\sig_{\ge n+1}W\up\to W\up$ in Proposition~\ref{prop:alphabetagamma}.

Since $(f\up\ci e\up)\ci\be_{n+1}\up=f\up\ci g\up=f\up\ci p\up\ci h\up=0$ is satisfied in $\Kbf(\A)$, we may apply Lemma~\ref{lem:extend_morph} {\rm (2)} to obtain $f\up\ci e\up=0$ in $\Kbf(\A)$. Thus there exists $\varphi\up\in\Cbf(\A)(W\up,Y\up)^{-1}$ such that $d\varphi\up=f\up\ci e\up$ in $\Cbf(\A)$. Then, $\ovl{\varphi}^1=\ovl{\varphi^{1,0}}$ satisfies $\ovl{f}^0\ci\ovl{e}^0=\ovl{\varphi}^1\ci\ovl{d}_W^0$. Since $\ovl{f}^0=\ovl{e}^0=\id$, this means that $\ovl{d}_W^0$ is a split monomorphism in $H^0(\A)$, hence $W\up$ is a zero object in $\Kbf^{n+2}(\A)$, as shown in Proposition~\ref{prp:characterization_split_seq}.
Thus $e\up=0$ holds in $\Kbf(\A)$, and $g\up=e\up\ci\be_{n+1}\up=0$ follows.
Since $Y\up$ is left $n$-exact, the distinguished triangle $(\ref{dt_cocone})$ induces a left exact sequence
\[ 0\to\Kbf(\A)(A[-(n+1)],\CoCone f\up)\xrightarrow{p\up\ci\blank}\Kbf(\A)(A[-(n+1)],X\up), \]
thus $p\up\ci h\up=g\up=0$ implies $h\up=0$ in $\Kbf(\A)$, as desired.
\end{proof}

\normalcolor

\begin{prop}\label{prop:pullpush_factorization}
Let $(\A,\calS)$ be an $n$-exact dg-category. For any $f\up\in Z^0(\Cbf^{n+2}(\A))(X\up,Y\up)$ with $X\up,Y\up\in\calS$, the following holds.
\begin{enumerate}
\item There exist $Z\up\in\calS$, an $n$-pushout morphism $g\up\in Z^0(\Cbf^{n+2}(\A))(X\up,Z\up)$ and an $n$-pullback morphism $h\up\in Z^0(\Cbf^{n+2}(\A))(Z\up,Y\up)$ that satisfy $f\up=h\up\ci g\up$ in $\calS$ and $\ovl{f}^0=\ovl{g}^0$, $\ovl{f}^{n+1}=\ovl{h}^{n+1}$ in $H^0(\A)$.
\item If $\ovl{f}^0$ is an isomorphism in $H^0(\A)$, then $g\up$ in {\rm (1)} is an isomorphism in $\Kbf^{n+2}(\A)$. 
\item If $\ovl{f}^{n+1}$ is an isomorphism in $H^0(\A)$, then $h\up$ in {\rm (1)} is an isomorphism in $\Kbf^{n+2}(\A)$. 
\item If $\ovl{f}^0$ and $\ovl{f}^{n+1}$ are isomorphisms in $H^0(\A)$, then $f\up$ is an isomorphism in $\Kbf^{n+2}(\A)$.
\end{enumerate}
\end{prop}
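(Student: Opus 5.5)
Part (1) is the main work; (2)–(4) then follow from the earlier results.

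For (1), put $a=f^{0,0}$ and apply {\rm ($n$-Ex2)} to $X\up\in\calS$ and $a\in Z^0(\A)(X^0,Y^0)$. This gives $Z\up\in\calS$ and an $n$-pushout morphism $g\up\co X\up\to Z\up$ with $Z^0=Y^0$ and $\ovl{g}^0=\ovl{f}^0$. By Remark~\ref{rem:strict_npo}~(1) we may take $g^{0,0}=a$ and $g^{n+1,n+1}=\id$ strictly.

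Next we factor $f\up$ through $g\up$. The dual of Proposition~\ref{prp:univ_npb} applies, because $X\up$ is right $n$-exact, $g\up$ is an $n$-pushout morphism, and the required condition at degree $0$ holds trivially with $b=\id_{Y^0}$, since $\ovl{f}^0=\id\circ\ovl{g}^0$. It yields $h\up\in Z^0(\Cbf^{n+2}(\A))(Z\up,Y\up)$ with $f\up=h\up\ci g\up$ in $\Kbf^{n+2}(\A)$, $\ovl{h}^0=\id$ and $\ovl{h}^{n+1}=\ovl{f}^{n+1}$. Since $\ovl{g}^{n+1}=\id$, this also gives $\ovl{h}^{n+1}\circ\ovl{g}^{n+1}=\ovl{f}^{n+1}$.

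Because $\ovl{h}^0=\id$ and $Y\up\in\calS$, Proposition~\ref{prop:pushout_inflation} $(3)\Rightarrow(1)$ (applied with $Z\up\in\calS$) shows that $h\up$ is an $n$-pullback morphism.

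The main obstacle is upgrading $f\up=h\up\ci g\up$ from $\Kbf^{n+2}(\A)$ to $\Kbf^{1,n,1}(\A)$ (``in $\calS$''). For $n\ge2$, both sides agree at degrees $0$ and $n+1$ in $H^0(\A)$: at degree $0$ we have $\ovl{h}^0\ovl{g}^0=\ovl{f}^0$, and at degree $n+1$ we have $\ovl{h}^{n+1}\ovl{g}^{n+1}=\ovl{f}^{n+1}$. Moreover $X\up$ is right $n$-exact and $Y\up$ is left $n$-exact, so Corollary~\ref{cor:htpy_modif} upgrades the equality.

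For $n=1$, I would instead run the homotopy-modification argument of Corollary~\ref{cor:htpy_modif2}. Take $\varphi\up$ with $d\varphi\up=f\up-h\up\ci g\up$. First modify $\varphi\up$ using Proposition~\ref{prp:htpy_modif}~(2), so that $\varphi^{1,0}=0$. Then absorb $\varphi^{2,1}$ by replacing $h\up$ with $h\up+d\xi\up$, where $\xi\up$ is concentrated at $(2,1)$ and $\xi^{2,1}=\varphi^{2,1}$. This works because $g^{2,2}=\id$, exactly as in the proof of Corollary~\ref{cor:htpy_modif2}. This replacement does not change $h\up$ in $\Kbf^{n+2}(\A)$, so $h\up$ remains an $n$-pullback morphism and keeps the same $\ovl{h}^0$ and $\ovl{h}^{n+1}$.

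For (2): if $\ovl{f}^0=\ovl{g}^0$ is an isomorphism, then the dual of Corollary~\ref{cor:iso-npbm} applies to the $n$-pushout morphism $g\up$ out of the right $n$-exact $X\up$, so $g\up$ is an isomorphism in $\Kbf^{n+2}(\A)$.

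For (3): if $\ovl{f}^{n+1}=\ovl{h}^{n+1}$ is an isomorphism, Corollary~\ref{cor:iso-npbm} applies to the $n$-pullback morphism $h\up$ into the left $n$-exact $Y\up$, so $h\up$ is an isomorphism in $\Kbf^{n+2}(\A)$.

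For (4): combining (2) and (3), $f\up=h\up\ci g\up$ in $\Kbf^{n+2}(\A)$ is a composite of isomorphisms, hence an isomorphism.
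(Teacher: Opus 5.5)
Your proposal is correct. For $n\ge2$ it is the mirror image of the paper's argument. The paper applies {\rm ($n$-Ex2$\op$)} first to get the $n$-pullback $h\up$, factors $f\up$ through it by Proposition~\ref{prp:univ_npb}, and then recognizes $g\up$ as an $n$-pushout via the dual of Proposition~\ref{prop:pushout_inflation}. You instead start from {\rm ($n$-Ex2)} and use the dual universal property. Both versions finish with Corollary~\ref{cor:htpy_modif}, and (2)--(4) are handled identically.

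The genuine difference is at $n=1$. The paper imports \cite[Lemma~4.12]{C1} through the equivalence $F\co\Kbf^{1,1,1}(\A)\to\calH_{3t}(\A)$ of Proposition~\ref{prop:K111=H3t}. You keep the same construction and replace Corollary~\ref{cor:htpy_modif} by the absorption trick from the $n=1$ part of Corollary~\ref{cor:htpy_modif2}. This is valid: Propositions~\ref{prp:univ_npb}, \ref{prp:htpy_modif}~(2) and \ref{prop:pushout_inflation} do not use $n\ge2$. Moreover, the strict normalization $g^{2,2}=\id$ is exactly what kills the $(2,1)$-component of the new homotopy $\psi\up-\xi\up\ci g\up$. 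Your route is therefore uniform in $n$ and avoids Chen's $\calH_{3t}$ dictionary; the paper's is shorter on the page but not self-contained.

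One justification needs repair. Write $h\upp=h\up+d\xi\up$. Equality of $h\up$ and $h\upp$ in $\Kbf^3(\A)$ does not by itself preserve the degree-$2$ class (Remark~\ref{rem:non-functorial2}), because $(d\xi)^{2,2}=d_Y^{1,2}\ci\xi^{2,1}$. The conclusion still holds, for the following reasons:
\begin{itemize}
\item $(d\xi)^{0,0}=0$ gives $h^{\prime 0,0}=h^{0,0}$ strictly.
\item $\ovl{h'}^2=t(h\upp)=t(h\upp)\ci t(g\up)=t(f\up)=\ovl{f}^2$, because $f\up=h\upp\ci g\up$ in $\Kbf^{1,1,1}(\A)$ and $t(g\up)=\id$.
\item The $n$-pullback property of $h\upp$ then follows from $h\upp=h\up$ in $\Kbf(\A)$, or simply by reapplying Proposition~\ref{prop:pushout_inflation}.
\end{itemize}
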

\begin{proof}
If {\rm (1)} is shown, then {\rm (3)} is immediate from Corollary~\ref{cor:iso-npbm}. {\rm (2)} can be shown dually. Also, {\rm (4)} follows from {\rm (2)} and {\rm (3)}.

It remains to show {\rm (1)}. When $n=1$, this is essentially shown in \cite{C1}. Indeed, by applying \cite[Lemma~4.12.]{C1} via the equivalence of categories $F\co\Kbf^{1,1,1}(\A)\xrightarrow{\cong}\calH_{3t}(\A)$ in Proposition~\ref{prop:K111=H3t}, we obtain $Z\up\in\calS$ and morphisms $g\up\in Z^0(\Cbf^{3}(\A))(X\up,Z\up)$, $h\up\in Z^0(\Cbf^{3}(\A))(Z\up,Y\up)$ that satisfy $f\up=h\up\ci g\up$ in $\Kbf^{1,1,1}(\A)$ and $\ovl{g}^0=\ovl{f}^0$, $\ovl{g}^{2}=\id$, $\ovl{h}^0=\id$, $\ovl{h}^{2}=\ovl{f}^{2}$ in $H^0(\A)$. Then $g\up$ is a $1$-pushout morphism by the dual of Proposition~\ref{prop:pushout_inflation}, and $h\up$ is a $1$-pullback morphism by Proposition~\ref{prop:pushout_inflation}.

Assume $n\ge2$. By {\rm ($n$-Ex2$\op$)}, there exist $Z\up\in\calS$ and an $n$-pullback morphism $h\up\in Z^0(\Cbf^{n+2}(\A))(Z\up,Y\up)$ such that $\ovl{f}^{n+1}=\ovl{h}^{n+1}$ in $H^0(\A)$.
By Proposition~\ref{prp:univ_npb}, we obtain a morphism $g\up\in Z^0(\Cbf^{n+2}(\A))(X\up,Z\up)$ that satisfies $f\up=h\up\ci g\up$ in $\Kbf^{n+2}(\A)$ and $\ovl{g}^0=\ovl{f}^0$, $\ovl{g}^{n+1}=\id_{X^{n+1}}$ in $H^0(\A)$. Then, by Corollary~\ref{cor:htpy_modif}, it satisfies $f\up=h\up\ci g\up$ in $\calS$. Moreover, $g\up$ is an $n$-pushout morphism by the dual of Proposition~\ref{prop:pushout_inflation}.
\end{proof}

\begin{cor}\label{cor:S_groupoid}
Let $(\A,\calS)$ be an $n$-exact dg-category. Let $f\up\in Z^0(\Cbf^{n+2}(\A))(X\up,Y\up)$ be any morphism with $X\up,Y\up\in\calS$.
If $\ovl{f}^0=\id$ and $\ovl{f}^{n+1}=\id$ in $H^0(\A)$, then $f\up$ is an isomorphism in $\Kbf^{1,n,1}(\A)$.
Moreover, the inverse $g\up$ of $f\up$ in $\Kbf^{1,n,1}(\A)$ also satisfies $\ovl{g}^0=\id$ and $\ovl{g}^{n+1}=\id$ in $H^0(\A)$.
\end{cor}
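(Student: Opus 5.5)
The plan is to combine Proposition~\ref{prop:pullpush_factorization}~(4) with Corollary~\ref{cor:htpy_modif2}.

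First, since $X\up,Y\up\in\calS$ and $\ovl{f}^0=\id$, $\ovl{f}^{n+1}=\id$ are in particular isomorphisms in $H^0(\A)$, Proposition~\ref{prop:pullpush_factorization}~(4) shows that $f\up$ is an isomorphism in $\Kbf^{n+2}(\A)$. Note that the hypothesis $\ovl{f}^0=\id$ only makes sense when $X^0=Y^0$, and likewise $X^{n+1}=Y^{n+1}$, so the setting of Corollary~\ref{cor:htpy_modif2} is met. Moreover, both $X\up$ and $Y\up$ are $n$-exact, because they belong to $\calS$.

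Next, Corollary~\ref{cor:htpy_modif2} applies directly. We have $X^0=Y^0$ and $X^{n+1}=Y^{n+1}$, the morphism $f\up$ satisfies $\ovl{f}^0=\id$ and $\ovl{f}^{n+1}=\id$, and $f\up$ is an isomorphism in $\Kbf^{n+2}(\A)$ with $X\up$ $n$-exact. Hence $f\up$ is an isomorphism in $\Kbf^{1,n,1}(\A)$. That corollary also states that any inverse $u\up$ satisfies $\ovl{u}^0=\id$ and $\ovl{u}^{n+1}=\id$. Alternatively, this last property follows from Proposition~\ref{prop:9-A}, via the functoriality of $s$ and $t$.

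The main work is really contained in Proposition~\ref{prop:pullpush_factorization}, which we may assume. The only point needing care is checking that the hypotheses of Corollary~\ref{cor:htpy_modif2} hold, in particular that the objects at the ends coincide. This is implicit in writing $\ovl{f}^0=\id$. So no substantial obstacle is expected.
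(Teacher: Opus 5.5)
Your proposal is correct and follows the paper's own proof: Proposition~\ref{prop:pullpush_factorization}~(4) gives that $f\up$ is an isomorphism in $\Kbf^{n+2}(\A)$, and Corollary~\ref{cor:htpy_modif2} (whose last clause rests on Proposition~\ref{prop:9-A}) then yields invertibility in $\Kbf^{1,n,1}(\A)$ together with $\ovl{g}^0=\id$ and $\ovl{g}^{n+1}=\id$. Your remark that $X^0=Y^0$ and $X^{n+1}=Y^{n+1}$ are implicit in the hypotheses is a fair and harmless clarification.
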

\begin{proof}
Proposition~\ref{prop:pullpush_factorization} {\rm (4)} shows that $f\up$ is an isomorphism in $\Kbf^{n+2}(\A)$. Thus Corollary~\ref{cor:htpy_modif2} can be applied.
\end{proof}

\subsection{Relation to Chen's exact dg-category}

When $n=1$, Definition~\ref{dfn:n-exact-dg} is equivalent to that of \emph{exact dg-category} in \cite{C1}.
\begin{prop}\label{prop:exact_dg_=_1-exact_dg}
Let $\A$ be a strictly connective and strictly additive dg-category as before, and let $\calS\subset\Kbf^{1,n,1}(\A)$ be a full subcategory. Define $\calS'\subset\calH_{3t}(\A)$ by $\calS'=F(\calS)$, using the equivalence $F$ of categories obtained in Proposition~\ref{prop:K111=H3t}. Then, the following are equivalent.
\begin{enumerate}
\item $(\A,\calS)$ is a $1$-exact dg-category in the sense of Definition~\ref{dfn:n-exact-dg}.
\item $(\A,\calS')$ is an exact dg-category in the sense of \cite[Definition~4.1]{C1}.
\end{enumerate}
\end{prop}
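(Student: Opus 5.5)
The plan is to compare the two definitions axiom by axiom through the equivalence $F\co\Kbf^{1,1,1}(\A)\xrightarrow{\cong}\calH_{3t}(\A)$ of Proposition~\ref{prop:K111=H3t}. Recall that Chen's Definition~4.1 in \cite{C1} asks for the following. First, a class of homotopy short exact sequences closed under isomorphism in $\calH_{3t}(\A)$ and containing the split ones (the latter possibly phrased as: identity morphisms of zero objects are deflations and inflations). Second, closure of deflations and inflations under composition. Third, existence of homotopy pullbacks of deflations along arbitrary morphisms and homotopy pushouts of inflations, together with stability of conflations under these base changes.

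I would first record the dictionary.
\begin{itemize}
\item By Remark~\ref{rem:1-exact}, $X\up$ is $1$-exact exactly when $F(X\up)$ is a homotopy short exact sequence.
\item Isomorphism-closure of $\calS$ in $\Kbf^{1,1,1}(\A)$ is the same as isomorphism-closure of $\calS'=F(\calS)$ in $\calH_{3t}(\A)$, since $F$ is an equivalence.
\item Inflations and deflations are literally the same morphisms $d_X^{0,1}$, $d_X^{1,2}$ on both sides, so ($1$-Ex1) matches Chen's composition axiom verbatim.
\end{itemize}
For the split axiom, one direction uses Proposition~\ref{prop:any_split_belongs_to_S}: in a $1$-exact dg-category every split sequence lies in $\calS$, in particular those exhibiting $\id_A$ as inflation and deflation. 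Conversely, Chen's axiom supplies a split sequence in $\calS'$, hence in $\calS$, which is ($1$-Ex0).

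The main work is matching ($1$-Ex2$\op$) with Chen's pullback axiom, which says that for a conflation $Y$ and any $c\co C\to Y^2$ there is a homotopy cartesian square whose pulled-back morphism is again a deflation. I would translate Chen's homotopy cartesian condition on the square formed by $c$ and $d_Y^{1,2}$ into exactness data on a cocone. Via Lemma~\ref{lem:equivalent_n-pbm1}(3), a morphism $f\up\co X\up\to Y\up$ with $\ovl f^0=\id$ is a $1$-pullback morphism iff $\CoCone((\sig_{\ge1}f\up)[1])$ is left $1$-exact. Here $\CoCone((\sig_{\ge1}f\up)[1])$ is the three-term complex $X^1\to X^2\oplus Y^1\to Y^2$, exactly the sequence whose homotopy left exactness is Chen's definition of a homotopy cartesian square.

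The argument in each direction then runs as follows.
\begin{itemize}
\item Given ($1$-Ex2$\op$), the $1$-pullback morphism $f\up$ gives a homotopy cartesian square with $d_X^{1,2}$ a deflation.
\item Conversely, given Chen's homotopy pullback square with deflation $d'\co X^1\to C$, complete $d'$ to a conflation $X\up\in\calS'$ and extend the square to a morphism $f\up\co X\up\to Y\up$ with $\ovl f^0=\id$. Build this extension with Corollary~\ref{cor:give_a_morph} and the uniqueness of $1$-kernels (Corollary~\ref{cor:uniqueness_n-kernel}); the cocone criterion then makes it a $1$-pullback morphism.
\end{itemize}
The pushout axiom is dual.

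The step I expect to be the main obstacle is this last identification. Chen phrases homotopy cartesian squares in terms of $3$-term $h$-complexes with explicit homotopies, and one must check carefully that the homotopy filling the square matches the components $f^{0,1}$, $f^{1,2}$, $f^{0,2}$ of a closed morphism in $\Cbf^3(\A)$ under the sign conventions of Remark~\ref{rem:comparison_with_H3t}. One must also check that $\ovl f^0=\id$ can be arranged strictly, which Lemma~\ref{lem:4A} handles. Where convenient, I would instead invoke Proposition~\ref{prop:pushout_inflation}: once $X\up\in\calS$, being a $1$-pullback morphism is automatic. This reduces the converse direction to producing some morphism $f\up$ with the prescribed ends, bypassing most of the homotopy bookkeeping.
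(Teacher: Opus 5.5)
Your direction $(1)\Rightarrow(2)$ is fine and is essentially the paper's argument. Ex0 comes from Proposition~\ref{prop:any_split_belongs_to_S} (the paper uses it in the form of Corollary~\ref{cor:deflation_examples}(1)). Ex1 is trivial. Ex2 comes from ($1$-Ex2$\op$), with the ends made strict by Lemma~\ref{lem:4A}, i.e.\ Remark~\ref{rem:strict_npo}(1).

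\textbf{Gap 1: closure axioms in $(2)\Rightarrow(1)$.} Here you work with a stronger version of \cite[Definition~4.1]{C1} than Chen's. Chen's axioms follow Keller's minimal system: Ex1 only asks that \emph{deflations} be closed under composition. That inflations compose is a theorem, \cite[Proposition~4.9(d)]{C1}, the dg analogue of Keller's proof that this axiom is redundant for exact categories. It is exactly what the paper cites to obtain ($1$-Ex1). So ($1$-Ex1) does not match Chen's Ex1 ``verbatim''. As written, your proof of $(2)\Rightarrow(1)$ never shows that inflations compose, and this is not a formality: you must cite that result or reprove it. Likewise, the paper does not read closure of $\calS'$ under all isomorphisms of $\calH_{3t}(\A)$ off the definition. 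It derives it from \cite[Remark~4.4]{C1} and \cite[Proposition~4.9(a)]{C1}, and you need that closure both for ($1$-Ex0) and below.

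\textbf{Gap 2: the pullback axiom in $(2)\Rightarrow(1)$.} Your fallback via Proposition~\ref{prop:pushout_inflation} is circular. That proposition presupposes that $(\A,\calS)$ is already a $1$-exact dg-category: its proof uses ($n$-Ex0), ($n$-Ex1), ($n$-Ex2$\op$), Lemma~\ref{lem:pushout_inflation} and Proposition~\ref{prop:S_is_closed_by_oplus}. So you must go through the cocone criterion of Lemma~\ref{lem:equivalent_n-pbm1}, and the completion step is less routine than you suggest. A conflation $X\upp\in\calS$ with $d_{X'}^{1,2}=d'$ has an arbitrary left end $X^{\prime 0}$. Hence $\ovl f^0=\id$ is meaningless until you pass to some $X\up$ with $X^0=Y^0$.

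This can be done as follows.
\begin{enumerate}
\item Homotopy cartesianness of the square $g\up\co\sig_{\ge1}X\upp\to\sig_{\ge1}Y\up$ means $\Kbf(\A)(B[-j],\CoCone g\up)=0$ for all $B\in\ob(\A)$ and $j\le2$. Hence $\al_{Y,1}\up$ lifts through $g\up$.
\item Take $X\up$ to be the cone of the lift. Corollary~\ref{cor:give_a_morph} gives $f\up$ with $f^{0,0}=\id$.
\item Then $\CoCone f\up\cong\CoCone g\up$ by Corollary~\ref{cor:devide_f}(2), which makes $X\up$ left $1$-exact.
\end{enumerate}

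You then still need $X\up\in\calS$. Corollary~\ref{cor:uniqueness_n-kernel} only yields $X\up\cong X\upp$ in $\Kbf^3(\A)$, with $0$-th component merely an isomorphism in $H^0(\A)$. But $\calS$ is only closed under isomorphisms in $\Kbf^{1,1,1}(\A)$, and Corollary~\ref{cor:htpy_modif2} requires identical end terms with identity classes. That upgrade is provable but missing from your plan. The paper sidesteps all of this by invoking the argument at the beginning of \cite[Section~4.2]{C1}.
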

\begin{proof}

As in Remark~\ref{rem:1-exact}, $1$-exact sequences in $\Kbf^{1,1,1}(\A)$ and homotopy exact sequences in $\calH_{3t}(\A)$ correspond through $F$.

$(1)\Rightarrow(2)$ Suppose that $(\A,\calS)$ is a $1$-exact dg-category. It suffices to check that conditions {\rm Ex0}, {\rm Ex1}, {\rm Ex2} and {\rm Ex2$\op$} in \cite[Definition~4.1]{C1} are satisfied by $(\A,\calS')$.

{\rm Ex0} is obvious by Corollary~\ref{cor:deflation_examples} {\rm (1)}. {\rm Ex1} follows from {\rm ($1$-Ex1)}. {\rm Ex2} is by {\rm ($1$-Ex2$\op$)} and Remark~\ref{rem:strict_npo} {\rm (1)}. Dually for {\rm Ex2$\op$}.

$(2)\Rightarrow(1)$ Suppose that $(\A,\calS')$ is an exact dg-category in the sense of \cite[Definition~4.1]{C1}.
By \cite[Remark~4.4]{C1} and \cite[Proposition~4.9 {\rm (a)}]{C1}, the subcategory $\calS'\subset\calH_{3t}(\A)$ is closed under isomorphisms.
Thus, so is $\calS\subset\Kbf^{1,1,1}(\A)$. Moreover, the split sequence
${}_0N_0$ in Definition~\ref{dfn:splitN} belongs to $\calS$ by {\rm Ex0}.
Hence $(\A,\calS)$ satisfies {\rm ($1$-Ex0)}.
{\rm ($1$-Ex1)} follows from Ex1 in \cite[Definition~4.1]{C1} and \cite[Proposition~4.9 {\rm (d)}]{C1}.
{\rm ($1$-Ex2)} follows from the argument in the beginning of Section~4.2 of \cite{C1}. Dually for {\rm ($1$-Ex2$\op$)}.
\end{proof}

\normalcolor

The following notion gives a special case of an $n$-exact dg-category, 
corresponding to an $n$-analogue of the stable dg-category introduced in \cite[Definition~6.1]{C1}.
\normalcolor
\begin{dfn}\label{dfn:n-stable-dg}
Let $\A$ be a strictly connective and strictly additive dg-category, as before. $\A$ is said to be an \emph{$n$-stable dg-category} if it satisfies  the following conditions.
\begin{enumerate}
\item[(St1)] Any closed morphism in $\A$ of degree $0$ has an $n$-kernel and an $n$-cokernel (see Remark~\ref{rem:n-kernel}).
\normalcolor
\item[(St2)] For any $X\up\in \Cbf^{n+2}(\A)$, it is left $n$-exact if and only if it is right $n$-exact.
\end{enumerate}
\end{dfn}

\begin{prop}\label{prop:characterization_n-stable}
Let $\A$ be a strictly connective and strictly additive dg-category, as before. If $\A$ is an $n$-stable dg-category, then it has a natural structure of an $n$-exact dg-category. Indeed, if we define $\calS\subset\Kbf^{1,n,1}(\A)$ to be the class of all $n$-exact sequences, then $(\A,\calS)$ becomes an $n$-exact dg-category. Conversely, if $(\A,\calS)$ is an $n$-exact dg-category in which any closed morphism in $\A$ of degree $0$ is both a deflation and an inflation, then it is an $n$-stable dg-category.
\end{prop}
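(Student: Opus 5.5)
For the first assertion, let $\calS$ be the class of all $n$-exact sequences; by (St2) it is also the class of all left $n$-exact (equivalently, right $n$-exact) sequences. I will verify the axioms of Definition~\ref{dfn:n-exact-dg} in turn.

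\textbf{($n$-Ex0).} Closure under isomorphisms in $\Kbf^{1,n,1}(\A)$ holds by Remark~\ref{rem:isom_n-ex}, since an isomorphism there is in particular one in $\Kbf^{n+2}(\A)$. Any split sequence, for instance ${}_0N\up_0$, is $n$-exact and so lies in $\calS$.

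\textbf{($n$-Ex1).} Every closed degree-$0$ morphism is both a deflation and an inflation. Indeed, by (St1) each such $c$ has an $n$-kernel $X\up$, which is left $n$-exact with $d_X^{n,n+1}=c$; by (St2) it is $n$-exact. Dually for inflations. Closure under composition is then automatic.

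\textbf{($n$-Ex2$\op$).} Take $Y\up\in\calS$ and $c\in Z^0(\A)(C,Y^{n+1})$. Form the morphism $[c\ \ {-d_Y^{n,n+1}}]\co C\oplus Y^n\to Y^{n+1}$ and take its $n$-kernel $K\up$ via (St1). Since $K\up$ is left $n$-exact, by (St2) it is $n$-exact. Now build $X\up$ with $\sig_{\ge1}X\up$ chosen so that $K\up\cong\CoCone((\sig_{\ge1}f\up)[1])$, reading the pullback shape off the decomposition used in the proof of Proposition~\ref{prop:pushout_inflation}. Concretely, I would construct $f\up\co X\up\to Y\up$ with $X^{n+1}=C$, $f^{n+1,n+1}=c$ and $\ovl f^0=\id$ by ``un-coning'' $K\up$. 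That is, I would realize $K\up$ (after modification up to isomorphism in $\Kbf^{n+2}(\A)$, using Corollary~\ref{cor:inflation_replace}-type adjustments) as a cone of a morphism $\sig_{\ge1}X\up\to\sig_{\ge1}Y\up$, and glue along $Y^0$ using Corollary~\ref{cor:give_a_morph}. Then Lemma~\ref{lem:equivalent_n-pbm1}(3) shows $f\up$ is an $n$-pullback morphism. It remains to see $X\up\in\calS$: left $n$-exactness of $X\up$ follows from the triangle $Y\up[-1]\to\CoCone f\up\to X\up\to Y\up$, together with the vanishing for $\CoCone f\up$ and the left $n$-exactness of $Y\up$. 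Then (St2) gives full $n$-exactness.

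\textbf{($n$-Ex2).} This is dual to ($n$-Ex2$\op$).

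This gluing step is the main obstacle. An alternative I would try first is an induction peeling off degrees, using the distinguished triangles of Proposition~\ref{prop:alphabetagamma}: take successive $n$-kernels to build the terms $X^n, X^{n-1},\dots$ directly from the homotopy pullback in $\Kbf(\A)$.

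For the converse, suppose $(\A,\calS)$ is $n$-exact and every closed degree-$0$ morphism $c$ is a deflation and an inflation.

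\textbf{(St1).} Since $c$ is a deflation, some $X\up\in\calS$ has $d_X^{n,n+1}=c$; this $X\up$ is an $n$-kernel of $c$. Dually, the inflation property gives an $n$-cokernel.

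\textbf{(St2).} By Lemma~\ref{lem:pushout_inflation}(2), a left $n$-exact $X\up$ whose $d_X^{n,n+1}$ is a deflation (which holds by assumption) lies in $\calS$, hence is right $n$-exact. Symmetrically, Lemma~\ref{lem:pushout_inflation}(3) shows that right $n$-exact implies left $n$-exact.
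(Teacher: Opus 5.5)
\paragraph{The missing step: constructing the $n$-pullback morphism.}
Your treatment of ($n$-Ex0), ($n$-Ex1) and of the converse is correct. For (St2) you go through Lemma~\ref{lem:pushout_inflation}, while the paper argues directly with Corollary~\ref{cor:uniqueness_n-kernel}; this is the same mechanism. Your auxiliary object for ($n$-Ex2$\op$) is also the right one: the $n$-kernel $K\up$ of $[c\ \ {-d_Y^{n,n+1}}]$ is exactly dual to the $n$-cokernel $W\up$ of $\begin{bmatrix}d_X^{0,1}\\ a\end{bmatrix}$ that the paper uses for ($n$-Ex2).

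The gap is that the step carrying the whole first assertion is never carried out, namely producing $X\up$ together with an $n$-pullback morphism $X\up\to Y\up$ with $X^{n+1}=C$. The ``un-coning'' you describe is circular: to realize $K\up$ as $\CoCone((\sig_{\ge1}f\up)[1])$ you already need $\sig_{\ge1}X\up$, which is the unknown. Nothing can be read off from $K\up$ itself, for three reasons.
\begin{itemize}
\item The terms $K^0,\dots,K^{n-1}$ of an $n$-kernel are arbitrary objects, not sums $X^{i+1}\oplus Y^i$.
\item Corollary~\ref{cor:inflation_replace} only changes the components $d^{0,j}$ of the differential and never the terms.
\item The factorization in the proof of Proposition~\ref{prop:pushout_inflation} only shows that $d^{n,n+1}$ of such a cocone is a deflation.
\end{itemize}

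\paragraph{What works instead (dual to the paper's proof of ($n$-Ex2)).}
Map the given sequence into the $n$-kernel and take a cocone.
\begin{enumerate}
\item Let $e\up\co\sig_{\ge n}Y\up\to\sig_{\ge n}K\up$ consist of the signed inclusion $Y^n\to C\oplus Y^n$ and $\id_{Y^{n+1}}$.
\item The complex $(\sig_{\le n-1}Y\up)[-1]$ is built from objects $A[-i]$ with $1\le i\le n$. So left $n$-exactness of $K\up$ gives $\be_{K,n}\up\ci e\up\ci\al_{Y,n}\up=0$ in $\Kbf(\A)$. Hence $e\up\ci\al_{Y,n}\up$ factors through $\al_{K,n}\up$, and Corollary~\ref{cor:give_a_morph} yields a closed $\phi\up\co Y\up\to K\up$ with $\sig_{\ge n}\phi\up=e\up$.
\item $\CoCone\phi\up$ has the right homotopy type but the wrong shape. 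It has $n+3$ terms, with $Y^{n+1}\oplus Y^n\oplus C$ in degree $n+1$ and $K^{n+1}=Y^{n+1}$ in degree $n+2$.
\item The essential extra step, absent from your proposal, is to split off a contractible three-term piece built from $Y^n$ and $Y^{n+1}$. This is the dual of the paper's split sequence $M\up$, the strict isomorphism $\Cone f\up\cong\Cone m\up$, and the homotopy equivalence $\eta\up\ci s_{-,M}\up\ci p_m\up+q_m\up$. The result is $X\up\in\Cbf^{n+2}(\A)$ with $X^0=Y^0$ and $X^{n+1}=C$, together with a closed map $X\up\to Y\up$ whose components are strictly $\id$ in degree $0$ and $c$ in degree $n+1$.
\end{enumerate}

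Only after this do your triangle arguments apply. The cocone of this map is isomorphic to $K\up[-1]$ in $\Kbf(\A)$, so left $n$-exactness of $K\up$ is precisely the vanishing required in Definition~\ref{dfn:pull_of_admissible}. Left $n$-exactness of $X\up$ then follows as you say, and (St2) finishes the argument.

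Without this construction, neither ($n$-Ex2$\op$) nor, dually, ($n$-Ex2) is established, so the first half of the proposition remains unproved. For $n=1$ the paper simply invokes Chen's result on stable dg-categories.
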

\begin{proof}
First, suppose that $\A$ is an $n$-stable dg-category, and let $\calS\subset\Kbf^{1,n,1}(\A)$ be the class of all $n$-exact sequences. We  show that $(\A,\calS)$ is an $n$-exact dg-category.

For $n=1$, this follows from Proposition~\ref{prop:exact_dg_=_1-exact_dg} and  \cite[Proposition~6.4]{C1}. 
\normalcolor
Therefore, we assume $n\ge2$.

Obviously, $(\A,\calS)$ satisfies {\rm ($n$-Ex0)} and {\rm ($n$-Ex1)}. Let us show {\rm ($n$-Ex2)}. Let $X\up\in\Cbf^{n+2}(\A)$ be any $n$-exact sequence, and let $a\in Z^0(\A)(X^0,A)$ be any morphism. By {\rm (St1)}, the morphism 
$
\begin{bmatrix}d_X^{0,1}\\ a\end{bmatrix}\co X^0\to X^1\oplus A
$
has an $n$-cokernel. Namely, there exists a right $n$-exact sequence $W\up\in\Cbf^{n+2}(\A)$ such that $d_W^{0,1}=\begin{bmatrix}d_X^{0,1}\\ a\end{bmatrix}$.

By Proposition~\ref{prop:alphabetagamma}, there are sequences of morphisms
\[
(\sig_{\le 1}W\up)[-1]\xrightarrow{\al_{W,2}\up} \sig_{\ge 2}W\up\xrightarrow{\be_{W,2}\up}W\up\xrightarrow{\gam_{W,2}\up}\sig_{\le 1}W\up \]
and
\[
(\sig_{\le 1}X\up)[-1]\xrightarrow{\al_{X,2}\up} \sig_{\ge 2}X\up\xrightarrow{\be_{X,2}\up}X\up\xrightarrow{\gam_{X,2}\up}\sig_{\le 1}X\up, \]
which give distinguished triangles in $\Kbf(\A)$.
Define 
$
e\up\in Z^0(\Cbf(\A))(\sig_{\le 1}W\up, \sig_{\le 1}X\up)
$
by
\[
e^{i,j}=\begin{cases}
\id_{X^0} & \text{if}\ (i,j)=(0,0) \\
[1\ 0] & \text{if}\ (i,j)=(1,1)\\
0 & \text{otherwise}
\end{cases}\ .
\]
By the right $n$-exactness of $W\up$, we have $\Kbf(\A)(W\up,(\sig_{\ge 2}X\up)[1])=0$. In particular $\al\up_{X,2}[1]\ci e\up\ci \gam\up_{X,2}=0$ holds in $\Kbf(\A)$. Thus, there exists $e\upp\in Z^0(\Cbf(\A))(\sig_{\ge 2}W\up,\sig_{\ge 2}X\up)$ such that
\[
\begin{tikzcd}[column sep=2.4em, row sep=2em]
\sig_{\le 1}W\up
  \arrow[r, "\scriptstyle\al_{W,2}\up {[}1{]}"]
  \arrow[d, "\scriptstyle e\up"'] &
(\sig_{\ge2}W\up)[1]
  \arrow[d, "\scriptstyle e\upp {[}1{]}"] \\
\sig_{\le 1}X\up
  \arrow[r, "\scriptstyle\al_{X,2}\up {[}1{]}"'] &
(\sig_{\ge2}X\up)[1]
\end{tikzcd}
\]
becomes commutative in $\Kbf(\A)$. By Corollary~\ref{cor:give_a_morph}, we obtain $f\up\in Z^0(\Cbf(\A))(W\up,X\up)$ such that $\sig_{\le 1}f\up=e\up$ and $\sig_{\ge 2}f\up=e\upp$ in $\tw(\A)$. Put $C\up=\Cone f\up$. 
We use the notation in Definition~\ref{dfn:cone}.
Since $W\up\xrightarrow{f\up}X\up\xrightarrow{v_f\up}C\up\xrightarrow{p_f\up} W\up[1]$ is a distinguished triangle,
\[
\Kbf(\A)(W\up[1],B[i-n])\to\Kbf(\A)(C\up,B[i-n])\to\Kbf(\A)(X,B[i-n])
\]
is exact for any $B\in\ob(\A)$ and $i\in\bbZ$. Moreover, we have $\Cone v_f\up\cong W\up[1]$ in $\Kbf(\A)$. Since $W\up$ and $X\up$ are right $n$-exact, for any $B\in\ob(\A)$ it follows that 
\begin{equation}\label{vf:n-po}
\Kbf(\A)(\Cone v_f\up,B[i-n-1])=0
\end{equation}
for all $i\le n+1$, and that
\begin{equation}\label{c:nexact}
\Kbf(\A)(C\up,B[i-n])=0
\end{equation}
for all $i\le n$.
\normalcolor

Define $M\up\in\Cbf^{n+2}(\A)$ by
\[
M^i=\begin{cases}
X^0&\text{if}\ i=0\\
X^1\oplus X^0&\text{if}\ i=1\\
X^1&\text{if}\ i=2\\
0&\text{otherwise}
\end{cases}\ ,\ \ \ 
d_M^{i,j}=\begin{cases}
\begin{bmatrix}d_X^{0,1}\\-1\end{bmatrix}&\text{if}\ (i,j)=(0,1)\\
[-1\ -d_X^{0,1}] &\text{if}\ (i,j)=(1,2)\\
0&\text{otherwise}
\end{cases}\ .
\]
Then, it is a split sequence. Indeed, $\psi\up\in \Cbf(\A)(M\up,M\up)^{-1}$ given by
\[
\psi^{i,j}=\begin{cases}
[0\ -1] & \text{if}\ (i,j)=(1,0)\\
\begin{bmatrix}-1\\0\end{bmatrix} & \text{if}\ (i,j)=(2,1)\\
0 & \text{otherwise}
\end{cases}
\]
satisfies $d\psi\up=\id_{M\up}$ in $\Cbf(\A)$.

Define $Y\up\in\Cbf^{n+2}(\A)$ by
\[
Y^i=\begin{cases}
A&\text{if}\ i=0\\
W^2&\text{if}\ i=1\\
C^i&\text{if}\ i\ge2\\
0&\text{otherwise}
\end{cases}\ ,\ \ \ 
d_Y^{i,j}=\begin{cases}
[1\ 0]\ci d_C^{i,j}\ci \begin{bmatrix}0\\1\\0\end{bmatrix}&\text{if}\ (i,j)=(1,2) \\
d_C^{i,j}\ci \begin{bmatrix}0\\1\\0\end{bmatrix}&\text{if}\ i=0\ \text{and}\ j\ge2\\
d_C^{i,j}\ci \begin{bmatrix}1\\0\end{bmatrix}&\text{if}\ i=1\\
d_C^{i,j}&\text{if}\ i\ge2\\
0&\text{otherwise}
\end{cases}\ .
\]
Then, there exists $m\up\in Z^0(\Cbf(\A))(M\up,Y\up)$ and an isomorphisms $\iota\up\co C\up\xrightarrow{\cong}\Cone m\up$ in $\Cbf(\A)$.
Explicitly, $m\up$ is a morphism satisfying
$m^{0,0}=-a$ and $m^{i,j}=0$ for all $i\ge3$, 
\normalcolor
and $\iota\up$ is the morphism induced by the isomorphisms in $\A$
\[
\iota^{0,0}=\begin{bmatrix}1&0&0\\0&0&1\\0&1&0\end{bmatrix}\co X^1\oplus A\oplus X^0\xrightarrow{\cong}X^1\oplus X^0\oplus A\ \]
\[
\iota^{1,1}=\begin{bmatrix}0&1\\1&0\end{bmatrix}\co W^2\oplus X^1\xrightarrow{\cong}X^1\oplus W^2,
\]
which are just switching the direct sum components.  
\normalcolor

Let
\[
\begin{tikzcd}[column sep = 30]
Y\up
\arrow[r, shift left=0.6ex, "v_m\up"]
&
\Cone m\up
\arrow[l, shift left=0.6ex, "q_m\up"]
\arrow[r, shift left=0.6ex, "p_m\up"]
&
M\up [1]
\arrow[l, shift left=0.6ex,  "u_m\up"]
\arrow[r, shift left=0.6ex, "s_{-,M}\up"]
&
M\up
\arrow[l, shift left=0.6ex,  "s_{+,M}\up"]
\end{tikzcd}
\]
be the morphisms introduced in Definitions~\ref{dfn:shifts} and~\ref{dfn:cone}.
Note that the left square in the diagram below is commutative in $\Kbf(\A)$, since $M\up$ is a split sequence.
\begin{equation}\label{mortri}
\begin{tikzcd}
M\up & Y\up & \Cone m\up & M\up[1] \\ 
0 & Y\up & Y\up & 0
\Ar{1-1}{1-2}{"m\up"}
\Ar{1-2}{1-3}{"v_m\up"}
\Ar{1-3}{1-4}{"p_m\up"}
\Ar{2-1}{2-2}{}
\Ar{2-2}{2-3}{""'}
\Ar{2-3}{2-4}{""'}
\Ar{1-1}{2-1}{}
\Ar{1-2}{2-2}{equal}
\Ar{1-3}{2-3}{"g\up"}
\Ar{1-4}{2-4}{}
\end{tikzcd}
\end{equation}
In fact, $\eta\up=m\up\ci\psi\up$ satisfies $d\eta\up=m\up$. 
Define $g\up\in \Cbf(\A)(\Cone m\up,Y\up)^{0}$ by
$g\up=\eta\up\ci s_{-,M}\up\ci p_m\up+q_m\up$.
Then, it satisfies $g\up\ci v_m\up=\id_{Y\up}$ and
\[
dg\up=d\eta\up\ci s_{-,M}\up\ci p_m\up+ dq_m\up=m\up\ci s_{-,M}\up\ci p_m\up-m\up\ci s_{-,M}\up\ci p_m\up\ =\ 0, 
\]
hence $g\up\in Z^0(\Cbf(\A))(\Cone m\up,Y\up)$. 
This makes the above $(\ref{mortri})$ a morphism of distinguished triangles in $\Kbf(\A)$. In particular, $g\up$ is an isomorphism in $\Kbf(\A)$.
Moreover, by its definition, we have
\begin{eqnarray*}
g^{0,0}&=&  m^{0,0}\ci\psi^{1,0}\ci p_m^{0,0}+q_m^{0,0} \\
&=&-a\ci \begin{bmatrix}0&-1\end{bmatrix}\ci \begin{bmatrix}1&0&0\\0&1&0\end{bmatrix}+\begin{bmatrix}0&0&1\end{bmatrix}
\ =\ \begin{bmatrix}0&a&1\end{bmatrix}\ \  \co X^1\oplus X^0\oplus A\to A
\end{eqnarray*}
and $g^{n+1,n+1}=q_m^{n+1,n+1}=\id\co X^{n+1}\to X^{n+1}$.

Then, the composite $h\up$ of 
\[
X\up\xrightarrow{v_f\up}C\up\xrightarrow{\iota\up}\Cone m\up\xrightarrow{g\up}Y\up
\]
satisfies $h^{0,0}=a$ and $h^{n+1,n+1}=\id$. As we have seen, for any $B\in\ob(\A)$, we have $(\ref{vf:n-po})$ for all $i\le n+1$, and $(\ref{c:nexact})$ for all $i\le n$. Since
$g\up\ci\iota\up$ is an isomorphism in $\Kbf(\A)$, this implies that, for any $B\in\ob(\A)$,
\[
\Kbf(\A)(\Cone h\up,B[i-n-1])=0
\]
holds for all $i\le n+1$, and
\[
\Kbf(\A)(Y\up,B[i-n])=0
\]
holds for all $i\le n$. Since $Y\up\in\Cbf^{n+2}(\A)$, this means that 
$h\up$ is an $n$-pushout morphism and $Y\up$ is right $n$-exact. By {\rm (St2)}, it follows that $Y\up$ is $n$-exact.
Thus {\rm ($n$-Ex2)} is shown.
{\rm ($n$-Ex2$\op$)} can be shown in a dual manner.


Conversely, let $(\A,\calS)$ be an $n$-exact dg-category in which any closed morphism in $\A$ of degree $0$ is both a deflation and an inflation. 
For any $a\in Z^0(\A)(A,B)$, by assumption it is an inflation, thus there exists $X\up\in\calS$ such that $a=d_X^{0,1}$. In particular $a$ has an $n$-cokernel. Similarly $a$ has an $n$-kernel since it is a deflation. This shows {\rm (St1)}. Let us show {\rm (St2)}. By duality, it is enough to show that any left $n$-exact sequence is $n$-exact. Let $X\up\in\Cbf^{n+2}(\A)$ be any left $n$-exact sequence. By assumption $d_X^{n,n+1}$ is a deflation, and hence there exists $X\upp\in\calS$ such that $d_{X'}^{n,n+1}=d_X^{n,n+1}$.
By Corollary~\ref{cor:uniqueness_n-kernel} we have $X\up\cong X\upp$ in $\Kbf^{n+2}(\A)$, hence $X\up$ is also $n$-exact by Remark~\ref{rem:isom_n-ex}.
\end{proof}

\normalcolor

\section{Induced biadditive functor $\E$ on $H^0(\A)$} \label{section:induced_biadditive}
Throughout this section, let $(\A,\calS)$ denote an $n$-exact dg-category for which $H^0(\A)$ is skeletally small.

\subsection{Functor $H^0(\A)\op\times H^0(\A)\to\Sets$}

In this subsection we will give a functor $\E\co H^0(\A)\op\times H^0(\A)\to\Sets$. This is a consequence of Proposition~\ref{prp:ts-fibration}, which states that the functors $s,t\co\calS\to H^0(\A)$ form a two-sided fibration.

In our main theorem (Theorem~\ref{thm:H0_of_n-ex-dg}), under the additional assumption that $H^i\A(A,B)=0$ for all $A,B\in\ob(\A)$ and $-n<i<0$ (Condition~\ref{condition:H^i=0}), we will show that any $n$-exact dg-category $H^0(\A)$ admits a structure of an $n$-exangulated category introduced in \cite{HLN1}. 

We remark that, when $n=1$ this additional assumption is requiring nothing, and the following is shown by Chen.
\begin{fact}(\cite[Theorem 4.26]{C1})
If $(\A,\calS')$ is an exact dg-category, then $H^0(\A)$ has a structure of an extriangulated category.
\end{fact}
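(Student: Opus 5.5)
This is Chen's result \cite[Theorem~4.26]{C1}. Within the present framework I would reprove it as the case $n=1$ of the general construction, using Proposition~\ref{prop:exact_dg_=_1-exact_dg} to regard $(\A,\calS')$ as a $1$-exact dg-category $(\A,\calS)$ with $\calS=F^{-1}(\calS')\subset\Kbf^{1,1,1}(\A)$. When $n=1$ the range $-n<i<0$ is empty, so no vanishing hypothesis enters.

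\textbf{Definition of $\E$.} For $A,C\in\ob(\A)$, I would let $\E(C,A)$ be the set of equivalence classes of $X\up\in\calS$ with $X^0=A$ and $X^2=C$. Here $X\up\sim Y\up$ if there is a morphism $f\up$ in $\Kbf^{1,1,1}(\A)$ with $\ovl{f}^0=\id$ and $\ovl{f}^2=\id$. By Corollary~\ref{cor:S_groupoid} and Proposition~\ref{prop:9-A}, this relation is symmetric, so it is an equivalence relation. Skeletal smallness of $H^0(\A)$ makes $\E(C,A)$ a set.

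\textbf{Functoriality.} For $\ovl{a}\colon A\to A'$ I would define $a_\ast[X\up]=[Y\up]$, where $g\up\colon X\up\to Y\up$ is the $1$-pushout morphism supplied by ($1$-Ex2). The dual of Corollary~\ref{cor:univ_npb} shows that this class is independent of the choices, including the choice of representative $a$ (using Remark~\ref{rem:strict_npo}). Pullback $c^\ast$ is defined dually via ($1$-Ex2$\op$). To obtain a bifunctor, the key identity is $a_\ast c^\ast=c^\ast a_\ast$, and I would derive it from the factorization in Proposition~\ref{prop:pullpush_factorization}. The fact that $s,t$ form a two-sided fibration is then exactly the statement that $\E$ is a functor $H^0(\A)\op\times H^0(\A)\to\Sets$.

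\textbf{Additivity.} The biadditive structure comes from Proposition~\ref{prop:S_is_closed_by_oplus}. I would set $\delta+\delta'=\Delta^\ast\nabla_\ast(\delta\oplus\delta')$, take the zero element to be the split class, which lies in $\calS$ by Proposition~\ref{prop:any_split_belongs_to_S}, and then check the abelian group axioms. This is the standard argument of \cite{HLN1} or \cite{C1}.

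\textbf{Realization and extriangle axioms.} I would define the realization $\sfr(\delta)$ by sending $[X\up]$ to the class of the $3$-term complex $\ovl{X}\up$ in $\Kbff^{3}(H^0(\A))$, modulo homotopy equivalences fixing the ends.
\begin{itemize}
\item Well-definedness uses that $\ovl{(\blank)}$ is a functor on $\Kbf^{1,1,1}(\A)$.
\item Exactness of the realized sequences (ET2) follows from Lemma~\ref{lem:comparison_H} together with $1$-exactness.
\item Compatibility of $\sfr$ with morphisms of $\E$-extensions (ET3, ET3$\op$) comes from Proposition~\ref{prp:univ_npb} and its dual.
\item (ET4) and (ET4$\op$), the octahedral-type axioms, use ($1$-Ex1) for composing inflations together with Proposition~\ref{prop:pushout_inflation}.
\end{itemize}

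\textbf{Main obstacle.} I expect (ET4) and (ET4$\op$) to be the hardest step. One has to construct a compatible $\E$-extension on the cone of the composed inflation and check all the required identities in $\E$, and this needs explicit manipulation of the higher component $d^{0,2}$ of the twisted complexes. My first attempt would be to build the third sequence as $\CoCone((\sig_{\ge1}f\up)[1])$, as in Proposition~\ref{prop:pushout_inflation}, and to verify the identities using Corollary~\ref{cor:htpy_modif2}.
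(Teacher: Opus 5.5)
This statement has no proof in the paper to compare with. The Fact is imported verbatim from \cite[Theorem~4.26]{C1}. The paper uses it, together with \cite[Proposition~4.3]{HLN1}, to \emph{dispose of} the case $n=1$ of Theorem~\ref{thm:H0_of_n-ex-dg}. Your plan runs this dependence backwards, and it has a genuine gap. The construction of $\E$ and $\sfr$, and the verification of the axioms in Sections~\ref{section:induced_biadditive} and~\ref{section:n-exangulated}, are carried out only for $n\ge 2$ (Caution~\ref{caution_n>1}). The reason is that Proposition~\ref{prp:htpy_modif}~(3) and Corollary~\ref{cor:htpy_modif} are false for $n=1$. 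For example, take $\A$ an ordinary additive category and $n=1$. A degree $-1$ map $\varphi\up$ has only the components $\varphi^{1,0}$ and $\varphi^{2,1}$, so the ideal $\calN$ is zero and $\Kbf^{1,1,1}(\A)$ is just the category of chain maps. On the split sequence $A\to A\oplus C\to C$, let $h\up$ be the chain map with $h^{0,0}=0=h^{2,2}$ and $h^{1,1}=\left[\begin{smallmatrix}0&k\\0&0\end{smallmatrix}\right]$. It is null-homotopic, so $\id$ and $\id+h\up$ have equal end components and agree in $\Kbf^3(\A)$, yet they differ in $\Kbf^{1,1,1}(\A)$ whenever $k\neq 0$. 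Both the existence and the uniqueness halves of Lemma~\ref{lem:equiv_npb_lift} go through Corollary~\ref{cor:htpy_modif}. So the two-sided fibration you invoke to make $\E$ a functor is not available from the paper when $n=1$. You would need a separate proof that $1$-pullback morphisms are $t$-cartesian in $\calS\subset\Kbf^{1,1,1}(\A)$.

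The $n=1$ inputs the paper does provide are themselves proved by appeal to \cite{C1}. Proposition~\ref{prop:exact_dg_=_1-exact_dg}, which you need to regard $(\A,\calS')$ as a $1$-exact dg-category, rests on \cite[Remark~4.4, Proposition~4.9]{C1} and \cite[Section~4.2]{C1}. Proposition~\ref{prop:pullpush_factorization}~(1) for $n=1$ is taken from \cite[Lemma~4.12]{C1}, and you rely on it twice: through Corollary~\ref{cor:S_groupoid} for the symmetry of your relation, and for the identity $a_\ast c^\ast=c^\ast a_\ast$. So the sketch is not an independent proof. It also stops exactly where the substance lies: (ET4) and (ET4$\op$) are left at the level of a ``first attempt''. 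Separately, (ET2) in \cite{NP1} asks that the realization be additive; exactness of the Hom sequences is a consequence, not that axiom. For this statement the citation is the proof. A self-contained version in this paper's language would need an $n=1$ substitute for Corollary~\ref{cor:htpy_modif} in the cartesian-lift step. It would also need an actual verification of (EA1), (EA2), (EA2$\op$), or equivalently (ET4), (ET4$\op$) via \cite[Proposition~4.3]{HLN1}.
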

As shown in \cite[Proposition 4.3]{HLN1}, an extriangulated category is nothing but a $1$-exangulated category. Thus, the case $n=1$ of Theorem~\ref{thm:H0_of_n-ex-dg} is already covered by the above result of Chen.
At the same time, some of the methods used in the following argument (for instance, those involving Proposition~\ref{prp:htpy_modif} {\rm (3)}) apply only when $n\ge2$, and certain modifications would be necessary in the argument when $n=1$. To avoid these technicalities, we restrict ourselves to the case $n\ge2$:
\begin{caution}\label{caution_n>1}
In the remainder of this section and Section~\ref{section:n-exangulated}, we assume $n\ge2$.
\end{caution}

\normalcolor

We remark that, for any $X\up,Y\up\in\calS$, a morphism $f\up\in Z^0(\Cbf^{n+2}(\A))(X\up,Y\up)$ is an $n$-pullback morphism if and only if $\ovl{f}^0=\id$ by Proposition~\ref{prop:pushout_inflation}. The following lemma shows that any such morphism is also a $t$-cartesian lift of $\ovl{f}^{n+1}$. For the definition of cartesian lifts, see \cite[Definition 2.2.1]{LR}.
\begin{lem}\label{lem:equiv_npb_lift}
Let $X\up,Y\up$ be in $\calS$ with $X^0=Y^0$.
If a morphism $f\up\in Z^0(\Cbf^{n+2}(\A))(X\up,Y\up)$ satisfies $\ovl{f}^0=\id$ in $H^0(\A)$, then 
$f\up$ is a cartesian lift of $\ovl{f}^{n+1}$ along the functor $t\co\calS\to H^0(\A)$.
\end{lem}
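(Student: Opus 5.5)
Our plan is to verify the cartesian lift condition directly in $\calS\subset\Kbf^{1,n,1}(\A)$. Recall from \cite[Definition 2.2.1]{LR} what this means here: for any $W\up\in\calS$, any morphism $g\up\colon W\up\to Y\up$ in $\calS$, and any $\mathbf{b}\in H^0(\A)(W^{n+1},X^{n+1})$ with $\ovl{f}^{n+1}\ci\mathbf{b}=t(g\up)=\ovl{g}^{n+1}$, there must exist a unique morphism $e\up\colon W\up\to X\up$ in $\calS$ with $t(e\up)=\mathbf{b}$ and $f\up\ci e\up=g\up$ in $\Kbf^{1,n,1}(\A)$.

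First, Proposition~\ref{prop:pushout_inflation} applies since $X\up,Y\up\in\calS$ and $\ovl{f}^0=\id$, so $f\up$ is an $n$-pullback morphism. Choose a representative $b\in Z^0(\A)(W^{n+1},X^{n+1})$ of $\mathbf{b}$.

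\textbf{Existence.} Apply Proposition~\ref{prp:univ_npb} with $\phi_n=0$. This gives $e\up\in Z^0(\Cbf^{n+2}(\A))(W\up,X\up)$ with $g\up=f\up\ci e\up$ in $\Kbf^{n+2}(\A)$, $\ovl{e}^0=\ovl{g}^0$ and $\ovl{e}^{n+1}=\ovl{b}$. We then compare $f\up\ci e\up$ and $g\up$. They have equal $0$-th components, since $\ovl{f}^0=\id$, and equal $(n+1)$-th components by hypothesis. Moreover $W\up$ is right $n$-exact and $Y\up$ is left $n$-exact, and $n\ge2$. Hence Corollary~\ref{cor:htpy_modif} upgrades the equality in $\Kbf^{n+2}(\A)$ to an equality in $\Kbf^{1,n,1}(\A)$, which gives $f\up\ci e\up=g\up$ in $\calS$ together with $t(e\up)=\mathbf{b}$.

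\textbf{Uniqueness.} Suppose $e\up,e\upp$ both satisfy these conditions. Then $f\up\ci e\up=f\up\ci e\upp$ in $\Kbf^{1,n,1}(\A)$, hence also in $\Kbf^{n+2}(\A)$. By the uniqueness part of Proposition~\ref{prp:univ_npb}, which rests on the injectivity of $f\up\ci\blank$ coming from $\Kbf(\A)(W\up,\CoCone f\up)=0$ in Lemma~\ref{lem:equivalent_n-pbm1}, we get $e\up=e\upp$ in $\Kbf^{n+2}(\A)$. The $(n+1)$-th components agree, since both equal $\mathbf{b}$. For the $0$-th components, apply the functor $s$ to $f\up\ci e\up=f\up\ci e\upp$ in $\Kbf^{1,n,1}(\A)$; since $s(f\up)=\id$, this gives $\ovl{e}^0=\ovl{e'}^0$. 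Corollary~\ref{cor:htpy_modif} then gives $e\up=e\upp$ in $\Kbf^{1,n,1}(\A)$.

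The main point to check is the passage from $\Kbf^{n+2}(\A)$ to $\Kbf^{1,n,1}(\A)$, which needs the end components to agree in $H^0(\A)$ and needs $n\ge2$ (Caution~\ref{caution_n>1}). Both conditions are secured as described above, so Corollary~\ref{cor:htpy_modif} applies in each step.
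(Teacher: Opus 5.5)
Your proposal is correct and follows essentially the same argument as the paper. Existence comes from Proposition~\ref{prp:univ_npb} followed by Corollary~\ref{cor:htpy_modif}. Uniqueness comes from the uniqueness clause of Proposition~\ref{prp:univ_npb}, together with matching the end components (using $s(f\up)=\id$) and a second application of Corollary~\ref{cor:htpy_modif}. Your explicit appeal to Proposition~\ref{prop:pushout_inflation} to see that $f\up$ is an $n$-pullback morphism is exactly what the paper's remark preceding the lemma provides.
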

\begin{proof}
Take any $W\up\in\calS$, $g\up\in Z^0(\Cbf(\A))(W\up,Y\up)$ and $b\in Z^0(\A)(W^{n+1},X^{n+1})$ satisfying $\ovl{f}^{n+1}\ci\ovl{b}=\ovl{g}^{n+1}$.
By Proposition~\ref{prp:univ_npb}, there exists $e\up\in Z^0(\Cbf^{n+2}(\A))(W\up,X\up)$ that satisfies $g\up=f\up\ci e\up$ in $\Kbf^{n+2}(\A)$, $\ovl{e}^0=\ovl{g}^0$ and $\ovl{e}^{n+1}=\ovl{b}$ in $H^0(\A)$.
By Corollary~\ref{cor:htpy_modif}, such $e\up$ satisfies $g\up=f\up\ci e\up$ in $\calS$.

Let us show the uniqueness of $e\up$ in $\calS$.
Suppose that $e\upp\in Z^0(\Cbf^{n+2}(\A))(W\up,X\up)$ also satisfies $g\up=f\up\ci e\upp$ in $\calS$ and $\ovl{e}^{\prime n+1}=\ovl{b}$ in $H^0(\A)$. By $g\up=f\up\ci e\upp$ in $\calS$, in particular it satisfies $g\up=f\up\ci e\upp$ in $\Kbf^{n+2}(\A)$ and $\ovl{g}^0=\ovl{f}^0\ci\ovl{e}^{\prime 0}$ in $H^0(\A)$. By Proposition~\ref{prp:univ_npb}, it follows $e\up=e\upp$ in $\Kbf^{n+2}(\A)$. Since $W\up,X\up$ are $n$-exact, we obtain $e\up=e\upp$ in $\calS$ by Corollary~\ref{cor:htpy_modif}.
\end{proof}

\begin{prop}\label{prp:ts-fibration}
The pair of functors $s,t\co\calS\to H^0(\A)$ forms a two-sided fibration.
\end{prop}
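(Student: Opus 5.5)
To show that $s,t\colon\calS\to H^0(\A)$ form a two-sided fibration in the sense of \cite{LR}, I will check the standard list of conditions. First, $t$ is a fibration and $s$ is an opfibration. Second, for an $s$-cocartesian lift and a $t$-cartesian lift, the two canonical comparison morphisms commute, i.e.\ $t$-cartesian morphisms are $s$-vertical, $s$-cocartesian morphisms are $t$-vertical, and the lifts interchange. The argument rests on the $n$-pullback/$n$-pushout machinery of Section~\ref{section:n-exact-dg}, together with Corollary~\ref{cor:htpy_modif}, which lets equalities in $\Kbf^{n+2}(\A)$ with fixed end components be promoted to equalities in $\calS\subset\Kbf^{1,n,1}(\A)$. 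This is where $n\ge 2$ is used.

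\emph{Step 1: $t$ is a fibration.} Given $Y\up\in\calS$ and $\ovl{c}\colon C\to Y^{n+1}$ in $H^0(\A)$, I choose a representative $c\in Z^0(\A)$. ($n$-Ex2$\op$) then gives $X\up\in\calS$ and an $n$-pullback morphism $f\up\colon X\up\to Y\up$ with $X^{n+1}=C$, $\ovl{f}^{n+1}=\ovl c$ and $\ovl f^0=\id$. By Lemma~\ref{lem:equiv_npb_lift}, $f\up$ is a $t$-cartesian lift of $\ovl c$. Dually, ($n$-Ex2) together with the dual of Lemma~\ref{lem:equiv_npb_lift} shows that $s$ is an opfibration with $s$-cocartesian lifts given by $n$-pushout morphisms.

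\emph{Step 2: cartesian lifts are vertical for the other functor.} Since $\ovl f^0=\id$, the lift $f\up$ satisfies $s(f\up)=\id$. Lemma~\ref{lem:equiv_npb_lift} applies to any $f\up$ between objects of $\calS$ with $\ovl f^0=\id$, so the $t$-cartesian morphisms lying over identities under $s$ are exactly such maps. Dually for $s$-cocartesian maps.

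\emph{Step 3: the interchange condition.} Given $X\up\in\calS$, $\ovl a\colon X^0\to A$ and $\ovl c\colon C\to X^{n+1}$, I first take the pushout $X\up\to Y\up$ along $a$ and then the pullback $Y'\up\to Y\up$ along $c$. In the other order I take the pullback $X'\up\to X\up$ and then the pushout $X'\up\to Z\up$. The universal property of $Y'\up\to Y\up$ as a $t$-cartesian lift, applied to the composite $X'\up\to X\up\to Y\up$, produces a morphism $Z\up\to Y'\up$ over $(\ovl a,\id)$ by cocartesianness of $X'\up\to Z\up$. This morphism has $\ovl{(\cdot)}^0=\id$ and $\ovl{(\cdot)}^{n+1}=\id$, so Corollary~\ref{cor:S_groupoid} shows that it is an isomorphism in $\calS$. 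This is exactly the required compatibility.

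The main obstacle is Step 3. One must keep track of the end components $\ovl{(\cdot)}^0$ and $\ovl{(\cdot)}^{n+1}$ of each factorization, via the "moreover" clauses of Proposition~\ref{prp:univ_npb} and its dual, so that Corollary~\ref{cor:S_groupoid} applies. One must also check that the equalities hold in $\calS$, not merely in $\Kbf^{n+2}(\A)$, using Corollary~\ref{cor:htpy_modif}. Everything else is formal.
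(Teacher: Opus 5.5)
Your proposal is correct and is essentially the paper's argument: $s$-vertical $t$-cartesian lifts come from ($n$-Ex2$\op$) via Lemma~\ref{lem:equiv_npb_lift} (dually for $s$), and the interchange condition holds because the comparison morphism has both end components equal to the identity, so Corollary~\ref{cor:S_groupoid} makes it an isomorphism in $\calS$. There are two small wording slips: not every $t$-cartesian morphism is $s$-vertical (every isomorphism of $\calS$ is $t$-cartesian, whatever its image under $s$), only that $s$-vertical $t$-cartesian lifts exist, which is what your Step~2 actually shows; and in Step~3 it is the intermediate morphism $X\upp\to Y\upp$ that lies over $(\ovl a,\id)$, while the induced $Z\up\to Y\upp$ lies over $(\id,\id)$.
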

\begin{proof}
By the definition in \cite[Definition~2.3.4]{LR}, the pair $s,t\co\calS\to H^0(\A)$ is a two-sided fibration if it satisfies the following conditions.
\begin{enumerate}\renewcommand{\labelenumi}{(\roman{enumi})}
\item For any $X\up\in\calS$ and any $a\in Z^0(\A)(X^0,A)$, there exists an opcartesian lift $g\up\in\calS(X\up,Y\up)$ of $\ovl{a}$ along the functor $s\co\calS\to H^0(\A)$ such that $t(g\up)=\id$ in $H^0(\A)$.
\item For any $Y\up\in\calS$ and any $c\in Z^0(\A)(C,Y^{n+1})$, there exists a cartesian lift $f\up\in\calS(X\up,Y\up)$  of $\ovl{c}$ along the functor $t\co\calS\to H^0(\A)$ such that $s(f\up)=\id$ in $H^0(\A)$.
\item Suppose that $f\up\in\calS(X\up,Y\up)$ is a $t$-cartesian lift of $t(f\up)$ and that $g\up\in\calS(Y\up,Z\up)$ is an $s$-opcartesian lift of $s(g\up)$. Let $f\upp\in\calS(X\up,Y\upp)$ be an $s$-opcartesian lift of $s(g\up\ci f\up)$ and let $g\upp\in\calS(Y\uppp,Z\up)$ be a $t$-cartesian lift of $t(g\up\ci f\up)$. Then, the morphism $h\up\in\calS(Y\upp,Y\uppp)$ induced by the universality of the (op)cartesian lifts, which satisfies $s(h\up)=\id$ and $t(h\up)=\id$, is an isomorphism in $\calS$.
\end{enumerate}
Let us confirm that they are satisfied. {\rm (ii)} follows from {\rm ($n$-Ex2$\op$)} by Lemma~\ref{lem:equiv_npb_lift}. {\rm (i)} can be shown in a dual manner. {\rm (iii)} follows from Corollary~\ref{cor:S_groupoid}.
\end{proof}

As a corollary, we obtain a functor $\E\co H^0(\A)\op\times H^0(\A)\to\Sets$.
Since we will later need to relate this with sequences in $H^0(\A)$ in Section~\ref{section:realization}, we describe the details of this functor in what follows. 
In the rest, if $X\up\in\Cbf^{n+2}(\A)$ satisfies $X^0=A$ and $X^{n+1}=C$, then we will also denote it by ${}_AX\up_C$ when we emphasize the end-terms.

\begin{dfn}\label{dfn:ASC}
Let $A,C\in\ob(\A)$ be any pair of objects. Define categories $\Kbf_{A,C}^{n+2}(\A)$, $\Kbf_{A,C}^{1,n,1}(\A)$ and ${}_A\calS_C$ by the following. 
\begin{enumerate}
\item $\Kbf_{A,C}^{n+2}(\A)$ is defined as a (not full) subcategory of $\Kbf^{n+2}(\A)$. An object $X\up\in\Kbf^{n+2}(\A)$ belongs to $\Kbf_{A,C}^{n+2}(\A)$ if it satisfies $X^0=A$ and $X^{n+1}=C$.
For any ${}_AX\up_C,{}_AY\up_C\in\Kbf^{n+2}_{A,C}(\A)$, a morphism $X\up\to Y\up$ in $\Kbf^{n+2}_{A,C}(\A)$ is a morphism in $\Kbf^{n+2}(\A)$ which can be represented by some $f\up\in Z^0(\Cbf^{n+2}(\A))(X\up,Y\up)$ that satisfies $\ovl{f}^0=\id_A$ and $\ovl{f}^{n+1}=\id_C$.
\item Similarly, $\Kbf_{A,C}^{1,n,1}(\A)$ is defined as a  subcategory of $\Kbf^{1,n,1}(\A)$. An object $X\up\in\Kbf^{1,n,1}(\A)$ belongs to $\Kbf_{A,C}^{1,n,1}(\A)$ if it satisfies $X^0=A$ and $X^{n+1}=C$.
For any ${}_AX\up_C,{}_AY\up_C\in\Kbf^{1,n,1}_{A,C}(\A)$, a morphism $f\up\co X\up\to Y\up$ in $\Kbf^{1,n,1}_{A,C}(\A)$ is a morphism in $\Kbf^{1,n,1}(\A)$ which satisfies  $s(f\up)=\id_A$ and $t(f\up)=\id_C$.
\item ${}_A\calS_C$ is defined to be the full subcategory of $\Kbf^{1,n,1}_{A,C}(\A)$, consisting of objects $X\up\in\Kbf^{1,n,1}(\A)$ satisfying $X\up\in\calS$. 
\end{enumerate}
\end{dfn}

By definition, ${}_A\calS_C$ is nothing but the category fibered over $A,C$ for the two-sided fibration $(s,t)\co\calS\to H^0(\A)\times H^0(\A)$. Note that ${}_A\calS_C$ is a groupoid for each $A,C\in\calS$, by Corollary~\ref{cor:S_groupoid}.

The following definition describes the object $\E(C,A)\in\Sets$ corresponding to $(C,A)\in \ob(H^0(\A)\op\times H^0(\A))$ through the functor $\E\co H^0(\A)\op\times H^0(\A)\to\Sets$. 

\begin{dfn}\label{dfn:equivalence_of_sequences}
Let $A,C\in\ob(\A)$ be any pair of objects.
Define $\E(C,A)$ to be $\ob({}_A\calS_C)/_{\cong}$, the set of isoclasses of objects in the category ${}_A\calS_C$. For any ${}_AX\up_C\in\ob({}_A\calS_C)$, the isoclass to which $X\up$ belongs will be denoted by $\dbr{X\up}\in\E(C,A)$.
Any element $\del=\dbr{X\up}\in\E(C,A)$ is called an $\E$-\emph{extension}, or simply an \emph{extension}.
\end{dfn}

Correspondence of the morphisms is described as follows.
\begin{dfn}\label{dfn:maps_ac}
Let $\ovl{c}\in H^0(\A)(C',C)$ and $\ovl{a}\in H^0(\A)(A,A')$ be arbitrary.
\begin{enumerate}
\item For any $\del=\dbr{X\up}\in\E(C,A)$, there exist $W\up\in{}_A\calS_{C'}$ and an $n$-pullback morphism $f\up\colon W\up\to X\up$ such that $\ovl{f}^{n+1}=\ovl{c}$.
We denote the isoclass $\dbr{W\up}\in E(C',A)$ of such $W\up$ by $\ovl{c}\uas\del$, or simply by $c\uas\del$ in $\E(C',A)$. This gives a well-defined map $\ovl{c}\uas=\E(\ovl{c},A)\co\E(C,A)\to \E(C',A)$.
\item The map $\ovl{a}\sas=\E(C,\ovl{a})\co\E(C,A)\to \E(C,A')$ is given in a dual manner.
For any $\del=\dbr{X\up}\in\E(C,A)$, there exist $Y\up\in{}_{A'}\calS_C$ and an $n$-pushout morphism $f\up\colon X\up\to Y\up$ such that $\ovl{f}^0=\ovl{a}$, and $\ovl{a}\sas\del=a\sas\del$ is defined to be  $\dbr{Y\up}$ in $\E(C,A')$. 
\end{enumerate}
\end{dfn}

Well-definedness of the maps $\E(\ovl{c},A)$ and $\E(C,\ovl{a})$ in Definition~\ref{dfn:maps_ac} follows from Proposition~\ref{prp:ts-fibration}. Furthermore, the following holds.
\begin{cor}\label{cor:bifunctor_E}
With the correspondences of objects and morphisms described in Definitions~\ref{dfn:equivalence_of_sequences} and \ref{dfn:maps_ac}, $\E$ forms a functor $\E\colon H^0(\A)\op\times H^0(\A)\to\Sets$.
\end{cor}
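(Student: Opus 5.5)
We derive this from Proposition~\ref{prp:ts-fibration}, using the standard principle that a two-sided fibration $(s,t)\co\calS\to H^0(\A)\times H^0(\A)$ whose fibres ${}_A\calS_C$ are groupoids (Corollary~\ref{cor:S_groupoid}) yields a pseudofunctor $H^0(\A)\op\times H^0(\A)\to$ groupoids. Passing to sets of isoclasses turns it into a strict functor to $\Sets$; skeletal smallness of $H^0(\A)$ (with small fibres up to isomorphism) makes each $\E(C,A)$ a set. We will also spell out the verification directly in terms of $n$-pullback and $n$-pushout morphisms, since the paper later needs this explicit description.

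First, well-definedness. For $\ovl{c}\uas$, the choice of $W\up$ and $f\up$ is independent up to isomorphism in ${}_A\calS_{C'}$: if $f_1\up\co W_1\up\to X\up$ and $f_2\up\co W_2\up\to X\up$ are two $n$-pullback morphisms with $\ovl{f_1}^{n+1}=\ovl{f_2}^{n+1}=\ovl{c}$, Corollary~\ref{cor:univ_npb}~(3) with $b=\id$ gives an isomorphism $W_1\up\cong W_2\up$ in $\Kbf^{1,n,1}(\A)$ with $s=\id$ and $t=\id$. If $X\up\cong X'\up$ in ${}_A\calS_C$ via $u\up$ and $f\up\co W\up\to X\up$ is an $n$-pullback morphism, then $u\up\ci f\up$ still has $\ovl{(\blank)}^0=\id$. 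Hence it is an $n$-pullback morphism by Proposition~\ref{prop:pushout_inflation}, with the same $t$-value. The result depends only on the class $\ovl{c}$, since Definition~\ref{dfn:maps_ac} only constrains $\ovl{f}^{n+1}$. The argument for $\ovl{a}\sas$ is dual.

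Second, functoriality in each variable. We have $\ovl{\id}\uas\del=\del$ because $\id_{X\up}$ is an $n$-pullback morphism. For composition $\ovl{c}\ci\ovl{c'}$, take $n$-pullback morphisms $W\up\to X\up$ over $\ovl{c}$ and $V\up\to W\up$ over $\ovl{c'}$. Their composite has $\ovl{(\blank)}^0=\id$, so it is an $n$-pullback morphism over $\ovl{c}\ci\ovl{c'}$ by Proposition~\ref{prop:pushout_inflation}. This shows $(cc')\uas=c'^{\ast}c\uas$, and dually $(aa')\sas=a\sas a'\sas$.

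Third, and this is the main point, the bifunctoriality condition $\ovl{a}\sas\ovl{c}\uas\del=\ovl{c}\uas\ovl{a}\sas\del$. Start with an $n$-pullback morphism $f\up\co W\up\to X\up$ and an $n$-pushout morphism $g\up\co W\up\to Y\up$ realizing $\ovl{a}\sas\ovl{c}\uas\del=\dbr{Y\up}$. Take also an $n$-pushout morphism $g'\up\co X\up\to Z\up$ realizing $\ovl{a}\sas\del$, and an $n$-pullback morphism $f'\up\co Z'\up\to Z\up$ realizing $\ovl{c}\uas\ovl{a}\sas\del=\dbr{Z'\up}$. Consider $g'\up\ci f\up\co W\up\to Z\up$. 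By Proposition~\ref{prop:pullpush_factorization}, or directly by the universality in Lemma~\ref{lem:equiv_npb_lift} and its dual, it factors through $g\up$ as some $Y\up\to Z\up$ with $s=\id$. It also factors through $f'\up$, giving a morphism $Y\up\to Z'\up$ with $s=\id$ and $t=\id$. This is exactly axiom (iii) of the two-sided fibration. Corollary~\ref{cor:S_groupoid} then shows it is an isomorphism in $\calS$, so $\dbr{Y\up}=\dbr{Z'\up}$.

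The expected obstacle is keeping track of the end-term identities $\ovl{(\blank)}^0$ and $\ovl{(\blank)}^{n+1}$ when composing in $\calS$ rather than in $\Kbf^{n+2}(\A)$. Corollary~\ref{cor:htpy_modif} handles this, since all the sequences involved are $n$-exact and $n\ge2$.
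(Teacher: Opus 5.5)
Your proposal is correct and takes the paper's route. The paper simply states that the corollary is a formal consequence of Proposition~\ref{prp:ts-fibration}. Your explicit checks unpack exactly that argument: well-definedness via Corollary~\ref{cor:univ_npb}~(3) and Proposition~\ref{prop:pushout_inflation}, functoriality in each variable via closure of $n$-pullback and $n$-pushout morphisms under composition, and the interchange law $\ovl{a}\sas\ovl{c}\uas=\ovl{c}\uas\ovl{a}\sas$ via axiom (iii) together with Corollary~\ref{cor:S_groupoid}.
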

\begin{proof}
This is a formal consequence of Proposition~\ref{prp:ts-fibration}.
\end{proof}

Proposition~\ref{prop:pullpush_factorization} relates morphisms in $\calS$ to \emph{morphisms of extensions}, whose definition is as below. This will be used in the following sections.
\begin{dfn}
(\cite[Definition 2.3.]{HLN1})
For any pair of extensions $\del\in\E(C,A)$ and $\rho\in\E(D,B)$, a \emph{morphism of extensions} $(\abf,\cbf)\colon\del\to\rho$ is a pair of morphisms $\abf\in H^0(\A)(A,B)$ and $\cbf\in  H^0(\A)(C,D)$ that satisfy $\abf\sas\del=\cbf\uas\rho$. 
\end{dfn}

\begin{cor}\label{cor:morph_in_S_to_morph_of_extension}
Any morphism $f\up\in\calS(X\up,Y\up)$ induces a morphism $(s(f\up),t(f\up))\co\del\to\rho$ of extensions.
\end{cor}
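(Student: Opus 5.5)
The plan is to factor $f\up$ through an intermediate sequence, using Proposition~\ref{prop:pullpush_factorization} {\rm (1)}. Here $X\up,Y\up\in\calS$ with $\del=\dbr{X\up}\in\E(C,A)$ and $\rho=\dbr{Y\up}\in\E(D,B)$, where $X^0=A$, $X^{n+1}=C$, $Y^0=B$ and $Y^{n+1}=D$. Put $\abf=s(f\up)=\ovl{f}^0$ and $\cbf=t(f\up)=\ovl{f}^{n+1}$.

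Apply Proposition~\ref{prop:pullpush_factorization} {\rm (1)} to $f\up$. This gives $Z\up\in\calS$, an $n$-pushout morphism $g\up\colon X\up\to Z\up$ and an $n$-pullback morphism $h\up\colon Z\up\to Y\up$. They satisfy $f\up=h\up\ci g\up$ in $\calS$, together with $\ovl{g}^0=\abf$ and $\ovl{h}^{n+1}=\cbf$.

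Next, pin down the end terms of $Z\up$. By the definition of an $n$-pushout morphism, $\ovl{g}^{n+1}=\id$, so $Z^{n+1}=X^{n+1}=C$. Dually, the definition of an $n$-pullback morphism gives $\ovl{h}^0=\id$, so $Z^0=Y^0=B$. Hence $Z\up\in{}_B\calS_C$.

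Now read off the two sides of the required identity.
\begin{itemize}
\item By Definition~\ref{dfn:maps_ac} {\rm (2)}, $g\up$ is an $n$-pushout morphism from $X\up$ with $\ovl{g}^0=\abf$ and target in ${}_B\calS_C$. Therefore $\abf\sas\del=\dbr{Z\up}$.
\item By Definition~\ref{dfn:maps_ac} {\rm (1)}, $h\up$ is an $n$-pullback morphism into $Y\up$ with $\ovl{h}^{n+1}=\cbf$ and source in ${}_B\calS_C$. Therefore $\cbf\uas\rho=\dbr{Z\up}$.
\end{itemize}
Both identifications rely on the well-definedness of these maps, which comes from Proposition~\ref{prp:ts-fibration}. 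It follows that $\abf\sas\del=\cbf\uas\rho$, so $(s(f\up),t(f\up))$ is a morphism of extensions.

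The main point to watch is that the maps in Definition~\ref{dfn:maps_ac} are independent of the choice of (op)cartesian lift. Equivalently, any $n$-pushout or $n$-pullback morphism with the prescribed end components computes the same isoclass. This is exactly the content of Lemma~\ref{lem:equiv_npb_lift} and its dual, combined with Corollary~\ref{cor:S_groupoid}: two cartesian lifts of the same morphism differ by a morphism in ${}_B\calS_C$ whose $s$ and $t$ components are identities, and such a morphism is an isomorphism. With that in place, the argument above is complete.
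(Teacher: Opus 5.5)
Your proposal is correct and is the paper's argument. The paper proves this corollary in one line, as immediate from Proposition~\ref{prop:pullpush_factorization}: factor $f\up$ as an $n$-pushout followed by an $n$-pullback through some $Z\up\in{}_B\calS_C$, and read off $\abf\sas\del=\dbr{Z\up}=\cbf\uas\rho$ from Definition~\ref{dfn:maps_ac}. Your added remarks on pinning down the end terms of $Z\up$ and on independence of the choice of lift (via Proposition~\ref{prp:ts-fibration}) only spell out what the paper leaves implicit.
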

\begin{proof}
This is immediate from  Proposition~\ref{prop:pullpush_factorization}.
\end{proof}

\subsection{Biadditive functor $H^0(\A)\op\times H^0(\A)\to\Ab$}

We proceed to show that the functor $\E\colon H^0(\A)\op\times H^0(\A)\to\Sets$ obtained in Corollary~\ref{cor:bifunctor_E} factors through the forgetful functor $\Ab\to\Sets$, to give a biadditive functor $\E\colon H^0(\A)\op\times H^0(\A)\to\Ab$ (Proposition~\ref{prp:biadditive_functor_E}). We denote it by the same symbol $\E$.

Essentially this is a consequence of the fact that $s,t$ are additive functors between additive categories, and can be shown in a way similar to the proof of \cite[Proposition~4.32]{HLN1}.
For the convenience of the reader, we include the proof here.

\begin{dfn}
By Proposition~\ref{prop:any_split_belongs_to_S}, the split sequence ${}_AN_C\up\in\Cbf^{n+2}(\A)$ in Definition~\ref{dfn:splitN} belongs to ${}_A\calS_C$.
We denote the element $\dbr{{}_AN_C\up}\in\E(C,A)$ by ${}_A0_C$, or simply by $0$ in each $\E(C,A)$.
\end{dfn}

\begin{lem}\label{lem:Equivtozero}
Let $A,C\in\ob(\A)$ be any pair of objects. For any $X\up\in {}_A\calS_C$, the following are equivalent.
\begin{enumerate}
\item $\dbr{X\up}=0$ holds in $\E(C,A)$.
\item $X\up$ is a split sequence.
\end{enumerate}
\end{lem}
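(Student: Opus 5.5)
Both directions come down to unwinding the definition of $\E(C,A)$ as isoclasses in the groupoid ${}_A\calS_C$, together with the comparison results for split sequences established earlier.

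For $(2)\Rightarrow(1)$: suppose $X\up\in{}_A\calS_C$ is a split sequence. By Proposition~\ref{prop:NtoX}, there is a morphism $f\up\in Z^0(\Cbf(\A))({}_AN_C\up,X\up)$ which gives an isomorphism in $\Kbf^{1,n,1}(\A)$ and satisfies $f^{0,0}=\id_A$ and $f^{n+1,n+1}=\id_C$ strictly in $\A$. In particular $s(f\up)=\id_A$ and $t(f\up)=\id_C$. Hence $f\up$ is a morphism in $\Kbf^{1,n,1}_{A,C}(\A)$, and therefore in the full subcategory ${}_A\calS_C$, since ${}_AN_C\up\in\calS$ by Proposition~\ref{prop:any_split_belongs_to_S}.

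One point needs checking: $f\up$ must be an isomorphism inside ${}_A\calS_C$, not merely in $\Kbf^{1,n,1}(\A)$. Its inverse $g\up$ in $\Kbf^{1,n,1}(\A)$ satisfies $s(g\up)=\id$ and $t(g\up)=\id$ by Proposition~\ref{prop:9-A}. Alternatively, this follows directly from Corollary~\ref{cor:S_groupoid}. So $\dbr{X\up}=\dbr{{}_AN_C\up}=0$.

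For $(1)\Rightarrow(2)$: if $\dbr{X\up}=0$, then $X\up\cong{}_AN_C\up$ in ${}_A\calS_C$, hence in particular in $\Kbf^{1,n,1}(\A)$. Condition (2) of Proposition~\ref{prop:NtoX} then holds, so $X\up$ is a split sequence. More directly, the functor $\Kbf^{1,n,1}(\A)\to\Kbf^{n+2}(\A)$ gives $X\up\cong{}_AN_C\up\cong0$ in $\Kbf^{n+2}(\A)$, which is condition (1) of Proposition~\ref{prp:characterization_split_seq}.

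There is no real obstacle here. The only care needed is to make sure that isomorphisms are taken in the non-full subcategory ${}_A\calS_C$ and not just in $\Kbf^{1,n,1}(\A)$; Proposition~\ref{prop:NtoX}, with its strict identities at the end terms, handles exactly this.
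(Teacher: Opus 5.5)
Your proof is correct and follows the same route as the paper, which simply says the lemma is immediate from Proposition~\ref{prop:NtoX}. You spell out the details the paper leaves implicit: Proposition~\ref{prop:any_split_belongs_to_S} gives ${}_AN_C\up\in\calS$, and Proposition~\ref{prop:9-A} (or Corollary~\ref{cor:S_groupoid}) shows that the isomorphism is an isomorphism in the non-full subcategory ${}_A\calS_C$.
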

\begin{proof}
This is immediate from Proposition~\ref{prop:NtoX}.

\end{proof}

\begin{dfn}
For any $A,C\in H^0(\A)$, the element ${}_A0_C\in\E(C,A)$ is called the \emph{split extension} for every $A,C\in\ob(H^0(\A))$.
\end{dfn}

By Corollary~\ref{cor:morph_in_S_to_morph_of_extension}, Lemma~\ref{lem:NtoX} tells us that $(\id_A,0)\colon {}_A0_C\to {}_A\del_C$ and $(0,\id_C)\colon {}_A\del_C\to {}_A0_C$ are morphisms of extensions for any $\del\in\E(C,A)$. Thus $0\uas\del=0$ and $0\sas\del=0$ hold in $\E(C,A)$.
Next, we define the addition in $\E(C,A)$.
\begin{dfn}\label{dfn:Baer_sum}
Let $A,C\in\ob(H^0(\A))$, and let $\del,\del'\in\E(C,A)$ be any pair of elements. Their \emph{Baer sum} $\del+\del'\in\E(C,A)$ is defined as follows:
By definition, there are ${}_AX\up_C,{}_AX\upp_C\in {}_A\calS_C$ such that $\del=\dbr{X\up}$ and $\del'=\dbr{X\upp}$.
By Proposition~\ref{prop:S_is_closed_by_oplus}, we have $X\up\oplus X\upp\in {}_{A\oplus A}\calS_{C\oplus C}$.
We define $\del+\del'$ by
\[ \del+\del'= \nabla^A\sas\Delta_C\uas(\del\oplus\del'),  \]
which is also equal to $\Delta_C\uas\nabla^A\sas(\del\oplus\del')$. Here, $\nabla^A\colon A\oplus A\to A$ is the folding morphism $\nabla^A=\begin{bmatrix}
1&1\end{bmatrix}$, while $\Delta_C\colon C\to C\oplus C$ is the diagonal morphism $\Delta_C=\begin{bmatrix}
1\\1\end{bmatrix}$.
\end{dfn}

\begin{lem}\label{lem:abelian_group_E(C,A)}
For any $A,C\in\ob(H^0(\A))$, the set $\E(C,A)$ equipped with the Baer sum forms an abelian group, whose zero element is given by the split extension.
\end{lem}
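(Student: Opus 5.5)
The plan is to follow the standard argument of \cite[Proposition~3.3]{HLN1} (equivalently \cite[Proposition~4.32]{HLN1}). The argument is formal once we know that $\E\colon H^0(\A)\op\times H^0(\A)\to\Sets$ is a functor (Corollary~\ref{cor:bifunctor_E}) which is compatible with direct sums. The key input to establish first is the \emph{additivity of $\E$ on objects}: the map
\[
\E(C\oplus C',A\oplus A')\to \E(C,A)\times\E(C,A')\times\E(C',A)\times\E(C',A')
\]
induced by the injections and projections is a bijection, and the sum $\del\oplus\del'=\dbr{X\up\oplus X\upp}$ corresponds to $(\del,0,0,\del')$.

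To prove this, I will use the two-sided fibration of Proposition~\ref{prp:ts-fibration} together with Corollary~\ref{cor:morph_in_S_to_morph_of_extension}. The canonical morphisms $X\up\to X\up\oplus X\upp\to X\up$ (componentwise inclusions and projections) are morphisms in $\calS$, by Proposition~\ref{prop:S_is_closed_by_oplus}. They yield identities such as $p_{A*}\iota_C^*(\del\oplus\del')=\del$ and $p_{A*}\iota_{C'}^*(\del\oplus\del')=0$. The latter holds because the relevant morphism factors through a split sequence, by Lemma~\ref{lem:NtoX} and the facts $0\uas\del=0$ and $0\sas\del=0$ recorded above. Conversely, any $\rho\in\E(C\oplus C',A\oplus A')$ should be recovered from its four components. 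For this I would decompose $\id_{C\oplus C'}=\iota_Cp_C+\iota_{C'}p_{C'}$. The genuinely new point is to show that $\rho\cong\iota_{A*}p_C^*(\cdots)\oplus\cdots$ via a morphism in ${}_{A\oplus A'}\calS_{C\oplus C'}$. I expect this to follow from Corollary~\ref{cor:S_groupoid}: build a morphism in $\calS$ with identity ends from $\rho$ to the direct sum of its pulled-back and pushed-forward pieces, using the universality of $n$-pullback and $n$-pushout morphisms (Proposition~\ref{prp:univ_npb} and its dual) to assemble the components.

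With additivity in hand, the group axioms are checked by the Eckmann--Hilton style computations. First, commutativity follows from naturality with respect to the swap isomorphisms of $A\oplus A$ and $C\oplus C$, since $\nabla\ci\mathrm{sw}=\nabla$ and $\mathrm{sw}\ci\Delta=\Delta$. Second, associativity follows from the corresponding identities for $\nabla$ and $\Delta$ on triple sums, together with the compatibility $(\del\oplus\del')\oplus\del''=\del\oplus(\del'\oplus\del'')$ up to the canonical isomorphism. Third, for the zero element, $\del+0=\nabla\sas\Delta\uas(\del\oplus 0)$; writing $\del\oplus0=(\id\oplus0)\sas(\del\oplus\del)$ (or using additivity) gives $\del$. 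Finally, for inverses, take $(-\id_A)\sas\del$ and use $\nabla\ci(\id\oplus-\id)=0$ together with $0\sas\del=0$. The equality $\nabla\sas\Delta\uas=\Delta\uas\nabla\sas$ asserted in Definition~\ref{dfn:Baer_sum} is simply functoriality of $\E$ as a bifunctor.

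The main obstacle is the additivity bijection, specifically its surjectivity and injectivity part: showing that an admissible sequence with direct-sum ends splits as a direct sum, up to isomorphism in ${}_{A\oplus A'}\calS_{C\oplus C'}$. I would attack this first by forming $\rho_1=(p_A)\sas(\iota_C)\uas\rho$, and similarly the other components, choosing $n$-pullback and $n$-pushout representatives. I would then produce comparison morphisms in $\calS$ and sum them using additivity of $\calS\subset\Kbf^{1,n,1}(\A)$, so that the resulting morphism has $s$ and $t$ equal to identities. Corollary~\ref{cor:S_groupoid} then makes it an isomorphism.
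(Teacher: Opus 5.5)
Your checks of the four axioms are essentially the paper's. The paper gets commutativity from the twist $\thh_{X\up,X\upp}\colon X\up\oplus X\upp\to X\upp\oplus X\up$ and associativity from $f\up\oplus\id$ and $g\up\oplus\id$. The zero element comes from $[\id\ f\up]\colon X\up\oplus N\up\to X\up$ with $f\up$ as in Lemma~\ref{lem:NtoX}~(1), and the inverse from $[\id\ f\up]\colon X\up\oplus Y\up\to X\up$ with $f\up$ an $n$-pullback morphism along $-\id_C$. In each case the required identity $\abf\sas\del=\cbf\uas\rho$ is read off from Corollary~\ref{cor:morph_in_S_to_morph_of_extension}. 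The paper never proves the additivity bijection you put first, and none of your axiom computations actually need it. The only inputs are that direct sums, twists, diagonals, projections and row morphisms are morphisms of the additive category $\calS\subset\Kbf^{1,n,1}(\A)$ (Proposition~\ref{prop:S_is_closed_by_oplus}).

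There are two concrete problems. First, your plan for the ``main obstacle'' does not typecheck. The four components of $\rho\in\E(C\oplus C',A\oplus A')$ lie in $\E(C,A)$, $\E(C,A')$, $\E(C',A)$ and $\E(C',A')$. Before a Baer sum exists, there is no object of ${}_{A\oplus A'}\calS_{C\oplus C'}$ that is ``the direct sum of the pieces'', so Corollary~\ref{cor:S_groupoid} has nothing to compare $\rho$ with. If you do want additivity in one variable, it is immediate and needs no groupoid argument. Take $n$-pullback morphisms $g\up\colon W\up\to Z\up$ and $g\upp\colon W\upp\to Z\up$ along $\iota_C$ and $\iota_{C'}$. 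Then $[g\up\ g\upp]\colon W\up\oplus W\upp\to Z\up$ is a morphism of $\calS$ with $s=\nabla^A$ and $t=\id_{C\oplus C'}$, so $\rho=\nabla^A\sas(\iota_C\uas\rho\oplus\iota_{C'}\uas\rho)$.

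Second, the inverse step is wrong as written. We have $\nabla^A\ci(\id\oplus(-\id))=[\id\ \,-\id]\neq0$; what vanishes is $[\id\ \,-\id]\ci\Delta_A$. To use this you need $\Delta_C\uas(\del\oplus\del)=(\Delta_A)\sas\del$, which comes from the diagonal morphism $X\up\to X\up\oplus X\up$ in $\calS$. With it, $\del+(-\id_A)\sas\del=[\id\ \,-\id]\sas\Delta_C\uas(\del\oplus\del)=([\id\ \,-\id]\ci\Delta_A)\sas\del=0$. The paper instead uses $[\id\ f\up]$ with $t=[\id_C\ \,-\id_C]$ and concludes via $\Delta_C\uas[\id_C\ \,-\id_C]\uas\del=0\uas\del=0$.
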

\begin{proof}
\noindent \underline{Commutativity.} Let $\del=\dbr{X\up},\del'=\dbr{X\upp}\in\E(C,A)$ be any pair of elements. As a morphism between direct sums, there is the twisting morphism
$t\up=\thh_{X\up,X\upp}
\colon X\up\oplus X\upp\to X\upp\oplus X\up$ given by
\[
t^{i,j}=\begin{cases}
\thh_{X^i,X^{\prime i}}=\begin{bmatrix}0&1\\1&0\end{bmatrix}&\text{if}\ i=j\\
0&\text{otherwise}
\end{cases}
\]
in $\calS$.
Since $t^{0,0}=\thh_{A,A}$ and $t^{n+1,n+1}=\thh_{C,C}$, it follows
\[ (\thh_{A,A})\sas\dbr{X\up\oplus X\upp}=(\thh_{C,C})\uas\dbr{X\upp\oplus X\up}  \]
by Corollary~\ref{cor:morph_in_S_to_morph_of_extension}. Thus we obtain
\begin{eqnarray*}
\del+\del' &=& \Delta_C\uas\nabla^A\sas\dbr{X\up\oplus X\upp}
\ =\ \Delta_C\uas\nabla^A\sas(\tw_{A,A})\sas\dbr{X\up\oplus X\upp}\\
&=&\nabla^A\sas\Delta_C\uas(\tw_{C,C})\uas\dbr{X\upp\oplus X\up}
\ =\ \nabla^A\sas\Delta_C\uas\dbr{X\upp\oplus X\up}
\ =\ \del'+\del.
\end{eqnarray*}

\smallskip

\noindent \underline{Associativity.} Take arbitrary $\del=\dbr{X\up},\del'=\dbr{X\upp},\del''=\dbr{X\uppp}\in\E(C,A)$.
We claim that both $(\del+\del')+\del''$ and $\del+(\del'+\del'')$ are equal to $m\sas u\uas\rho$,
where we put $\rho=\dbr{X\up\oplus X\upp\oplus X\uppp}$ and
\[
m=[\id_A\ \id_A\ \id_A]\colon A\oplus A\oplus A\to A,\quad u=\begin{bmatrix}\id_C\\\id_C\\\id_C\end{bmatrix} \colon C\to C\oplus C\oplus C
\]
in $Z^0(\A)$ for simplicity. Since a similar argument shows $\del+(\del'+\del'')=m\sas u\uas\rho$, let us only show the equation $(\del+\del')+\del''=m\sas u\uas\rho$ in the below.

By the definition of $\del+\del'$, there exist an $n$-pullback morphism $f\up\colon Y\up\to X\up\oplus X\upp$ and an $n$-pushout morphisms $g\up\colon Y\up\to Z\up$ satisfying
$\ovl{f}^{n+1}=\Delta_C$ and $\ovl{g}^0=\nabla^A$ in $H^0(\A)$, which give
\[
\dbr{Y\up}=\Delta_C\uas\dbr{X\up\oplus X\upp}\quad \text{and}\quad 
\dbr{Z\up}=\nabla^A\sas\dbr{Y\up}=\del+\del'.
\]
By Corollary~\ref{cor:morph_in_S_to_morph_of_extension}, morphism $f\up\oplus\id\colon Y\up\oplus X\uppp\to X\up\oplus X\upp\oplus X\uppp$ in $\calS$ induces a morphism of extensions $(\ovl{f}^0\oplus \id,\ovl{f}^{n+1}\oplus \id)$.
This means that we have
$\dbr{Y\up\oplus X\uppp}=\left[\begin{array}{cc}1&0\\1&0\\0&1\end{array}\right]\uas \rho$, hence
$\Delta_C\uas\dbr{Y\up\oplus X\uppp}=u\uas\rho$ follows.
Again by Corollary~\ref{cor:morph_in_S_to_morph_of_extension},
morphism $g\up\oplus \id\colon Y\up\oplus X\uppp\to Z\up\oplus X\uppp$ induces a morphism of extensions $(\ovl{g}^0\oplus \id,\ovl{g}^{n+1}\oplus \id)$.
This means that we have
$\dbr{Z\up\oplus X\uppp}=\left[\begin{array}{ccc}1&1&0\\0&0&1\end{array}\right]\sas \dbr{Y\up\oplus X\uppp}$,
hence
$\nabla^A\sas\dbr{Z\up\oplus X\uppp}=m\sas\dbr{Y\up\oplus X\uppp}$ follows.
Thus we obtain
\[
(\del+\del')+\del''
=\Delta_C\uas\nabla^A\sas\dbr{Z\up\oplus X\uppp}
=\Delta_C\uas m\sas \dbr{Y\up\oplus X\uppp}
=m\sas\Delta_C\uas\dbr{Y\up\oplus X\uppp}
=m\sas u\uas\rho
\]
as desired.

\smallskip

\noindent\underline{Existence of the zero element.}

Let $N\up={}_AN\up_C\in {}_A\calS_C$ be the split sequence in Definition~\ref{dfn:splitN}.
We claim that $0=\dbr{N\up}\in\E(C,A)$ is the zero element. Take any element $\del=\dbr{X\up}\in\E(C,A)$.
By Lemma~\ref{lem:NtoX}, there exists a morphism $f\up\colon N\up\to X\up$ such that $\ovl{f}^0=\id_A$ and $\ovl{f}^{n+1}=0$ hold in $H^0(\A)$. Then, for the morphism $g\up=[\id\ f\up]\colon X\up\oplus N\up\to X\up$ in $\calS$, we have
$\ovl{g}^0=\nabla^A\colon A\oplus A\to A$ and $\ovl{g}^{n+1}=[\id_C\ 0]\colon C\oplus C\to C$ in $H^0(\A)$.
Thus we obtain
\[
\del+0
=\Delta_C\uas\nabla^A\sas\dbr{X\up\oplus N\up}
=\Delta_C\uas [\id_C\ 0]\uas\dbr{X\up}
=\id_C\uas\del
=\del
\]
by Corollary~\ref{cor:morph_in_S_to_morph_of_extension}.

\smallskip

\noindent\underline{Existence of the additive inverse.}

Let $\del=\dbr{X\up}\in\E(C,A)$ be any element. By definition, $(-\id_C)\uas\del=\dbr{Y\up}$ is given by using an $n$-pullback morphism $f\up\colon Y\up\to X\up$ satisfying $\ovl{f}^{n+1}=-\id_C$ in $H^0(\A)$. Then a morphism $g\up=[\id\ f\up]\colon X\up\oplus Y\up\to X\up$ satisfies $\ovl{g}^0=\nabla^A$ and $\ovl{g}^{n+1}=[\id_C\ -\id_C]$ in $H^0(\A)$.
Thus we obtain
\[
\del+(-\id_C)\uas\del
=\Delta_C\uas\nabla^A\sas\dbr{X\up\oplus Y\up}
=\Delta_C\uas [\id_C\ -\id_C]\uas\dbr{X\up}
=0\uas\del
=0
\]
by Corollary~\ref{cor:morph_in_S_to_morph_of_extension}, which means that $(-\id_C)\uas\del=-\del$ holds in $\E(C,A)$.
\end{proof}

\begin{prop}\label{prp:biadditive_functor_E}
The functor $\E$ in Corollary~\ref{cor:bifunctor_E} factors through the forgetful functor $\Ab\to\Sets$, to give a biadditive functor $\E\colon H^0(\A)\op\times H^0(\A)\to\Ab$.
\end{prop}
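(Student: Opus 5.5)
We must show that each $\ovl{c}\uas$ and $\ovl{a}\sas$ is a group homomorphism for the Baer sum, and that $(\ovl{c}+\ovl{c}')\uas=\ovl{c}\uas+\ovl{c}'\uas$, $(\ovl{a}+\ovl{a}')\sas=\ovl{a}\sas+\ovl{a}'\sas$. By Lemma~\ref{lem:abelian_group_E(C,A)} each $\E(C,A)$ is already an abelian group. We follow the formal argument of \cite[Proposition~4.32]{HLN1}. It uses only three inputs: the functoriality $\E\colon H^0(\A)\op\times H^0(\A)\to\Sets$ of Corollary~\ref{cor:bifunctor_E}; Corollary~\ref{cor:morph_in_S_to_morph_of_extension}, which says a morphism in $\calS$ gives a morphism of extensions; and the closure of $\calS$ under direct sums from Proposition~\ref{prop:S_is_closed_by_oplus}.

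The key auxiliary identity is that $\E$ is compatible with direct sums, namely
\[ (\ovl{a}\oplus\ovl{a}')\sas(\del\oplus\del')=\ovl{a}\sas\del\oplus\ovl{a}'\sas\del' \]
and the dual statement for $\ovl{c}\uas$. Here $\del\oplus\del'$ denotes $\dbr{X\up\oplus X\upp}$. To prove it, take $n$-pushout morphisms $g\up\colon X\up\to Y\up$ and $g\upp\colon X\upp\to Y\upp$ realizing $\ovl{a}\sas\del$ and $\ovl{a}'\sas\del'$. Then $g\up\oplus g\upp$ is a morphism in $\calS$ with $s=\ovl{a}\oplus\ovl{a}'$ and $t=\id$. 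By Corollary~\ref{cor:morph_in_S_to_morph_of_extension} it gives $(\ovl{a}\oplus\ovl{a}')\sas\dbr{X\up\oplus X\upp}=\dbr{Y\up\oplus Y\upp}$. The same argument handles morphisms of the form $\ovl{a}\oplus\id$ and matrix morphisms between direct sums.

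With this identity, additivity of $\ovl{a}\sas$ follows. We have $\ovl{a}\sas(\del+\del')=\ovl{a}\sas\nabla^A\sas\Delta_C\uas(\del\oplus\del')$. Next, $\ovl{a}\ci\nabla^A=\nabla^{A'}\ci(\ovl{a}\oplus\ovl{a})$ in $H^0(\A)$. By functoriality, and since $\ovl{a}\sas$ commutes with $\Delta_C\uas$ (bifunctoriality), this equals $\nabla^{A'}\sas\Delta_C\uas(\ovl{a}\sas\del\oplus\ovl{a}\sas\del')=\ovl{a}\sas\del+\ovl{a}\sas\del'$. Additivity of $\ovl{c}\uas$ is dual.

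For additivity in the morphism, write $\ovl{a}+\ovl{a}'=\nabla^{A'}\ci(\ovl{a}\oplus\ovl{a}')\ci\Delta_A$. Then
\[ (\ovl{a}+\ovl{a}')\sas\del=\nabla^{A'}\sas(\ovl{a}\oplus\ovl{a}')\sas(\Delta_A)\sas\del. \]
We need $(\Delta_A)\sas\del=\Delta_C\uas(\del\oplus\del)$. To get it, apply Corollary~\ref{cor:morph_in_S_to_morph_of_extension} to the diagonal morphism $X\up\to X\up\oplus X\up$ in $\calS$, whose $s$ and $t$ are $\Delta_A$ and $\Delta_C$. This gives $(\Delta_A)\sas\del=(\Delta_C)\uas(\del\oplus\del)$. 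Substituting and using the direct-sum compatibility yields $\nabla^{A'}\sas\Delta_C\uas(\ovl{a}\sas\del\oplus\ovl{a}'\sas\del)=\ovl{a}\sas\del+\ovl{a}'\sas\del$. The contravariant variable is dual, using the codiagonal morphism $X\up\oplus X\up\to X\up$.

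The main obstacle is the bookkeeping in the direct-sum compatibility. We must make sure that $\del\oplus\del'$ is well defined on isoclasses: a direct sum of isomorphisms in ${}_A\calS_C$ is again one in $\Kbf^{1,n,1}(\A)$, since $\calN$ is closed under $\oplus$. We must also make sure that every morphism invoked genuinely lies in $\calS$, which is a full subcategory of $\Kbf^{1,n,1}(\A)$. Once this is checked, everything else is the formal diagram chase from \cite{HLN1}.
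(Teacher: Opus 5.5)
Your proposal is correct and takes essentially the same route as the paper. The homomorphism property comes from applying Corollary~\ref{cor:morph_in_S_to_morph_of_extension} to a direct sum of $n$-pushout (resp.\ $n$-pullback) morphisms together with bifunctoriality, and additivity in the morphism variable comes from applying that corollary to a (co)diagonal morphism in $\calS$. The only difference is which variable is written out: you treat the covariant variable via the diagonal $X\up\to X\up\oplus X\up$, while the paper treats the contravariant variable via the folding morphism $X\up\oplus X\up\to X\up$ and leaves the other as dual.
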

\begin{proof}
By Lemma~\ref{lem:abelian_group_E(C,A)}, $\E(C,A)$ is an abelian group for any $A,C\in\ob(H^0(\A))$. We will show that the maps $\E(\ovl{c},\ovl{a})\colon \E(C,A)\to\E(C',A')$ are group homomorphisms, for all $\ovl{a}\in H^0(\A)(A,A')$ and $\ovl{c}\in H^0(\A)(C',C)$. 

Let us show that $\E(\ovl{c},A)\colon\E(C,A)\to\E(C',A)$ is a group homomorphism. Take $\del=\dbr{X\up},\del'=\dbr{X\upp}\in\E(C,A)$ arbitrarily. By definition, we have $c\uas\del=\dbr{Y\up}$ and $c\uas\del'=\dbr{Y\upp}$ for some $n$-pullback morphisms $f\up\colon Y\up\to X\up$ and $f\upp\colon Y\upp\to X\upp$ satisfying $\ovl{f}^{n+1}=\ovl{f'}^{n+1}=\ovl{c}$ in $H^0(\A)$.
Since $f\up\oplus f\upp\colon Y\up\oplus Y\upp\to X\up\oplus X\upp$ is a morphism in $\calS$,
by Corollary~\ref{cor:morph_in_S_to_morph_of_extension}
we also have $(\ovl{c}\oplus \ovl{c})\uas\dbr{X\up\oplus X\upp}=\dbr{Y\up\oplus Y\upp}$. Thus we obtain
\begin{eqnarray*}
c\uas(\del+\del')&=&c\uas\Delta_C\uas\nabla^A\sas\dbr{X\up\oplus X\upp}
\ =\ \nabla^A\sas\Delta_{C'}\uas(\ovl{c}\oplus \ovl{c})\uas\dbr{X\up\oplus X\upp}\\
&=& \nabla^A\sas\Delta_{C'}\uas\dbr{Y\up\oplus Y\upp}
\ =\ c\uas\del+c\uas\del',
\end{eqnarray*}
which means that $\E(\ovl{c},A)=\ovl{c}\uas$ is a group homomorphism. In a dual manner we can show that $\E(C',\ovl{a})$
is a group homomorphism, hence so is their composition $\E(\ovl{c},\ovl{a})=\E(C',\ovl{a})\circ\E(\ovl{c},A)$.

The argument so far shows that $\E\colon H^0(\A)\op\times H^0(\A)\to\Ab$ is indeed a functor. It remains to show the biadditivity. As a similar argument works for the second component, we only show that $\E$ is additive in the first component.
Let $\del=\dbr{X\up}\in\E(C,A)$ be any element, and let $\ovl{c_1},\ovl{c_2}\colon C'\to C$ be a pair of morphisms in $H^0(\A)$. 
Let $f_1\up\colon Y_1\up\to X\up$ and $f_2\up\colon Y_2\up\to X\up$ be $n$-pullback morphisms satisfying $\ovl{f_1}^{n+1}=\ovl{c_1}$ and $\ovl{f_2}^{n+1}=\ovl{c_2}$
in $H^0(\A)$, which give $c_1\uas\del=\dbr{Y_1\up}$ and $c_2\uas\del=\dbr{Y_2\up}$.
Since $f_1\up\oplus f_2\up\colon Y_1\oplus Y_2\up\to X\up\oplus X\up$ is a morphism in $\calS$, 
we have $(\ovl{c_1}\oplus \ovl{c_2})\uas\dbr{X\up\oplus X\up}=\dbr{Y_1\up\oplus Y_2\up}$ by Corollary~\ref{cor:morph_in_S_to_morph_of_extension}, hence
\[
c_1\uas\del+c_2\uas\del
=\nabla^A\sas\Delta_{C'}\uas\dbr{Y_1\up\oplus Y_2\up}
=\nabla^A\sas\Delta_{C'}\uas(\ovl{c_1}\oplus \ovl{c_2})\uas\dbr{X\up\oplus X\up}
\]
holds. Since the folding morphism $h\up=\begin{bmatrix}\id_{X\up}&\id_{X\up}\end{bmatrix}\colon X\up\oplus X\up\to X\up$ in $\calS$ satisfies $\ovl{h}^0=\nabla^A$ and $\ovl{h}^{n+1}=\nabla^C$ in $H^0(\A)$,
we obtain
\[
\Delta_{C'}\uas(\ovl{c_1}\oplus \ovl{c_2})\uas\nabla^A\sas\dbr{X\up\oplus X\up}
=\Delta_{C'}\uas(\ovl{c_1}\oplus \ovl{c_2})\uas(\nabla^C)\uas\dbr{X\up}
=(\ovl{c_1}+\ovl{c_2})\uas\del,
\]
thus $(\ovl{c_1}+\ovl{c_2})\uas\del=c_1\uas\del+c_2\uas\del$ as desired.
\normalcolor
\end{proof}

\section{$n$-exangulated structure on $H^0(\A)$} \label{section:n-exangulated}

Let $(\A,\calS)$ be an $n$-exact dg-category as in the previous section. 
We continue to assume that $H^0(\A)$ is skeletally small and $n\ge2$ (see Caution~\ref{caution_n>1}).
Our goal in this section is to show that $H^0(\A)$ admits the
structure of an $n$-exangulated category.
The following condition will be used in the proof of the main theorem, and in several lemmas and propositions leading to it.
For clarity, we state it here once, although the precise minimal
assumptions needed for each statement will be specified
separately in the corresponding lemmas and propositions.

\begin{cond}\label{condition:H^i=0}
$H^i\A(A,B)=0$ holds for all $A,B\in\ob(\A)$ and for all $-n<i<0$.
\end{cond}
Under this condition, our main theorem
(Theorem~\ref{thm:H0_of_n-ex-dg})
shows that $H^0(\A)$ admits the structure of an
$n$-exangulated category.
We remark that Condition~\ref{condition:H^i=0}
is automatically satisfied, for example,
when $\A$ has \emph{$n$-sparse cohomologies},
defined as follows.
\begin{dfn}\label{dfn:with-n-sparse}(\cite[\S4.3.2]{JKM})
Let $\A$ be a connective additive dg-category. $\A$ is said to be \emph{with $n$-sparse cohomologies} if $H^i\A(A,B)=0$ for all $A,B\in\ob(\A)$ whenever $i\notin n\bbZ$.
\end{dfn}

\subsection{Long exact sequence of cohomologies}

For any $k\in\bbZ$ and any object $X\up$ in $\Cbf(\A)$, by Proposition~\ref{prop:alphabetagamma} applied to $\sig_{\le k}X\up$, we have 
a distinguished triangle
\begin{equation}\label{tri_ups_wp}
(\sig_{\le k-1}X\up )[-1]\xrightarrow{\al_k\up}X^k[-k]\xrightarrow{\be_k\up}\sig_{\le k}X\up\xrightarrow{\gam_k\up}\sig_{\le k-1}X\up 
\end{equation}
in $\Kbf(\A)$, which we will use in the following lemma.

\begin{lem}\label{lem:cohom_vanish}
Let $X\up\in\Cbf^{n+2}(\A)$ be any left $n$-exact sequence.
For any $A\in\ob(\A)$ and $u,k\in\bbZ$, put
$S_X^{u,k}(A)=\Kbf(\A)(A[-u],\sig_{\le k}X\up)$.
Then, the following holds.
\begin{enumerate}
\item For any $A\in\ob(\A)$ and any $k\in\bbZ$, distinguished triangle $(\ref{tri_ups_wp})$ induces a long exact sequence
\begin{equation}\label{long_seq_S}
\cdots \xrightarrow{m_{u-1,k-1}} H^{u-k}\A(A,X^k)
\xrightarrow{e_{u,k}} S_X^{u,k}(A)
\xrightarrow{s_{u,k-1}} S_X^{u,k-1}(A)
\xrightarrow{m_{u,k-1}} 
\cdots,
\end{equation}
which moreover makes the diagram
\begin{equation}\label{comm_d}
\begin{tikzcd}[row sep = 14, column sep = 2]
H^v\A(A,X^k) &   & H^v\A(A,X^{k+1}) \\
            & S_X^{v+k,k}(A) &   
\Ar{1-1}{1-3}{"\overline{d}_X^k\ci\blank"}
\Ar{1-1}{2-2}{"e_{v+k,k}"'}
\Ar{2-2}{1-3}{"m_{v+k,k}"'}
\end{tikzcd}
\end{equation}
commutative in $\Ab$ for all $v,k\in\bbZ$.
\item $S_X^{u,k}(A)=0$ holds in either of the following cases.
\begin{enumerate}
\item $k<0$, and $u$ is arbitrary.
\item $u>k$.
\item $u\le n$ and $k\ge n+1$.
\end{enumerate}
\item Let $w\le 0$, and suppose that $A$ satisfies $H^i\A(A,X^j)=0$ for all $j\in\bbZ$ whenever $-n+w<i<w$. 
Then, the following holds.
\begin{enumerate}
\item $S_X^{w+k,k+1}(A)=0$ for $1\le k\le n$. 
\item There is a unique surjection $\varphi^w_A\co H^{w-n}\A(A,X^{n+1})\to S_X^{w,1}(A)$ determined by the commutativity of the diagram
\[
\begin{tikzcd}[row sep = 22]
S_X^{w,n}(A) & &  H^{w-n}\A(A,X^{n+1}) \\
S_X^{w,n-1}(A) &   \cdots & S_X^{w,1}(A) 
\Ar{1-1}{1-3}{"m_{w,n}", "\cong"'}
\Ar{1-1}{2-1}{"s_{w,n-1}"'}
\Ar{2-1}{2-2}{"s_{w,n-2}"', "\cong"}
\Ar{2-2}{2-3}{"s_{w,1}"', "\cong"}
\Ar{1-3}{2-3}{"\varphi^w_A"}
\end{tikzcd}
\]
in which $s_{w,1},\ldots, s_{w,n-2}$  and $m_{w,n}$ are isomorphisms, while $s_{w,n-1}$ is surjective.
\item $S_X^{w+k-n,k-1}(A)=0$ for $k<n$.
\end{enumerate}
\end{enumerate}
\end{lem}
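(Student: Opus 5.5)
The plan is to get everything from the single computation
\[
\Kbf(\A)(A[-u],B[-k])\cong H^{u-k}\A(A,B)\qquad(A,B\in\ob(\A)),
\]
which holds because a degree-$0$ morphism $A[-u]\to B[-k]$ in $\Cbf(\A)$ is just an element of $\A(A,B)^{k-u}$ up to sign, and the differential of $\Cbf(\A)$ restricts to $\pm d_{\A}$. Everything else is diagram chasing in long exact sequences.

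For (1), I apply the cohomological functor $\Kbf(\A)(A[-u],\blank)$ to the triangle $(\ref{tri_ups_wp})$, rotated so that the connecting term is $X^k[-k+1]$. The map $e_{u,k}$ is induced by $\be_k\up$ and $s_{u,k-1}$ by $\gam_k\up$. The map $m_{u,k-1}$ is induced by $\al_k\up[1]\co\sig_{\le k-1}X\up\to X^k[-k+1]$, and its target is identified with $H^{u-k+1}\A(A,X^k)$ by the computation above. For the commutativity of $(\ref{comm_d})$ I compute $m_{v+k,k}\ci e_{v+k,k}$ on a representative. A cycle $a\in Z^v\A(A,X^k)$ gives the morphism $A[-(v+k)]\to X^k[-k]\to\sig_{\le k}X\up$. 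Composing with $\al_{k+1}\up[1]$ picks out the component $d_X^{k,k+1}$, by the explicit formula in Proposition~\ref{prop:alphabetagamma}, so the composite is $\pm d_X^{k,k+1}\ci a$. The main thing to check here is that the sign conventions of Definition~\ref{dfn:shifts} make this sign $+$; if not, one fixes it by normalizing $m$.

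For (2), case (a) is trivial, since $\sig_{\le k}X\up=0$. For (b) I induct on $k$ starting from (a). In $(\ref{long_seq_S})$, the group $S_X^{u,k}(A)$ sits between $H^{u-k}\A(A,X^k)$, which vanishes by strict connectivity because $u-k>0$, and $S_X^{u,k-1}(A)$, which vanishes by induction because $u>k-1$. For (c), when $k\ge n+1$ we have $\sig_{\le k}X\up=X\up$, and the group vanishes by the definition of left $n$-exactness.

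For (3)(b), I read off $(\ref{long_seq_S})$ at $k=n+1$:
\[
S_X^{w,n+1}(A)\to S_X^{w,n}(A)\xrightarrow{m_{w,n}}H^{w-n}\A(A,X^{n+1})\to S_X^{w+1,n+1}(A).
\]
Both outer terms vanish by (2)(c), because $w+1\le 1\le n$, so $m_{w,n}$ is an isomorphism. For $2\le k\le n$, the map $s_{w,k-1}\co S_X^{w,k}(A)\to S_X^{w,k-1}(A)$ has kernel that is a quotient of $H^{w-k}\A(A,X^k)$ and cokernel that embeds into $H^{w-k+1}\A(A,X^k)$. The hypothesis kills $H^i$ for $-n+w<i<w$. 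Hence both groups vanish when $2\le k\le n-1$, and only the cokernel side vanishes when $k=n$. This yields the stated isomorphisms $s_{w,1},\dots,s_{w,n-2}$ and the surjectivity of $s_{w,n-1}$. I then define $\varphi^w_A=s_{w,1}\ci\cdots\ci s_{w,n-1}\ci m_{w,n}^{-1}$, which is forced by the diagram.

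Parts (3)(a) and (3)(c) are the step I expect to be fiddliest, because the indices shift in $u$ and $k$ simultaneously. My plan for (a) is a downward induction on $k$. I would prove a slightly stronger statement, namely $S_X^{w'+k,k+1}(A)=0$ for all $w'$ in the window $w-n+k<w'\le w$ or a similar range. The base case is $k=n$, where the vanishing comes from (2)(c). In the inductive step, each $S_X^{u,k+1}(A)$ is squeezed in $(\ref{long_seq_S})$ between a term $H^{u-k-1}\A(A,X^{k+1})$ and a term $S_X^{u,k}(A)$. The first lies in the vanishing window; the second should be controlled by the inductive hypothesis after re-indexing, possibly supplemented by (2)(b). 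Part (c) is handled analogously by upward induction on $k$ from (2)(a). The only danger is an off-by-one mismatch in the window of vanishing degrees, so I would first tabulate which $H^i\A(A,X^j)$ appear before fixing the exact inductive range.
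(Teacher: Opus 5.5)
You handle (1), (2) and (3)(b) correctly, essentially as the paper does. Your inductive proof of (2)(b) through $H^{u-k}\A(A,X^k)=0$ is a valid alternative to the paper's direct remark that $\Cbf(\A)(A[-u],\sig_{\le k}X\up)^0=0$ by strict connectivity.

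The gap is in (3)(a), and it is not an off-by-one issue: you squeeze $S_X^{u,k+1}(A)$ against the wrong neighbour. Take the segment $H^{u-k-1}\A(A,X^{k+1})\xrightarrow{e_{u,k+1}}S_X^{u,k+1}(A)\xrightarrow{s_{u,k}}S_X^{u,k}(A)$ with $u=w+k$. It yields vanishing only if $S_X^{w+k,k}(A)=0$, and this fails in general. For $w=0$, $k=1$ the long exact sequence gives $S_X^{1,1}(A)\cong\operatorname{Coker}\bigl(H^0\A(A,X^0)\xrightarrow{\ovl{d}_X^0\ci\blank}H^0\A(A,X^1)\bigr)$. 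This is nonzero for $A=X^1$ as soon as $\ovl{d}_X^0$ is not a split epimorphism. Re-indexing does not rescue it. $S_X^{w+k,k}(A)$ is the statement at $k-1$ with $w+1$ in place of $w$, which a downward induction on $k$ does not provide. Your proposed window $w-n+k<w'\le w$ is also already empty at the base case $k=n$.

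The correct squeeze uses the larger truncation at fixed $u=w+k$. The segment $S_X^{u,p+1}(A)\xrightarrow{s_{u,p}}S_X^{u,p}(A)\xrightarrow{m_{u,p}}H^{u-p}\A(A,X^{p+1})$ shows that $s_{u,p}$ is surjective whenever $-n+w<w+k-p<w$. That means $k<p<k+n$, which includes $k+1\le p\le n$ since $k\ge1$. Hence the composite $S_X^{w+k,n+1}(A)\to S_X^{w+k,k+1}(A)$ is surjective, and its source vanishes by (2)(c) because $w+k\le n$. This is exactly the kernel/cokernel bookkeeping you already did in (3)(b), and it is the paper's argument.

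The squeeze you wrote down is in fact the right one for (3)(c). Take $1\le k\le n-1$ and $u=w+k-n$. The kernel of $s_{u,p}$ is a quotient of $H^{u-p-1}\A(A,X^{p+1})$, which vanishes for $k-1-n<p<k-1$. So $S_X^{u,k-1}(A)\to\cdots\to S_X^{u,-1}(A)=0$ is a composite of injections, and the case $k\le 0$ is (2)(a). As written, however, your proposal proves neither (3)(a) nor (3)(c).
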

\begin{proof}
{\rm (1)} 
By definition, we have
\begin{equation}\label{eq_albe}
\alpha_k^{i,j}=\begin{cases}
d_X^{i-1,k} & \text{if}\ i\le k\ \text{and}\ j=k\\
0 & \text{otherwise}
\end{cases}\ \ ,\ \ 
\be_k^{i,j}=\begin{cases}
\id_{X^k} & \text{if}\ i=j=k\\
0 & \text{otherwise}
\end{cases}.
\end{equation}
For any $u\in\bbZ$, applying the cohomological functor $\Kbf(\A)(A[-u],\blank)\co \Kbf(\A)\to \Ab$ to $(\ref{tri_ups_wp})$, we obtain a long exact sequence
\footnotesize
\begin{equation}\label{long_seq_to_be_S}
\cdots \to \Kbf(\A)(A[-u],(\sig_{\le k-1}X\up)[-1])
\xrightarrow{\al\up_k\ci\blank}
\Kbf(\A)(A[-u],X^k[-k])
\xrightarrow{\be\up_k\ci\blank}
S_X^{u,k}(A)
\xrightarrow{\gam\up_k\ci\blank}
S_X^{u,k-1}(A)
\to
\cdots.
\end{equation}
\normalsize
We have an isomorphism 
\[
h_{u,k}\colon\Kbf(\A)(A[-u],X^k[-k])\xrightarrow{\cong}H^{u-k}\A(A,X^k)\, ;\, f\up\mapsto\ovl{f^{u,k}}.
\]
Put $s_{u,k-1}=(\gam\up_k\ci\blank)\co S_X^{u,k}(A)\to  S_X^{u,k-1}(A)$, and define $e_{u,k},m_{u-1,k-1}$ to be the morphisms which make the diagram
\[
\begin{tikzpicture}[>=stealth]
\node (1) at (-4.6,0.8) {$S_X^{u-1,k-1}(A)$};
\node (2) at (0.8,0.8) {$H^{u-k}\A(A,X^k)$};
\node (3) at (4.6,0.8) {$S_X^{u,k}(A)$};
\node (4) at (-4.6,-0.8) {$\Kbf(\A)(A[-u],(\sig_{\le k-1}X\up)[-1])$};
\node (5) at (0.8,-0.8) {$\Kbf(\A)(A[-u],X^k[-k])$};
\draw[->] (1) -- node[above,font=\scriptsize] {$m_{u,k}$} (2);
\draw[->] (1) -- node[left,font=\scriptsize] {$[-1]$} node[right,font=\scriptsize] {$\cong$} (4);
\draw[->] (4) -- node[below,font=\scriptsize] {$\al\up_k\ci\blank$} (5);
\draw[->] (5) -- node[right,font=\scriptsize] {$h_{u,k}$} node[left,font=\scriptsize] {$\cong$} (2);
\draw[->] (2) -- node[above,font=\scriptsize] {$e_{u,k}$} (3);
\draw[->] (5) -- node[right,font=\scriptsize] {$\ \ \ \be\up_k\ci\blank$} (3);
\end{tikzpicture}
\]
commutative. 
Then, by the exactness of $(\ref{long_seq_to_be_S})$, we obtain the long exact sequence $(\ref{long_seq_S})$.

It remains to show the commutativity of $(\ref{comm_d})$. By the definitions of $e_{u,k}$ and $m_{u,k}$, it suffices to show that
\begin{equation}\label{comm_6}
\begin{tikzpicture}[>=stealth]
\node (1) at (-2.6,1.5) {$H^v\A(A,X^k)$};
\node (2) at (2.6,1.5) {$H^v\A(A,X^{k+1})$};
\node (3) at (-3.2,0) {$\Kbf(\A)(A[-(v+k)],X^k[-k])$};
\node (4) at (3.2,0) {$\Kbf(\A)(A[-(v+k+1)],X^{k+1}[-(k+1)])$};
\node (5) at (-2.8,-1.5) {$S_X^{v+k,k}(A)$};
\node (6) at (2.8,-1.5) {$\Kbf(\A)(A[-(v+k+1)],(\sig_{\le k}X\up)[-1])$};
\draw[->] (1) -- node[above,font=\scriptsize] {$\ovl{d}_X^k\ci\blank$} (2);
\draw[->] (3) -- node[left,font=\scriptsize] {$h_{v+k,k}$} node[right,font=\scriptsize] {$\cong$} (1);
\draw[->] (4) -- node[right,font=\scriptsize] {$h_{v+k+1,k+1}$} node[left,font=\scriptsize] {$\cong$} (2);
\draw[->] (3) -- node[left,font=\scriptsize] {$\be\up_k\ci\blank$}   (5);
\draw[->] (6) -- node[right,font=\scriptsize] {$\al\up_{k+1}\ci\blank$}   (4);
\draw[->] (5) -- node[below,font=\scriptsize] {$[-1]$} node[above,font=\scriptsize] {$\cong$} (6);
\end{tikzpicture}
\end{equation}
is commutative. Let $f\up\in Z^0(\Cbf(\A))(A[-(v+k)],X^k[-k])$ be any element. Then, $g\up=\al\up_{k+1}\ci((\be_k\up\ci f\up)[-1])$ satisfies
\[
g^{i,j}=\begin{cases}
d_X^{k,k+1}\ci f^{v+k,k} & \text{if}\ (i,j)=(v+k+1,k+1)\\
0 & \text{otherwise}
\end{cases}
\]
by $(\ref{eq_albe})$.
Thus $h_{v+k+1,k+1}(g\up)=\ovl{d_X^{k,k+1}\ci f^{v+k,k}}=\ovl{d}_X^k\ci h_{v+k,k}(f\up)$ holds, which shows the commutativity of $(\ref{comm_6})$.

{\rm (2)}  {\rm (a)} This case is obvious, since $\sig_{\le k}X\up=0$ if $k<0$. {\rm (b)} If $u>k$, since $\A$ is strictly connective, we have $\Cbf(\A)(A[-u],\sig_{\le k}X\up)^0=0$. This implies $S_X^{u,k}(A)=0$.
{\rm (c)} If $k\ge n+1$, then we have $\sig_{\le k}X\up=X\up$, hence $S_X^{u,k}(A)=\Kbf(\A)(A[-u],X\up)$. This is equal to $0$ whenever $u\le n$, since $X\up$ is left $n$-exact.

{\rm (3)} By {\rm (1)}, the sequence
\[
H^{w+k-p-1}(A,X^{p+1})\to S_X^{w+k,p+1}(A)
\xrightarrow{s_{w+k,p}} S_X^{w+k,p}(A)
\to H^{w+k-p}(A,X^{p+1})
\]
is exact. By the assumption for $H^i\A(A,\blank)$ and this exactness, it follows that $s_{w+k,p}$ is an isomorphism if $k<p<k+n-1$, and that $s_{w+k,k+n-1}$ is surjective. Thus, for each $1\le k\le n$, the composition (which we define to be the identity map if $k=n$)
\[
s_{w+k,k+1}\ci s_{w+k,k+2}\ci\cdots\ci s_{w+k,n}\co S_X^{w+k,n+1}(A)\to S_X^{w+k,k+1}(A)
\]
is surjective. Since $S_X^{w+k,n+1}(A)=0$ by {\rm (2)(c)}, we obtain $S_X^{w+k,k+1}(A)=0$. Thus {\rm (a)} is shown.
For {\rm (b)}, since we have already seen that $s_{w,1},\ldots, s_{w,n-2}$ are isomorphisms and $s_{w,n-1}$ is surjective, 
it remains to show that $m_{w,n}$ is an isomorphism. This follows from the exactness of 
\[ S_X^{w,n+1}(A)\to S_X^{w,n}(A)\xrightarrow{m_{w,n}}H^{w-n}\A(A,X^{n+1})\to S_X^{w+1,n+1}(A) \]
obtained in {\rm (1)}, since $S_X^{w,n+1}(A)=S_X^{w+1,n+1}(A)=0$ by {\rm (2)(c)}.

Let us show {\rm (c)}. For $k\le 0$, it is immediate from {\rm (2)(a)}. For $0<k<n$, it can be shown in a similar way to {\rm (a)}. Indeed, by the exactness of $(\ref{long_seq_S})$ obtained in {\rm (1)} and by the assumption for $H^i\A(A,\blank)$, it follows that $s_{w-n+k,p}$ is an isomorphism if $-n+k<p<k-1$, and that $s_{w-n+k,-n+k}$ is injective. Thus, for any $0< k< n$, the composition
\[
s_{w-n+k,-1}\ci s_{w-n+k,0}\ci\cdots\ci s_{w-n+k,k-2}\co S_X^{w-n+k,k-1}(A)\to S_X^{w-n+k,-1}(A)
\]
is injective. Since $S_X^{w-n+k,-1}(A)=0$ by {\rm (2)(a)}, it follows that $S_X^{w-n+k,k-1}(A)=0$.
\end{proof}

\begin{lem}\label{lem:long_ex_seq}
Let $A\in\ob(\A)$ be any object, and let $X\up\in\Cbf^{n+2}(\A)$ be any left $n$-exact sequence. We continue to use the notation in Lemma~\ref{lem:cohom_vanish}.
Let $w$ be an integer such that $w\le-n$, or $w=0$.
Assume that $A$ satisfies $H^i\A(A,X^j)=0$
\begin{itemize}
\item for all $j\in\bbZ$ and all $-n+w<i<w$, and
\item for all $j\in\bbZ$ and all $w<i<n+w$ (which is always satisfied if $w=0$ since $\A$ is connective).
\end{itemize}
Then, the following holds.
\begin{enumerate}
\item The sequence
\begin{equation}\label{seq_for_exact_s}
H^w\A(A,X^0)
\xrightarrow{\ovl{d}_X^0\ci\blank}H^w\A(A,X^1)
\xrightarrow{\ovl{d}_X^1\ci\blank}\cdots
\xrightarrow{\ovl{d}_X^n\ci\blank}H^w\A(A,X^{n+1})
\end{equation}
is exact. 
\item If we define $\vartheta_A^w$ to be the unique morphism that makes the diagram 
\[
\begin{tikzcd}[row sep = 22]
H^{w-n}\A(A,X^{n+1}) & H^w\A(A,X^0)  \\
S_X^{w,1}(A) &  S_X^{w,0}(A) 
\Ar{1-1}{1-2}{"\vartheta_A^w"}
\Ar{1-1}{2-1}{"\varphi_A^w"'}
\Ar{2-1}{2-2}{"s_{w,0}"'}
\Ar{1-2}{2-2}{"e_{w,0}","\cong"'}
\end{tikzcd}
\]
commutative, in which $\varphi_A^w$ is the surjection obtained in Lemma~\ref{lem:cohom_vanish} {\rm (3)(b)}, then 
\[
H^{w-n}\A(A,X^{n+1})
\xrightarrow{\vartheta_A^w}
H^w\A(A,X^0)
\xrightarrow{\ovl{d}_X^0\ci\blank}H^w\A(A,X^1)
\]
is exact.
\end{enumerate}
\end{lem}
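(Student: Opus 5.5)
The plan is to read off both statements from the long exact sequences $(\ref{long_seq_S})$ of Lemma~\ref{lem:cohom_vanish}(1), combined with the vanishing results in Lemma~\ref{lem:cohom_vanish}(2),(3). In the notation there, $H^w\A(A,X^k)$ sits in the sequence for $u=w+k$ as
\[
S_X^{w+k-1,k-1}(A)\xrightarrow{m_{w+k-1,k-1}} H^{w}\A(A,X^k)\xrightarrow{e_{w+k,k}} S_X^{w+k,k}(A)\xrightarrow{s_{w+k,k-1}} S_X^{w+k,k-1}(A).
\]
By $(\ref{comm_d})$ the differential $\ovl{d}_X^k\ci\blank$ factors as $m_{w+k,k}\ci e_{w+k,k}$. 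Exactness of $(\ref{seq_for_exact_s})$ at $H^w\A(A,X^k)$ for $1\le k\le n$ therefore reduces to two facts: $m_{w+k,k}$ is injective, which gives $\Ker(\ovl{d}_X^k\ci\blank)=\Ker e_{w+k,k}=\Ima m_{w+k-1,k-1}$; and $e_{w+k-1,k-1}\colon H^w\A(A,X^{k-1})\to S_X^{w+k-1,k-1}(A)$ is surjective. Given both, $\Ima(\ovl{d}_X^{k-1}\ci\blank)=\Ima m_{w+k-1,k-1}$, and exactness follows.

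For injectivity of $m_{w+k,k}$ I would use the exact piece $S_X^{w+k,k+1}(A)\to S_X^{w+k,k}(A)\xrightarrow{m}H^{w}\A(A,X^{k+1})$. Its kernel is the image of $S_X^{w+k,k+1}(A)$, which vanishes by Lemma~\ref{lem:cohom_vanish}(3)(a) for $1\le k\le n$. The case $k=0$ needs a separate check: $S_X^{w,1}\to S_X^{w,0}$ need not be zero, and this is precisely where $\vartheta$ enters in part (2).

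For surjectivity of $e_{w+k-1,k-1}$, the cokernel injects into $S_X^{w+k-1,k-2}(A)$ via $s$, so it suffices that this group vanishes. Here I would use the second hypothesis, $H^i\A(A,X^j)=0$ for $w<i<n+w$, and repeat the argument of Lemma~\ref{lem:cohom_vanish}(3)(c) with the index shifted upward. The maps $s_{w+k-1,p}\colon S_X^{w+k-1,p+1}\to S_X^{w+k-1,p}$ have kernel and cokernel controlled by $H^{w+k-1-p-1}$ and $H^{w+k-1-p}$. These groups vanish as long as their degrees lie in $(w,w+n)$, which is the case for $p<k-2$ down to $p\ge k-n$. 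Hence $S_X^{w+k-1,k-2}$ injects into $S_X^{w+k-1,k-n-1}$, and this is zero by Lemma~\ref{lem:cohom_vanish}(2)(a) since $k-n-1<0$. When $w=0$ the easier route is Lemma~\ref{lem:cohom_vanish}(2)(b), since $u>k$ there. I expect this index bookkeeping, together with the boundary cases $k=1$ and $k=n$, to be the main obstacle. It is also the point where the hypothesis $w\le -n$ or $w=0$ is used: it guarantees that the two forbidden ranges $(-n+w,w)$ and $(w,w+n)$ together with connectivity cover everything needed.

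For (2), first note that $e_{w,0}$ is an isomorphism: $S_X^{w,0}(A)=\Kbf(\A)(A[-w],X^0)$, and its neighbours $S_X^{w,-1}$ and $S_X^{w-1,-1}$ vanish by (2)(a). So $\vartheta_A^w$ is well defined. Then $\ovl{d}_X^0\ci\blank=m_{w,0}\ci e_{w,0}$, and the kernel of $m_{w,0}$ equals $\Ima(s_{w,0}\colon S_X^{w,1}\to S_X^{w,0})$ by $(\ref{long_seq_S})$. Transporting along $e_{w,0}$ gives $\Ker(\ovl{d}_X^0\ci\blank)=e_{w,0}^{-1}(\Ima s_{w,0})$. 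Since $\varphi_A^w$ is surjective, this equals $\Ima\vartheta_A^w$.
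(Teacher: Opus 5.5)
Your proposal is correct and is essentially the paper's proof. The paper splices the pieces $0\to S_X^{w+k,k}(A)\to H^w\A(A,X^{k+1})\to S_X^{w+k+1,k+1}(A)\to 0$; exactness of these pieces is exactly your injectivity of $m$ (via Lemma~\ref{lem:cohom_vanish}(3)(a)) together with surjectivity of $e$. The paper also treats $e_{w,0}$ and part (2) just as you do.

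The only difference is that you re-derive $S_X^{w+k-1,k-2}(A)=0$ by hand, whereas the paper cites Lemma~\ref{lem:cohom_vanish}(3)(c) at the shifted index $w+n$ when $w\le -n$, and (2)(b) when $w=0$. That shifted citation is where the restriction $w\le -n$ actually enters; your direct re-derivation does not need it. In your version, the last step $S_X^{w+k-1,k-n}(A)\hookrightarrow S_X^{w+k-1,k-n-1}(A)$ also requires $H^{w+n-1}\A(A,X^{k-n})=0$, which your second hypothesis supplies.
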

\begin{proof}
{\rm (1)} By Lemma~\ref{lem:cohom_vanish} {\rm (1)},
\[
\begin{tikzpicture}
\matrix (m) [matrix of math nodes,
  row sep=3em,
  column sep=4em] {
  H^{w-1}\A(A,X^{k+1}) & S_X^{w+k,k+1}(A) & S_X^{w+k,k}(A) \\
  H^w\A(A,X^{k+1}) & S_X^{w+k+1,k+1}(A) & S_X^{w+k+1,k}(A) \\
};

\draw[->] (m-1-1) -- node[midway, above, font=\scriptsize] {$e_{w+k,k+1}$} (m-1-2);
\draw[->] (m-1-2) -- node[midway, above, font=\scriptsize] {$s_{w+k,k}$} (m-1-3);

\draw[->] (m-2-1) -- node[midway, above, font=\scriptsize] {$e_{w+k+1,k+1}$} (m-2-2);
\draw[->] (m-2-2) -- node[midway, above, font=\scriptsize] {$s_{w+k+1,k}$} (m-2-3);

\draw[->]
  (m-1-3)
  .. controls +(1,-1) and +(-1,1)
  .. node[midway, above, font=\scriptsize] {$m_{w+k,k}$}(m-2-1);
  \end{tikzpicture}
\]
is exact for any $k\in\bbZ$. We have $S_X^{w+k,k+1}(A)=0$ for $1\le k\le n$ by Lemma~\ref{lem:cohom_vanish} {\rm (3)(a)}. 
Also, we have $S_X^{w+k+1,k}(A)=0$ for all $k<n-1$. Indeed, this is by Lemma~\ref{lem:cohom_vanish} {\rm (2)(b)} if $w=0$, and by Lemma~\ref{lem:cohom_vanish} {\rm (3)(c)} applied to $w+n$ if $w\le-n$.

Hence
\begin{equation}\label{to_splice_1_n}
0\to S_X^{w+k,k}(A)
\xrightarrow{m_{w+k,k}}H^w\A(A,X^{k+1})
\xrightarrow{e_{w+k+1,k+1}}S_X^{w+k+1,k+1}(A)
\to0
\end{equation}
is exact for all $k=1,2,\ldots,n-2$. 

For $k=n-1$,
\[
0\to S_X^{w+n-1,n-1}(A)
\xrightarrow{m_{w+n-1,n-1}}H^w\A(A,X^n)
\xrightarrow{e_{w+n,n}}S_X^{w+n,n}(A)
\xrightarrow{s_{w+n,n-1}}S_X^{w+n,n-1}(A)
\]
is exact. Moreover, $m_{w+n,n}\co S_X^{w+n,n}(A)\to H^w\A(A,X^{n+1})$ is injective by Lemma~\ref{lem:cohom_vanish} {\rm (2)(c)} if $w=0$, while it is an isomorphism by by Lemma~\ref{lem:cohom_vanish} {\rm (3)(b)} applied to $w+n$ if $w\le-n$. In either case, this shows that 
\begin{equation}\label{to_splice_n-1}
0\to S_X^{w+n-1,n-1}(A)
\xrightarrow{m_{w+n-1,n-1}}H^w\A(A,X^n)
\xrightarrow{\ovl{d}_X^n\ci\blank}H^w\A(A,X^{n+1})
\end{equation}
is exact, by the commutativity of $(\ref{comm_d})$ shown in Lemma~\ref{lem:cohom_vanish} {\rm (1)}.

For $k=0$, the above sequence becomes 
\begin{equation}\label{to_splice_0}
0\to S_X^{w,1}(A)
\xrightarrow{s_{w,1}} S_X^{w,0}(A)
\xrightarrow{m_{w,0}}H^w\A(A,X^1)
\xrightarrow{e_{w+1,1}}S_X^{w+1,1}(A)
\to 0,
\end{equation}
in which the exactness at the leftmost term follows from $H^{w-1}\A(A,X^1)=0$.
Similarly for $k=-1$, the sequence
$S_X^{w-1,-1}(A)
\to H^w\A(A,X^0)
\xrightarrow{e_{w,0}}S_X^{w,0}(A)
\to 0$ becomes exact.
Since $S_X^{w-1,-1}(A)=0$ by Lemma~\ref{lem:cohom_vanish} {\rm (2)(a)}, this exactness shows that $e_{w,0}$ is an isomorphism.
Splicing up $(\ref{to_splice_1_n})$, $(\ref{to_splice_n-1})$ and $(\ref{to_splice_0})$, we obtain the exact sequence $(\ref{seq_for_exact_s})$.

{\rm (2)} By the definition of $\vartheta_A^w$ and Lemma~\ref{lem:cohom_vanish} {\rm (1)}, the diagram 
\[
\begin{tikzcd}[row sep = 22]
&H^{w-n}\A(A,X^{n+1}) & H^w\A(A,X^0) & \\
0&S_X^{w,1}(A) &  S_X^{w,0}(A) & H^w\A(A,X^1) 
\Ar{1-2}{1-3}{"\vartheta_A^w"}
\Ar{1-2}{2-2}{"\varphi_A^w"'}
\Ar{2-2}{2-3}{"s_{w,0}"'}
\Ar{1-3}{2-3}{"e_{w,0}","\cong"'}
\Ar{1-3}{2-4}{"\ovl{d}_X^0\ci\blank"}
\Ar{2-1}{2-2}{}
\Ar{2-3}{2-4}{"m_{w,0}"'}
\end{tikzcd}
\]
is commutative.
By the exactness of the bottom row and the surjectivity of $\varphi_A^w$, the sequence
$H^{w-n}\A(A,X^{n+1})\xrightarrow{\vartheta_A^w}H^w\A(A,X^0)\xrightarrow{\ovl{d}_X^0\ci\blank}H^w\A(A,X^1)$ becomes exact.
\end{proof}

\begin{prop}\label{prp:long_ex_seq}
Let $A\in\ob(\A)$ be any object, and let $X\up\in\Cbf^{n+2}(\A)$ be any left $n$-exact sequence. We continue to use the notation in Lemma~\ref{lem:cohom_vanish}. 
The following holds.
\begin{enumerate}
\item Assume that $A$ satisfies $H^i\A(A,X^j)=0$ for all $j\in\bbZ$ and all $-n<i<0$. This assumption is satisfied, for instance, under Condition~\ref{condition:H^i=0}.
Then, the sequence
\[
H^{-n}\A(A,X^{n+1})
\xrightarrow{\vartheta_A^0}
H^0\A(A,X^0)
\xrightarrow{\ovl{d}_X^0\ci\blank}H^0\A(A,X^1)
\xrightarrow{\ovl{d}_X^1\ci\blank}\cdots
\xrightarrow{\ovl{d}_X^n\ci\blank}H^0\A(A,X^{n+1})
\]
is exact. 
\item Assume that $A$ satisfies $H^i\A(A,X^j)=0$ for all $j\in\bbZ$ and all $i\in\bbZ\setminus n\bbZ$. Then we have the following long exact sequence.
\[
\begin{tikzpicture}
\matrix (m) [matrix of math nodes,
  row sep=3em,
  column sep=3em] {
   \qquad\qquad\cdots\ \  & H^{-2n}\A(A,X^1) & \cdots & H^{-2n}\A(A,X^{n+1}) \\
   H^{-n}\A(A,X^0) & H^{-n}\A(A,X^1) & \cdots & H^{-n}\A(A,X^{n+1}) \\
   H^0\A(A,X^0) & H^0\A(A,X^1) & \cdots & H^0\A(A,X^{n+1}) \\
};

\draw[->] (m-1-1) -- node[midway, above, font=\scriptsize] {$\ovl{d}_X^0\ci\blank$} (m-1-2);
\draw[->] (m-1-2) -- node[midway, above, font=\scriptsize] {$\ovl{d}_X^1\ci\blank$} (m-1-3);
\draw[->] (m-1-3) -- node[midway, above, font=\scriptsize] {$\ovl{d}_X^n\ci\blank$} (m-1-4);
\draw[->] (m-2-1) -- node[midway, above, font=\scriptsize] {$\ovl{d}_X^0\ci\blank$} (m-2-2);
\draw[->] (m-2-2) -- node[midway, above, font=\scriptsize] {$\ovl{d}_X^1\ci\blank$} (m-2-3);
\draw[->] (m-2-3) -- node[midway, above, font=\scriptsize] {$\ovl{d}_X^n\ci\blank$} (m-2-4);
\draw[->] (m-3-1) -- node[midway, above, font=\scriptsize] {$\ovl{d}_X^0\ci\blank$} (m-3-2);
\draw[->] (m-3-2) -- node[midway, above, font=\scriptsize] {$\ovl{d}_X^1\ci\blank$} (m-3-3);
\draw[->] (m-3-3) -- node[midway, above, font=\scriptsize] {$\ovl{d}_X^n\ci\blank$} (m-3-4);
\draw[->]
  (m-1-4)
  .. controls +(1,-1) and +(-1,1)
  .. node[midway, above, font=\scriptsize] {$\vartheta_A^{-n}$}(m-2-1);
\draw[->]
  (m-2-4)
  .. controls +(1,-1) and +(-1,1)
  .. node[midway, above, font=\scriptsize] {$\vartheta_A^0$}(m-3-1);
\end{tikzpicture}
\]
\end{enumerate}
\end{prop}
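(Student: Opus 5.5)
The plan is to derive both parts directly from Lemma~\ref{lem:long_ex_seq}, applied to suitable values of $w$, and then to splice the resulting pieces.

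For {\rm (1)}, take $w=0$ in Lemma~\ref{lem:long_ex_seq}. Its first vanishing hypothesis is $H^i\A(A,X^j)=0$ for $-n<i<0$, which is exactly what we assume. The second hypothesis, for $0<i<n$, holds automatically because $\A$ is strictly connective. Lemma~\ref{lem:long_ex_seq}~{\rm (1)} then gives exactness of
\[H^0\A(A,X^0)\to H^0\A(A,X^1)\to\cdots\to H^0\A(A,X^{n+1}),\]
and Lemma~\ref{lem:long_ex_seq}~{\rm (2)} gives exactness at $H^0\A(A,X^0)$ of $H^{-n}\A(A,X^{n+1})\xrightarrow{\vartheta_A^0}H^0\A(A,X^0)\xrightarrow{\ovl{d}_X^0\ci\blank}H^0\A(A,X^1)$. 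Joining these two statements yields {\rm (1)}.

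For {\rm (2)}, apply the same lemma to each $w\in\{0,-n,-2n,\dots\}$. When $w=-mn$ with $m\ge1$, we have $w\le -n$, and both intervals $-n+w<i<w$ and $w<i<n+w$ lie strictly between consecutive multiples of $n$. So the $n$-sparseness assumption supplies both vanishing hypotheses, and the case $w=0$ is covered as in {\rm (1)}. For every such $w$ this gives exactness of the row $H^w\A(A,X^0)\to\cdots\to H^w\A(A,X^{n+1})$, and also exactness at $H^w\A(A,X^0)$ of the connecting piece $H^{w-n}\A(A,X^{n+1})\xrightarrow{\vartheta_A^w}H^w\A(A,X^0)\to H^w\A(A,X^1)$.

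The remaining task, and the one real obstacle, is exactness at the terms $H^{w-n}\A(A,X^{n+1})$. That is, we need
\[H^{w-n}\A(A,X^n)\xrightarrow{\ovl{d}_X^n\ci\blank}H^{w-n}\A(A,X^{n+1})\xrightarrow{\vartheta_A^w}H^w\A(A,X^0)\]
to be exact, which amounts to identifying $\Ker\vartheta_A^w$ with $\Ima(\ovl{d}_X^n\ci\blank)$. I would unwind $\vartheta_A^w$ as $e_{w,0}^{-1}\ci s_{w,0}\ci\varphi_A^w$. The map $e_{w,0}$ is an isomorphism, and $s_{w,0}$ is injective by (\ref{to_splice_0}), so $\Ker\vartheta_A^w=\Ker\varphi_A^w$. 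Since $\varphi_A^w$ is defined through the isomorphism $m_{w,n}$, the iso chain $s_{w,n-2},\dots,s_{w,1}$, and the surjection $s_{w,n-1}$, its kernel is $m_{w,n}(\Ker s_{w,n-1})$. By the long exact sequence (\ref{long_seq_S}), $\Ker s_{w,n-1}=\Ima e_{w,n}$ with $e_{w,n}\co H^{w-n}\A(A,X^n)\to S_X^{w,n}(A)$. Finally, the triangle (\ref{comm_d}) with $v=w-n$, $k=n$ gives $m_{w,n}\ci e_{w,n}=\ovl{d}_X^n\ci\blank$. Together these show $\Ker\vartheta_A^w=\Ima(\ovl{d}_X^n\ci\blank)$. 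Splicing all these exact pieces for $w=0,-n,-2n,\dots$ produces the displayed long exact sequence.
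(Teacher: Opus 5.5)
Your proposal is correct and follows the paper's proof. Part (1) is Lemma~\ref{lem:long_ex_seq} at $w=0$. For part (2) the paper also applies that lemma at $w=-qn$, and proves exactness at $H^{w-n}\A(A,X^{n+1})$ by factoring $\vartheta_A^w$ through $m_{w,n}$, $s_{w,n-1}$ and an injective map, then using $m_{w,n}\ci e_{w,n}=\ovl{d}_X^n\ci\blank$; your kernel computation is an elementwise unpacking of that same diagram.
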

\begin{proof}
{\rm (1)} Note that $H^i\A(A,X^j)=0$ holds for all $j\in\bbZ$ and for $i>0$ by the connectivity of $\A$. Thus, this follows from Lemma~\ref{lem:long_ex_seq} for $w=0$. 

{\rm (2)} For any $q\in\bbZ_{\ge0}$, by Lemma~\ref{lem:long_ex_seq} applied to $w=-qn$, the sequence
\[
H^{-(q+1)n}\A(A,X^{n+1})
\xrightarrow{\vartheta_A^{-qn}}H^{-qn}\A(A,X^0)
\xrightarrow{\ovl{d}_X^0\ci\blank}
\cdots
\xrightarrow{\ovl{d}_X^n\ci\blank}H^{-qn}\A(A,X^{n+1})
\]
becomes exact. Thus it remains to show that 
\[
H^{-(q+1)n}\A(A,X^n)
\xrightarrow{\ovl{d}_X^n\ci\blank}H^{-(q+1)n}\A(A,X^{n+1})
\xrightarrow{\vartheta_A^{-qn}}H^{-qn}\A(A,X^0)
\]
is exact for $q\in\bbZ_{\ge0}$.
By the definitions of $\varphi_A^w,\vartheta_A^w$ for $w=-qn$, the diagram
\[
\begin{tikzcd}[row sep = 22]
H^{-(q+1)n}\A(A,X^n) & S_X^{-qn,n}(A) &S_X^{-qn,n-1}(A) \\
H^{-(q+1)n}\A(A,X^n) & H^{-(q+1)n}\A(A,X^{n+1}) & H^{-qn}\A(A,X^0) 
\Ar{1-1}{1-2}{"e_{-qn,n}"}
\Ar{1-2}{1-3}{"s_{-qn,n-1}"}
\Ar{1-1}{2-1}{equal}
\Ar{1-2}{2-2}{"m_{-qn,n}"',"\cong"}
\Ar{2-3}{1-3}{"(e_{-qn,0})^{-1}\ci s_{-qn,0}\ci \omega^{-1}"'}
\Ar{2-1}{2-2}{"\ovl{d}_X^n\ci\blank"'}
\Ar{2-2}{2-3}{"\vartheta_A^{-qn}"'}
\end{tikzcd}
\]
is commutative. Here we put $\om=s_{-qn,1}\ci s_{-qn,2}\ci\cdots\ci s_{-qn,n-2}$, which is an isomorphism.
Since $(e_{-qn,0})^{-1}\ci s_{-qn,0}\ci \omega^{-1}$ is injective and the top row is exact by Lemma~\ref{lem:cohom_vanish}, it follows that the bottom row is also exact.
\end{proof}

\subsection{Comparison with complexes in $H^0(\A)$} \label{subsection:Comparison}
For use in the next subsection, we examine properties of the functor $\ovl{(\blank)}\co\Kbf(\A)\to\Kbff(H^0(\A))$ and related functors in detail.

\begin{lem}\label{lem:comparison_H_plus}
Assume that $\A$ satisfies $H^i\A(A,B)=0$ for all $A,B\in\ob(\A)$ and $-n<i<0$.
Then, the functor $\ovl{(\blank)}\co\Kbf_{[1,n]}(\A)\to\Kbff_{[1,n]}(H^0(\A))$ is fully faithful.
\end{lem}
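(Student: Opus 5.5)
We prove that $\ovl{(\blank)}\co\Kbf_{[1,n]}(\A)(X\up,Y\up)\to\Kbff(H^0(\A))(\ovl{X}\up,\ovl{Y}\up)$ is bijective for all $X\up,Y\up\in\Cbf_{[1,n]}(\A)$ by d\'evissage on both arguments. Both sides are cohomological in each variable: the left via the distinguished triangles of Proposition~\ref{prop:alphabetagamma}, the right via the corresponding termwise split triangles in $\Kbff(H^0(\A))$. The functor $\ovl{(\blank)}$ is a triangle functor commuting strictly with shifts and truncations, so it carries the triangle $(\sig_{\le k-1}X\up)[-1]\to\sig_{\ge k}X\up\to X\up\to\sig_{\le k-1}X\up$ to the standard truncation triangle of $\ovl{X}\up$. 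The five lemma therefore reduces the claim to a statement about stalk objects $A[-p]$ and $B[-q]$ with $A,B\in\ob(\A)$. However, the five lemma needs bijectivity not only at the required shifts but also at the neighbouring shifts on both sides, so we must control a range of shifts.

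The basic computation is as follows. For $A,B\in\ob(\A)$ and any $p,q$, one has
\[
\Kbf(\A)(A[-p],B[-q])\cong H^{q-p}\A(A,B),
\]
compare the isomorphism $h_{u,k}$ in the proof of Lemma~\ref{lem:cohom_vanish}. Likewise,
\[
\Kbff(H^0(\A))(A[-p],B[-q])=H^0(\A)(A,B)
\]
if $p=q$, and it is $0$ otherwise. The map $\ovl{(\blank)}$ sends $\ovl{f}$ to its class when $p=q$, and to $0$ when $p\ne q$. Hence $\ovl{(\blank)}$ is bijective on stalks exactly when $p=q$, when $q-p>0$ (both sides vanish, by connectivity), or when $-n<q-p<0$ (by the hypothesis). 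So it is bijective whenever $q-p>-n$. For stalks in degrees $[1,n]$, every $p,q\in[1,n]$ satisfies $q-p\ge 1-n>-n$, so bijectivity holds. Surjectivity additionally holds for all $p,q$, since the right side is nonzero only when $p=q$.

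The induction is then set up as follows. For $X\up\in\Cbf_{[a,b]}(\A)$ and $Y\up\in\Cbf_{[c,d]}(\A)$, we prove by induction on the total length that the map is bijective whenever $d-a>-n$ fails for no pair of components. That is, we require $q-p>-n$ for all $p\in[a,b]$ and $q\in[c,d]$, i.e. $c-b>-n$. We induct first on the length of $X\up$ with $Y\up$ a stalk, then on $Y\up$. The triangle for $X\up$ has outer terms $(\sig_{\le k-1}X\up)[-1]$ and $\sig_{\ge k}X\up$, together with their shifts, which appear in the five-term exact sequence. For the five lemma it suffices to have isomorphisms at the two middle-adjacent positions, plus surjectivity at one end and injectivity at the other.

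The main obstacle is exactly this bookkeeping. The outer terms involve shifted pieces such as $(\sig_{\le k-1}X\up)[-1]$ paired with $Y\up$, and pieces paired with $Y\up[1]$, which push the relevant degree differences $q-p$ down by one. At the extreme shift we may hit $q-p=-n$, where bijectivity fails. There, however, only surjectivity on one side or injectivity on the other is needed. Surjectivity holds universally by the stalk computation, extended to arbitrary complexes through an induction that uses only surjectivity and an injectivity at the neighbouring level. So we will run a combined induction proving two statements together: bijectivity when all differences satisfy $q-p>-n$, and surjectivity when all differences satisfy $q-p\ge -n$. Injectivity at the boundary is never required, because in the four-lemma we will choose the orientation that needs only epi on the left-hand outer term. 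We will verify carefully that each application of the four-lemma lands within the hypotheses, which the range $[1,n]$ (width $n-1$) just permits.
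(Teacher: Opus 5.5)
Your overall strategy (d\'evissage along the truncation triangles of Proposition~\ref{prop:alphabetagamma} down to stalks, then the five lemma) is the paper's. However, your stalk computation has the wrong sign, and everything you say about the ``main obstacle'' rests on it. In fact
\[
\Kbf(\A)(A[-p],B[-q])\cong\Kbf(\A)(A,B[p-q])\cong H^{p-q}\A(A,B),
\]
which is what $h_{u,k}$ says: it gives $H^{u-k}$, with $u=p$, $k=q$. Your $H^{q-p}$ would even contradict Lemma~\ref{lem:comparison_H}, which identifies $\Kbf(\A)(A[-n],B)$ with $\Kbff(H^0(\A))(A[-n],B)=0$ under no vanishing hypothesis.

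So the source vanishes by connectivity when $q<p$, and by the hypothesis when $0<q-p<n$. The stalk map can fail to be injective exactly when $q-p\ge n$. Consequently your general inductive claim (bijectivity for $X\up\in\Cbf_{[a,b]}(\A)$, $Y\up\in\Cbf_{[c,d]}(\A)$ whenever $c-b>-n$) is false. Take $X\up=A$ and $Y\up=B[-n]$: they satisfy $c-b=n>-n$, yet $\ovl{(\blank)}$ sends $\Kbf(\A)(A,B[-n])\cong H^{-n}\A(A,B)$ to $0$, and the hypothesis allows this group to be nonzero. Your base case for stalks in $[1,n]$ comes out right only because the condition $|p-q|\le n-1$ is symmetric.

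Moreover, your workaround fails on its own terms. In every five-term sequence that arises, the outer term whose $q-p$ drops by one (e.g.\ $\Kbf(\A)((\sig_{\le k-1}X\up)[-1],Y\up)$ or $\Kbf(\A)(X\up,Y^b[-b+1])$) sits at the right-hand end. That is where the four lemma requires injectivity, and the orientation is dictated by the triangle, so it cannot be chosen. If bijectivity really failed at $q-p=-n$, as you believe, the argument would break exactly there.

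The repair is easy and yields the paper's proof. With the correct sign, the right inductive condition is $q-p<n$ for all stalk pairs, i.e.\ $d-a<n$. This condition is stable under both d\'evissages. The left-end terms, such as $(\sig_{\ge k}X\up)[1]$ and $X^k[-k+1]$ with $k\ge 2$, stay in degrees $\ge 1$, and $(\sig_{\le b-1}Y\up)[-1]$ stays in degrees $\le n$. Thus for $X\up,Y\up\in\Cbf_{[1,n]}(\A)$ every pair encountered has $q-p\le n-1$, all four outer vertical maps are isomorphisms, and the ordinary five lemma suffices; no combined surjectivity/bijectivity induction is needed.

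The paper runs exactly this in three steps:
\begin{enumerate}
\item stalks;
\item arbitrary $X\up$ against a stalk $B[-j]$;
\item arbitrary $Y\up$, by induction on length.
\end{enumerate}
It invokes Lemma~\ref{lem:comparison_H} only for the single term where a degree-$0$ stalk appears, namely $\Kbf(\A)(\sig_{\le k-1}X\up,B)$ when $j=1$, and there both sides in fact vanish.
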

\begin{proof}
For simplicity, for each $X\up,Y\up\in\Cbf(\A)$, let us denote the map
\[
\ovl{(\blank)}\co\Kbf(\A)(X\up,Y\up)\to\Kbff(H^0(\A))(\ovl{X}\up,\ovl{Y}\up)\,;\,f\up\mapsto\ovl{f}\up
\]
by $F_{X\up,Y\up}\co M_{(X\up,Y\up)}\to\ovl{M}_{(X\up,Y\up)}$.
We show that $F_{X\up,Y\up}$ is an isomorphism for all $X\up,Y\up\in\Cbf_{[1,n]}(\A)$ in three steps, by successively weakening the assumptions on $X\up,Y\up$ as follows.
\begin{enumerate}
\item The case where $X\up=A[-i],Y\up=B[-j]$ for some $A,B\in\ob(\A)$ and $i,j\in\{1,\ldots,n\}$.
\item The case where $X\up\in\Cbf_{[1,n]}(\A)$ is arbitrary,  and $Y\up=B[-j]$ for some $B\in\ob(\A)$, $j\in\{1,\ldots,n\}$.
\item The case where $X\up,Y\up\in\Cbf_{[1,n]}(\A)$ are arbitrary.
\end{enumerate}
We proceed to prove each of these cases.

\smallskip

{\rm (1)} Assume that $X\up=A[-i]$ and $Y\up=B[-j]$ hold for some $A,B\in\ob(\A)$ and $i,j\in\{1,\ldots,n\}$. If $i\ne j$ we have $\ovl{M}_{(A[-i],B[-j])}=0$, and $M_{(A[-i],B[-j])}=\Kbf(\A)(A[-i],B[-j])\cong H^{i-j}(\A)(A,B)=0$ by assumption, hence $F_{A[-i],B[-j]}$ is trivially an isomorphism. If $i=j$, note that we have a commutative diagram
\[
\begin{tikzcd}[column sep = 20pt]
M_{(A[-i],B[-i])} & M_{(A,B)} \\
\ovl{M}_{(A[-i],B[-i])} & \ovl{M}_{(A,B)}
\Ar{1-1}{1-2}{"{[i]}", "\cong"'}
\Ar{1-1}{2-1}{"F_{A[-i],B[-i]}"'}
\Ar{1-2}{2-2}{"F_{A,B}"}
\Ar{2-1}{2-2}{"{[i]}"', "\cong"}
\end{tikzcd}
\]
in which the horizontal arrows are isomorphisms induced by shifts. Since $F_{A,B}$ is an isomorphism by Lemma~\ref{lem:comparison_H}, so is $F_{A[-i],B[-i]}$.

{\rm (2)} Let $X\up\in\Cbf_{[1,n]}(\A)$ be arbitrary. It suffices to show that $F_{\sig_{\le k}X\up,Y\up}$ is an isomorphism for any $Y\up=B[-j]$ with $B\in\ob(\A)$ and $j\in\{1,\ldots,n\}$, by induction on $k\in\{1,\ldots,n\}$.

For $k=1$, this is shown in {\rm (1)}.
Assume inductively that this is shown for $k-1$, where $k\ge2$.
By Proposition~\ref{prop:alphabetagamma}, we have a distinguished triangle
$(\sig_{\le k-1}X\up)[-1]\xrightarrow{\al\up}X^k[-k]\xrightarrow{\be\up}\sig_{\le k}X\up\xrightarrow{\gam\up}\sig_{\le k-1}X\up$
in $\Kbf(\A)$. This is sent to distinguished triangle
$(\sig_{\le k-1}\ovl{X}\up)[-1]\xrightarrow{\ovl{\al}\up}X^k[-k]\xrightarrow{\ovl{\be}\up}\sig_{\le k}\ovl{X}\up\xrightarrow{\ovl{\gam}\up}\sig_{\le k-1}\ovl{X}\up$
in $\Kbff(H^0(\A))$ by the functor $\ovl{(\blank)}$.
Thus, we have a commutative diagram
\small
\[
\begin{tikzcd}[column sep = 13pt, row sep = 20pt]
M_{(X^k[-k+1],Y\up)} & M_{(\sig_{\le k-1}X\up,Y\up)}& M_{(\sig_{\le k}X\up,Y\up)}& M_{(X^k[-k],Y\up)}& M_{((\sig_{\le k-1}X\up)[-1],Y\up)} \\
\ovl{M}_{(X^k[-k+1],Y\up)} & \ovl{M}_{(\sig_{\le k-1}X\up,Y\up)}& \ovl{M}_{(\sig_{\le k}X\up,Y\up)}& \ovl{M}_{(X^k[-k],Y\up)}& \ovl{M}_{((\sig_{\le k-1}X\up)[-1],Y\up)} 
\Ar{1-1}{1-2}{}
\Ar{1-2}{1-3}{"\blank\ci\gam\up"}
\Ar{1-3}{1-4}{"\blank\ci\be\up"}
\Ar{1-4}{1-5}{"\blank\ci\al\up"}
\Ar{1-1}{2-1}{"F_{X^k[-k+1],Y\up}"'}
\Ar{1-2}{2-2}{"F_{\sig_{\le k-1}X\up,Y\up}"'}
\Ar{1-3}{2-3}{"F_{\sig_{\le k}X\up,Y\up}"'}
\Ar{1-4}{2-4}{"F_{X^k[-k],Y\up}"'}
\Ar{1-5}{2-5}{"F_{(\sig_{\le k-1}X\up)[-1],Y\up}"'}
\Ar{2-1}{2-2}{}
\Ar{2-2}{2-3}{"\blank\ci\ovl{\gam}\up"'}
\Ar{2-3}{2-4}{"\blank\ci\ovl{\be}\up"'}
\Ar{2-4}{2-5}{"\blank\ci\ovl{\al}\up"'}
\end{tikzcd}
\]
\normalsize
in which rows are exact.

Among the vertical arrows, we see that $F_{X^k[-k+1],Y\up}$ and $F_{X^k[-k],Y\up}$ are isomorphisms by {\rm (1)}, since $2\le k\le n$ and $Y\up=B[-j]$ with $j\in\{1,\ldots,n\}$. Moreover, $F_{\sig_{\le k-1}X\up,Y\up}$ is an isomorphism by the induction hypothesis. The rightmost $F_{(\sig_{\le k-1}X\up)[-1],Y\up}$ is also an isomorphism. Indeed, in the commutative diagram below,
\[
\begin{tikzcd}[column sep = 20pt]
M_{((\sig_{\le k-1}X\up)[-1],Y\up)} & M_{(\sig_{\le k-1}X\up,Y\up[1])} \\
\ovl{M}_{(\sig_{\le k-1}X\up)[-1],Y\up)} & \ovl{M}_{(\sig_{\le k-1}X\up,Y\up[1])}
\Ar{1-1}{1-2}{"{[}1{]}","\cong"'}
\Ar{1-1}{2-1}{"F_{(\sig_{\le k-1}X\up)[-1],Y\up}"'}
\Ar{1-2}{2-2}{"F_{\sig_{\le k-1}X\up,Y\up[1]}","\cong"'}
\Ar{2-1}{2-2}{"{[}1{]}"',"\cong"}
\end{tikzcd}
\]
the vertical arrow on the right is an isomorphism by the induction hypothesis when $j>1$, and by Lemma~\ref{lem:comparison_H} when $j=1$, hence $F_{(\sig_{\le k-1}X\up)[-1],Y\up}$ on the left is also an isomorphism in either case.
By the five lemma, we see that $F_{\sig_{\le k}X\up,Y\up}$ is an isomorphism.

{\rm (3)} Let $X\up\in\Cbf_{[1,n]}(\A)$ be any object. It suffices to show that $F_{X\up,Y\up}$ are isomorphisms for all $Y\up\in\Cbf_{[a,b]}(\A)$ and $1\le a\le b\le n$, by induction on $l=b-a$.

For $l=0$, this is shown in {\rm (2)}.
Assume inductively that this is shown for $l-1$, where $l\ge 1$. 
Let us show that $F_{X\up,Y\up}$ becomes an isomorphism for any $Y\up\in\Cbf_{[a,b]}(\A)$ with $l=b-a$.
By Proposition~\ref{prop:alphabetagamma}, we have a distinguished triangle
\[
(\sig_{\le b-1}Y\up)[-1]\xrightarrow{\al\up}
Y^b[-b]\xrightarrow{\be\up}
Y\up\xrightarrow{\gam\up}
\sig_{\le b-1}Y\up
\]
in $\Kbf(\A)$.
This is sent to the distinguished triangle
$(\sig_{\le b-1}\ovl{Y}\up)[-1]\xrightarrow{\ovl{\al}\up}
Y^b[-b]\xrightarrow{\ovl{\be}\up}
\ovl{Y}\up\xrightarrow{\ovl{\gam}\up}
\sig_{\le b-1}\ovl{Y}\up$
in $\Kbff(H^0(\A))$ by the functor $\ovl{(\blank)}$. Thus, we have a commutative diagram
\small
\[
\begin{tikzcd}[column sep = 13pt, row sep = 20pt]
M_{(X\up,(\sig_{\le b-1}Y\up)[-1])} & M_{(X\up,Y^b[-b])} & M_{(X\up,Y\up)} & M_{(X\up,\sig_{\le b-1}Y\up)} & M_{(X\up,Y^b[-b+1])} \\
\ovl{M}_{(X\up,(\sig_{\le b-1}Y\up)[-1])} & \ovl{M}_{(X\up,Y^b[-b])} & \ovl{M}_{(X\up,Y\up)} & \ovl{M}_{(X\up,\sig_{\le b-1}Y\up)} & \ovl{M}_{(X\up,Y^b[-b+1])}
\Ar{1-1}{1-2}{"\al\up\ci\blank"}
\Ar{1-2}{1-3}{"\be\up\ci\blank"}
\Ar{1-3}{1-4}{"\gam\up\ci\blank"}
\Ar{1-4}{1-5}{}
\Ar{1-1}{2-1}{"F_{X\up,(\sig_{\le b-1}Y\up)[-1]}"}
\Ar{1-2}{2-2}{"F_{X\up,Y^b[-b]}"}
\Ar{1-3}{2-3}{"F_{X\up,Y\up}"}
\Ar{1-4}{2-4}{"F_{X\up,\sig_{\le b-1}Y\up}"}
\Ar{1-5}{2-5}{"F_{X\up,Y^b[-b+1]}"}
\Ar{2-1}{2-2}{"\ovl{\al}\up\ci\blank"'}
\Ar{2-2}{2-3}{"\ovl{\be}\up\ci\blank"'}
\Ar{2-3}{2-4}{"\ovl{\gam}\up\ci\blank"'}
\Ar{2-4}{2-5}{}
\end{tikzcd}
\]
\normalsize
in which rows are exact. Among the vertical arrows, we see that $F_{X\up,(\sig_{\le b-1}Y\up)[-1]}$ and $F_{X\up,\sig_{\le b-1}Y\up}$ are isomorphisms by the induction hypothesis. Moreover, $F_{X\up,Y^b[-b]}$ is an isomorphism by {\rm (2)}. The rightmost $F_{X\up,Y^b[-b+1]}$ is also an isomorphism. Indeed, this follows from {\rm (2)} if $b\ge2$, and from Lemma~\ref{lem:comparison_H} if $b=1$. By the five lemma, we see that $F_{X\up,Y\up}$ in the middle is an isomorphism, as desired.
\end{proof}

\begin{prop}\label{prop:comparison_H}
Assume that $\A$ satisfies $H^i\A(A,B)=0$ for all $A,B\in\ob(\A)$ and $-n<i<0$. Then, the following holds.
\begin{enumerate}
\item The functor $\ovl{(\blank)}\co\Kbf_{[1,n+1]}(\A)\to\Kbff_{[1,n+1]}(H^0(\A))$ is full. In fact, more strongly, for any $X\up,Y\up\in\Cbf_{[1,n+1]}(\A)$ and any $\mathbf{f}\upf\in Z^0(\Cbff_{[1,n+1]}(H^0(\A)))(\ovl{X}\up,\ovl{Y}\up)$, there exists $f\up\in Z^0(\Cbf_{[1,n+1]})(X\up,Y\up)$ such that $\ovl{f}\up=\mathbf{f}\upf$ in $\Kbff_{[1,n+1]}(H^0(\A))$ and $\ovl{f}^{n+1}=\mathbf{f}^{\, n+1}$ in $H^0(\A)$.

Moreover, if $\sig_{\le n}\mathbf{f}\upf$ is an isomorphism in $\Kbff_{[1,n]}(H^0(\A))$ and if $\mathbf{f}^{\, n+1}$ is an isomorphism in $H^0(\A)$, then such $f\up$ can be taken as an isomorphism in $\Kbf_{[1,n+1]}(\A)$.
\item The functor $\ovl{(\blank)}\co\Kbf^{n+2}(\A)\to\Kbff^{n+2}(H^0(\A))$ is surjective on objects. Namely, for any object $\Xbf\up\in \Kbff^{n+2}(H^0(\A))$, there exists an object $X\up\in\Kbf^{n+2}(\A)$ such that $\ovl{X}\up=\Xbf\up$.
\end{enumerate}
\end{prop}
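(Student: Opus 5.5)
For (1), I would split $X\up$ and $Y\up$ at degree $n+1$. By Proposition~\ref{prop:alphabetagamma} with $k=n+1$, we have triangles $(\sig_{\le n}X\up)[-1]\xrightarrow{\al_X\up}X^{n+1}[-(n+1)]\to X\up\to\sig_{\le n}X\up$, and similarly for $Y\up$. The functor $\ovl{(\blank)}$ sends these to the analogous triangles for $\ovl{X}\up,\ovl{Y}\up$.

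Given $\mathbf{f}\upf$, Lemma~\ref{lem:comparison_H_plus} lets me choose $g\up\in Z^0(\Cbf_{[1,n]}(\A))(\sig_{\le n}X\up,\sig_{\le n}Y\up)$ with $\ovl{g}\up=\sig_{\le n}\mathbf{f}\upf$ in $\Kbff$. I would also pick a degree $0$ representative $c$ of $\mathbf{f}^{\,n+1}$ and set $e\up=c[-(n+1)]$.

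The key step is to compare the obstruction $\al_Y\up\ci(g\up[-1])-e\up\ci\al_X\up$ with zero in $\Kbf(\A)((\sig_{\le n}X\up)[-1],Y^{n+1}[-(n+1)])$. Shifting, this is $\Kbf(\A)(\sig_{\le n}X\up,Y^{n+1}[-n])$, a Hom between objects of $\Cbf_{[1,n]}(\A)$, so Lemma~\ref{lem:comparison_H_plus} says $\ovl{(\blank)}$ is injective on it. Its image is $\ovl{\al}_Y\ci\sig_{\le n}\mathbf{f}[-1]-\mathbf{f}^{n+1}\ci\ovl{\al}_X$. In $\Kbff$ this is zero, because $\mathbf{f}\upf$ is a genuine chain map. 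So the obstruction is zero in $\Kbf(\A)$, i.e.\ it equals $d\varphi\up$ for some $\varphi\up$ of degree $-1$.

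Corollary~\ref{cor:give_a_morph} then produces $f\up$ with $\sig_{\le n}f\up=g\up$ and $\sig_{\ge n+1}f\up=e\up$. By construction $f^{n+1,n+1}=c$, so $\ovl{f}^{n+1}=\mathbf{f}^{\,n+1}$. The difference $\ovl{f}\up-\mathbf{f}\upf$ restricts to zero on $\sig_{\le n}$ up to homotopy and equals zero in degree $n+1$.

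The main obstacle is showing $\ovl{f}\up=\mathbf{f}\upf$ in $\Kbff$ exactly, rather than only up to the triangle's ambiguity. A map of cones agreeing on both ends need not agree; the difference factors as $\ovl{Y}^{n+1}[-(n+1)]\leftarrow\sig_{\le n}\ovl{X}\up$ through the connecting map. I would handle this by correcting $g\up$: first adjust $\sig_{\le n}$ of the difference by a homotopy $\mathbf{h}$ on $\sig_{\le n}$. The residual is then a map $\sig_{\le n}\ovl{X}\up\to \ovl{Y}^{n+1}[-(n+1)]$ of degree $0$, determined by $\mathbf{h}^{n}$-type components into $\ovl{Y}^{n+1}$. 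I would lift it by Lemma~\ref{lem:comparison_H_plus} to a map $\sig_{\le n}X\up\to Y^{n+1}[-(n+1)]$ and add the lift into $\varphi\up$, changing $f\up$ only in the off-diagonal $(i,n+1)$ entries. This keeps $f^{n+1,n+1}=c$.

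For the isomorphism claim, $g\up$ is an isomorphism in $\Kbf$ because $\ovl{(\blank)}$ is fully faithful on $\Kbf_{[1,n]}$. The map $e\up$ is an isomorphism since $\ovl{c}$ is. So $f\up$ is an isomorphism by Corollary~\ref{cor:give_a_morph}.

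For (2), I would proceed by induction on length, building $X\up$ from the top. The object $\Xbf\up$ is the cone of $\ovl{\al}\colon(\sig_{\le n}\Xbf\up)[-1]\to\Xbf^{n+1}[-(n+1)]$ in $\Kbff$.

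First I would realize $\sig_{[1,n]}$ by induction, again peeling the top term and using Lemma~\ref{lem:comparison_H_plus} fullness on $\Kbf_{[1,n]}$. Then I would attach degree $0$ using Lemma~\ref{lem:comparison_H}. That lemma makes $\Kbf(\A)(\sig_{\ge1}\text{-part}[-1]\ldots)$, i.e.\ maps $X^0[-1]\to(\text{realized }\sig_{\ge1})$, comparable. To get the needed fullness, I would dualize Lemma~\ref{lem:comparison_H}, or shift so that both objects lie in a range $[a,a+n-1]$ where Lemma~\ref{lem:comparison_H_plus} applies.

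Finally I would attach degree $n+1$ via fullness as in (1). The cone of a lifted $\al$ then has $\ovl{(\blank)}$ equal to $\Xbf\up$ on the nose, after the identification of Proposition~\ref{prop:alphabetagamma}(1). Here the strict equality on objects holds because I only need the differential classes $\ovl{d}^i$ to match, and these can be adjusted by choosing representatives as in Corollary~\ref{cor:inflation_replace}.
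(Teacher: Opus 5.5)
Your proposal is essentially correct, and part (1) is the paper's own argument. You make explicit that the square in $\Kbff(H^0(\A))$ lifts to a commutative square in $\Kbf(\A)$ because $\ovl{(\blank)}$ is injective on $\Kbf(\A)(\sig_{\le n}X\up,Y^{n+1}[-n])$, since both objects lie in $\Cbf_{[1,n]}(\A)$. You then glue via Corollary~\ref{cor:give_a_morph}, and the isomorphism clause is also handled as in the paper.

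The ``main obstacle'' you raise afterwards, however, does not exist. Let $\mathbf{h}$ be a homotopy between the truncations with $\sig_{\le n}\ovl{f}\up-\sig_{\le n}\mathbf{f}\upf=d\mathbf{h}$, and extend it by $\mathbf{h}^{n+1}=0$. The only new terms in $d\mathbf{h}$ are $\mathbf{h}^{n+1}\ci\ovl{d}_X^n=0$ in degree $n$ and $\ovl{d}_Y^n\ci\mathbf{h}^{n+1}=0$ in degree $n+1$. The latter is exactly what is required, since $\ovl{f}^{n+1}-\mathbf{f}^{\,n+1}=\ovl{c}-\mathbf{f}^{\,n+1}=0$. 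Hence $\ovl{f}\up=\mathbf{f}\upf$ in $\Kbff(H^0(\A))$ immediately; this is the step the paper leaves implicit. In your terms, the ``residual'' is a degree-$0$ morphism in $\Cbff(H^0(\A))$ from a complex concentrated in degrees $[1,n]$ to one concentrated in degree $n+1$, so it is identically zero.

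Your proposed repair could not have worked anyway. Adding a closed term to $\varphi\up$ changes only the components $f^{i,n+1}$ with $i\le n$, while $\ovl{f}\up$ depends only on the diagonal components $f^{i,i}$.

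For (2) your route differs from the paper's. The paper runs one downward induction $k=n+1,n,\ldots,0$. At each step it applies (1) to the chain map $\Xbf^k[-(k+1)]\to\sig_{\ge k+1}\Xbf\up$ whose only component is $\pm d^k_{\Xbf}$ (both ends lie in degrees $[1,n+1]$). It realizes this by a closed map into the already constructed $X_{(k+1)}\up$ and takes the cone. You instead build $\sig_{[1,n]}\Xbf\up$ upward using Lemma~\ref{lem:comparison_H_plus}, attach degree $0$ underneath, and use (1) only to attach degree $n+1$. That works too, but two of your justifications need replacing.

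First, the degree-$0$ step needs fullness on maps $\Xbf^0[-1]\to Z\up$, where $Z\up\in\Cbf_{[1,n]}(\A)$ is the realized middle part. Both objects already lie in $\Cbf_{[1,n]}(\A)$, so this is Lemma~\ref{lem:comparison_H_plus} directly. Lemma~\ref{lem:comparison_H}, which concerns maps \emph{out of} a complex into an object in degree $0$, and its dual are beside the point.

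Second, the on-the-nose equality $\ovl{X}\up=\Xbf\up$ needs no appeal to Corollary~\ref{cor:inflation_replace}. In each attaching step, yours and the paper's, source and target admit no nonzero degree $-1$ morphisms in $\Cbff(H^0(\A))$. So equality in $\Kbff(H^0(\A))$ is already equality of chain maps, and the cone of the lift has exactly the prescribed differentials.

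The paper's induction is more uniform; yours needs part (1) only once.
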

\begin{proof}
{\rm (1)} Let $X\up,Y\up\in\Cbf(\A)$ and $\mathbf{f}\upf\in Z^0(\Cbff_{[1,n+1]}(H^0(\A)))(\ovl{X}\up,\ovl{Y}\up)$ be arbitrary.
As in Proposition~\ref{prop:alphabetagamma}, we have distinguished triangles
\[
(\sig_{\le n}X\up)[-1]\xrightarrow{\al_X\up} X^{n+1}[-(n+1)]\xrightarrow{\be_X\up}X\up\xrightarrow{\gam_X\up}\sig_{\le n}X\up \]
and
\[
(\sig_{\le n}Y\up)[-1]\xrightarrow{\al_Y\up} Y^{n+1}[-(n+1)]\xrightarrow{\be_Y\up}Y\up\xrightarrow{\gam_Y\up}\sig_{\le n}Y\up \]
in $\Kbf(\A)$. Note that we have a commutative diagram
\begin{equation}\label{diagram_to_be_lifted}
\begin{tikzcd}
\sig_{\le n}\ovl{X}\up & X^{n+1}{[}-n{]}  \\
\sig_{\le n}\ovl{Y}\up & Y^{n+1}{[}-n{]}
\Ar{1-1}{1-2}{"\ovl{\al_X}\up{[}1{]}"}
\Ar{1-1}{2-1}{"\sig_{\le n}\mathbf{f}\upf"'}
\Ar{1-2}{2-2}{"(\sig_{\ge n+1}\mathbf{f}\upf) {[}1{]}"}
\Ar{2-1}{2-2}{"\ovl{\al_Y}\up{[}1{]}"'}
\end{tikzcd}
\end{equation}
in $\Kbff(H^0(\A))$.
By Lemma~\ref{lem:comparison_H_plus}, this can be lifted to a commutative diagram
\[
\begin{tikzcd}[column sep = 20]
\sig_{\le n}X\up & X^{n+1}{[}-n{]}  \\
\sig_{\le n}Y\up & Y^{n+1}{[}-n{]}
\Ar{1-1}{1-2}{"(\al_X\up){[}1{]}"}
\Ar{1-1}{2-1}{"g\up"'}
\Ar{1-2}{2-2}{"e\up{[}1{]}"}
\Ar{2-1}{2-2}{"(\al_{Y}\up){[}1{]}"'}
\end{tikzcd}
\]
in $\Kbf(\A)$, where $g\up,e\up$ are morphisms in $Z^0(\Cbf(\A))$ such that $\ovl{g}\up=\sig_{\le n}\mathbf{f}\upf$ and $\ovl{e}\up=\sig_{\ge n+1}\mathbf{f}\upf=\mathbf{f}^{\, n+1}[-(n+1)]$ in $\Kbf(\A)$, respectively. By Corollary~\ref{cor:give_a_morph}, we obtain a morphism $f\up\in Z^0(\Cbf(\A))(X\up,Y\up)$ that satisfies $\sig_{\le n}f\up=g\up$ and $\sig_{\ge n+1}f\up=e\up$ in $\Cbf(\A)$. Then $f\up$ satisfies $\sig_{\le n}\ovl{f}\up=\ovl{g}\up=\sig_{\le n}\mathbf{f}\upf$ and $\sig_{\ge n+1}\ovl{f}\up=\ovl{e}\up=\sig_{\ge n+1}\mathbf{f}\upf$ in $\Kbff(H^0(\A))$. This shows that $f\up$ satisfies $\ovl{f}\up=\mathbf{f}\upf$ in $\Kbff(H^0(\A))$ and $\ovl{f}^{n+1}=\mathbf{f}^{\, n+1}$ in $H^0(\A)$.

Suppose that $\sig_{\le n}\mathbf{f}\upf$ is an isomorphism in $\Kbff_{[1,n]}(H^0(\A))$ and $\mathbf{f}^{\, n+1}$ is an isomorphism in $H^0(\A)$. Then the vertical arrows in $(\ref{diagram_to_be_lifted})$ are isomorphisms in $\Kbff(H^0(\A))$, hence their lifts $g\up$ and $e\up[1]$ are isomorphisms in $\Kbf(\A)$ by Lemma~\ref{lem:comparison_H_plus}. Then, Corollary~\ref{cor:give_a_morph} shows that $f\up$ is an isomorphism in $\Kbf(\A)$.

{\rm (2)}
Let $\Xbf\up\in\Cbff^{n+2}(H^0(\A))$ be any object. For $0\le k\le n+1$, we construct $X_{(k)}\up\in\Cbf_{[k,n+1]}(\A)$ satisfying $\ovl{X_{(k)}}\up=\sig_{\ge k}\Xbf\up$ and $\sig_{\ge k+1}X_{(k)}\up=X_{(k+1)}\up$ by downward induction on $k$.
When $k=n+1$, we simply take $X_{(n+1)}=\Xbf^{n+1}[-(n+1)]$.
Suppose that we have constructed $X_{(m)}\up$ for $k+1\le m\le n+1$ where $k\le n$.
By {\rm (1)}, for a morphism $\mathbf{f}\upf\in Z^0(\Cbff(H^0(\A)))(X^k[-(k+1)],\sig_{\ge k+1}\Xbf\up)$ defined by
\[
\mathbf{f}^{\, i}=\begin{cases}
-d_{\Xbf}^k&\text{if}\ i=k+1\\
0&\text{otherwise}
\end{cases},
\]
there exists a morphism
$f\up\in Z^0(\Cbf(\A))(X^k[-(k+1)],X_{(k+1)}\up)$
such that $\ovl{f}\up=\mathbf{f}\upf$ in $\Kbff(H^0(\A))$. In particular we have $\ovl{f}^{k+1}=\mathbf{f}^{\, k+1}=-d_{\Xbf}^{k}$.
If we put $X\upp=C_f\up$, then it satisfies 
$\sig_{\ge k+1}X\upp=X_{(k+1)}\up$ and $d_{X'}^{k,k+1}=f^{k+1,k+1}$
up to the identification via the canonical isomorphisms $0\oplus X^i\cong X^i\cong X^i\oplus X^i\oplus 0$. Thus we obtain $X_{(k)}\up\in\Cbf_{[k,n+1]}(\A)$ that satisfies $\sig_{\ge k+1}X_{(k)}\up=X_{(k+1)}\up$ and  $\ovl{d}_{X_{(k)}}^k=\ovl{f}^{k+1}=d_{\Xbf}^k$, hence also satisfies $\ovl{X_{(k)}}\up=\sig_{\ge k}\Xbf\up$.
\end{proof}

\begin{rem}
By Proposition~\ref{prop:comparison_H} {\rm (2)}, the functors $\ovl{(\blank)}\co\Kbf_{[1,n+1]}(\A)\to\Kbff_{[1,n+1]}(H^0(\A))$ and $\ovl{(\blank)}\co\Kbf_{[1,n]}(\A)\to\Kbff_{[1,n]}(H^0(\A))$ are also surjective on objects. In particular, the latter
is an equivalence of categories by Lemma~\ref{lem:comparison_H_plus}.
\end{rem}

\normalcolor

\subsection{Realization of $\E$}\label{section:realization}

By restricting the functor $\overline{(\blank)}\colon\Kbf^{n+2}(\A)\to\Kbff^{n+2}(H^0(\A))$ in Definition~\ref{dfn:bar_functor3} to $\Kbf^{n+2}_{A,C}(\A)$, we also obtain a functor
to $\Kbff^{n+2}_{A,C}(H^0(\A))$, which we denote by the same symbol $\overline{(\blank)}\colon\Kbf^{n+2}_{A,C}(\A)\to\Kbff^{n+2}_{A,C}(H^0(\A))$.
Henceforth, for any $\Xbf\up\in\Kbff^{n+2}_{A,C}(H^0(\A))$, we write $[\Xbf\up]$ for its isomorphism class in $\Kbff^{n+2}_{A,C}(H^0(\A))$.

\begin{lem}\label{lem:EquivtoEquiv}
Let $A,C\in\ob(\A)$ be any pair of objects. Assume that ${}_AX\up_C,{}_AY\up_C\in {}_A\calS_C$ satisfy $\dbr{X\up}=\dbr{Y\up}$ in $\E(C,A)$.
Then $\overline{X}\up\cong\overline{Y}\up$ holds in $\Kbff^{n+2}_{A,C}(H^0(\A))$. 
Namely, $[\ovl{X}\up]=[\ovl{Y}\up]$ holds in $\ob(\Kbff^{n+2}_{A,C}(H^0(\A)))/_{\cong}$.
\end{lem}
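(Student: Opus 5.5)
The plan is to unwind the definition of $\E(C,A)$ and then push everything through the functor $\ovl{(\blank)}$. By Definition~\ref{dfn:equivalence_of_sequences}, the equality $\dbr{X\up}=\dbr{Y\up}$ means that $X\up$ and $Y\up$ are isomorphic as objects of the groupoid ${}_A\calS_C$. So there is a morphism $f\up\in Z^0(\Cbf^{n+2}(\A))(X\up,Y\up)$ which is an isomorphism in $\Kbf^{1,n,1}(\A)$ and satisfies $s(f\up)=\id_A$ and $t(f\up)=\id_C$, that is, $\ovl{f}^0=\id_A$ and $\ovl{f}^{n+1}=\id_C$ in $H^0(\A)$. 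Let $g\up$ be an inverse of $f\up$ in $\Kbf^{1,n,1}(\A)$. By Proposition~\ref{prop:9-A} (equivalently, Corollary~\ref{cor:S_groupoid}), $g\up$ also satisfies $\ovl{g}^0=\id_A$ and $\ovl{g}^{n+1}=\id_C$.

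Next I apply $\ovl{(\blank)}$. The equalities $g\up\ci f\up=\id$ and $f\up\ci g\up=\id$ hold in $\Kbf^{1,n,1}(\A)$. The canonical functor $\Kbf^{1,n,1}(\A)\to\Kbf^{n+2}(\A)$ of Definition~\ref{dfn:K1n1} then gives the same equalities in $\Kbf^{n+2}(\A)$, because the ideal $\calN$ is contained in the ideal of null-homotopic morphisms. Applying the additive functor $\ovl{(\blank)}\co\Kbf^{n+2}(\A)\to\Kbff^{n+2}(H^0(\A))$ of Definition~\ref{dfn:bar_functor3}, we get that $\ovl{f}\up$ and $\ovl{g}\up$ are mutually inverse in $\Kbff^{n+2}(H^0(\A))$.

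It remains to check that this isomorphism lives in the non-full subcategory $\Kbff^{n+2}_{A,C}(H^0(\A))$. That subcategory is defined, as in Definition~\ref{dfn:ASC}(1) with $H^0(\A)$ in place of $\A$, by morphisms admitting a representative whose end components are $\id_A$ and $\id_C$. For this I use the componentwise description of Definition~\ref{dfn:bar_functor2}: the chain map $\ovl{f}\up$ has $0$-th component $\ovl{f^{0,0}}=\id_A$ and $(n+1)$-th component $\ovl{f^{n+1,n+1}}=\id_C$, and the same holds for $\ovl{g}\up$. Hence $\ovl{f}\up$ and $\ovl{g}\up$ are morphisms of $\Kbff^{n+2}_{A,C}(H^0(\A))$, and they are mutually inverse there.

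The one point that needs care is exactly this last one. The end components of a morphism are not well defined on homotopy classes (Remark~\ref{rem:non-functorial2}), so the argument must use the specific representatives $f\up,g\up$ rather than their classes. It must also use that the identities are witnessed as morphisms of the subcategory, with identity ends. Both requirements are met by the choices above, so I do not expect a genuine obstacle; note that Condition~\ref{condition:H^i=0} is not needed for this lemma.
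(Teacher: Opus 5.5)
Your proof is correct and matches the paper's argument: an isomorphism $f$ in the groupoid ${}_A\calS_C$, with identity end components, is also an isomorphism in $\Kbf^{n+2}_{A,C}(\A)$, and applying $\overline{(\blank)}$ gives the required isomorphism in $\Kbff^{n+2}_{A,C}(H^0(\A))$. Your explicit handling of the inverse $g$ and its identity ends just spells out what the paper packs into the functoriality of $\overline{(\blank)}\colon\Kbf^{n+2}_{A,C}(\A)\to\Kbff^{n+2}_{A,C}(H^0(\A))$.
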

\begin{proof}
By $\dbr{X\up}=\dbr{Y\up}$, there exists $f\up\in Z^0(\Cbf^{n+2}(\A))(X\up,Y\up)$ such that $\ovl{f}^0=\id$ and $\ovl{f}^{n+1}=\id$, which is an isomorphism in ${}_A\calS_C\subset\Kbf^{1,n,1}_{A,C}(\A)$, hence also an isomorphism in $\Kbf_{A,C}^{n+2}(\A)$.
By the functoriality of $\overline{(\blank)}\colon\Kbf^{n+2}_{A,C}(\A)\to\Kbff^{n+2}_{A,C}(H^0(\A))$, the morphism $\ovl{f}\up\colon\ovl{X}\up\to\ovl{Y}\up$ becomes an isomorphism in $\Kbff^{n+2}_{A,C}(H^0(\A))$.
\end{proof}

\begin{dfn}\label{dfn:realization}
Let $A,C\in\ob(\A)$ be any pair of objects.
For each $\del\in\E(C,A)$, take any ${}_AX\up_C\in{}_A\calS_C$ satisfying $\del=\dbr{X\up}$, and put $\sfr(\del)=[X\up]\in\ob(\Kbff^{n+2}_{A,C}(H^0(\A)))/_{\cong}$.
This is well-defined by Lemma~\ref{lem:EquivtoEquiv}.
\end{dfn}

\begin{prop}\label{prp:realization}
Assume that Condition~\ref{condition:H^i=0} is satisfied.
Then $\sfr$ is an exact realization of $\E$, in the sense of \cite[Definition~2.22]{HLN1}.
\end{prop}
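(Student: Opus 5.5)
We have to check the axioms of an exact realization in \cite[Definition~2.22]{HLN1}. These are:
{\rm (R0)} any morphism of extensions $(\abf,\cbf)\co\del\to\rho$ can be lifted to a morphism between representatives of $\sfr(\del)$ and $\sfr(\rho)$ with end terms $\abf$ and $\cbf$;
{\rm (R1)} for any $\del=\dbr{X\up}$, the sequences $H^0(\A)(\blank,X^0)\to\cdots\to H^0(\A)(\blank,X^{n+1})\xrightarrow{\del\ssh}\E(\blank,A)\xrightarrow{(\ovl{d}_X^0)\sas}\E(\blank,X^1)$ and its dual are exact;
{\rm (R2)} $\sfr({}_A0_C)$ is the class of the trivial complex $A\xrightarrow{1}A\to0\to\cdots\to0\to C\xrightarrow{1}C$.
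Well-definedness is already Lemma~\ref{lem:EquivtoEquiv}.

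{\rm (R2)} is immediate, since for $n\ge2$ we have $\ovl{{}_AN\up_C}$ equal to exactly this complex, and ${}_A0_C=\dbr{{}_AN\up_C}$.

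For {\rm (R0)}, let $\del=\dbr{X\up}$, $\rho=\dbr{Y\up}$ and $\abf\sas\del=\cbf\uas\rho$. By {\rm ($n$-Ex2)} and {\rm ($n$-Ex2$\op$)} we take an $n$-pushout morphism $g\up\co X\up\to Z\up$ with $\ovl{g}^0=\abf$, $\ovl{g}^{n+1}=\id$, and an $n$-pullback morphism $h\up\co W\up\to Y\up$ with $\ovl{h}^0=\id$, $\ovl{h}^{n+1}=\cbf$. Since $\dbr{Z\up}=\dbr{W\up}$, there is an isomorphism $e\up\co Z\up\to W\up$ in ${}_{B}\calS_{C}$ with end terms the identities. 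Then $f\up=h\up\ci e\up\ci g\up$ is a closed morphism whose end terms, computed through the functors $s,t$, are $\abf$ and $\cbf$. Applying $\ovl{(\blank)}$ gives the required lift in $\Kbff^{n+2}(H^0(\A))$. This only needs Lemma~\ref{lem:4A}-type normalisations and the functoriality of $s,t$, not Condition~\ref{condition:H^i=0}.

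The substance is {\rm (R1)}; by duality it suffices to treat the contravariant sequence, evaluated at an arbitrary $B\in\ob(\A)$.

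Exactness from $H^0(\A)(B,X^0)$ up to $H^0(\A)(B,X^{n+1})$ is Proposition~\ref{prp:long_ex_seq}~{\rm (1)}. This is exactly where Condition~\ref{condition:H^i=0} enters, applied to the left $n$-exact $X\up$.

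At $H^0(\A)(B,X^{n+1})$ we must show that $\ovl{x}\uas\del=0$ if and only if $\ovl{x}$ factors through $\ovl{d}_X^n$.
\begin{itemize}
\item Suppose $\ovl{x}=\ovl{d}_X^n\ci\ovl{y}$. The $n$-pullback $W\up\to X\up$ along $x$ can be compared, via Proposition~\ref{prp:univ_npb}, with the morphism $\sig_{\ge n}$-type map from ${}_AN\up_B$ built from $y$ (as in Lemma~\ref{lem:NtoX}). This gives $\ovl{d}_W^n$ split epi, so $W\up$ is split by Proposition~\ref{prp:characterization_split_seq}.
\item Conversely, if $W\up$ is split, then $\ovl{d}_W^n$ has a section $\ovl{\sig}$. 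The pullback morphism gives $\ovl{x}=\ovl{h}^{n+1}=\ovl{h}^{n+1}\ci\ovl{d}_W^n\ci\ovl{\sig}=\ovl{d}_X^n\ci\ovl{h}^n\ci\ovl{\sig}$.
\end{itemize}

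At $\E(B,A)$, the inclusion ``image $\subset$ kernel'' follows because $(\ovl{d}_X^0)\sas\del=0$: the morphism $\id_{X\up}$ together with Lemma~\ref{lem:NtoX} gives a morphism of extensions $(\ovl{d}_X^0,0)$, i.e.\ one uses Corollary~\ref{cor:morph_in_S_to_morph_of_extension} with a morphism $X\up\to{}_{X^1}N\up_C$.

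The hard part, and the step I expect to be the main obstacle, is the reverse inclusion. Take $\rho=\dbr{V\up}\in\E(B,A)$ with $(\ovl{d}_X^0)\sas\rho=0$; we need $\ovl{x}$ with $\rho=\ovl{x}\uas\del$.
\begin{enumerate}
\item Form the $n$-pushout $V\up\to V\upp$ along $d_X^0$. Since $V\upp$ is split, $\ovl{d}_{V'}^0$ is a split mono.
\item Use this splitting to produce a closed morphism $\sig_{\le 1}$-level map $\sig_{\le 1}V\up\to\sig_{\le 1}X\up$ with first component the identity on $A$.
\item Extend it to a morphism $V\up\to X\up$ in $Z^0(\Cbf^{n+2}(\A))$. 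The plan is to use the right $n$-exactness of $V\up$ (dual of Lemma~\ref{lem:extend_morph}~{\rm (1)}) together with Proposition~\ref{prop:comparison_H}~{\rm (1)} to lift the $H^0$-level map.
\item Let $\ovl{x}$ be the last component. Corollary~\ref{cor:morph_in_S_to_morph_of_extension} then gives $\rho=\id\sas\rho=\ovl{x}\uas\del$.
\end{enumerate}
The difficulty is controlling the extension step so that the end term on $A$ remains the identity. If the direct extension fails, I would instead factor via Proposition~\ref{prop:pullpush_factorization}.
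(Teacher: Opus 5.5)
Your arguments for (R0), for (R2), and for everything (R1) actually demands are correct, and (R0) is essentially the paper's argument. However, you have misstated (R1). In \cite[Definition~2.22]{HLN1} it only requires $\langle\ovl{X}\up,\del\rangle$ to be an $n$-exangle, i.e.\ $(\ovl{d}_X^0)\sas\del=0$, $(\ovl{d}_X^n)\uas\del=0$, and exactness of the two sequences that \emph{end} at $\E(\blank,X^0)$ and at $\E(X^{n+1},\blank)$. Exactness at $\E(\blank,X^0)$, i.e.\ the continuation to $\E(\blank,X^1)$, is not part of the axiom; \cite{HLN1} derives it later from the axioms of an $n$-exangulated category. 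So the step you flag as the main obstacle is not needed.

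Of your discussion at $\E(B,A)$, only ``image $\subset$ kernel'' is required, and that is exactly $(\ovl{d}_X^0)\sas\del=0$. Your argument for it works once made concrete. The morphism $X\up\to{}_{X^1}N\up_0$ with components $d_X^{0,1}$ in degree $0$ and $\id_{X^1}$ in degree $1$ is closed. It lies in $\calS$ by Proposition~\ref{prop:any_split_belongs_to_S}, and Corollary~\ref{cor:morph_in_S_to_morph_of_extension} then gives $(\ovl{d}_X^0)\sas\del=0$. Should you want the extra exactness anyway, your step (3) has no control problem. Apply the dual of Lemma~\ref{lem:extend_morph}~(1) to the right $n$-exact $V\up$ and a closed morphism $V\up\to\sig_{\le1}X\up$ built from your $H^0$-factorization. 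This already returns an extension $f\up$ with $\ovl{f}^0=\id_A$, so Proposition~\ref{prop:comparison_H} is not needed.

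Where you genuinely differ from the paper is in proving $(\ovl{d}_X^n\ci\ovl{y})\uas\del=0$. The paper reduces by Yoneda to $(\ovl{d}_X^n)\uas\del=0$ and takes the $n$-pullback $g\up\co Z\up\to X\up$ along $d_X^{n,n+1}$. It uses Proposition~\ref{prop:pushout_inflation} to get $\CoCone((\sig_{\ge1}g\up)[1])\in\calS$. It then applies Proposition~\ref{prp:long_ex_seq} to this cocone to lift the diagonal $X^n\to X^n\oplus X^n$, which yields a section of $\ovl{d}_Z^n$; the condition $(\ovl{d}_X^0)\sas\del=0$ is left to duality.

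You instead map the split sequence ${}_AN\up_B$ into $X\up$ and invoke the universal property in Proposition~\ref{prp:univ_npb}. Concretely, take the morphism of Lemma~\ref{lem:NtoX}~(1), with $b=\id_B$ and $\phi_n=-y$; these satisfy the hypothesis and produce $e\up\co{}_AN\up_B\to W\up$ with $\ovl{e}^{n+1}=\id_B$. Closedness of $e\up$ then gives $\ovl{d}_W^n\ci\ovl{e}^n=\id_B$. Your route is shorter and more elementary: no cocone construction and no second use of the long exact sequence. It also handles both attachment conditions by the same direct device of mapping to or from a split sequence.
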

\begin{proof}
Let us confirm that $\sfr$ satisfies conditions {\rm (R0),(R1),(R2)} in \cite[Definition~2.22]{HLN1}.

{\rm (R0)} Let $\del=\dbr{X\up}\in\E(C,A)$ and $\rho=\dbr{Y\up}\in\E(B,D)$ be any pair of extensions.
Let $(\abf,\cbf)\colon\del\to\rho$ be any morphism of extensions.
By definition, $\abf\sas\del=\dbr{X\upp}$ and $\cbf\uas\rho=\dbr{Y\upp}$ are given by
an $n$-pushout morphism $f\up\colon X\up\to X\upp$ in $\Cbf^{n+2}(\A)$ such that $\ovl{f}^0=\abf$, and
an $n$-pullback morphism $g\up\colon Y\upp\to Y\up$ in $\Cbf^{n+2}(\A)$ such that $\ovl{g}^{n+1}=\cbf$.
By Lemma~\ref{lem:EquivtoEquiv}, the equality $\dbr{X\upp}=\dbr{Y\upp}$ implies $[X\upp]=[Y\upp]$ in $\ob(\Kbf^{n+2}_{B,C}(\A))/_{\cong}$, hence there exists a morphism $\al\up\colon \ovl{X'}\up\to\ovl{Y'}\up$ in $\Cbff^{n+2}_{B,C}(H^0(\A))$. Thus we have a morphism $\ovl{g}\up\ci\al\up\ci\ovl{f}\up$ in $\Cbff^{n+2}(H^0(\A))$, which gives a lift of $(\abf,\cbf)$.

{\rm (R1)} Let $X\up\in{}_A\calS_C$ be any object. By Proposition~\ref{prp:long_ex_seq},
\[ H^0\A(\blank,X^0)\xrightarrow{\ovl{d}_X^0\ci\blank}H^0\A(\blank,X^1)\xrightarrow{\ovl{d}_X^1\ci\blank}\cdots\to H^0\A(\blank,X^n)\xrightarrow{\ovl{d}_X^{n}\ci\blank} H^0\A(\blank,X^{n+1})
\]
is exact in $\Mod H^0\A$. 

Let us show that
\[ H^0\A(\blank,X^n)\xrightarrow{\ovl{d}_X^{n}\ci\blank} H^0\A(\blank,X^{n+1})\xrightarrow{\del\ssh}\E(\blank,X^0) \]
is exact for $\del=\dbr{X\up}$. Here $\del\ssh$ denotes the natural transformation corresponding to $\del$ by the Yoneda lemma. For any $C'\in\ob(\A)$, the map $(\del\ssh)_{C'}\co H^0\A(C',X^{n+1})\to \E(C',X^0)$ sends each $\cbf\in H^0\A(C',X^{n+1})$ to $\cbf\uas\dbr{X\up}\in\E(C',X^0)$.
By its definition, there exist $Y\up\in {}_A\calS_{C'}$ and an $n$-pullback morphism $f\up\co Y\up\to X\up$ such that $\ovl{f}^{n+1}=\cbf$, which gives $\cbf\uas\dbr{X\up}=\dbr{Y\up}$. If $\cbf\in\Ker((\del\ssh)_{C'})$, then $Y\up$ is a split sequence by Lemma~\ref{lem:Equivtozero}, hence $\ovl{d}_Y^n$ has a section $\sbf\in H^0\A(C',Y^n)$. Then $\ovl{f}^n\ci \sbf\in H^0\A(C',X^n)$ satisfies $\ovl{d}_X^n\ci(\ovl{f}^n\ci \sbf)=\cbf\ci \ovl{d}_Y^n\ci \sbf=\cbf$, which shows $\cbf\in\Ima((\ovl{d}_X^n\ci-))$.

To show $\Ima(\ovl{d}_X^n\ci-)\subset\Ker(\del\ssh)$, by the Yoneda lemma, it is enough to show $\dbr{(\ovl{d}_X^n)\uas X\up}=0$. Take ${}_AZ\up_{X^n}\in {}_A\calS_{X^n}$ and an $n$-pullback morphism $g\co Z\up\to X\up$ such that
$\ovl{g}^{n+1}=\ovl{d}_X^n$, so that $\dbr{Z\up}=\dbr{(\ovl{d}_X^n)\uas X\up}$ holds.
By Proposition~\ref{prop:pushout_inflation}, we see that $C\up=\CoCone(\sig_{\ge1}g\up[1])$ belongs to $\calS$, hence in particular it is left $n$-exact. Thus by Proposition~\ref{prp:long_ex_seq},
\[ H^0\A(-,C^{n-1})\xrightarrow{d_{\ovl{C}}^{n-1}\ci-}H^0\A(-,C^n)\xrightarrow{d_{\ovl{C}}^n\ci-}H^0\A(-,C^{n+1}) \]
becomes exact. This means that, especially,
\[
H^0\A(X^n,Z^n\oplus X^{n-1})\xrightarrow{d_{\ovl{C}}^{n-1}\ci-}H^0\A(X^n,X^n\oplus X^n)\xrightarrow{d_{\ovl{C}}^n\ci-}H^0\A(X^n,X^{n+1})
\]
is exact, where
\[ d_{\ovl{C}}^{n-1}=\left[\begin{array}{cc}
d_{\ovl{Z}}^n&0\\
\ovl{f}^n&-d_{\ovl{X}^{n-1}}
\end{array}\right]
\ ,\quad
d_{\ovl{C}}^n=\left[\begin{array}{cc}
\ovl{d}_X^n&-\ovl{d}_X^n
\end{array}\right].
\]
Since $\Delta=\begin{bmatrix}\id_X\\\id_X\end{bmatrix}\in H^0\A(X^n,X^n\oplus X^n)$ satisfies $d_{\ovl{C}}^n\ci\Delta=0$, there exists $\begin{bmatrix}\abf\\ \mathbf{b}\end{bmatrix}\in H^0\A(X^n,Z^n\oplus X^{n-1})$ such that $d_{\ovl{C}}^{n-1}\ci\begin{bmatrix}\abf\\ \mathbf{b}\end{bmatrix}=\Delta$. Thus $\abf\in H^0\A(X^n,Z^n)$ gives a section of $d_{\ovl{Z}}^n$, hence $\dbr{Z\up}=0$ holds by Lemma~\ref{lem:Equivtozero}.
Exactness of
\[ H^0\A(X^{n+1},\blank)\xrightarrow{\blank\ci \ovl{d}_X^n}H^0\A(X^n,\blank)\xrightarrow{\blank\ci \ovl{d}_X^{n-1}}\cdots
\xrightarrow{\blank\ci \ovl{d}_X^0} H^0\A(X^0,\blank)
\xrightarrow{\del\ush} \E(X^{n+1},\blank)
\]
can be shown in a dual manner, in which $\del\ush$ denotes the natural transformation corresponding to $\del\in \E(X^{n+1},X^0)$ by the Yoneda lemma.

{\rm (R2)} By Definition~\ref{dfn:splitN}, we have $\sfr({}_A0_0)=[A\xrightarrow{\id_A}A\to0\to\cdots\to0]$ and $\sfr({}_00_C)=[0\to\cdots\to0\to C\xrightarrow{\id_C}C]$ for all $A,C\in\ob(H^0(\A))$. 
\end{proof}

\begin{lem}\label{lem:nea-1}
Assume that Condition~\ref{condition:H^i=0} is satisfied.
Let $A,C\in\ob(\A)$, and let $\Ybf\up\in\Cbff^{n+2}_{A,C}(H^0(\A))$ be arbitrary. Suppose that $\ovl{X}\up\cong\Ybf\up$ holds in $\Kbff^{n+2}_{A,C}(H^0(\A))$ for some $X\up\in {}_A\calS_C$. Then, there exists $Y\upp\in\calS$ that satisfies $Y^{\prime0}=X^0$ in $\A$ and $\sig_{\ge1}\ovl{Y'}\up=\sig_{\ge1}\Ybf\up$ in $\Kbff(H^0(\A))$. In particular, it satisfies $\ovl{d}_{Y'}^n=d_{\Ybf}^n$ in $H^0(\A)$.
\end{lem}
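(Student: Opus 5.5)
Take $\Xbf\up:=\ovl{X}\up$ and an isomorphism $\mathbf{u}\upf\co\Xbf\up\to\Ybf\up$ in $\Kbff^{n+2}_{A,C}(H^0(\A))$. We may assume $\mathbf{u}^0=\id_A$ and $\mathbf{u}^{\,n+1}=\id_C$ in $H^0(\A)$. The plan is to keep $X^0$, replace $\sig_{\ge1}X\up$ by a lift of $\sig_{\ge1}\Ybf\up$, and then glue.

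First, apply Proposition~\ref{prop:comparison_H}~(2) to $\sig_{\ge1}\Ybf\up\in\Kbff_{[1,n+1]}(H^0(\A))$. This gives $Z\up\in\Cbf_{[1,n+1]}(\A)$ with $\ovl{Z}\up=\sig_{\ge1}\Ybf\up$ strictly. Next, consider $\sig_{\ge1}\mathbf{u}\upf\co\sig_{\ge1}\ovl{X}\up\to\ovl{Z}\up$. It is a closed morphism of complexes. I expect it to be an isomorphism in $\Kbff_{[1,n+1]}(H^0(\A))$: by the dual of Corollary~\ref{cor:devide_f}, since $\mathbf{u}^0=\id$ and $\mathbf{u}$ is a homotopy equivalence, the induced map on the $\sig_{\ge1}$-parts, viewed as cones of $X^0[-1]\to\sig_{\ge1}$, is an isomorphism in the triangulated category. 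Proposition~\ref{prop:comparison_H}~(1) then gives $f\up\in Z^0(\Cbf_{[1,n+1]}(\A))(\sig_{\ge1}X\up,Z\up)$ lifting it. The ``moreover'' part of that proposition makes $f\up$ an isomorphism in $\Kbf_{[1,n+1]}(\A)$, using that $\sig_{\le n}\sig_{\ge1}\mathbf{u}$ and $\mathbf{u}^{\,n+1}=\id$ are isomorphisms. If this homotopy-invariance of truncation is delicate, I would instead use Corollary~\ref{cor:devide_f}~(2) directly on the dg level: with $\ovl{u}^0=\id$, the cocones of $\sig_{\ge1}$ and of the whole map agree.

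Second, set $\al\up:=\al_{X,1}\up\co X^0[-1]\to\sig_{\ge1}X\up$ and define $Y\upp:=\Cone(f\up\ci\al\up)\in\Cbf^{n+2}(\A)$. Its terms are $Y^{\prime0}=X^0$ and $\sig_{\ge1}Y\upp=Z\up$. Hence $\sig_{\ge1}\ovl{Y'}\up=\sig_{\ge1}\Ybf\up$ holds strictly, and in particular $\ovl{d}_{Y'}^n=d_{\Ybf}^n$. By Corollary~\ref{cor:give_a_morph}, with $g\up=\id_{X^0}$, $e\up=f\up$ and $\varphi\up=0$ (the square commutes strictly), we obtain a closed morphism $h\up\co X\up\to Y\upp$ with $h^{0,0}=\id$ and $\sig_{\ge1}h\up=f\up$. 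This $h\up$ is an isomorphism in $\Kbf^{n+2}(\A)$ since both $g\up$ and $e\up$ are.

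Third, show $Y\upp\in\calS$. Since $Y\upp\cong X\up$ in $\Kbf^{n+2}(\A)$, Remark~\ref{rem:isom_n-ex} shows that $Y\upp$ is $n$-exact. Also $d_{Y'}^{0,1}=f^{1,1}\ci d_X^{0,1}$ plus possibly higher components. I would therefore apply Proposition~\ref{prop:closed_equiv}~(2), or Corollary~\ref{cor:htpy_modif2} with ($n$-Ex0). This needs $\ovl{h}^{n+1}=\id$, i.e.\ $\ovl{f}^{n+1}=\mathbf{u}^{\,n+1}=\id$, which is exactly guaranteed by the refined statement of Proposition~\ref{prop:comparison_H}~(1). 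Since $X^{n+1}=C=Y^{\prime n+1}$, $h\up$ is then an isomorphism in $\Kbf^{1,n,1}(\A)$, and $Y\upp\in\calS$ by ($n$-Ex0).

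The main obstacle I expect is the first step: producing a genuine homotopy equivalence on $\sig_{\ge1}$ from $\mathbf{u}$ while keeping $\ovl{f}^{n+1}=\id$ on the nose in $H^0(\A)$. Truncations are not homotopy invariant (Remark~\ref{rem:non-functorial1}), so this likely requires first modifying the representative of $\mathbf{u}$ so that it is strictly the identity in degree $0$. This is possible in $H^0(\A)$ by the argument of Lemma~\ref{lem:4A} and Corollary~\ref{cor:htpy_modif2} in the ordinary case.
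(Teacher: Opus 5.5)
Your proposal is correct. Steps two and three coincide with the paper's proof: lift via Proposition~\ref{prop:comparison_H}, form $Y\upp=\Cone(f\up\ci\al_{X,1}\up)$, get $h\up$ from Corollary~\ref{cor:give_a_morph} with $\varphi\up=0$, and conclude with Corollary~\ref{cor:htpy_modif2} and ($n$-Ex0). The genuine difference is how you show that the truncated comparison map is invertible.

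\textbf{The paper's route.} It proves by hand that $\sig_{[1,n]}\mathbf{u}\upf$ is an isomorphism in $\Kbff_{[1,n]}(H^0(\A))$. It uses the exactness of $H^0\A(\blank,X^0)\to H^0\A(\blank,X^1)\to H^0\A(\blank,X^2)$ and of its dual, which comes from Proposition~\ref{prp:long_ex_seq} and hence from $X\up\in\calS$ and Condition~\ref{condition:H^i=0}. It transports this exactness to $\Ybf\up$ and modifies the homotopies so that $\varphi^1=0=\varphi^{n+1}$, after which they restrict to $[1,n]$. It also treats $n=1$ separately.

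\textbf{Your route.} You keep the single closed representative $\mathbf{u}\upf$, which already satisfies $\mathbf{u}^0=\id_A$ and $\mathbf{u}^{\,n+1}=\id_C$ on the nose by the definition of $\Kbff^{n+2}_{A,C}$. You then use that $(\ref{triangle_sigma})$ is a morphism of distinguished triangles in $\Kbff(H^0(\A))$. For $k=1$ the maps $\id_A$ and $\mathbf{u}\upf$ are isomorphisms, so $\sig_{\ge1}\mathbf{u}\upf$ is one. This is Corollary~\ref{cor:devide_f}~(2) itself applied to the dg-category $H^0(\A)$, not its dual. The argument is valid, shorter, and uniform in $n$. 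At this point it needs neither Proposition~\ref{prp:long_ex_seq} nor the $n$-exactness of $X\up$; the vanishing condition enters only through Proposition~\ref{prop:comparison_H}.

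\textbf{A step you skip.} The ``moreover'' part of Proposition~\ref{prop:comparison_H}~(1) needs $\sig_{\le n}(\sig_{\ge1}\mathbf{u}\upf)=\sig_{[1,n]}\mathbf{u}\upf$ to be an isomorphism, and you assert this without argument. It follows from the same two-out-of-three, applied to the triangle of the closed map $\sig_{\ge1}\mathbf{u}\upf$ at $k=n+1$ and using $\mathbf{u}^{\,n+1}=\id_C$. This is where the dual statement enters.

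\textbf{The obstacle in your last paragraph is not real.} No modification of $\mathbf{u}\upf$ is needed. The failure of truncations to respect homotopy (Remark~\ref{rem:non-functorial1}) does not matter once one closed representative with strict identities at both ends is fixed. You never truncate an inverse or a homotopy, which is exactly what the paper's homotopy-modification argument has to deal with.
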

\begin{proof}
Since $\ovl{X}\up\cong\Ybf\up$ holds in $\Kbff^{n+2}_{A,C}(H^0(\A))$, there exist
$\mathbf{f}\upf\in Z^0(\Cbff^{n+2}(H^0(\A)))(\ovl{X}\up,\Ybf\up)$ and $\mathbf{f}\uppf\in Z^0(\Cbff^{n+2}(H^0(\A)))(\Ybf\up,\ovl{X}\up)$ such that $\mathbf{f}\uppf\ci\mathbf{f}\upf=\id_{\Ybf\up}$, $\mathbf{f}\upf\ci\mathbf{f}\uppf=\id_{\ovl{X}\up}$in $\Kbff^{n+2}(H^0(\A))$ and  $\mathbf{f}^{\, 0}=\mathbf{f}^{\, \prime 0}=\id_A$, $\mathbf{f}^{\, n+1}=\mathbf{f}^{\, \prime n+1}=\id_C$.
First we show that $\sig_{[1,n]}\mathbf{f}\upf$ becomes an isomorphism in $\Kbff_{[1,n]}(H^0(\A))$. 

By Proposition~\ref{prp:long_ex_seq} and its dual,
\begin{equation}\label{ex_1}
H^0\A(\blank,X^0)
\xrightarrow{\ovl{d}_X^0\ci\blank}H^0\A(\blank,X^1)
\xrightarrow{\ovl{d}_X^1\ci\blank}H^0\A(\blank,X^2)
\end{equation}
and
\begin{equation}\label{ex_2}
H^0\A(X^{n+1},\blank)
\xrightarrow{\blank\ci\ovl{d}_X^n}H^0\A(X^n,\blank)
\xrightarrow{\blank\ci\ovl{d}_X^{n-1}}H^0\A(X^{n-1},\blank)
\end{equation}
are exact. Since $\Ybf\up$ is isomorphic to $\ovl{X}\up$ in $\Kbff(H^0(\A))$, it follows that the complex
\begin{equation}\label{ex_3}
H^0\A(\blank,\Ybf^0)
\xrightarrow{d_{\Ybf}^0\ci\blank}H^0\A(\blank,\Ybf^1)
\xrightarrow{d_{\Ybf}^1\ci\blank}H^0\A(\blank,\Ybf^2)
\end{equation}
is homotopy equivalent to $(\ref{ex_1})$, hence $(\ref{ex_3})$ becomes also exact. Similarly,
\begin{equation}\label{ex_4}
H^0\A(\Ybf^{n+1},\blank)
\xrightarrow{\blank\ci d_{\Ybf}^n}H^0\A(\Ybf^n,\blank)
\xrightarrow{\blank\ci d_{\Ybf}^{n-1}}H^0\A(\Ybf^{n-1},\blank)
\end{equation}
is exact. 
As before, we treat the case $n\ge 2$.
(When $n=1$, then $\mathbf{f}^{\, 1}$ becomes an isomorphism in $H^0(\A)$ by \cite[Lemma~4.1]{HLN1}.) 
Since $\mathbf{f}\uppf\ci\mathbf{f}\upf=\id_{\ovl{X}\up}$ in $\Kbf(\A)$, there exists a homotopy from $\mathbf{f}\uppf\ci\mathbf{f}\upf$ to  $\id_{\ovl{X}\up}$, that is, a morphism $\varphi\up\in\Cbff^{n+2}(H^0(\A))(\ovl{X}\up,\ovl{X}\up)^{-1}$ such that 
\begin{equation}\label{htpy_phi}
\id_{\ovl{X}\up}-\mathbf{f}\uppf\ci\mathbf{f}\upf=d\varphi\up.
\end{equation}
Let us show that $\varphi\up$ can be modified to satisfy $\varphi^1=0$ and $\varphi^{n+1}=0$, in a similar argument to that of Proposition~\ref{prp:htpy_modif}. By $(\ref{htpy_phi})$, especially we have $\varphi^1\ci\ovl{d}_X^0=0$ and $\ovl{d}_X^n\ci\varphi^{n+1}=0$. By the exactness of $(\ref{ex_1})$ and $(\ref{ex_2})$, there are
$\xi^1\in H^0(\A)(X^2,X^0)$ and $\xi^{n+1}\in H^0(\A)(X^{n+1},X^{n-1})$ such that $\varphi^1=\xi^1\ci\ovl{d}_X^1$, $\varphi^{n+1}=\ovl{d}_X^{n-1}\ci\xi^{n+1}$ in $H^0(\A)$.
Define $\varphi\upp\in\Cbff^{n+2}(H^0(\A))(\ovl{X}\up,\ovl{X}\up)^{-1}$ by
\[
\varphi^{\prime i}=\begin{cases}
\varphi^2+\ovl{d}_X^0\ci\xi^1&\text{if}\ i=2\\
\varphi^i&\text{if}\ 2<i<n\\
\varphi^n+\xi^{n+1}\ci\ovl{d}_X^n&\text{if}\ i=n\\
0&\text{otherwise}
\end{cases}
\]
if $n>2$, and 
\[
\varphi^{\prime i}=\begin{cases}
\varphi^2+\ovl{d}_X^0\ci\xi^1+\xi^3\ci\ovl{d}_X^2&\text{if}\ i=2\\
0&\text{otherwise}
\end{cases}
\]
if $n=2$.
Then, $\varphi\upp$ is a homotopy from $\mathbf{f}\uppf\ci\mathbf{f}\upf$ to $\id_{\ovl{X}\up}$ satisfying $\varphi^{\prime 1}=0$ and $\varphi^{\prime n+1}=0$. 
We see that $\sig_{[1,n]}\varphi\upp$ gives a homotopy from $(\sig_{[1,n]}\mathbf{f}\uppf)\ci(\sig_{[1,n]}\mathbf{f}\upf)$ to $\id_{\sig_{[1,n]}\ovl{X}\up}$.
Similarly, by using the exactness of $(\ref{ex_3})$ and $(\ref{ex_4})$, we can obtain a homotopy 
from $(\sig_{[1,n]}\mathbf{f}\upf)\ci(\sig_{[1,n]}\mathbf{f}\upp)$ to $\id_{\sig_{[1,n]}\Ybf\up}$. This means that $\sig_{[1,n]}\mathbf{f}\upf$ is an isomorphism in $\Kbff_{[1,n]}(H^0(\A))$. 

By Proposition~\ref{prop:comparison_H} {\rm (2)}, there exists $Y\up\in\Cbf^{n+2}(\A)$ such that $\ovl{Y}\up=\Ybf\up$. Then, by Proposition~\ref{prop:comparison_H} {\rm (1)}, there exists $g\up\in Z^0(\Cbf_{[1,n+1]}(\A))(\sig_{\ge 1}X\up,\sig_{\ge 1}Y\up)$ such that $\ovl{g}\up=\sig_{\ge 1}\mathbf{f}\upf$ in $\Kbff^{n,1}(H^0(\A))$ and $\ovl{g}^{n+1}=\mathbf{f}^{\, n+1}=\id_C$ in $H^0(\A)$. Moreover, since  
$\sig_{[1,n]}\mathbf{f}\upf=\sig_{\le n}(\sig_{\ge 1}\mathbf{f}\upf)$ is an isomorphism in $\Kbff_{[1,n]}(H^0(\A))$, we may assume that $g\up$ is an isomorphism in $\Kbf_{[1,n+1]}(\A)$ by the same proposition.
Let $X^0[-1]\xrightarrow{\al_{X,1}\up}\sig_{\ge 1}X\up\xrightarrow{\be_{X,1}\up}X\up$
be the sequence of morphisms in $\Cbf(\A)$ as in Proposition~\ref{prop:alphabetagamma}.
If we define $Y\upp\in\Cbf^{n+2}(\A)$ to be the cone of $h\up=g\up\ci\al_{X,1}\up\co X^0[-1]\to\sig_{\ge 1}Y\up$ in $\Cbf(\A)$, then it 
satisfies $\sig_{\ge 1}Y\upp=\sig_{\ge 1}Y\up$, $\sig_{\le 0}Y\upp=X^0[-1]$ and $h\up=\al_{Y',1}$ in $\Cbf(\A)$, up to the identification via the canonical isomorphisms $Y^{\prime i}\oplus 0\cong Y^{\prime i}\cong 0\oplus Y^{\prime i}$ in $Z^0(\A)$ for all $i\in\bbZ$. 

It remains to show that $Y\upp$ belongs to $\calS$. 
Since
\[
\begin{tikzcd}
X^0[-1] & \sig_{\ge 1}X\up\\
Y^{\prime 0}[-1] & \sig_{\ge 1}Y\upp
\Ar{1-1}{1-2}{"\al_{X,1}\up"}
\Ar{1-1}{2-1}{equal}
\Ar{1-2}{2-2}{"g\up"}
\Ar{2-1}{2-2}{"\al_{Y',1}\up"'}
\end{tikzcd}
\]
is commutative,
we obtain $f\upp\in Z^0(\Cbf(\A))(X\up,Y\upp)$ such that $\sig_{\ge 1}f\upp=g\up$ in $\Cbf(\A)$ and $f^{\prime0,0}=\id_A$ by Corollary~\ref{cor:give_a_morph}.
Since $g\up$ is an isomorphism in $\Kbf(\A)$, so is $f\upp$ by the same corollary. Then, since $X\up$ is $n$-exact and since $\ovl{f'}^0=\id_A$, $\ovl{f'}^{n+1}=\ovl{g}^{n+1}=\id_C$ are satisfied in $H^0(\A)$, it follows that $f\upp$ is an isomorphism in $\Kbf^{1,n,1}(\A)$ by Corollary~\ref{cor:htpy_modif2}. Thus $Y\upp$ belongs to $\calS$ by {\rm ($n$-Ex0)}, since $X\up\in\calS$.
\end{proof}

\normalcolor

\begin{thm}\label{thm:H0_of_n-ex-dg}
Let $(\A,\calS)$ be an $n$-exact dg-category with skeletally small $H^0(\A)$.
Under Condition~\ref{condition:H^i=0}, the triplet $(H^0(\A),\E,\sfr)$ is an $n$-exangulated category defined in \cite[Definition~2.32]{HLN1}.
Here, $\E$ is the biadditive functor obtained in Proposition~\ref{prp:biadditive_functor_E}, and $\sfr$ is its exact realization obtained in Proposition~\ref{prp:realization}.
\end{thm}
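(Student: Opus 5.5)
We check the axioms (EA1), (EA2), (EA2$\op$) of \cite[Definition~2.32]{HLN1}. Biadditivity of $\E$ is Proposition~\ref{prp:biadditive_functor_E}, and $\sfr$ is an exact realization by Proposition~\ref{prp:realization}. Both use Condition~\ref{condition:H^i=0} through Proposition~\ref{prp:long_ex_seq}. In the realization, a sequence $\Ybf\up$ with $[\Ybf\up]=\sfr(\del)$ is an $\sfr$-distinguished $n$-exangle, so its end terms are $\sfr$-inflations and $\sfr$-deflations. The key tool is Lemma~\ref{lem:nea-1}: it lifts any complex $\Ybf\up$ in $H^0(\A)$ that is homotopy equivalent (relative to the ends) to some $\ovl{X}\up$ with $X\up\in\calS$ to an admissible $Y\upp\in\calS$ with $\ovl{d}_{Y'}^n=d_{\Ybf}^n$. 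Together with Remark~\ref{rem:strict_npo}~(2), this says that $\ovl{d}$ is an $\sfr$-deflation iff any representative $d\in Z^0(\A)$ is an $\calS$-deflation. The same holds for inflations.

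For (EA1), take composable $\sfr$-deflations $\ovl{d},\ovl{d'}$. By the previous paragraph, their representatives are $\calS$-deflations. By ($n$-Ex1), the composite $d\ci d'$ is an $\calS$-deflation, so it is $d_Z^{n,n+1}$ for some $Z\up\in\calS$, and $\sfr\dbr{Z\up}$ exhibits $\ovl{d}\ci\ovl{d'}$ as an $\sfr$-deflation. Inflations are handled dually.

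For (EA2$\op$), let $\del=\dbr{X\up}\in\E(C,A)$ and $\cbf=\ovl{c}\in H^0(\A)(C',C)$. By Remark~\ref{rem:strict_npo}, choose an $n$-pullback morphism $f\up\co W\up\to X\up$ with $W\up\in\calS$, $f^{0,0}=\id$ and $f^{n+1,n+1}=c$, so that $c\uas\del=\dbr{W\up}$. By Proposition~\ref{prop:pushout_inflation}, $C\up=\CoCone((\sig_{\ge1}f\up)[1])\in\calS$, with $C^0=A$ and $C^{n+1}=C$. The required good lift is $\ovl{f}\up\co\ovl{W}\up\to\ovl{X}\up$. Its mapping cone, in the sense of \cite[Definition~2.27]{HLN1}, is exactly $\ovl{(\sig_{\ge1}C\up)}$ up to the sign conventions of Definition~\ref{dfn:cone}, because $\ovl{(\blank)}$ commutes strictly with shifts, cones and truncations. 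Since $C\up$ is admissible, $\sfr\dbr{C\up}$ realizes $(d_W^{0,1})\sas\del$ after identifying the ends, as required.

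The main obstacle is this last identification: the definition of $n$-exangulated categories asks that $\langle M_f,(\ovl{d}_W^0)\sas\del\rangle$ be distinguished. We expect to prove $\dbr{C\up}=(\ovl{d}_W^0)\sas\del$ directly. The plan is to construct an explicit closed morphism $X\up\to C\up$ that is the identity on $\sig_{\ge1}$-components and has $0$-th term $d_W^{0,1}$. This morphism is $n$-pushout by the dual of Proposition~\ref{prop:pushout_inflation}, since $\ovl{(\cdot)}^{n+1}=\id$ and both ends lie in $\calS$. Corollary~\ref{cor:morph_in_S_to_morph_of_extension} then gives the equality. Sign bookkeeping in the cocone is the likely source of difficulty. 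Finally, any other representative of the class in $\Kbff_{A,C}^{n+2}$ is handled by Lemma~\ref{lem:EquivtoEquiv}, and (EA2) follows dually.
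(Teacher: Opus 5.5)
Your proposal follows the paper's proof. (EA1) goes through Lemma~\ref{lem:nea-1}, Remark~\ref{rem:strict_npo}~(2) and ($n$-Ex1), and the cone axiom goes through Proposition~\ref{prop:pushout_inflation}, an explicit closed morphism between the admissible sequence and the (co)cone, and Corollary~\ref{cor:morph_in_S_to_morph_of_extension}. The only real difference is that you treat the pullback version, which is \cite{HLN1}'s (EA2) rather than (EA2$\op$), while the paper treats the dual pushout version using $g\up\co\Cone((\sig_{\le n}f\up)[-1])\to X\up$.

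Fix these slips:
\begin{itemize}
\item $C^0=W^1$, not $A$.
\item The mapping cone of $\ovl{f}\up$ corresponds to $\ovl{C}\up$, not $\ovl{(\sig_{\ge1}C\up)}$.
\item The map $X\up\to C\up$ is the inclusion into the $X^i$-summands, plus higher correction terms.
\item The reduction to the specific representatives $\ovl{W}\up,\ovl{X}\up$ comes from \cite[Remark~2.33]{HLN1}, not from Lemma~\ref{lem:EquivtoEquiv}.
\end{itemize}
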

\begin{proof}
It remains to check the conditions {\rm (EA1),(EA2),(EA2$\op$)} listed in \cite[Definition~2.32]{HLN1}.

By Lemma~\ref{lem:nea-1}, a morphism $\mathbf{d}\in H^0(\A)(A,B)$ is a deflation in $(H^0(\A),\E,\sfr)$ if and only if there exists a deflation $d\in Z^0(\A)(A,B)$ in the $n$-exact dg category $(\A,\calS)$ such that $\mathbf{d}=\ovl{d}$. Similarly for inflations. Thus, {\rm (EA1)} follows immediately from {\rm ($n$-Ex1)} in Definition~\ref{dfn:n-exact-dg}.

As {\rm (EA2)} can be shown dually, let us only show {\rm (EA2$\op$)}. Take arbitrary $\del\in\E(C,A), a\in Z^0(\A)(A,B)$, and let ${}_A\dtri{\Xbf\up}{\del}_C$, ${}_B\dtri{\Ybf\up}{a\sas\del}_C$ be any pair of distinguished $n$-exangles.
By the dual of \cite[Remark~2.33 {\rm (2)}]{HLN1}, we may assume that
there exist $X\up,Y\up\in\calS$ which give $\del=\dbr{X\up}$, $a\sas\del=\dbr{Y\up}$ and $\Xbf\up=\ovl{X}\up$, $\Ybf\up=\ovl{Y}\up$  from the beginning.

In particular, by $a\sas\del=\dbr{Y\up}$ and by Remark~\ref{rem:strict_npo} {\rm (1)}, there exists an $n$-pushout morphism $f\up\colon X\up\to Y\up$ in $\Cbf(\A)$ such that $f^{0,0}=a$, $f^{n+1,n+1}=\id_C$ in $H^0(\A)$.
By the dual of Proposition~\ref{prop:pushout_inflation}, $C\up=\Cone ((\sigma_{\leq n}f\up)[-1])$ belongs to $\calS$.
If we define a morphism $g\up\co C\up\to X\up$ by
\[
g^{i,j}=\begin{cases}
\id & \text{if}\ (i,j)=(0,0)\\
[\id\ 0] & \text{if}\ 1\le i=j\le n\\
d_Y^{n,n+1} & \text{if}\ (i,j)=(n+1,n+1)\\
[-f^{i,n+1}\ d_Y^{i-1,n+1}] & \text{if}\ j=n+1\text{and}\ 1\le i\le n\\
-f^{0,n+1} & \text{if}\ (i,j)=(0,n+1)\\
0 & \text{otherwise}
\end{cases}
\]
then we can check directly that $g\up\in Z^0(\Cbf(\A))(C\up,X\up)$ holds. 
By Corollary~\ref{cor:morph_in_S_to_morph_of_extension}, this implies $(\ovl{d}_Y^n)\uas\del=\dbr{C\up}$ in $\bbE(Y^n,A)$.
Thus $\dtri{\ovl{C}\up}{(\ovl{d}_Y^n)\uas\del}$ becomes a distinguished $n$-exangle, which shows that $(H^0(\A),\E,\sfr)$ satisfies {\rm (EA2$\op$)}.
\end{proof}

\begin{dfn}\label{dfn:ext_closed_sub}
Let $(\A,\calS)$ be as before, and let $\B\subset\A$ be a strictly additive full dg-subcategory to which $0\in\ob(\A)$ belongs. We say that $\B$ is an \emph{$n$-extension-closed dg-subcategory} of $\A$ if for any $X\up\in\calS$ with $X^0,X^{n+1}\in\ob(\B)$, there exist $Y\up\in\Cbf^{n+2}(\B)$ and a morphism $f\up\in Z^0(\Cbf^{n+2}(\A))(X\up,Y\up)$ such that $f\up$ is an isomorphism in $\Kbf^{n+2}(\A)$ and $\ovl{f}^0=\id$, $\ovl{f}^{n+1}=\id$ in $H^0(\A)$.

We remark that such $f\up$ is also an isomorphism in $\Kbf^{1,n,1}(\A)$ by Corollary~\ref{cor:htpy_modif2}, and hence $Y\up\in\calS$ automatically.
\end{dfn}

\begin{lem}\label{lem:ext_closed_sub}
Assume that Condition~\ref{condition:H^i=0} is satisfied. Let $(\A,\calS)$ be as before, and assume that $\B\subset\A$ is an $n$-extension-closed dg-subcategory. Then, in the $n$-exangulated category $(H^0(\A),\E,\sfr)$ obtained in Theorem~\ref{thm:H0_of_n-ex-dg}, the additive full subcategory $H^0(\B)\subset H^0(\A)$ is \emph{extension-closed} in the sense of \cite[Definition~4.1]{HLN2} (also called \emph{$n$-extension closed} in \cite[Definition~1.18]{Kla}).
\end{lem}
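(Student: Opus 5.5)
Recall the definition from \cite[Definition~4.1]{HLN2}: a full additive subcategory $\calB'\subset H^0(\A)$ (closed under isomorphisms, or up to replacing by its isomorphism closure) is extension-closed if for every $\del\in\E(C,A)$ with $A,C\in\calB'$ there exists a distinguished $n$-exangle $\dtri{\Ybf\up}{\del}$ realizing $\del$ with all terms $\Ybf^i\in\calB'$. So the plan is simply to translate the dg-level condition of Definition~\ref{dfn:ext_closed_sub} through the realization $\sfr$ of Definition~\ref{dfn:realization}.

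First, take $A,C\in\ob(\B)$ and any $\del\in\E(C,A)$. By Definition~\ref{dfn:equivalence_of_sequences} there is ${}_AX\up_C\in{}_A\calS_C$ with $\del=\dbr{X\up}$. Since $X^0=A$, $X^{n+1}=C$ lie in $\ob(\B)$, the $n$-extension-closedness of $\B$ provides $Y\up\in\Cbf^{n+2}(\B)$ and $f\up\in Z^0(\Cbf^{n+2}(\A))(X\up,Y\up)$ which is an isomorphism in $\Kbf^{n+2}(\A)$ with $\ovl{f}^0=\id$, $\ovl{f}^{n+1}=\id$. In particular $Y^0=A$, $Y^{n+1}=C$ (as $f\up$ is a morphism with identity end-components, the end terms coincide; if needed one uses that $\ovl{f}^0=\id$ forces $Y^0=X^0$ as objects since it is an identity morphism). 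By the remark after Definition~\ref{dfn:ext_closed_sub} (i.e.\ Corollary~\ref{cor:htpy_modif2} together with ($n$-Ex0)), $f\up$ is an isomorphism in $\Kbf^{1,n,1}(\A)$ with $s(f\up)=\id$, $t(f\up)=\id$, so $Y\up\in{}_A\calS_C$ and $\dbr{Y\up}=\dbr{X\up}=\del$.

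Then $\sfr(\del)=[\ovl{Y}\up]$, and $\ovl{Y}\up$ is a complex in $H^0(\A)$ all of whose terms $Y^i$ lie in $\ob(\B)$, hence in $H^0(\B)$. Thus $\dtri{\ovl{Y}\up}{\del}$ is a distinguished $n$-exangle in $(H^0(\A),\E,\sfr)$ realizing $\del$ with all terms in $H^0(\B)$, which is exactly extension-closedness. Additivity of $H^0(\B)$ follows from strict additivity of $\B$, and $0\in H^0(\B)$ by assumption; if the definition in \cite{HLN2} demands closure under isomorphisms, we replace $H^0(\B)$ by its isomorphism closure, which does not affect the argument since $\E$ and $\sfr$ are invariant under isomorphisms in $H^0(\A)$.

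The only delicate point is bookkeeping: checking that the definition of extension-closedness in \cite{HLN2}/\cite{Kla} asks only for \emph{some} realizing $n$-exangle with terms in the subcategory (rather than all), and that Condition~\ref{condition:H^i=0} is used only to have the $n$-exangulated structure of Theorem~\ref{thm:H0_of_n-ex-dg} available; no further argument is needed.
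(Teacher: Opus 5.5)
Your proof is correct and is the paper's argument. You represent $\del$ by some $X\up\in{}_A\calS_C$, use $n$-extension-closedness together with Corollary~\ref{cor:htpy_modif2} and ($n$-Ex0) to replace it by an object of ${}_A\calS_C$ with all terms in $\B$ and the same class, and then read off $\sfr(\del)$ from Definition~\ref{dfn:realization}; the only difference is that you spell out the step the paper compresses into ``there exists $X\up\in\calS_{\B}$ with $\del=\dbr{X\up}$''.
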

\begin{proof}
Since $\B\subset\A$ is $n$-extension-closed, for any $A,C\in\ob(\B)$ and any $\del\in\E(C,A)$ there exists $X\up\in\calS_{\B}$ such that $\del=\dbr{X\up}$ in $\E(C,A)$. We have $\sfr(\del)=[X\up]$ by the definition of $\sfr$ in Definition~\ref{dfn:realization}, which means that $H^0(\B)\subset H^0(\A)$ is extension-closed.
\end{proof}

\begin{prop}\label{prop:ext_closed_sub}
Assume that Condition~\ref{condition:H^i=0} is satisfied. Let $(\A,\calS)$ be as before, and assume that $\B\subset\A$ is an $n$-extension-closed dg-subcategory. If we define $\calS_{\B}\subset\Kbf^{1,n,1}(\B)$ to be the full subcategory consisting of all the objects $X\up$ such that $X\up\in\calS$, then $(\B,\calS_{\B})$ is also an $n$-exact dg-category.
\end{prop}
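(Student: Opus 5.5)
We verify ($n$-Ex0), ($n$-Ex1), ($n$-Ex2), ($n$-Ex2$\op$) for $(\B,\calS_{\B})$ in that order. The main bookkeeping fact is that $\Cbf(\B)\subset\Cbf(\A)$ is a full dg-subcategory, so $\Kbf(\B)\subset\Kbf(\A)$ and $\Kbf^{1,n,1}(\B)\subset\Kbf^{1,n,1}(\A)$ are full subcategories. Hence isomorphisms in $\Kbf^{1,n,1}(\B)$ are isomorphisms in $\Kbf^{1,n,1}(\A)$. We also need that $n$-exactness of an object of $\Cbf^{n+2}(\B)$ is the same whether tested in $\B$ or in $\A$. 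This is the first point to check, since Definition~\ref{dfn:n-(co)-kernel} quantifies over test objects $A\in\ob(\A)$ versus $A\in\ob(\B)$. If $n$-exactness over $\A$ is meant, then $\calS_{\B}$ consists of $n$-exact sequences in $\B$ automatically, because testing against fewer objects is weaker. So we interpret $\calS_{\B}$ as the objects of $\Cbf^{n+2}(\B)$ lying in $\calS$; these are $n$-exact in $\B$.

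($n$-Ex0) follows from the fullness just noted together with ($n$-Ex0) for $\calS$. The required split sequence ${}_0N_0$ lies in $\Cbf^{n+2}(\B)$ since $0\in\ob(\B)$, and it belongs to $\calS$ by Proposition~\ref{prop:any_split_belongs_to_S}.

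For ($n$-Ex1), let $d\in Z^0(\B)(B_1,B_2)$ and $d'\in Z^0(\B)(B_2,B_3)$ be $\calS_{\B}$-deflations. Then they are $\calS$-deflations, so $d'\ci d=d_X^{n,n+1}$ for some $X\up\in\calS$. We cannot directly conclude $X\up\in\Cbf^{n+2}(\B)$, because only $X^n$ and $X^{n+1}$ lie in $\B$. The plan is to first replace $X\up$ by a sequence whose end-terms lie in $\B$. By Proposition~\ref{prop:S_is_closed_by_oplus} and the factorization trick, consider $X\up$ together with the left $n$-exact sequences for $d$ and $d'$ in $\calS_{\B}$. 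Use the obstacle-avoiding route below: the $n$-kernel $X^0$ of $d'\ci d$ is determined by an extension argument in $H^0(\A)$.

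Here Condition~\ref{condition:H^i=0} and Lemma~\ref{lem:ext_closed_sub} become essential. In the $n$-exangulated category $H^0(\A)$, the subcategory $H^0(\B)$ is $n$-extension-closed. By \cite{Kla} it therefore inherits an $n$-exangulated structure, whose deflations are closed under composition. Given that structure, pick $\del\in\E(B_1\text{-side})$ realizing $\ovl{d'\ci d}$ with end-terms in $\B$. Lift it via $n$-extension-closedness to some $Y\up\in\Cbf^{n+2}(\B)\cap\calS$ with $\ovl{d}_Y^n=\ovl{d'\ci d}$, using Lemma~\ref{lem:nea-1}. Finally, Remark~\ref{rem:strict_npo}(2), or rather Corollary~\ref{cor:inflation_replace} applied within $\B$, replaces $d_Y^{n,n+1}$ by $d'\ci d$ strictly. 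Inflations are handled dually. This is the main obstacle, and the step where we rely most heavily on \cite{Kla}. If that reliance is undesirable, the alternative is to mimic the proof of ($n$-Ex2$\op$) below.

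For ($n$-Ex2$\op$), take $Y\up\in\calS_{\B}$ and $c\in Z^0(\B)(C,Y^{n+1})$. Apply ($n$-Ex2$\op$) in $\A$ to obtain an $n$-pullback morphism $f\up\co X\up\to Y\up$ with $X\up\in\calS$ and end-terms $Y^0,C\in\ob(\B)$. By Definition~\ref{dfn:ext_closed_sub}, there is $e\up\co X\up\to X\upp$ with $X\upp\in\Cbf^{n+2}(\B)\cap\calS$. This $e\up$ is an isomorphism in $\Kbf^{1,n,1}(\A)$ with trivial ends, by Corollary~\ref{cor:htpy_modif2}. Let $u\up$ be its inverse, also with trivial ends. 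Then $f\up\ci u\up\co X\upp\to Y\up$ lies in $\Cbf^{n+2}(\B)$ and satisfies $\ovl{f\ci u}^0=\id$ and $\ovl{f\ci u}^{n+1}=\ovl{c}$. It is an $n$-pullback morphism in $\A$ by Proposition~\ref{prop:pushout_inflation}. Its cocone lies in $\Cbf(\B)$, and its vanishing against $\B$-objects follows from vanishing against $\A$-objects, so it is also $n$-pullback in $\B$. ($n$-Ex2) is dual.
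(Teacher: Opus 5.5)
Your proposal is correct and follows essentially the paper's own proof. ($n$-Ex0) uses fullness of $\Kbf^{1,n,1}(\B)\subset\Kbf^{1,n,1}(\A)$ together with Proposition~\ref{prop:any_split_belongs_to_S}. ($n$-Ex1) goes through the $n$-exangulated structure induced on $H^0(\B)$ by \cite{Kla}, then Lemma~\ref{lem:nea-1}, then a strict replacement of the deflation by $d'\ci d$. ($n$-Ex2) and ($n$-Ex2$\op$) transport the $n$-pushout or $n$-pullback morphism along the isomorphism supplied by $n$-extension-closedness.

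You should drop the exploratory remarks at the start of your ($n$-Ex1) paragraph and the suggested fallback of mimicking ($n$-Ex2$\op$). That fallback does not go through as stated: the term $X^0$ of an $\calS$-sequence realizing $d'\ci d$ need not lie in $\B$, so extension-closedness cannot be applied directly. This is exactly why the detour through \cite{Kla} is needed.
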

\begin{proof}
We note that $\Kbf(\B)\subset\Kbf(\A)$ is a triangulated full subcategory, and that $\Kbf^{n+2}(\B)\subset\Kbf^{n+2}(\A)$, $\Kbf^{1,n,1}(\B)\subset\Kbf^{1,n,1}(\A)$, and $H^0(\B)\subset H^0(\A)$ are additive full subcategories.
Let us show that $(\B,\calS_{\B})$ satisfies conditions in Definition~\ref{dfn:n-exact-dg}.

{\rm ($n$-Ex0)} By definition we have $\calS_{\B}=\calS\cap(\Kbf^{1,n,1}(\B))$ as full subcategories of $\Kbf^{1,n,1}(\A)$. In particular, since $\calS\subset\Kbf^{1,n,1}(\A)$ is closed by isomorphisms, so is $\calS_{\B}\subset\Kbf^{1,n,1}(\B)$.
By Proposition~\ref{prop:any_split_belongs_to_S}, the trivial split sequence $N\up={}_0N\up_0\in\Cbf^{n+2}(\A)$, that is, the object $N\up$ satisfying $N^i=0$ for all $i\in\bbZ$, should belong to $\calS$. Since $0\in\ob(\B)$, it follows $N\up\in\calS_{\calB}$.

{\rm ($n$-Ex1)} The statement for $\calS_{\B}$-inflations can be treated dually, so it suffices to show that
$\calS_{\B}$-deflations are closed by composition.
Let $B\xrightarrow{b}C\xrightarrow{c}D$ be any sequence of $\calS_{\B}$-deflations. 

As in Theorem~\ref{thm:H0_of_n-ex-dg}, $(H^0(\A),\E,\sfr)$ is an $n$-exangulated category. Also, by Lemma~\ref{lem:ext_closed_sub}, $H^0(\B)\subset H^0(\A)$ is extension-closed.
Then, by \cite[Theorem 3.3]{Kla} (or \cite[Theorem~3.2]{HZ} for the Krull-Schmidt case), the restriction of $\E$ and $\sfr$ to $H^0(\B)$ induces an $n$-exangulated structure on $H^0(\B)$. In particular, as shown in \cite[Lemma~3.2]{Kla}, the morphism $\ovl{c}\ci\ovl{b}$ is a deflation with respect to this $n$-exangulated structure. This means that there exist $A\in\ob(\B)$, $\del\in\E(D,A)$ and $\Ybf\up\in\Cbff^{n+2}(H^0(\B))$ such that $\langle\Ybf\up,\del\rangle$ is a distinguished $n$-exangle which satisfies $d_{\Ybf}^n=\ovl{c}\ci\ovl{b}$.

By the definition of $\E$, there exists $X\up\in {}_A\calS_D$ such that $\del=\dbr{X\up}$. Since $\B\subset\A$ is $n$-extension-closed, we may assume that $X\up$ belongs to $\calS_{\B}$. Then we have $[\Ybf\up]=\sfr(\del)=[\ovl{X}\up]$, namely, $\ovl{X}\up\cong\Ybf\up$ holds in $\Kbff^{n+2}_{A,D}(H^0(\A))$.
By Lemma~\ref{lem:nea-1}, there exist $Y\upp\in\calS$ such that $Y^{\prime 0}=X^0$ and $\sig_{\ge1}\ovl{Y'}\up=\sig_{\ge1}\Ybf\up$.
Since $Y^{\prime i}\in\ob(\B)$ holds for all $i\in\bbZ$, we have $Y\upp\in\calS_{\B}$. In particular, $d_{Y'}^{n,n+1}$ is an $\calS_{\B}$-deflation. By Remark~\ref{rem:strict_npo} {\rm (2)}, it follows that $c\ci b$ is also an $\calS_{\B}$-deflation. 
Thus $\calS_{\B}$-deflations are closed by composition.

{\rm ($n$-Ex2)} Let $X\up\in\calS_{\B}$ and $a\in Z^0(\B)(X^0,A)$ be arbitrary. As $(\A,\calS)$ satisfies {\rm ($n$-Ex2)}, there exist $Y\up\in\calS$ and an $n$-pushout morphism $g\up\colon X\up\to Y\up$ in $\Cbf^{n+2}(\A)$ such that $Y^0=A$ and $\ovl{g}^0=\ovl{a}$ in $H^0(\A)$. Since $\B\subset\A$ is $n$-extension-closed, there exist $Y\upp\in\calS_{\B}$ and a morphism $y\up\in Z^0(\Cbf^{n+2}(\A))(Y\up,Y\upp)$ such that $y\up$ is an isomorphism in $\Kbf^{n+2}(\A)$ and $\ovl{y}^0=\id$, $\ovl{y}^{n+1}=\id$ in $H^0(\A)$. Then, since $\Kbf(\B)\subset\Kbf(\A)$ is a triangulated full subcategory, it follows that $g\upp=y\up\ci g\up$ is an $n$-pushout morphism in $\Cbf^{n+2}(\B)$. Moreover it satisfies $\ovl{g'}^0=\ovl{a}$ in $H^0(\B)$. {\rm ($n$-Ex2$\op$)} can be checked in a dual manner. 
\normalcolor
\end{proof}

\subsection{Relation to ordinary $n$-exact category}
When $\calA$ is an ordinary category regarded as a dg-category, we show that the notion of an $n$-exact dg-category in Definition~\ref{dfn:n-exact-dg} is equivalent to that of an $n$-exact category in the sense of Jasso. Note that Condition~\ref{condition:H^i=0} is trivially satisfied.

We remark that, for $\calS\subset\ob(\Cbf^{n+2}(\calA))=\ob(\Cbff^{n+2}(\calA))$, it follows from
Proposition~\ref{prop:n-exact_ordinary} that $\calS$ consists of $n$-exact sequences in our sense if and only if it consists of $n$-exact sequences in the sense of \cite[Definition~2.2]{J}.
We begin by recalling the following result from \cite{HLN1}, adapting the terminology to our setting.
\begin{fact}\label{fact:Prop4.23_HLN1}(\cite[Definitions~4.21, 4.22, and Proposition~4.23]{HLN1})
Let $\calA$ be an ordinary additive category, and let $\calS\subset\ob(\Cbff^{n+2}(\calA))$ be a class consisting of some $n$-exact sequences. The pair $(\calA,\calS)$ is an $n$-exact category in the sense of \cite[Definition~4.2]{J} if and only if it satisfies the following conditions.
\begin{itemize}
\item[{\rm (E0)}] The split sequence ${}_0N_0\in\Cbff^{n+2}(\calA)$ in Definition~\ref{dfn:splitN}, that is, the sequence
$0\to0\to\cdots\to0$
belongs to $\calS$.
\item[{\rm (E1)}] $\calS$-inflations are closed by composition in $\calA$. 
\item[{\rm (E1$\op$)}] $\calS$-deflations are closed by composition in $\calA$. 
\item[{\rm (EC$'$)}] Let $X\up,Y\up\in\Cbff^{n+2}(\calA)$ be any pair of $n$-exact sequences with $X^0=Y^0$ and $X^{n+1}=Y^{n+1}$. Suppose that there are morphisms $f\up\in Z^0(\Cbff^{n+2}(\calA))(X\up,Y\up)$ and $g\up\in Z^0(\Cbff^{n+2}(\calA))(Y\up,X\up)$ satisfying $f^0=g^0=\id$ and $f^{n+1}=g^{n+1}=\id$, which are inverse to each other in $\Kbff^{n+2}(\calA)$. Then, $X\up\in\calS$ if and only if $Y\up\in\calS$.
\item[{\rm (E2$'$)}] The dual of the following {\rm (E2${}^{\prime\mathrm{op}}$)}.
\item[{\rm (E2${}^{\prime\mathrm{op}}$)}] \begin{itemize}
\item[{\rm (i)}] For any $c\in\calA(C',C)$ and any $Y\up\in\calS$ with $Y^{n+1}=C$, there exist $X\up\in\calS$ and $f\up\in Z^0(\Cbff^{n+2}(\calA))(X\up,Y\up)$ such that $f^0=\id$ and $f^{n+1}=c$.
\item[{\rm (ii)}] For any $X\up,Y\up\in\calS$ and any $f\up\in Z^0(\Cbff^{n+2}(\calA))(X\up,Y\up)$ satisfying $f^0=\id$, we have $\CoCone((\sig_{\ge 1}f\up)[1])\in\calS$. 
\end{itemize}
\item[{\rm (EI)}] (cf.\ \cite[Corollary~4.13]{HLN1})
For any $X\up\in\calS$ and any $n$-exact sequence $Y\up\in\Cbff^{n+2}(\calA)$, the following holds.
\begin{itemize}
\item[{\rm (I1)}] If there is $f\up\in Z^0(\Cbff^{n+2}(\calA))(X\up,Y\up)$ such that $f^0$ and $f^{n+1}$ are isomorphisms in $\calA$, then $f\up$ is an isomorphism in $\Kbff^{n+2}(\calA)$.
\item[{\rm (I2)}] If there is $g\up\in Z^0(\Cbff^{n+2}(\calA))(Y\up,X\up)$ such that $g^0$ and $g^{n+1}$ are isomorphisms in $\calA$, then $g\up$ is an isomorphism in $\Kbff^{n+2}(\calA)$.
\end{itemize}
\end{itemize}
\end{fact}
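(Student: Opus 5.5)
The proof is a dictionary between Jasso's axioms in \cite[Definition~4.2]{J} and the listed conditions, using Proposition~\ref{prop:n-exact_ordinary} to identify our $n$-exact sequences in $\Cbff^{n+2}(\calA)$ with Jasso's. Recall that an $n$-exact category is a pair $(\calA,\calS)$ where $\calS$ is a class of $n$-exact sequences closed under weak isomorphisms, that is, homotopy equivalences restricting to identities on the end terms. It satisfies Jasso's (E0), (E1), (E1$\op$), (E2) and (E2$\op$). Jasso's (E2) asserts: for $X\up\in\calS$ and $a\colon X^0\to A$ there is an $n$-pushout diagram $f\up\colon X\up\to Y\up$ with $f^0=a$, $f^{n+1}=\id$, such that $d_Y^0$ is an admissible monomorphism. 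Here ``$n$-pushout diagram'' means that the mapping cone of $\sig_{\le n}f\up$ is a right $n$-exact (indeed $n$-exact) sequence. The translation will be carried out in both directions, one axiom at a time.

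I would take three steps.
\begin{enumerate}
\item Match the axioms that need no work. Jasso's (E0), (E1), (E1$\op$) are literally (E0), (E1), (E1$\op$). Closure under weak isomorphisms is literally (EC$'$), since in an ordinary category $\ovl{f}^i=f^i$ strictly.
\item Show that (EI) holds automatically for any class of $n$-exact sequences in an additive category. Given $f\up\colon X\up\to Y\up$ between $n$-exact sequences with $f^0,f^{n+1}$ isomorphisms, I would factor $f\up$ through an intermediate sequence, in the manner of the comparison lemma \cite[Proposition~2.7]{J}, so that each factor has one end equal to an identity. For a factor with identity last term, the left $n$-exactness of both sequences lets me build a homotopy inverse inductively from the right end. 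This is the ordinary-category shadow of Proposition~\ref{prp:uniqueness_n-kernel}, and the dual argument handles the factor with identity first term.
\item Show that Jasso's (E2$\op$) is equivalent to (E2$'^{\mathrm{op}}$) in the presence of the other axioms. Part (i) is the existence half of Jasso's (E2$\op$). For part (ii), given $f\up\colon X\up\to Y\up$ with $f^0=\id$ between admissible sequences, the sequence $C\up=\CoCone((\sig_{\ge1}f\up)[1])$ is exactly the complex whose left $n$-exactness expresses that $\sig_{\ge1}f\up$ is an $n$-pullback diagram.
\end{enumerate}

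The main obstacle is the equivalence in step 3, in both directions. For Jasso $\Rightarrow$ (ii), I would compare $f\up$ with the $n$-pullback $X'^{\Cdot}\to Y\up$ along $f^{n+1}$ supplied by Jasso's (E2$\op$). By (EI) and the uniqueness of $n$-pullbacks up to weak isomorphism, $f\up$ agrees with this one modulo a weak isomorphism fixing $f^0$ and $f^{n+1}$. Hence $C\up$ is left $n$-exact. Its last differential $[f^{n+1}\ \ {-d_Y^n}]$ is a deflation by the decomposition used in Proposition~\ref{prop:pushout_inflation}; this needs direct sums of admissible sequences, available by \cite[Proposition~4.6]{J}. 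Then \cite[Proposition~4.8]{J}, which states that an $n$-exact sequence with admissible last map is admissible, gives $C\up\in\calS$. For the converse, given the data of Jasso's (E2$\op$), I take $f\up$ from (i). Condition (ii) puts $C\up$ in $\calS$, so $\sig_{\ge1}f\up$ is an $n$-pullback diagram. Admissibility of $d_X^{n}$ is then immediate because $X\up\in\calS$. Finally I would dualize everything to obtain (E2$'$).
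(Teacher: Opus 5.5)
Your Step~2 is a genuine gap, and it traces back to a misstatement in Step~1. In \cite[Definition~4.1]{J}, a weak isomorphism is a morphism of $n$-exact sequences with two consecutive components $f^k,f^{k+1}$ invertible, indices read modulo $n+2$. So every morphism with $f^{n+1}$ and $f^0$ invertible is one, with no homotopy-equivalence hypothesis; it is not ``a homotopy equivalence restricting to identities on the ends''. Jasso's closure condition is therefore not literally (EC$'$). The point of the reformulation in \cite{HLN1} is that, given the other axioms, this closure splits into (EC$'$) together with (EI).

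In particular (EI) is not automatic, so the claim in Step~2 is false. For $n=1$ it is the short five lemma for kernel--cokernel pairs, which fails in general additive categories. Take the additive hull of the $k$-linear category with objects $1,2,3,4$, $\Hom(x,y)=k$ for $0\le y-x\le 2$ and $0$ otherwise; these are the projectives over $kA_4/\mathrm{rad}^3$. Both $1\to2\to4$ and $1\to3\to4$ are kernel--cokernel pairs, and $(\id,\,2\to3,\,\id)$ is a chain map between them. Since $\Hom(3,2)=0$, every chain map back is zero, while $1\to2\to4$ is not contractible. Hence (I1) fails for any $\calS$ containing $1\to2\to4$. The same category on objects $1,\ldots,7$, with $1\to2\to4\to5\to7$ and $1\to3\to4\to6\to7$, gives the same failure for $n=3$.

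Your argument breaks at its first step for every $n$. A homotopy inverse with $g^{n+1}=\id$ needs some $g^n$ with $d_X^n\ci g^n=d_Y^n$, i.e.\ $d_Y^n$ must factor through $d_X^n$. Left $n$-exactness says nothing about the image of $d_X^n\ci\blank$. The comparison lemma and Proposition~\ref{prp:uniqueness_n-kernel} need $\sig_{\ge n}f\up$ to be invertible, e.g.\ $f^n$ and $f^{n+1}$ both invertible, which is exactly what is missing.

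The substance of the statement therefore lies where your proposal is silent.
\begin{itemize}
\item From Jasso's axioms, (EI) must be derived. The cyclic case of closure under weak isomorphisms first puts $Y\up$ into $\calS$. One then needs an $n$-exact-category argument, via $n$-pushouts and $n$-pullbacks (the ordinary shadow of Proposition~\ref{prop:pullpush_factorization}~(4)), that a morphism between admissible sequences with invertible end terms is a homotopy equivalence. Compare how the paper checks (EI) in Proposition~\ref{prop:compare_ordinary_n-exact}, first getting $Y\up\in\calS$ from Klapproth's lemma.
\item Your Step~3 uses (EI) to compare $f\up$ with Jasso's $n$-pullback, so it inherits this gap.
\item Conversely, you must recover Jasso's full closure from (EC$'$), (EI) and the remaining conditions. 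This includes weak isomorphisms with $f^k,f^{k+1}$ invertible for $0\le k\le n$, where the end terms need not be isomorphic and neither (EC$'$) nor (EI) applies directly. There one must show, e.g., that $d_Y^0$ or $d_Y^n$ is admissible and then use the analogue of Lemma~\ref{lem:pushout_inflation}.
\end{itemize}
None of this appears in the proposal. The paper itself gives no proof here; the statement is quoted from \cite{HLN1}.
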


\begin{prop}\label{prop:compare_ordinary_n-exact}
Let $\calA$ be an ordinary additive category, which we may regard $\calA$ as a dg-category satisfying $\calA^i=0$ for $i\ne0$. Let $\calS\subset\ob(\Cbff^{n+2}(\calA))$ consists of $n$-exact sequences. 
Then, the following are equivalent. 
\begin{enumerate}
\item $(\calA,\calS)$ is an $n$-exact category in the sense of \cite[Definition~4.2]{J}.
\item $(\calA,\calS)$ is an $n$-exact dg-category in the sense of Definition~\ref{dfn:n-exact-dg}. 
\end{enumerate}
\end{prop}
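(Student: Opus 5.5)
The plan is to compare the axioms of Definition~\ref{dfn:n-exact-dg} with the list in Fact~\ref{fact:Prop4.23_HLN1}, in the setting where $\calA$ is an ordinary category. Here every degree $-1$ morphism vanishes. So the ideal $\calN$ in Definition~\ref{dfn:K1n1} is zero, $\Kbf^{1,n,1}(\calA)=Z^0(\Cbff^{n+2}(\calA))$, and $\ovl{f}^i=f^i$ strictly. In particular, isomorphisms in $\Kbf^{1,n,1}(\calA)$ are just isomorphisms of complexes. $n$-pullback morphisms are chain maps $f\up$ with $f^0=\id$ such that $\CoCone((\sig_{\ge1}f\up)[1])$ is left $n$-exact (Lemma~\ref{lem:equivalent_n-pbm1}), and dually for $n$-pushouts.

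$(2)\Rightarrow(1)$. Verify each condition of Fact~\ref{fact:Prop4.23_HLN1}.
\begin{itemize}
\item (E0): by Proposition~\ref{prop:any_split_belongs_to_S}, since ${}_0N_0$ is split.
\item (E1), (E1$\op$): directly from ($n$-Ex1).
\item (EC$'$): from Proposition~\ref{prop:closed_equiv} $(1)\Rightarrow(2)$, using ($n$-Ex0).
\item (E2${}^{\prime\mathrm{op}}$)(i): ($n$-Ex2$\op$) gives $f\up$ with $f^0=\id$ up to homotopy-free equality, hence strictly, and $f^{n+1}=c$.
\item (E2${}^{\prime\mathrm{op}}$)(ii): Proposition~\ref{prop:pushout_inflation}, namely $(3)\Rightarrow(1)\Rightarrow(2)$ applied with $\ovl{f}^0=\id$.
\item (EI): for (I1), take $X\up\in\calS$ and $Y\up$ $n$-exact with $f^0$, $f^{n+1}$ invertible. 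Lemma~\ref{lem:pushout_inflation} lets us move $Y\up$ into $\calS$, as follows. $d_Y^{n,n+1}$ is a deflation, because the pullback along $f^{n+1}$ is used. A cleaner route is to factor $f\up=h\up\ci g\up$ through an $n$-pushout along $f^0$ (Definition~\ref{dfn:n-exact-dg} ($n$-Ex2)). Then use the dual of Proposition~\ref{prp:univ_npb}, so that $g\up$ is an isomorphism by the dual of Corollary~\ref{cor:iso-npbm}. The remaining map $Z\up\to Y\up$ with $s=\id$ is handled by Proposition~\ref{prop:pushout_inflation}-type arguments together with Corollary~\ref{cor:univ_npb}.
\end{itemize}
I expect (EI) to be the main obstacle. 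I would first try to show $Y\up\in\calS$. One could try Lemma~\ref{lem:pushout_inflation}(2): $d_Y^{n}\ci f^n=f^{n+1}\ci d_X^n$, and $f^{n+1}$ is an isomorphism, hence a deflation (Corollary~\ref{cor:deflation_examples}). But this shows only that the composite is a deflation, not $d_Y^n$ itself. Instead, I would pull back $X\up$ along $(f^{n+1})^{-1}$ and push out along $f^0$, to get $X'\up\in\calS$ with end maps $\id$ and a comparison $X'\up\to Y\up$. I would then apply the left/right $n$-exact uniqueness argument of Lemma~\ref{lem:pushout_inflation}'s proof, which only needs $Y\up$ $n$-exact, to conclude the map is an isomorphism in $\Kbf^{n+2}$. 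Once $Y\up\in\calS$, Proposition~\ref{prop:pullpush_factorization}(4) yields (I1); (I2) is the same.

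$(1)\Rightarrow(2)$. The proof runs as follows.
\begin{itemize}
\item ($n$-Ex0): closure under isomorphisms of complexes follows from (EC$'$), since strict isomorphisms are in particular homotopy inverse with identity ends; ${}_0N_0\in\calS$ by (E0).
\item ($n$-Ex1): this is (E1) together with (E1$\op$).
\item ($n$-Ex2$\op$): given $Y\up$ and $c$, take $f\up$ from (E2${}^{\prime\mathrm{op}}$)(i). By (ii), $\CoCone((\sig_{\ge1}f\up)[1])\in\calS$ is left $n$-exact, so $f\up$ is an $n$-pullback morphism by Lemma~\ref{lem:equivalent_n-pbm1}.
\item ($n$-Ex2): dually, using (E2$'$).
\end{itemize}
These directions are routine once the identifications $\Kbf^{1,n,1}(\calA)=Z^0(\Cbff^{n+2}(\calA))$ and Proposition~\ref{prop:n-exact_ordinary} are in place.
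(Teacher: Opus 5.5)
\paragraph{The main gap: (EI) in $(2)\Rightarrow(1)$.}
This is exactly where you expected trouble. Pushing out along $f^0$ and pulling back along $(f^{n+1})^{-1}$ does give $X\upp\in\calS$ and a chain map $u\up\colon X\upp\to Y\up$ with $u^0=\id$ and $u^{n+1}=\id$. However, the argument in the proof of Lemma~\ref{lem:pushout_inflation} cannot show that $u\up$ is an isomorphism. That argument starts from the dual of Corollary~\ref{cor:uniqueness_n-kernel}, which compares two sequences with the \emph{same} differential $d^{0,1}$, i.e.\ a comparison invertible in two adjacent degrees. Such a common differential exists there only because $d^{0,1}$ is assumed to be an inflation.

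For a map invertible only in degrees $0$ and $n+1$, one checks via Corollary~\ref{cor:iso-npbm} and the cocone sequence that $u\up$ is an isomorphism in $\Kbf^{n+2}(\calA)$ if and only if it is an $n$-pullback morphism, if and only if $d_Y^n$ factors through $d_{X'}^n$. Nothing in your setup yields this. In this situation it is in fact equivalent to $d_Y^n$ itself being a deflation, which is precisely what you noted you could not extract from $d_Y^n\ci f^n=f^{n+1}\ci d_X^n$. Your ``cleaner route'' stalls at the same point: Proposition~\ref{prop:pushout_inflation} needs the target to lie in $\calS$, and Corollary~\ref{cor:univ_npb} needs $n$-pullback morphisms.

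The missing idea is Klapproth's lemma \cite[Lemma~2.1]{Kla}: for $n\ge2$, in any $n$-exangulated category, if $b\ci a$ is a deflation then so is $b$, and if $b\ci a$ is an inflation then so is $a$. Condition~\ref{condition:H^i=0} is vacuous here, so Theorem~\ref{thm:H0_of_n-ex-dg} makes $(\calA,\E,\sfr)$ $n$-exangulated, and by Lemma~\ref{lem:nea-1} it has the same deflations and inflations as $\calS$. Now $f^{n+1}\ci d_X^n$ is a deflation by ($n$-Ex1) and Corollary~\ref{cor:deflation_examples}. Hence $d_Y^n$ is a deflation, so $Y\up\in\calS$ by Lemma~\ref{lem:pushout_inflation}, and Proposition~\ref{prop:pullpush_factorization}~(4) gives (I1). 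For (I2), argue the same way using $g^1\ci d_Y^0=d_X^0\ci g^0$. Since this lemma requires $n\ge2$, the case $n=1$ must be handled separately; the paper reduces it to Proposition~\ref{prop:exact_dg_=_1-exact_dg}.

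\paragraph{A second, smaller error: $\Kbf^{1,n,1}(\calA)\neq Z^0(\Cbff^{n+2}(\calA))$ for $n\ge2$.}
Only the degree $-1$ parts of the Hom-sets of $\calA$ vanish. Degree $-1$ morphisms in $\Cbf(\calA)$ are ordinary homotopies $(\varphi^{i,i-1})$, and $\calN$ contains every $d\varphi\up$ with $\varphi^{1,0}=0=\varphi^{n+1,n}$. For example, when $n=2$ and $B\neq0$, the complex $0\to B\xrightarrow{\id}B\to0$ placed in degrees $1,2$ is a zero object of $\Kbf^{1,2,1}(\calA)$ (take $\varphi^{2,1}=\id_B$), but not of $Z^0(\Cbff^{4}(\calA))$.

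Consequently your verification of ($n$-Ex0) in $(1)\Rightarrow(2)$ fails as written, for two reasons:
\begin{itemize}
\item isomorphisms in $\Kbf^{1,n,1}(\calA)$ are not just isomorphisms of complexes;
\item even isomorphisms of complexes have only invertible, not identity, end components, so (EC$'$) does not apply directly.
\end{itemize}
Reorder as the paper does. First establish ($n$-Ex2) and ($n$-Ex2$\op$); your arguments for these, and for ($n$-Ex1), are fine. Then invoke Proposition~\ref{prop:closed_equiv}, whose condition (2) is (EC$'$) by Corollary~\ref{cor:htpy_modif2}.
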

\begin{proof}
First we note that, by Corollary~\ref{cor:htpy_modif2}, condition {\rm (EC$'$)} is equivalent to the condition {\rm (2)} in Proposition~\ref{prop:closed_equiv}.

$(1)\Rightarrow(2)$  Suppose that $(\calA,\calS)$ is an $n$-exact category in the sense of \cite[Definition~4.2]{J}. 
By Proposition~\ref{prop:closed_equiv} and {\rm (EC$'$)}, it follows that $\calS\subset\Kbff^{1,n,1}(\calA)$ is closed by isomorphisms.
Together with {\rm (E0)}, we see that $(\A,\calS)$ satisfies {\rm ($n$-Ex0)}.
{\rm ($n$-Ex1)} follows from {\rm (E1)} and  {\rm (E1$\op$)}.
{\rm ($n$-Ex2)} and {\rm ($n$-Ex2$\op$)} are by \cite[Lemma~4.22 {\rm (2)}]{HLN1}.

$(2)\Rightarrow(1)$ For $n=1$, this is a consequence of Proposition~\ref{prop:exact_dg_=_1-exact_dg}. Let us consider the case $n\ge2$.
Suppose that $(\calA,\calS)$ is an $n$-exact dg-category with $n\ge2$. Let us check the conditions in Fact~\ref{fact:Prop4.23_HLN1}. {\rm (E0)} holds by Proposition~\ref{prop:any_split_belongs_to_S}. {\rm (E1)} and {\rm (E1$\op$)} are immediate from {\rm ($n$-Ex1)}. {\rm (EC$'$)} follows from {\rm ($n$-Ex0)} by Proposition~\ref{prop:closed_equiv}.
{\rm (E2${}^{\prime\mathrm{op}}$)}{\rm (i)} is immediate from {\rm ($n$-Ex2$\op$)}. {\rm (E2${}^{\prime\mathrm{op}}$)}{\rm (ii)} follows from Proposition~\ref{prop:pushout_inflation}.

It remains to show that condition {\rm (EI)} is satisfied. 
Let $X\up\in\calS$ and $n$-exact sequence $Y\up\in\Cbff^{n+2}(\calA)$ be arbitrary. Suppose that there is $g\up\in Z^0(\Cbff^{n+2}(\calA))(Y\up,X\up)$ such that $g^0$ and $g^{n+1}$ are isomorphisms in $\calA$. Then $d_X^0\ci g^0=g^1\ci d_Y^0$ holds in $\calA$, and the left hand side is an inflation by {\rm ($n$-Ex1)} and the dual of Corollary~\ref{cor:deflation_examples} {\rm (1)}. 
Since $n\ge2$, by a result of Klapproth \cite[Lemma~2.1]{Kla}, it follows that $d_Y^0$ is also a inflation. Thus $Y\up\in\calS$ by Lemma~\ref{lem:pushout_inflation}. Then Proposition~\ref{prop:pullpush_factorization} {\rm (4)} shows that $g\up$ is an isomorphism in $\Kbff^{n+2}(\calA)$. This shows {\rm (I2)}.

The other condition {\rm (I1)} can be checked in a similar way. Indeed, if there is $f\up\in Z^0(\Cbff^{n+2}(\calA))(X\up,Y\up)$ such that $f^0$ and $f^{n+1}$ are isomorphisms in $\calA$, then $d_Y^{n+1}$ becomes a deflation by the dual of \cite[Lemma~2.1]{Kla}, and hence the same proof as {\rm (I1)} can be applied.
\end{proof}

\begin{rem}\label{rem:comparison_with_n-exact}
For $n=1$, it is shown in \cite{NP1} that for a $1$-exangulated category (i.e., an extriangulated category), the condition that every inflation is a monomorphism and every deflation is an epimorphism is equivalent to the category corresponding to an exact category.

For $n \ge 2$, it is shown in \cite[Proposition~4.37]{HLN1} that, for an $n$-exangulated category satisfying the following {\rm (a)} and {\rm (b)}:
\begin{enumerate}
\item[(a)] If $b\ci a$ is an inflation, then so is $a$, 
\item[(b)] If $b\ci a$ is a deflation, then so is $b$,
\end{enumerate}
the same condition-namely that every inflation is a monomorphism and every deflation is an epimorphism-is equivalent to the category corresponding to an $n$-exact category.
However, as shown in the above-mentioned \cite[Lemma~2.1]{Kla}, these conditions are automatically satisfied for $n \ge 2$.
Therefore, this equivalence holds for any $n$-exangulated category.
\end{rem}

\normalcolor

\section{From $n$-cluster tilting subcategories of exact dg-categories}\label{section:Typical}

In this section, we give typical examples of $n$-exact dg-categories, which are obtained from $n$-cluster tilting subcategories of exact dg-categories. 

\subsection{Splicing constructions}

Before discussing $n$-cluster tilting subcategories, we describe a general method for constructing $(n+2)$-term sequences in a dg-category $\A$ by splicing.

As before, let $\A$ be a strictly connective and strictly additive dg-category.

\begin{lem}\label{lem:splising1'}
Let $L\up\in\Cbf_{[0,2]}(\A)$ and $X\up\in\Cbf_{[1,m]}(\A)$ be any pair of objects satisfying $L^2=X^1$.
Then, we obtain an object $(L\ast X)\up=L\up\ast X\up\in\Cbf_{[0,m]}(\A)$ defined by the following.
\begin{equation}\label{L*X}
 (L\ast X)^i=\begin{cases}
 L^i & \text{if}\ i\le 1\\
 X^i & \text{if}\ i\ge 2 \\
 0 & \text{otherwise}
 \end{cases}
 \ \ ,\quad
 d_{L\ast X}^{i,j}=\begin{cases}
 d_L^{i,j} & \text{if}\ j\le 1\\
 d_X^{1,j}\ci d_L^{i,2} & \text{if}\ i\le 1\ \text{and}\ j\ge 2\\
 d_X^{i,j} & \text{if}\ i\ge 2\\
  0 & \text{otherwise} 
 \end{cases}\ \ .
 \end{equation}
\end{lem}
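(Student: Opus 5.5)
The claim to verify is that the data in (\ref{L*X}) satisfy the axioms of Definition~\ref{dfn:complexes}. Boundedness is clear, and each $d_{L\ast X}^{i,j}$ has degree $i-j+1$. In the mixed case $i\le 1$, $j\ge 2$ this holds because $d_L^{i,2}$ has degree $i-1$ and $d_X^{1,j}$ has degree $2-j$, which sum to $i-j+1$. Vanishing for $i\ge j$ is inherited from $d_L$ and $d_X$. The only real work is the Maurer--Cartan equation (\ref{eq_d}),
\[
d_{\A}(D^{i,j})+(-1)^j\sum_k D^{k,j}\ci D^{i,k}=0,\qquad D=d_{L\ast X}.
\]
I will verify it by splitting into cases according to the positions of $i$ and $j$ relative to the splice point.

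\textbf{Unmixed cases.} If $j\le 1$, every intermediate index $k$ with nonzero terms satisfies $k\le 1$. So the equation is exactly (\ref{eq_d}) for $L$ at $(i,j)$, since the omitted $L$-term $d_L^{2,j}$ vanishes for $j\le 1$. If $i\ge 2$, all relevant $k$ are $\ge 2$, and the equation is (\ref{eq_d}) for $X$ at $(i,j)$, since the term through $k=1$ vanishes for $i\ge 2$.

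\textbf{Mixed case $i\le 1$, $j\ge 2$.} This is the main obstacle. First apply the Leibniz rule:
\[
d_{\A}(d_X^{1,j}\ci d_L^{i,2})=d_{\A}(d_X^{1,j})\ci d_L^{i,2}+(-1)^{2-j}d_X^{1,j}\ci d_{\A}(d_L^{i,2}).
\]
Next substitute (\ref{eq_d}) for $X$ at $(1,j)$ and for $L$ at $(i,2)$:
\[
d_{\A}(d_X^{1,j})=-(-1)^j\sum_{k\ge 2}d_X^{k,j}\ci d_X^{1,k},\qquad d_{\A}(d_L^{i,2})=-\sum_{k\le 1}d_L^{k,2}\ci d_L^{i,k}.
\]
Now expand the sum $\sum_k D^{k,j}\ci D^{i,k}$ by splitting $k\le 1$ from $k\ge 2$:
\[
\sum_{k\le 1}d_X^{1,j}\ci d_L^{k,2}\ci d_L^{i,k}\;+\;\sum_{k\ge 2}d_X^{k,j}\ci d_X^{1,k}\ci d_L^{i,2}.
\]
After multiplying by $(-1)^j$, these two pieces cancel exactly the two substituted terms. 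The sign $(-1)^{2-j}=(-1)^j$ is what makes the first cancellation work.

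The diagonal index $k=1$ on the $X$ side needs no separate treatment, because $d_X^{1,1}=0$. I expect the only delicate point to be this sign bookkeeping together with the degree-dependent Koszul sign in the Leibniz rule. Since $\A$ is only assumed to be a dg-category, the signs must be tracked exactly as in the convention of \cite{G}. Once they agree, the proof is complete.
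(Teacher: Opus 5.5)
Your direct verification is correct. In the mixed case $i\le 1$, $j\ge 2$, the Leibniz sign $(-1)^{|d_X^{1,j}|}=(-1)^{2-j}=(-1)^j$ is exactly what makes the terms $d_X^{1,j}\ci d_L^{k,2}\ci d_L^{i,k}$ ($k\le 1$) cancel. The terms $d_X^{k,j}\ci d_X^{1,k}\ci d_L^{i,2}$ ($k\ge 2$) cancel directly. The Leibniz rule is part of the definition of a dg-category, so your closing hedge about the conventions of \cite{G} is unnecessary.

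The paper argues structurally instead. It forms the closed degree-$0$ composite $\wp\up=\al_{X,2}\up\ci\al_{L,2}\up\co(\sig_{\le 1}L\up)[-1]\to\sig_{\ge 2}X\up$, using $L^2[-2]=X^1[-2]$, and computes $\wp^{i,j}=d_X^{1,j}\ci d_L^{i-1,2}$. It then observes that $\Cone\wp\up$ from Definition~\ref{dfn:cone} is exactly (\ref{L*X}), up to the identifications $A\oplus 0\cong A\cong 0\oplus A$. Since cones are objects of $\Cbf(\A)$, no Maurer--Cartan check is needed.

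Your route is more elementary and does not depend on the sign conventions in the cone formula, but it produces only the object. The paper's route also exhibits $L\up\ast X\up$ as a cone. This is reused in Lemma~\ref{lem:splicing_morphism} to construct $\varphi\up\ast\psi\up$ via Proposition~\ref{prop:give_a_morph}. You can recover this easily: once $L\up\ast X\up$ is known to be an object, $\al_{L\ast X,2}\up=\wp\up$, so Proposition~\ref{prop:alphabetagamma}~(1) gives $L\up\ast X\up=\Cone\wp\up$.
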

\begin{proof}
By Proposition~\ref{prop:alphabetagamma}, we have sequences of morphisms
\begin{eqnarray*}
&X^1[-2]\xrightarrow{\al\up_{X,2}}\sig_{\ge 2}X\up\xrightarrow{\be\up_{X,2}}X\up\xrightarrow{\gam\up_{X,2}}X^1[-1],&\\
&(\sig_{\le 1}L\up)[-1]\xrightarrow{\al\up_{L,2}}L^2[-2]\xrightarrow{\be\up_{L,2}}L\up\xrightarrow{\gam\up_{L,2}}\sig_{\le 1}L\up&
\end{eqnarray*}
in $\Cbf(\A)$.
By assumption, we have $X^1[-2]=L^2[-2]$. Put $\wp\up=\al\up_{X,2}\ci\al\up_{L,2}\co(\sig_{\le 1}L\up)[-1]\to \sig_{\ge2}X\up$. Explicitly, we have
\[
\wp^{i,j}=\begin{cases}
d_X^{1,j}\ci d_L^{i-1,2} & \text{if}\ i\le 2\ \text{and}\ j\ge 2\\
0 & \text{otherwise}
\end{cases}
\]
by the definition of $\alpha\up_{X,2}$ and $\alpha\up_{L,2}$.
If we take the cone $C_{\wp}\up=\Cone \wp\up\in\Cbf(\A)$, then by Definition~\ref{dfn:cocone}, up to the identification via the canonical isomorphisms $A\oplus 0\cong A\cong 0\oplus A$ in $\A$ for any $A\in\ob(\A)$, we have
\[
 C_{\wp}^i=\begin{cases}
 L^i & \text{if}\ i\le 1\\
 X^i & \text{if}\ i\ge 2 \\
 0 & \text{otherwise}
 \end{cases}
 \ \ ,\quad
 d_{C_{\wp}}^{i,j}=\begin{cases}
 d_L^{i,j} & \text{if}\ j\le 1\\
 d_X^{1,j}\ci d_L^{i,2} & \text{if}\ i\le 1\ \text{and}\ j\ge 2\\
 d_X^{i,j} & \text{if}\ i\ge 2\\
  0 & \text{otherwise} 
 \end{cases}\ \ ,  \]  
which agrees with $(\ref{L*X})$.
\end{proof}

\normalcolor

\begin{nota}\label{notation:3t}
In this section, we frequently deal with one-sided twisted complexes of length~$3$, that is, objects of $\Cbf^3(\A)$.
As suggested by Proposition~\ref{prop:K111=H3t}, these correspond to $3$-term complexes via an equivalence of categories.
Therefore, from now on, through this identification,
we shall represent an object $X\up$ of $\Cbf^3(\A)$
by extracting its nontrivial terms, either in the form
\[
\begin{tikzcd}
X^0 \arrow[r,"{\scriptsize d_X^{0,1}}"] \arrow[rr, bend right=30, dashed, "{\scriptsize d_X^{0,2}}"'] & X^1 \arrow[r,"{\scriptsize d_X^{1,2}}"] & X^2
\end{tikzcd}
\]
as in $(\ref{diag_3t})$,
or, more simply, in the form
\[
X^0\xrightarrow{d_X^{0,1}}X^1\xrightarrow{d_X^{1,2}}X^2\quad (d_X^{0,2}\co X^0\to X^2).
\]
When written in this way, the morphism $d_X^{1,2}$ in parentheses
is understood to be of degree $-1$, and to satisfy $d_{\A}(d_X^{0,2})=-d_X^{1,2}\ci d_X^{0,1}$.

Similarly, for a morphism $f\up\in\Cbf^3(\A)(X\up,Y\up)$, via the above equivalence of categories, we represent it as a $6$-tuple 
$(f^{0,0},f^{1,1},f^{2,2},f^{0,1},-f^{1,2},f^{0,2})$ 
consisting of its nontrivial components. 
When represented diagrammatically, we write it as follows, 
\[
\begin{tikzcd}
X^0 \ar{r}{d_X^{0,1}} \ar{d}{f^{0,0}}& X^1 \ar{r}{d_X^{1,2}} \ar{d}{f^{1,1}} & X^2 \ar{d}{f^{2,2}} \\
Y^0 \ar{r}[swap]{d_Y^{0,1}} & Y^1 \ar{r}[swap]{d_Y^{1,2}} & Y^2
\Ar{1-1}{2-2}{"f^{0,1}",dashed}
\Ar{1-2}{2-3}{"-f^{1,2}",dashed}
\Ar{1-1}{1-3}{"d_X^{0,2}",bend left=30, dashed}
\Ar{2-1}{2-3}{"d_Y^{0,2}"',bend right=30, dashed}
\end{tikzcd}
\]
where the morphism $f^{0,2}$ is implicit. 
\normalcolor
\end{nota}

The following lemma provides a method for splicing $3$-term one-sided twisted complexes.
\begin{prop}\label{prop:splicing1}
Let
\[
M^{i-1}\xrightarrow{m^{i-1}} X^{i}\xrightarrow{e^{i}} M^{i}\quad(h^{i-1}\co M^{i-1}\to M^{i})
\]
be objects in $\Cbf^3(\A)$ for $1\le i\le n$.
For notational convenience, put $e^0=\id_{M^0}$ and $m^n=\id_{M^n}$.
Then, the following data define an object $X\up\in \Cbf^{n+2}(\A)$:
\begin{enumerate}
\renewcommand{\labelenumi}{(\roman{enumi})}
\item $(X\up)^i=X^i$ for $1\le i\le n$, $X^0=M^0$, and $X^{n+1}=M^n$,
\item The differentials of $X\up$ are defined by
$$d_X^{i,j}:=
\begin{cases}
m^i\circ e^i & \text{if}\ j=i+1\\
m^{j-1}\circ h^{j-2}\circ \cdots \circ h^{i}\circ e^i & \text{if}\ i+1<j\\
0 & \text{otherwise}
\end{cases}
$$
for $i,j\in\bbZ$.
\end{enumerate}
\end{prop}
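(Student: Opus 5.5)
We need to verify the twisted complex equation (\ref{eq_d}): for all $i<j$,
\[
d_{\A}(d_X^{i,j})+(-1)^j\sum_{k}d_X^{k,j}\ci d_X^{i,k}=0 .
\]
The plan is a direct computation, organized by the gap $j-i$. First we record the input data. For each $i$, the object $M^{i-1}\to X^i\to M^i$ in $\Cbf^3(\A)$ has $m^{i-1}$, $e^i$ closed of degree $0$. Its twisted differential consists of $m^{i-1}$, $e^i$ and $h^{i-1}$ (degree $-1$). By Notation~\ref{notation:3t}, the equation (\ref{eq_d}) at $(0,2)$ for this object reads
\[
d_{\A}(h^{i-1})=-e^i\ci m^{i-1} .
\]
For $i=0$ or $j=n+1$ we use the conventions $e^0=\id$ and $m^n=\id$. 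With these, the formula for $d_X^{i,j}$ is uniform in $0\le i<j\le n+1$, since $X^0=M^0$ and $X^{n+1}=M^n$. Degrees are also consistent: $d_X^{i,j}$ contains $j-i-1$ factors $h$, so it has degree $i-j+1$, as required.

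\emph{Case $j=i+1$.} Here $d_X^{i,i+1}=m^i\ci e^i$ is closed. The sum $\sum_k d_X^{k,j}\ci d_X^{i,k}$ has no terms with $i<k<j$, and strict one-sidedness kills all other terms. So the equation holds trivially.

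\emph{Case $j\ge i+2$.} Write
\[
d_X^{i,j}=m^{j-1}\ci h^{j-2}\ci\cdots\ci h^i\ci e^i .
\]
Since $m$ and $e$ are closed, the Leibniz rule gives
\[
d_{\A}(d_X^{i,j})=\sum_{l=i}^{j-2}\pm\, m^{j-1}\ci h^{j-2}\ci\cdots\ci d_{\A}(h^l)\ci\cdots\ci h^i\ci e^i .
\]
Substituting $d_{\A}(h^l)=-e^{l+1}\ci m^l$ splits the $l$-th term as
\[
\bigl(m^{j-1}\ci h^{j-2}\cdots h^{l+1}\ci e^{l+1}\bigr)\ci\bigl(m^l\ci h^{l-1}\cdots h^i\ci e^i\bigr)=d_X^{l+1,j}\ci d_X^{i,l+1}.
\]
This holds also at the boundary $l=i$ and $l=j-2$, where one factor is a $1$-step differential. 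The composites $d_X^{k,j}\ci d_X^{i,k}$ with $i<k<j$ are exactly these, indexed by $k=l+1$. Hence the two sums match term by term up to sign.

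The main obstacle is the sign bookkeeping. Passing $d_{\A}$ past $h^{j-2},\dots,h^{l+1}$ gives the Koszul sign $(-1)^{j-2-l}$. Combined with the minus sign from $d_{\A}(h^l)$, the $k$-th term of $d_{\A}(d_X^{i,j})$ is
\[
-(-1)^{j-1-k}\, d_X^{k,j}\ci d_X^{i,k}=(-1)^{j-k}\, d_X^{k,j}\ci d_X^{i,k}.
\]
This must equal $-(-1)^j d_X^{k,j}\ci d_X^{i,k}$ for each $k$, which fails whenever $k$ is odd. So I would first recheck the sign convention for $h$ in Notation~\ref{notation:3t}: a $3$-term object placed in degrees $i-1,i,i+1$ rather than $0,1,2$ acquires the shift sign $(-1)^{i-1}$ from Definition~\ref{dfn:shifts}. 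I would also check whether the formula for $d_X^{i,j}$ is meant with an implicit alternating sign. If the stated formula is sign-correct only up to a global choice $h^{l}\mapsto(-1)^{l}h^{l}$, I would incorporate that, normalizing each $3$-term complex to degrees $[0,2]$ as the statement does. Then I would verify the identity $(-1)^{j-k}\epsilon=-(-1)^j$ with the corrected $\epsilon$.
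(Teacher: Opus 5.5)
Your strategy of verifying (\ref{eq_d}) directly, organized by $j-i$, is sound, and your Koszul bookkeeping is right. But the proposal stops at the decisive point and leaves the outcome open, so as it stands it proves nothing. There is also a parity slip: $(-1)^{j-k}=-(-1)^j$ holds exactly when $k$ is odd, so the mismatch occurs at \emph{even} $k$.

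More importantly, your computation shows that the statement is false for $n\ge 2$ when read literally with the $\Cbf^3(\A)$ normalization of Notation~\ref{notation:3t}, i.e.\ $d_{\A}(h^{l})=-e^{l+1}\circ m^{l}$. Take $n=2$ and $(i,j)=(1,3)$. Then $d_X^{1,3}=h^1\circ e^1$, $d_X^{1,2}=m^1\circ e^1$ and $d_X^{2,3}=e^2$, so
\[
d_{\A}(d_X^{1,3})+(-1)^3\, d_X^{2,3}\circ d_X^{1,2}=-e^2\circ m^1\circ e^1-e^2\circ m^1\circ e^1=-2\,e^2\circ m^1\circ e^1,
\]
which is nonzero in general. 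The missing step is therefore not more bookkeeping but a correction of the statement.

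\textbf{The correction.} Keep $d_X^{i,i+1}=m^i\circ e^i$ and replace each $h^l$ by $(-1)^l h^l$. That is, set $d_X^{i,j}=\epsilon_{i,j}\,m^{j-1}\circ h^{j-2}\circ\cdots\circ h^i\circ e^i$ with $\epsilon_{i,j}=(-1)^{\sum_{l=i}^{j-2}l}$.

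Equivalently, keep the displayed formula but regard the piece $M^l\to X^{l+1}\to M^{l+1}$ (the one carrying $h^l$) as an object of $\Cbf_{[l,l+2]}(\A)$ with the same $m^l$ and $e^{l+1}$. Then (\ref{eq_d}) for that piece reads $d_{\A}(h^l)=(-1)^{l+1}e^{l+1}\circ m^l$. This is not the shift of Definition~\ref{dfn:shifts}, which would also put the sign $(-1)^l$ on $m^l$ and $e^{l+1}$ and so spoil $d_X^{i,i+1}=m^i\circ e^i$.

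\textbf{Completing your argument.} With the corrected signs, your term-by-term comparison requires $\epsilon_{i,j}(-1)^{j-k}=-(-1)^j\epsilon_{k,j}\epsilon_{i,k}$, i.e.\ $\epsilon_{i,j}=(-1)^{k-1}\epsilon_{i,k}\epsilon_{k,j}$. This holds because the index ranges of $\epsilon_{i,k}$ and $\epsilon_{k,j}$ together cover $\{i,\dots,j-2\}\setminus\{k-1\}$. For $j=i+2$ only $k=i+1$ occurs, so this sign is forced in general. With that, your direct verification is a complete proof.

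\textbf{Comparison with the paper.} The paper only says ``use Lemma~\ref{lem:splising1'} inductively.'' Each step there is a cone, so (\ref{eq_d}) comes for free. However, the induction needs the spliced tail to lie in $\Cbf_{[1,n+1]}(\A)$, and that is exactly where the degree-$[l,l+2]$ normalization (or the signs of a shift) enters silently. Your direct computation makes this sign explicit.
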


\begin{proof}
This is shown by using Lemma~\ref{lem:splising1'} inductively. 
\end{proof}

\begin{lem}\label{lem:splicing_morphism}
Let $L\up, M\up\in\Cbf_{[0,2]}(\A)$ and $X\up,Y\up\in\Cbf_{[1,m]}(\A)$ be any pair of objects satisfying $L^2=X^1$ and $M^2=Y^1$. Let $\varphi\up\colon L\up\to M\up$ and $\psi\up\colon X\up\to Y\up$ be morphisms in $Z^0(\Cbf(\A))$ satisfying $\varphi^2=\psi^1$ in $\A$. Then, we obtain a morphism $(\varphi\ast\psi)\up=\varphi\up\ast\psi\up\colon L\up\ast X\up\to M\up\ast Y\up$ in $Z^0(\Cbf(\A))$ such that $\sig_{\leq 1}(\varphi\up\ast\psi\up)=\sig_{\leq 1}(\varphi\up)$ and $\sig_{\geq 2}(\varphi\up\ast\psi\up)=\sig_{\geq 2}(\psi\up)$ in $\Cbf(\A)$.
\end{lem}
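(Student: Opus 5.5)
The plan is to use the description of $L\up\ast X\up$ as a cone from the proof of Lemma~\ref{lem:splising1'}, together with Corollary~\ref{cor:give_a_morph} applied with $k=2$. Recall that $L\up\ast X\up=\Cone\wp_{L,X}\up$ with $\wp_{L,X}\up=\al_{X,2}\up\ci\al_{L,2}\up$. Moreover, in the notation of Proposition~\ref{prop:alphabetagamma}, we have $\al_{L\ast X,2}\up=\wp_{L,X}\up$, with $\sig_{\le1}(L\ast X)\up=\sig_{\le1}L\up$ and $\sig_{\ge2}(L\ast X)\up=\sig_{\ge2}X\up$. The same holds for $M\up\ast Y\up$.

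We take $g\up=\sig_{\le1}\varphi\up$ and $e\up=\sig_{\ge2}\psi\up$; these are closed because the truncation functors (\ref{truncations}) preserve $Z^0$. By Corollary~\ref{cor:give_a_morph}, it then suffices to produce $\Phi\up$ of degree $-1$ satisfying
\[
d\Phi\up=\al_{Y,2}\up\ci\al_{M,2}\up\ci(g\up[-1])-e\up\ci\al_{X,2}\up\ci\al_{L,2}\up .
\]
The resulting $f\up$ will then automatically have the prescribed truncations.

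To build $\Phi\up$, we apply Proposition~\ref{prop:give_a_morph} to the closed morphism $\varphi\up$ with $k=2$. This gives $\varphi_1\up$ of degree $-1$, with components $\varphi^{i-1,2}$, such that
\[
d\varphi_1\up=\al_{M,2}\up\ci(g\up[-1])-(\sig_{\ge2}\varphi\up)\ci\al_{L,2}\up,
\]
where $\sig_{\ge2}\varphi\up=\varphi^{2,2}[-2]=\psi^{1,1}[-2]$. Similarly, applying it to $\psi\up$ with $k=2$ gives $\psi_1\up$, with components $\psi^{1,j}$, such that
\[
d\psi_1\up=\al_{Y,2}\up\ci(\psi^{1,1}[-2])-e\up\ci\al_{X,2}\up .
\]
Here the hypothesis $\varphi^2=\psi^1$, which we read as $\varphi^{2,2}=\psi^{1,1}$, is exactly what makes the middle terms match. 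We then set
\[
\Phi\up=\al_{Y,2}\up\ci\varphi_1\up+\psi_1\up\ci\al_{L,2}\up .
\]
Since $\al_{Y,2}\up$ and $\al_{L,2}\up$ are closed of degree $0$, the Leibniz rule gives
\[
d\Phi\up=\al_{Y,2}\up\ci d\varphi_1\up+(d\psi_1\up)\ci\al_{L,2}\up .
\]
The two copies of $\al_{Y,2}\up\ci(\psi^{1,1}[-2])\ci\al_{L,2}\up$ cancel, leaving exactly the required identity.

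The main obstacle is sign bookkeeping. First, the sign in the Leibniz rule for degree $-1$ morphisms composed with degree $0$ closed ones must be checked. Second, the shift $[-1]$ inside the formula of Proposition~\ref{prop:give_a_morph}: one must confirm that $\varphi_1\up$ really lands in $\sig_{\ge2}M\up=M^2[-2]$, so that postcomposition with $\al_{Y,2}\up$ makes sense. Third, since $\psi\up$ lives in $\Cbf_{[1,m]}(\A)$, the object $\sig_{\le1}X\up$ is just $X^1[-1]$, and its shift $[-1]$ must be matched with $L^2[-2]$. I would verify these signs componentwise using the explicit formulas for $\al$ in the proof of Lemma~\ref{lem:splising1'}.
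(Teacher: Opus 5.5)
Your proof is correct and follows the paper's route. Both realize $L\up\ast X\up$ and $M\up\ast Y\up$ as the cones of $\wp\up=\al_{X,2}\up\ci\al_{L,2}\up$ and $\zeta\up=\al_{Y,2}\up\ci\al_{M,2}\up$, and then glue $\sig_{\le 1}\varphi\up$ and $\sig_{\ge 2}\psi\up$ through the correspondence of Corollary~\ref{cor:give_a_morph}.

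The only real difference is in the middle step, and there your version is the more accurate one. The paper asserts that the square formed by $\wp\up$, $\zeta\up$, $(\sig_{\le1}\varphi\up)[-1]$ and $\sig_{\ge2}\psi\up$ commutes in $\Cbf(\A)$ simply because $\varphi^{2,2}=\psi^{1,1}$. This fails in general. Using closedness of $\varphi\up$ and $\psi\up$ at the component $(1,2)$, the $(2,2)$-components of $\zeta\up\ci((\sig_{\le1}\varphi\up)[-1])$ and $(\sig_{\ge2}\psi\up)\ci\wp\up$ differ by $\pm\bigl(d_Y^{1,2}\ci d_{\A}(\varphi^{1,2})+d_{\A}(\psi^{1,2})\ci d_L^{1,2}\bigr)$, which need not vanish. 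This discrepancy is the boundary of $d_Y^{1,2}\ci\varphi^{1,2}+\psi^{1,2}\ci d_L^{1,2}$, which is precisely the $(2,2)$-component of your $\Phi\up$.

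So the square commutes only up to homotopy, and Corollary~\ref{cor:give_a_morph} needs that homotopy as explicit input. Your pasting $\Phi\up=\al_{Y,2}\up\ci\varphi_1\up+\psi_1\up\ci\al_{L,2}\up$ of the two homotopies from Proposition~\ref{prop:give_a_morph} supplies exactly this missing datum.

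The sign bookkeeping you worry about is harmless. The outer factors $\al_{Y,2}\up$ and $\al_{L,2}\up$ are closed of degree $0$, so the Leibniz rule contributes no signs. The cancellation of the two copies of $\al_{Y,2}\up\ci(\psi^{1,1}[-2])\ci\al_{L,2}\up$ only uses that Proposition~\ref{prop:give_a_morph} is applied twice with the same convention, so it is insensitive to the overall sign in that formula.
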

\begin{proof}
As in the proof of Lemma~\ref{lem:splising1'}, we have $L\up\ast X\up=\Cone \wp\up$ and $M\up\ast Y\up=\Cone \zeta\up$ for 
$$\wp\up:=\al\up_{X,2}\ci\al\up_{L,2}\co(\sig_{\le 1}L\up)[-1]\to \sig_{\ge2}X\up \quad \text{and} \quad \zeta\up:=\al\up_{Y,2}\ci\al\up_{M,2}\co(\sig_{\le 1}M\up)[-1]\to \sig_{\ge2}Y\up.$$
On the other hand, by $\varphi^2=\psi^1$, the diagram 
$$\begin{tikzcd}
(\sig_{\le 1}L\up)[-1] \ar{r}{\wp\up} \ar{d}[swap]{(\sig_{\le 1}\varphi\up)[-1]} & \sig_{\ge2}X\up \ar{d}{\sig_{\ge2}\psi\up} \\
(\sig_{\le 1}M\up)[-1] \ar{r}{\zeta\up} & \sig_{\ge2}Y\up
\end{tikzcd}$$
is commutative in $\Cbf(\A)$.
Thus, by Proposition~\ref{prop:give_a_morph}, there exists a morphism $\varphi\up\ast\psi\up\colon L\up\ast X\up\to M\up\ast Y\up$ in $Z^0(\Cbf(\A))$ such that 
$$\sig_{\leq 1}(\varphi\up\ast\psi\up)=\sig_{\leq 1}(\varphi\up)\quad \text{ and }\quad \sig_{\geq 2}(\varphi\up\ast\psi\up)=\sig_{\geq 2}(\psi\up)$$ in $\Cbf(\A)$, as required.
\end{proof}

\begin{prop}\label{prop:splicing_morphism}
Assume that the following data are given.
\begin{enumerate}
\renewcommand{\labelenumi}{(\roman{enumi})}
\item Objects in $\Cbf^3(\A)$
\begin{eqnarray*}
&M^{i-1}\xrightarrow{m^{i-1}_X} X^i\xrightarrow{e^{i}_X} M^i
\quad (h_X^{i-1}\co M^{i-1}\to M^{i+1}),&\\ 
&N^{i-1}\xrightarrow{m^{i-1}_Y} Y^i\xrightarrow{e^{i}_Y} N^i \quad (h_Y^{i-1}\co N^{i-1}\to N^{i+1})&
\end{eqnarray*}
for $1\le i\le n$.

\item Morphisms $(a^{i-1},f^i,a^{i},c^{i-1}, b^{i},t^{i-1})$ in $Z^0(\Cbf^3(\A))$
 from
\[M^{i-1}\xrightarrow{m^{i-1}_X} X^i\xrightarrow{e^{i}_X} M^i
\quad (h_X^{i-1}\co M^{i-1}\to M^{i+1})\]
to 
\[
N^{i-1}\xrightarrow{m^{i-1}_Y} Y^i\xrightarrow{e^{i}_Y} N^i \quad (h_Y^{i-1}\co N^{i-1}\to N^{i+1}),
\]
for $1\le i\le n$.
\end{enumerate}
Let $X\up,Y\up\in \Cbf^{n+2}(\A)$ be the
objects obtained from data {\rm (i)} by splicing, using Proposition~\ref{prop:splicing1}.
Then, data {\rm (ii)} yields a morphism $f\up\colon X\up\to Y\up$ in $Z^0(\Cbf(\A))$ such that $f^{i,i}=f^i$ for $1\le i\le n$ and $f^{0,0}=a^0$, $f^{n+1,n+1}=a^n$.
\end{prop}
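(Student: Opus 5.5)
We argue by induction on $n$, mirroring the construction of the spliced objects in Proposition~\ref{prop:splicing1}, and use Lemma~\ref{lem:splicing_morphism} at each step. Write $L_i\up\in\Cbf_{[i-1,i+1]}(\A)$ for the shifted $3$-term objects $M^{i-1}\to X^i\to M^i$ placed in degrees $i-1,i,i+1$, and $L'_i\up$ for the corresponding objects built from $N^{i-1}\to Y^i\to N^i$. Write $\theta_i\up\co L_i\up\to L_i'\up$ for the given closed morphisms in (ii), transported along the appropriate shift. Shifting changes signs of components, so $\theta_i\up$ is the shift of $(a^{i-1},f^i,a^i,c^{i-1},b^i,t^{i-1})$ with the sign conventions of Definition~\ref{dfn:shifts}. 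Closedness is preserved by shifting, and the diagonal components $a^{i-1},f^i,a^i$ are unchanged up to sign, since $[m]$ acts on morphisms by an identity-type identification.

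Following the proof of Proposition~\ref{prop:splicing1}, define inductively from the right:
\[
X_{(n)}\up = L_n\up, \qquad X_{(i)}\up = L_i\up\ast X_{(i+1)}\up,
\]
where we use the degree-shifted version of Lemma~\ref{lem:splising1'}. Here $L_i$ is placed in degrees $[i-1,i+1]$ and $X_{(i+1)}\up\in\Cbf_{[i,n+1]}(\A)$. For the last factor, $X_{(n)}$ is the object $M^{n-1}\to X^n\to M^n$ with $m^n=\id$. The gluing object is $M^i=L_i^{i+1}$, which must equal the degree-$i$ term of $X_{(i+1)}$. This explains the conventions $e^0=\id$ and $m^n=\id$: in the spliced complex, the degree-$i$ object of $X_{(i+1)}$ is $M^i$ only after collapsing. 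More precisely, I would take the pieces as $M^{i-1}\to X^i\to M^i$ and splice along $M^i$ at the level of the intermediate objects, as in the paper's construction. One then checks, as in Proposition~\ref{prop:splicing1}, that $X_{(1)}\up=X\up$ and similarly $Y_{(1)}\up=Y\up$.

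The morphism is built by the same induction. Set $g_{(n)}\up=\theta_n\up$. Given $g_{(i+1)}\up\co X_{(i+1)}\up\to Y_{(i+1)}\up$ whose lowest diagonal component is $a^i$, apply Lemma~\ref{lem:splicing_morphism} to $\theta_i\up$ and $g_{(i+1)}\up$. The required compatibility $\theta_i^{i+1,i+1}=g_{(i+1)}^{i,i}$ in $\A$ holds because both equal $a^i$; this is exactly why the data in (ii) share the components $a^i$ between consecutive pieces. The lemma gives a closed $g_{(i)}\up=\theta_i\up\ast g_{(i+1)}\up$ with $\sig_{\le i}g_{(i)}\up=\sig_{\le i}\theta_i\up$ and $\sig_{\ge i+1}g_{(i)}\up=\sig_{\ge i+1}g_{(i+1)}\up$. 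Reading off the diagonals then gives $f^{i,i}=f^i$ for $1\le i\le n$, $f^{0,0}=a^0$ and $f^{n+1,n+1}=a^n$ for $f\up=g_{(1)}\up$.

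The main obstacle is bookkeeping rather than conceptual. Lemma~\ref{lem:splising1'} and Lemma~\ref{lem:splicing_morphism} are stated for the window $[0,2]$ glued to $[1,m]$, so each step must be transported by a shift $[-(i-1)]$. This shift multiplies differentials and off-diagonal components by signs, which must be checked to agree with the splicing formula in Proposition~\ref{prop:splicing1}, including the products $m^{j-1}\ci h^{j-2}\ci\cdots\ci h^i\ci e^i$. I would first verify that shifting by an even or odd amount is compatible with the cone construction used in Lemma~\ref{lem:splising1'}; this amounts to saying that $\Cone$ commutes with shifts up to the sign convention of Definition~\ref{dfn:shifts}. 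If the signs fail to match on the nose, I would insert the canonical sign-twisting isomorphism, which is the identity on objects and so leaves the diagonal components unchanged.
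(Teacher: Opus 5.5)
Your proposal is correct and is the paper's argument: the proof there is just Lemma~\ref{lem:splicing_morphism} applied inductively, gluing from the right. As you note, the shared end components $a^i$ supply exactly the compatibility $\varphi^{2,2}=\psi^{1,1}$ needed at each step, and the strict identities $\sig_{\le i}g_{(i)}\up=\sig_{\le i}\theta_i\up$ and $\sig_{\ge i+1}g_{(i)}\up=\sig_{\ge i+1}g_{(i+1)}\up$ let you read off the diagonal components.

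On signs, two remarks. A closed degree-$0$ morphism can be shifted without changing any of its components. If you take $X\up,Y\up$ to be the iterated splices themselves, as in the proof of Proposition~\ref{prop:splicing1}, no sign-twisting is needed at all; if you do twist, apply the same sign pattern to $X\up$ and $Y\up$ so that the diagonal components are preserved.
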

\begin{proof}
This is shown by using Lemma~\ref{lem:splicing_morphism} inductively. 
\end{proof}

\normalcolor

\subsection{$n$-cluster tilting subcategories of exact dg-categories}

In this subsection, we explain how to construct $n$-exangulated categories from $n$-cluster tilting subcategories of exact dg-categories. Throughout this subsection, we fix a strictly connective and strictly additive exact dg-category $\scE$, and assume that $H^0(\scE)$ is skeletally small. 
From now on, we simply call an admissible $1$-exact sequence
$X\up\in\Cbf^3(\scE)$ in the exact dg-category $\scE$ a \emph{conflation} in $\scE$, and represent it using the notation introduced in Notation~\ref{notation:3t}. Similarly for
morphisms between them.

By \cite[Theorem~4.26]{C1}, $H^0(\scE)$ has a structure of an extriangulated category $(H^0(\scE),\bbE,\sfr)$.
We note that, if
\[
X^0\xrightarrow{d_X^{0,1}}X^1\xrightarrow{d_X^{1,2}}X^2\quad (d_X^{0,2}\co X^0\to X^2).
\]
is a conflation in $\scE$, then the sequence
$
X^0\xrightarrow{\ovl{d}_X^{0}}X^1\xrightarrow{\ovl{d}_X^{1}}X^2
$ in $H^0(\scE)$ is a conflation in the extriangulated category $(H^0(\scE),\bbE,\sfr)$, in the sense of \cite[Definition~2.15]{NP1}.
Moreover, by using the coend over a skeleton $\mathrm{sk}\,H^0(\scE)$ of $H^0(\scE)$, one can define $\mathbb{E}^i$ inductively for any positive integer $i>0$ by
\[
\mathbb{E}^1=\mathbb{E}, \qquad
\mathbb{E}^i=\mathbb{E}^{i-1}\otimes_{\mathrm{sk}\,H^0(\scE)}\mathbb{E},
\]
in a manner similar to \cite{GNP}.
Then, by \cite[Theorem~3.5]{GNP}, for any conflation
$
A \xrightarrow{\mathbf{x}} B \xrightarrow{\mathbf{y}} C
$
in the extriangulated category $(H^0(\scE),\bbE,\sfr)$, one obtains long exact sequences continuing to the right
\scriptsize
\[
H^0\scE(\blank,A)\xrightarrow{\mathbf{x}\ci\blank}
H^0\scE(\blank,B)\xrightarrow{\mathbf{y}\ci\blank}
H^0\scE(\blank,C)\to
\bbE(\blank,A)\xrightarrow{\mathbf{x}\sas}
\bbE(\blank,B)\xrightarrow{\mathbf{y}\sas}
\bbE(\blank,C)\to
\bbE^2(\blank,A)\xrightarrow{\mathbf{x}\sas}
\bbE^2(\blank,B)\xrightarrow{\mathbf{y}\sas}\cdots
\]
\normalsize
and
\scriptsize
\[
H^0\scE(C,\blank)\xrightarrow{\blank\ci\mathbf{y}}
H^0\scE(B,\blank)\xrightarrow{\blank\ci\mathbf{y}}
H^0\scE(A,\blank)\to
\bbE(C,\blank)\xrightarrow{\mathbf{y}\uas}
\bbE(B,\blank)\xrightarrow{\mathbf{x}\uas}
\bbE(A,\blank)\to
\bbE^2(C,\blank)\xrightarrow{\mathbf{y}\uas}
\bbE^2(B,\blank)\xrightarrow{\mathbf{x}\uas}\cdots.
\]
\normalsize
These exact sequences can also be obtained from
\cite[Proposition~3.20]{HLN2} when $(H^0(\scE),\bbE,\sfr)$
has enough projectives and enough injectives.

\begin{rem}
If an extriangulated category is \emph{algebraic} (see \cite[p.3]{C1}), that is, if it admits an enhancement by an exact dg-category, then there always exists a strictly connective and strictly additive exact dg-category $\scE$ such that $H^0(\scE)$ is equivalent to the given extriangulated category.
\end{rem}

\begin{dfn}
We say that $\scE$ satisfies condition (WIC) if the following hold:
\begin{enumerate}
\item For any sequence of morphisms $A\xrightarrow{f}Y\xrightarrow{g}Z$ in $Z^0(\scE)$, if $g\circ f$ is a deflation, then $g$ is also a deflation.
\item  For any sequence of morphisms $A\xrightarrow{f}Y\xrightarrow{g}Z$ in $Z^0(\scE)$, if $g\circ f$ is an inflation, then $f$ is also an inflation.
\end{enumerate}
\end{dfn}

A morphism $f$ in $Z^0(\scE)$ is an
inflation in the exact dg-category $\scE$ if and only if its image
$\overline{f}$ is an inflation in the extriangulated category
$H^0(\scE)$. Similarly for deflations.
Thus, the above condition is equivalent to saying that the extriangulated category $H^0(\scE)$ satisfies condition (WIC) in the sense of \cite[Condition~5.8]{NP1}. By \cite[Proposition~2.7]{Kla}, this is also equivalent to that $H^0(\scE)$ is weakly idempotent complete.

\begin{dfn}\label{dfn:adm_applox}
Let $\scT\subset \scE$ be a full dg-subcategory. A morphism $\mathbf{f}\colon T\to Z$ in $H^0(\scE)$ is called a \emph{right $H^0(\scT)$-approximation} if $T\in\ob(H^0(\scT))$ and the induced morphism
$$H^0(\scE)(T',T)\xrightarrow{-\circ \mathbf{f}} H^0(\scE)(T',Z)$$
is surjective for every $T'\in\ob(H^0(\scT))$. 

A right $H^0(\scT)$-approximation $\mathbf{f}\colon T\to Z$ is called \emph{admissible} if, in addition, it is a deflation in $H^0(\scE)$.
Left $H^0(\scT)$-approximations and admissible left $H^0(\scT)$-approximations are defined dually.
\end{dfn}

\begin{dfn}\label{dfn:n-cluster-tilting-subcategory}
A full dg-subcategory $\scT\subset\scE$ is called an \emph{$n$-cluster tilting subcategory} if it satisfies the following conditions.
\begin{enumerate}
\item $H^0(\scT)$ is a strongly functorially finite subcategory of $H^0(\scE)$, that is, every object in $H^0(\scE)$ has both an admissible right $H^0(\scT)$-approximation and an admissible left $H^0(\scT)$-approximation. 
\item The following equalities hold:
\begin{eqnarray*}
\scT&=&\{M\in \scE\mid \bbE^i(M,H^0(\scT))=0 \text{ for } 0\le i\le n-1 \}\\
&=&\{M\in \scE\mid \bbE^i(H^0(\scT),M)=0 \text{ for } 0\le i\le n-1 \}.
\end{eqnarray*}
\end{enumerate}

If furthermore an $n$-cluster tilting subcategory $\scT$ of $\scE$ satisfies $H^{i}\scE(\scT,\scT)=0$ whenever $i\notin n\Z$, then we call $\scT$ an \emph{$n\Z$-cluster tilting subcategory}.
\end{dfn}

\begin{rem}
If $\scT$ is an $n$-cluster tilting subcategory of $\scE$, then $\scT$ automatically becomes strictly connective and strictly additive.
\end{rem}

\begin{rem}
If $H^0(\scE)$ has enough projectives and injectives, then condition~(1) in Definition~\ref{dfn:n-cluster-tilting-subcategory} is equivalent to saying that $H^0(\scT)$ is simply a functorially finite subcategory of $H^0(\scE)$. Thus, in this case, $\scT\subset\scE$ is an $n$-cluster tilting subcategory in the sense of Definition~\ref{dfn:n-cluster-tilting-subcategory} if and only if $H^0(\scT)\subset H^0(\scE)$ is an $n$-cluster tilting subcategory in the sense of
\cite[Definition~3.21]{HLN2}. See \cite[Remark~3.22]{HLN2} for details.
\end{rem}

\begin{rem}\label{rem:nZ-cluster}
If $\scE$ is a stable dg-category, then a full dg-subcategory $\scT$ is an $n$-cluster tilting subcategory in the above sense if and only if it is an $n\mathbb{Z}$-cluster tilting subcategory in the sense of \cite[Definition~1.3.1]{JKM}. This follows from \cite[Remark~1.3.5]{JKM}.
\end{rem}

For the rest of this subsection, we assume that $\scE$ satisfies  condition~(WIC), and 
fix an $n$-cluster tilting subcategory $\scT\subset\scE$.

\begin{lem}\label{lem:defl_approx}
Let $\mathbf{f}\colon T\to Z$ be a right $H^0(\scT)$-approximation in $H^0(\scE)$. Then $\mathbf{f}$ is a deflation.
\end{lem}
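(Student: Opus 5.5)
The plan is to work entirely in the extriangulated category $(H^0(\scE),\bbE,\sfr)$ and to use condition (WIC) in the form of \cite[Condition~5.8]{NP1}. As recalled above, (WIC) for $\scE$ is equivalent to (WIC) for $H^0(\scE)$. So it suffices to show that $\mathbf{f}$ fits into a factorization $\mathbf{f}'=\mathbf{f}\circ\mathbf{g}$ in which $\mathbf{f}'$ is a deflation; then (WIC)(1) shows that $\mathbf{f}$ is a deflation.

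To produce such a factorization, I first use condition~(1) of Definition~\ref{dfn:n-cluster-tilting-subcategory}. It provides an admissible right $H^0(\scT)$-approximation $\mathbf{f}'\colon T'\to Z$, which by definition is a deflation in $H^0(\scE)$, with $T'\in\ob(H^0(\scT))$. Since $\mathbf{f}\colon T\to Z$ is a right $H^0(\scT)$-approximation and $T'\in H^0(\scT)$, the map $H^0(\scE)(T',T)\xrightarrow{\mathbf{f}\circ-}H^0(\scE)(T',Z)$ is surjective. Hence there exists $\mathbf{g}\colon T'\to T$ with $\mathbf{f}\circ\mathbf{g}=\mathbf{f}'$.

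Now $\mathbf{f}\circ\mathbf{g}$ is a deflation, so $\mathbf{f}$ is a deflation by (WIC)(1). At the level of $\scE$, I choose representatives $g\in Z^0(\scE)(T',T)$ and $f\in Z^0(\scE)(T,Z)$. By Remark~\ref{rem:strict_npo}~(2), applied to $\scE$ with $n=1$, $f\circ g$ is a deflation because its class equals $\mathbf{f}'$. Then (WIC)(1) for $\scE$ gives that $f$, and hence $\mathbf{f}$, is a deflation.

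I do not expect a real obstacle here. The only point needing care is the direction of composition in Definition~\ref{dfn:adm_applox}: the surjectivity is meant as postcomposition with $\mathbf{f}$, despite the notation ``$-\circ\mathbf{f}$''. The other point is the passage between deflations in $\scE$ and in $H^0(\scE)$, which is settled by the remark preceding the definition of (WIC).
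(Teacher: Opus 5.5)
Your proposal is correct and is the paper's argument: take an admissible right $H^0(\scT)$-approximation $\mathbf{f}'\colon T'\to Z$, factor it as $\mathbf{f}'=\mathbf{f}\circ\mathbf{g}$ using that $\mathbf{f}$ is a right approximation, and conclude by (WIC)(1). Your extra step of lifting to representatives in $Z^0(\scE)$ via Remark~\ref{rem:strict_npo}~(2) is harmless but redundant, given the stated equivalence between (WIC) for $\scE$ and for $H^0(\scE)$.
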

\begin{proof}
Let $\mathbf{g}\colon T'\to Z$ be an admissible right $H^0(\scT)$-approximation of $Z$. Then $\mathbf{g}$ factors through $\mathbf{f}$, since $\mathbf{f}$ is a right $H^0(\scT)$-approximation. Since $\mathbf{g}$ is a deflation, condition~(WIC) implies that $\mathbf{f}$ is also a deflation.
\end{proof}

\begin{prop}\label{prop:n-exactness_from_splice}
Let
\[
M^{i-1}\xrightarrow{m^{i-1}} X^{i}\xrightarrow{e^{i}} M^{i}\quad(h^i\co M^{i-1}\to M^i)
\]
be conflations in $\scE$ for $1\le i\le n$, and put $X^0=M^0$
, $X^{n+1}=M^n$.
Assume $X^i\in \scT$ for $0\le i\le n+1$.
Then, the following holds.
\begin{enumerate}
  \item
  $\ovl{e^i}$ is a right $H^0(\scT)$-approximation, for any $1\le i\le n-1$.

  \item
  $X\up\in\Cbf^{n+2}(\scT)$ obtained by Proposition~\ref{prop:splicing1} is left $n$-exact as a one-sided twisted complex in $\scT$.
\end{enumerate}
The dual statements of $(1)$ and $(2)$ also hold. In particular, $X\up$ is $n$-exact in $\scT$.
\end{prop}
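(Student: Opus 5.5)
The plan is to handle the two assertions separately. For part (1) I would use the higher extension functors $\bbE^k$ on $H^0(\scE)$ together with the long exact sequences of \cite[Theorem~3.5]{GNP}, recalled above. For part (2) I would use the cone description of spliced complexes from Lemma~\ref{lem:splising1'} and Proposition~\ref{prop:splicing1}, and induct along the splicing. The dual statements then follow by the same arguments applied to $\scE\op$, and combining (2) with its dual gives that $X\up$ is $n$-exact.

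For (1), I first apply the covariant long exact sequence to each conflation $M^{j-1}\to X^j\to M^j$, evaluated at $T\in\ob(H^0(\scT))$. Since $X^j\in\scT$, the $n$-cluster tilting condition gives $\bbE^k(T,X^j)=0$ for $1\le k\le n-1$. The connecting maps therefore give isomorphisms $\bbE^k(T,M^j)\cong\bbE^{k+1}(T,M^{j-1})$ for $1\le k\le n-2$, and a surjection in the boundary case. Iterating from $M^0=X^0\in\scT$ yields
\[
\bbE^1(T,M^{i-1})\cong\bbE^{i}(T,M^0)=0\quad\text{for } 1\le i\le n-1,
\]
since $i\le n-1$. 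The exact sequence $H^0\scE(T,X^i)\xrightarrow{\ovl{e^i}\ci\blank}H^0\scE(T,M^i)\to\bbE(T,M^{i-1})=0$ then shows that $\ovl{e^i}$ is a right $H^0(\scT)$-approximation. (By Lemma~\ref{lem:defl_approx} it is automatically admissible.) The index bookkeeping must be checked carefully at the ends $i=1$ and $i=n-1$.

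For (2), let $Z_{(i)}\up$ be the splice of the last $n-i+1$ conflations, so that $Z_{(i)}\up$ starts at $M^{i-1}$ and $Z_{(1)}\up=X\up$. By Lemma~\ref{lem:splising1'} we have $Z_{(i)}\up=L_i\up\ast Z_{(i+1)}\up=\Cone(\al_{Z_{(i+1)},}\ci\al_{L_i,2})$, where $L_i\up$ is the $i$-th conflation. Applying the octahedral axiom in $\Kbf(\scE)$ to this composite gives a distinguished triangle relating $L_i\up$, $Z_{(i)}\up$ and a shift of $Z_{(i+1)}\up$, with the common term $M^i$ cancelling. I then apply $\Kbf(\scE)(A[-k],\blank)$ for $A\in\scT$. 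This is legitimate because $\Kbf(\scT)\subset\Kbf(\scE)$ is full, so left $n$-exactness can be tested there.

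The inputs are as follows. First, $L_i\up$ is a conflation, hence $1$-exact. Its remaining Hom group $\Kbf(\scE)(A[-2],L_i\up)$ is identified, via Lemma~\ref{lem:n-exact_ordinary}-type arguments (or Chen's homotopy cokernel property), with $\operatorname{Cok}(H^0\scE(A,X^i)\to H^0\scE(A,M^i))$. That cokernel vanishes by part (1), or embeds in $\bbE(A,M^{i-1})$. Second, the groups $\Kbf(\scE)(A[-k],M^j[-l])\cong H^{k-l}\scE(A,M^j)$ vanish for $k>l$ by connectivity. A downward induction on $i$ then shows $\Kbf(A[-k],Z_{(i)}\up)=0$ for all $k\le n-i+1$; the base case $i=n$ is the conflation $L_n\up$ itself.

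The main obstacle is this induction. The octahedral triangle must be set up with the correct shifts. One must also check that the one group not killed by connectivity or by $1$-exactness, namely the degree-$(n+1)$ contribution coming from $\bbE^{1}(A,M^{i-1})$, actually vanishes in the required range. This is where the precise range $1\le i\le n-1$ from part (1) must match the requirement $k\le n$. As a fallback, I would compare directly with the long exact sequence $(\ref{long_seq_S})$ of Lemma~\ref{lem:cohom_vanish} for $\sig_{\le k}X\up$.
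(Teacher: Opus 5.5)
Your proposal is correct and follows the paper's proof. Part (1) is the same dimension shift $\E(T,M^{i-1})\cong\E^2(T,M^{i-2})\cong\cdots\cong\E^{i}(T,M^0)=0$. Part (2) uses the same octahedral triangle: in the paper it is $Y_{(i+1)}\xrightarrow{\eta_{(i+1)}}Y_{(i)}\to C_{(i+1)}$, where $C_{(i+1)}$ is your shifted $L_i$. There, the only group not killed by $1$-exactness or connectivity is $\Kbf(\scE)(A[-2],L_i)\cong\operatorname{Cok}(\ovl{e^i}\ci\blank)$, which vanishes by (1) because the induction steps only involve $1\le i\le n-1$. Your worry about a degree-$(n+1)$ group is unfounded: $L_n$ enters only as the base case, and there left $1$-exactness already gives the vanishing needed for $k\le n$.
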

\begin{proof}
First, we prove~(1). It suffices to show that $\E(T,M^{n-i})=0$ for every $2\le i \le n$ and every $T\in\scT$. Let $T\in\scT$ be any object and $2\le i\le n$. Then we have the following isomorphisms:
$$\E(T,M^{n-i})\cong \E^2(T,M^{n-i-1})\cong \cdots \cong \E^{n-i}(T,M^1)\cong \E^{n-i+1}(T,M^0)$$
Since $M^0\in\scT$ and $\E^{n-i+1}(H^0(\scT),H^0(\scT))=0$, it follows that $\E(T,M^{n-i})=0$. 

Next, we prove~(2). For $1\le i\le n$, let $Y_{(i)}\up\in \Cbf_{[n-i,n+1]}(\scE)$ be the object obtained by splicing
\[
M^{p-1}\xrightarrow{m^{p-1}} X^{p}\xrightarrow{e^{p}} M^{p}\quad(h^p\co M^{p-1}\to M^p)
\]
for $n-i+1\le p\le n$ by using Proposition~\ref{prop:splicing1}.
Note that the terms of $Y_{(i)}\up$ are given by
$$(Y_{(i)}\up)^j=\begin{cases}
0 & \text{if}\ j< n-i\\
M^{n-i} & \text{if}\ j=n-i\\
X^{j} & \text{if}\ n-i<j\le n+1\\
0 & \text{if}\ j>n+1
\end{cases}$$
It is easy to see that $\sig_{\ge n-i+1}Y_{(i)}\up=\sig_{\ge n-i+1}X\up$ and $Y_{(n)}\up=X\up$. We define a morphism
$\eta_{(i+1)}\up\colon Y\up_{(i+1)}\to Y\up_{(i)}$ in $\Cbf(\scE)$ as follows:
$$\eta^{j,j}_{(i+1)}:=
\begin{cases}
    e^{n-i} & \text{ if } j=n-i \\
    \id_{X^{j}} & \text{ if } n-i+1 \le j\le n\\
    \id_{M^n} & \text{ if } j=n+1\\
    0 & \text{otherwise}
\end{cases}
\quad, \quad
\eta^{j,j+1}_{(i+1)}:=
\begin{cases}
    h^{n-i-1} & \text{ if } j=n-i-1 \\
    0 & \text{otherwise}
\end{cases}
$$
$$\text{and } \eta^{j,k}_{(i+1)}=0 \text{ for all } k\ne j,j+1.$$
It is straightforward to check that $\eta_{(i+1)}\up$ is a closed morphism of degree $0$.
We claim that $\eta_{(i+1)}\up\colon Y\up_{(i+1)}\to Y\up_{(i)}$ induces an isomorphism
$$\Kbf(\scE)(T[-j],Y\up_{(i+1)})\xrightarrow{\cong}\Kbf(\scE)(T[-j],Y\up_{(i)})$$
for every $T\in\scT$ and every $j\in\mathbb{Z}$.

By construction, the object $\Cone(\sigma_{\le n-i}(\eta_{(i+1)}))$ corresponds to the following one-sided twisted complex:
$$\begin{tikzcd}
    \cdots & 0 & M^{n-i-1} & X^{n-i} & M^{n-i} & 0 & \cdots
    \Ar{1-1}{1-2}{}
    \Ar{1-2}{1-3}{}
    \Ar{1-3}{1-4}{"m^{n-i-1}"'}
    \Ar{1-4}{1-5}{"e^{n-i}"'}
    \Ar{1-5}{1-6}{}
    \Ar{1-6}{1-7}{}
    \Ar{1-3}{1-5}{bend left =20, dashed, "h^{n-i-1}"}
\end{tikzcd}$$
where the dashed arrow denotes a degree $-1$ differential and $M^{n-i}$ is placed in degree $n-i$ in the above diagram. Denote this one-sided twisted complex by $C\up_{(i+1)}$. Since
$$M^{n-i-1}\to X^{n-i}\to M^{n-i}\quad (h^{n-i-1}\colon M^{n-i-1}\to M^{n-i})$$
is a conflation, we have
$$\Kbf(\scE)(T[-j],C\up_{(i+1)})=0$$
for every $T\in\scT$ and every $j\neq n-i$. Moreover, by~(1), we also have $\Kbf(\scE)(T[i-n],C\up_{(i+1)})=0$. Hence
$$\Kbf(\scE)(T[-j],C\up_{(i+1)})=0$$
for every $T\in\scT$ and every $j\in\mathbb{Z}$.

By the octahedral axiom, we obtain the following diagram:

$$
\begin{tikzcd}
\sig_{\ge n-i+1}X\up & \sig_{\ge n-i+1}X\up & {} \\
Y\up_{(i+1)}& Y\up_{(i)} & C\up_{(i+1)} \\
\sig_{\le n-i} Y\up_{(i+1)} & \sig_{\le n-i} Y\up_{(i)} & C\up_{(i+1)}
\Ar{1-1}{1-2}{equal}
\Ar{2-1}{2-2}{"\eta_{(i+1)}\up"}
\Ar{2-2}{2-3}{}
\Ar{3-1}{3-2}{"\sig_{\le n-i}\eta_{(i+1)}\up"}
\Ar{3-2}{3-3}{}
\Ar{1-1}{2-1}{}
\Ar{2-1}{3-1}{}
\Ar{1-2}{2-2}{}
\Ar{2-2}{3-2}{}
\Ar{2-3}{3-3}{equal}
\end{tikzcd}
$$
In particular, we obtain a triangle
$$Y\up_{(i+1)}\xrightarrow{\eta_{(i+1)}\up} Y\up_{(i)} \to C\up_{(i+1)}\to Y\up_{(i+1)}[1]$$
Since $\Kbf(\scE)(T[-j],C\up_{(i+1)})=0$ for every $j\in\mathbb{Z}$, the morphism $\eta_{(i+1)}\up$ induces an isomorphism
$$\Kbf(\scE)(T[-j],Y\up_{(i+1)})\xrightarrow{\cong}\Kbf(\scE)(T[-j],Y\up_{(i)})$$
for every $T\in\scT$ and every $j\in \mathbb{Z}$. In particular, iterating these isomorphisms, we obtain
$$\Kbf(\scE)(T[-j],Y\up_{(n)})\cong \cdots \cong \Kbf(\scE)(T[-j],Y\up_{(1)})=0$$
for every $T\in\scT$ and every $j\leq n$, by the definition of $Y\up_{(1)}$. Since $X\up=Y\up_{(n)}$, it follows that $X\up$ is left $n$-exact.
\end{proof}

\begin{cor}\label{cor:defl_are_defl}
Let $g\in Z^0(\scE)(Y,Z)$ be a deflation in $\scE$. Then there exists an $n$-exact sequence $X\up\in\Kbf^{n+2}(\scT)$ such that $X^n=Y$, $X^{n+1}=Z$ and $d_X^{n,n+1}=g$. The dual statement also holds. 
\end{cor}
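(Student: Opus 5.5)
The plan is to build $X\up$ by splicing $n$ conflations in $\scE$ obtained from iterated admissible approximations, and then to apply Proposition~\ref{prop:n-exactness_from_splice}. Since $X\up$ is to lie in $\Kbf^{n+2}(\scT)$, I read the statement with $Y,Z\in\scT$ (implicitly assumed). Throughout we have $n\ge 2$, and for $T,T'\in\scT$ we have $\bbE^i(T,T')=0$ for $1\le i\le n-1$ by Definition~\ref{dfn:n-cluster-tilting-subcategory}.

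\emph{Step 1: the last conflation.} Since $g$ is a deflation, there is a conflation $M^{n-1}\xrightarrow{m^{n-1}}Y\xrightarrow{g}Z$ (with some $h^{n-1}$) in $\scE$. Put $X^n=Y$, $e^n=g$, $M^n=Z$ and $X^{n+1}=Z$.

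\emph{Step 2: the remaining conflations.} Proceed by downward induction on $k=n-1,\dots,1$. Take an admissible right $H^0(\scT)$-approximation $X^k\to M^k$, which exists by Definition~\ref{dfn:n-cluster-tilting-subcategory}(1). Lift it to a deflation $e^k\in Z^0(\scE)$; by Remark~\ref{rem:strict_npo}(2), any lift of a deflation in $H^0$ is a deflation in $\scE$. Complete it to a conflation $M^{k-1}\xrightarrow{m^{k-1}}X^k\xrightarrow{e^k}M^k$. This yields conflations for $1\le i\le n$ with $X^1,\dots,X^n\in\scT$ and $X^{n+1}=Z\in\scT$.

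\emph{Step 3: $M^0\in\scT$ (main obstacle).} Set $X^0=M^0$. By the cluster tilting equality it suffices to show $\bbE^i(T,M^0)=0$ for all $T\in\scT$ and $1\le i\le n-1$. This uses the long exact sequences of \cite[Theorem~3.5]{GNP} for the conflations $M^{k-1}\to X^k\to M^k$, with $1\le k\le n-1$.
\begin{itemize}
\item First, exactness of $H^0\scE(T,X^k)\to H^0\scE(T,M^k)\to\bbE(T,M^{k-1})\to\bbE(T,X^k)=0$, together with surjectivity of the approximation, gives $\bbE(T,M^{k-1})=0$ for $1\le k\le n-1$.
\item Second, because $\bbE^j(T,X^k)=0$ for $1\le j\le n-1$, the connecting maps give isomorphisms $\bbE^j(T,M^k)\cong\bbE^{j+1}(T,M^{k-1})$ for $1\le j\le n-2$.
\item Chaining these, $\bbE^i(T,M^0)\cong\bbE^{i-1}(T,M^1)\cong\cdots\cong\bbE^1(T,M^{i-1})=0$ for $1\le i\le n-1$, since $i-1\le n-2$.
\end{itemize}
The care needed here is purely in matching indices so that every $X^k$ used in the chain lies in $\scT$, and in keeping the chain within the range $j\le n-1$ where $\bbE^j(\scT,\scT)=0$.

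\emph{Step 4: splice and conclude.} Apply Proposition~\ref{prop:splicing1} to obtain $X\up\in\Cbf^{n+2}(\scT)$. Its degree-$n$ term is $X^n=Y$ and its last term is $X^{n+1}=M^n=Z$. Its last differential is $d_X^{n,n+1}=m^n\circ e^n=\id_Z\circ g=g$. Since all terms lie in $\scT$, Proposition~\ref{prop:n-exactness_from_splice} shows that $X\up$ is $n$-exact in $\scT$.

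The dual statement, for an inflation, follows by the same argument using admissible left approximations and the contravariant long exact sequences.
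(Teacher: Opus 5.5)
Your proposal is correct and takes the same route as the paper: the paper likewise constructs the conflations required by Proposition~\ref{prop:n-exactness_from_splice}, with $X^n=Y$, $M^n=Z$ and $e^n=g$, and then applies that proposition. Your iterated admissible approximations and the dimension-shifting argument showing $M^0\in\scT$ spell out what the paper leaves as ``we can construct'', using the same style of argument the paper gives in Claim~\ref{claim:LM}.
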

\begin{proof}
Since $g$ is a deflation, we can construct conflations as in the assumption of Proposition~\ref{prop:n-exactness_from_splice} with $X^n=Y$, $M^n=Z$ and $e^n=g$. Proposition~\ref{prop:n-exactness_from_splice} then shows that the resulting one-sided twisted complex $X\up$ is an $n$-exact sequence in $\scT$ such that 
$d_X^{n,n+1}=g$.
\end{proof}

\begin{prop}\label{prop:chara_conf}
The following conditions are equivalent for $X\up\in\Kbf^{1,n,1}(\scT)$:
\begin{enumerate}
    \item $X\up$ is isomorphic to a one-sided twisted complex obtained by Proposition~\ref{prop:n-exactness_from_splice} in $\Kbf^{1,n,1}(\scT)$. 
  \item $X\up$ is a left $n$-exact sequence in $\scT$, and $d^{n,n+1}_X$ is a deflation in $\scE$.
  \item $X\up$ is a right $n$-exact sequence in $\scT$, and $d^{0,1}_X$ is an inflation in $\scE$. 
  \item $X\up$ is an $n$-exact sequence in $\scT$, and $d^{0,1}_X$ is an inflation and $d^{n,n+1}_X$ is a deflation in $\scE$.
\end{enumerate}
\end{prop}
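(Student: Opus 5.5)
I would prove the cycle of implications $(1)\Rightarrow(4)\Rightarrow(2)\Rightarrow(1)$, and obtain $(3)\Leftrightarrow(1)$ by the dual argument.

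For $(1)\Rightarrow(4)$, note that $n$-exactness is invariant under isomorphism in $\Kbf^{1,n,1}(\scT)$, hence in $\Kbf^{n+2}(\scT)$, by Remark~\ref{rem:isom_n-ex}. Proposition~\ref{prop:n-exactness_from_splice} shows that a spliced complex is $n$-exact, with $d^{0,1}=m^0$ an inflation and $d^{n,n+1}=e^n$ a deflation. It remains to see that these properties pass along an isomorphism $f\up$ in $\Kbf^{1,n,1}(\scT)$. Here $s(f\up)=\ovl{f}^0$ and $t(f\up)=\ovl{f}^{n+1}$ are isomorphisms in $H^0(\scE)$. Closedness of $f\up$ gives $\ovl{d}_Y^n\ci\ovl{f}^n=\ovl{f}^{n+1}\ci\ovl{d}_X^n$. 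Since deflations in the extriangulated category $H^0(\scE)$ are closed under composition with isomorphisms, $\ovl{d}_Y^n\ci\ovl{f}^n$ is a deflation whenever $\ovl{d}_X^n$ is. Condition (WIC) then makes $\ovl{d}_Y^n$ a deflation. Dually, $\ovl{d}_Y^0$ is an inflation. By the remark that $f$ is an inflation or deflation in $\scE$ exactly when $\ovl f$ is one in $H^0(\scE)$, this gives (4). The implication $(4)\Rightarrow(2)$ is trivial.

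For $(2)\Rightarrow(1)$, let $X\up$ be left $n$-exact in $\scT$ with $g=d_X^{n,n+1}$ a deflation in $\scE$. By Corollary~\ref{cor:defl_are_defl} there is a spliced complex $Z\up$, which is $n$-exact, with $d_Z^{n,n+1}=g$. Both $X\up$ and $Z\up$ are left $n$-exact in $\scT$ with the same last differential. Corollary~\ref{cor:uniqueness_n-kernel}, applied in the dg-category $\scT$, then gives mutually inverse $f\up,f\upp$ in $\Kbf^{n+2}(\scT)$ with $\ovl{f}^{n+1}=\id$.

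To reach $\Kbf^{1,n,1}(\scT)$ I also need $\ovl{f}^0$ to be an isomorphism, and this is the main obstacle. I would handle it through the left end. Left $n$-exactness in $\scT$ gives exactness of $H^0\scT(T,X^0)\to H^0\scT(T,X^1)\to H^0\scT(T,X^2)$ via the $w=0$ part of the long exact sequence argument (Lemma~\ref{lem:long_ex_seq}, which uses only connectivity at $w=0$), and likewise for $Z\up$. The homotopy equivalence $f\up$ restricts to an isomorphism of the left ends, so $\ovl{f}^0$ is an isomorphism by a Yoneda-type argument. Alternatively, applying $\Kbf(\scT)(X^0[0],\blank)$ to the triangle of Corollary~\ref{cor:devide_f} should pin down $\ovl f^0$.

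Once $\ovl{f}^0$ is an isomorphism, I replace $Z\up$ by a reindexed copy with the same zeroth term, adjusting via Corollary~\ref{cor:inflation_replace} or Lemma~\ref{lem:4A}, so that $\ovl f^0=\id$. Corollary~\ref{cor:htpy_modif2} then upgrades $f\up$ to an isomorphism in $\Kbf^{1,n,1}(\scT)$. The replacement still arises from splicing, since only $m^0$ changes, by $m^0\ci(f^{0,0})^{-1}$ up to homotopy, and the first conflation can be modified accordingly using Remark~\ref{rem:strict_npo}(2) for $\scE$.
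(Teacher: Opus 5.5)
Your argument for $(1)\Rightarrow(4)$ is essentially the paper's: invariance under isomorphism plus (WIC), using the inverse in $\Kbf^{1,n,1}(\scT)$ for the inflation side. The gap is in $(2)\Rightarrow(1)$. The step ``$\ovl{f}^0$ is an isomorphism'' is false in general, so neither of your proposed justifications can work.

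For $n\ge 2$, the first term of a left (even two-sided) $n$-exact sequence is not determined by its last differential. Take a spliced $Z$, for instance one produced by Corollary~\ref{cor:defl_are_defl}, and any $0\neq A\in\scT$. Let $X=Z\oplus{}_AN_0$, where ${}_AN_0$ is the split sequence $A\xrightarrow{\id}A\to0\to\cdots\to0$ of Definition~\ref{dfn:splitN}. Then:
\begin{itemize}
\item $X$ satisfies (2), indeed (4), and $d_X^{n,n+1}=d_Z^{n,n+1}$;
\item but $X^0=Z^0\oplus A$.
\end{itemize}
So no $f$ as in Corollary~\ref{cor:uniqueness_n-kernel} can have $\ovl{f}^0$ invertible when, say, $H^0(\scT)$ is Krull--Schmidt. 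This already happens for an ordinary exact category $\scE$.

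Your ``Yoneda-type'' reasoning fails for the same reason: a homotopy equivalence of $(n+2)$-term complexes does not induce isomorphisms on individual terms. Even when $0\to H^0\scT(-,X^0)\to H^0\scT(-,X^1)\to H^0\scT(-,X^2)$ is exact, this only identifies $X^0$ with a kernel of $\ovl{d}_X^1$, and $X^1$ is itself determined only up to homotopy.

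In addition, Lemma~\ref{lem:long_ex_seq} at $w=0$ still requires $H^i\scE(T,X^j)=0$ for $-n<i<0$. Only the range $0<i<n$ is automatic from connectivity. The vanishing for $-n<i<0$ is not available for a general $n$-cluster tilting $\scT$; it is essentially the $n\bbZ$ condition.

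The missing idea is to build the spliced complex out of $X$ itself, with the same terms $X^0,\dots,X^{n+1}$. Then the comparison map has every component equal to the identity in $H^0$, and Corollary~\ref{cor:htpy_modif2} applies. The paper does this by downward induction:
\begin{itemize}
\item It starts from a conflation $M^{n-1}\to X^n\xrightarrow{d_X^{n,n+1}}X^{n+1}$.
\item It uses left $n$-exactness to produce comparison maps $\psi_{(i)}$ from $X$ to the partial splicings $Y_{(i)}$ that are the identity in $H^0$ in degrees $\ge n-i+1$.
\item It shows the next component $X^{n-i+1}\to M^{n-i+1}$ is a right $H^0(\scT)$-approximation. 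This makes it a deflation by Lemma~\ref{lem:defl_approx} and (WIC), and it is taken as $e^{n-i+1}$.
\item At the final step it shows the induced map $X^0\to M^0$ is an isomorphism in $H^0(\scE)$. Hence the first conflation can be taken to start at $X^0$ itself.
\end{itemize}
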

\begin{proof}
(1) $\Rightarrow$ (2) follows by Proposition~\ref{prop:n-exactness_from_splice}, Remark~\ref{rem:isom_n-ex} and condition (WIC). 

Let us show that (2) $\Rightarrow$ (1). Assume that $X\up$ is left $n$-exact and that $d_X^{n,n+1}\colon X^n\to X^{n+1}$ is a deflation. Then we can take the following conflation:
$$M^{n-1}\xrightarrow{m^{n-1}} X^{n}\xrightarrow{e^n} M^n\quad (h^{n-1}\colon M^{n-1}\to M^{n})$$
 where $M^n=X^{n+1}$ and $e^n=d_X^{n,n+1}$. As in the proof of Proposition~\ref{prop:n-exactness_from_splice}, let $Y\up_{(1)}\in \Kbf_{[n-1,n+1]}(\scE)$ be the one-sided twisted complex defined by the above conflation. 

Since $\Kbf(\scE)(E[-i],Y\up_{(1)})=0$ holds for every $i\leq n$ and every $E\in\scE$, there exists a morphism $\varphi\up_{(1)}\colon \sig_{< n} X\up \to \sig_{< n} Y\up_{(1)}$ which makes the following diagram in $\Kbf(\scE)$ commutative:
$$
\begin{tikzcd}
\sigma_{< n} X\up[-1] & \sigma_{\ge n}X\up & X\up & \sigma_{< n}X\up\\
\sigma_{< n} Y\up_{(1)}[-1] & \sigma_{\ge n}Y\up_{(1)} & Y\up_{(1)} & \sigma_{<n}Y\up_{(n-1)}
\Ar{1-1}{1-2}{}
\Ar{1-2}{1-3}{}
\Ar{1-3}{1-4}{}
\Ar{2-1}{2-2}{}
\Ar{2-2}{2-3}{}
\Ar{2-3}{2-4}{}
\Ar{1-1}{2-1}{"\varphi\up_{(1)}{[}-1{]}"}
\Ar{1-2}{2-2}{equal}
\Ar{1-4}{2-4}{"\varphi\up_{(1)}"}
\end{tikzcd}
$$
Thus, by Proposition~\ref{prop:give_a_morph}, we have a morphism $\psi\up_{(1)}$ so that $\ovl{\psi_{(1)}}^{i}$ is the identity morphism for $i=n,n+1$ and the following diagram in $\Kbf(\scE)$ commutes:
$$
\begin{tikzcd}
\sigma_{< n} X\up[-1] & \sigma_{\ge n}X\up & X\up & \sigma_{< n}X\up\\
\sigma_{< n} Y\up_{(1)}[-1] & \sigma_{\ge n}Y\up_{(1)} & Y\up_{(1)} & \sigma_{<n}Y\up_{(n-1)}
\Ar{1-1}{1-2}{}
\Ar{1-2}{1-3}{}
\Ar{1-3}{1-4}{}
\Ar{2-1}{2-2}{}
\Ar{2-2}{2-3}{}
\Ar{2-3}{2-4}{}
\Ar{1-1}{2-1}{"\varphi\up_{(1)}{[}-1{]}"}
\Ar{1-2}{2-2}{equal}
\Ar{1-3}{2-3}{"\psi\up_{(1)}"}
\Ar{1-4}{2-4}{"\varphi\up_{(1)}"}
\end{tikzcd}
$$

We will show that this argument can be repeated to construct an isomorphism $\psi\up_{(n)}\colon X\up\to Y\up_{(n)}$ in $\Kbf^{1,n,1}(\scT)$. To prove this, we establish the following claim by induction on $i$.

\begin{claim}
For each $1\le i\le n-2$, we can construct the following data:
\begin{itemize}
\item[(i)] Conflations in $\scE$
$$M^{k-1}\xrightarrow{m^{k}} X^{k}\xrightarrow{e^{k}} M^k \quad (h^{k-1}\colon M^{k-1}\to M^{k})$$
 for each $n-i+1\le k\le n-1$ such that:
\begin{itemize}
\item $\ovl{e}^{k}\colon X^{k}\to M^{k}$ is a right $H^0(\scT)$-approximation for every $n-i+1\le k\le n-1$.
\item $X^{n-i}, \ldots, X^n, M^n=X^{n+1}$ belong to $\scT$ and $d^{n,n+1}_{X}=e^n$. 
\end{itemize}
\item[(ii)] A morphism of one-sided twisted complexes $\psi\up_{(i)}\colon X\up\to Y\up_{(i)}$ such that $\ovl{\psi_{(i)}}^{j}=\id_{X^j}$ in $H^0(\scE)$ for every $n-i+1\le j\le n+1$, where $Y\up_{(i)}$ is the one-sided twisted complex obtained from the conflations in \textup{(i)}, as in the proof of Proposition~\ref{prop:n-exactness_from_splice}.
\end{itemize}
\end{claim}
\begin{proof}[Proof of Claim]
We have already established the case $i=1$. Assume that the claim holds for $i-1$. We first construct the conflations in (i). By the induction hypothesis, we have the following diagram:
$$
\begin{tikzcd}
\sigma_{\leq n-i+1} X\up[-1] & \sigma_{> n-i+1}X\up & X\up & \sigma_{\leq n-i+1}X\up\\
\sigma_{\leq n-i+1} Y\up_{(i-1)}[-1] & \sigma_{> n-i+1}Y\up_{(i-1)} & Y\up_{(i-1)} & \sigma_{\leq n-i+1}Y\up_{(i-1)}
\Ar{1-1}{1-2}{}
\Ar{1-2}{1-3}{}
\Ar{1-3}{1-4}{}
\Ar{2-1}{2-2}{}
\Ar{2-2}{2-3}{}
\Ar{2-3}{2-4}{}
\Ar{1-1}{2-1}{"\sig_{\leq n-i+1}\psi\up_{(i-1)}{[}-1{]}"}
\Ar{1-2}{2-2}{"\sig_{>n-i+1}\psi\up_{(i-1)}"}
\Ar{1-3}{2-3}{"\psi\up_{(i-1)}"}
\Ar{1-4}{2-4}{"\sig_{\leq n-i+1}\psi\up_{(i-1)}"}
\end{tikzcd}
$$
Then $\sig_{> n-i+1}\psi\up_{(i-1)}$ is an isomorphism in $\Kbf(\scE)$ by using Proposition~\ref{prop:give_a_morph} inductively, since $\ovl{\psi_{(i-1)}}^{j}=\id_{X^j}$ in $H^0(\scE)$ for every $n-i+2\le j\le n+1$. Therefore, for every $T\in\scT$, we obtain the following commutative diagram:
$$\begin{tikzcd}
\Kbf(\scE)(T[-n+i-1],X^{n-i+1}[-n+i-1]) & \Kbf(\scE)(T[-n+i-1],\sig_{\le n-i+1}X\up) \\
\Kbf(\scE)(T[-n+i-1],M^{n-i+1}[-n+i-1]) & \Kbf(\scE)(T[-n+i-1],\sig_{\le n-i+1}Y\up_{(i-1)})
\Ar{1-1}{1-2}{}
\Ar{2-1}{2-2}{}
\Ar{1-1}{2-1}{}
\Ar{1-2}{2-2}{}
\end{tikzcd}$$
where the horizontal morphisms are induced by the morphisms 
$$\be_{\sig_{\le n-i+1}X\up,n-i+1}\up \quad\text{and}\quad\be_{\sig_{\le n-i+1}Y\up_{(i-1)},n-i+1}\up$$
associated with the hard truncations. 

The bottom horizontal morphism is also an isomorphism, since the components of $\sig_{\le n-i+1}Y\up_{(i-1)}$ are concentrated in degrees less than or equal to $n-i+1$ by Lemma~\ref{lem:comparison_H}. Similarly, we can conclude that the top horizontal map is surjective. We will see that the right vertical map is an isomorphism. 
First, we show that $\psi\up_{(i-1)}$ induces an isomorphism
\begin{equation}\label{eq:eqeq}
\Kbf(\scE)(T[-j],X\up)\xrightarrow{\cong}\Kbf(\scE)(T[-j],Y\up_{(i-1)})
\end{equation}
for every $T\in\scT$ and every $j\in\mathbb{Z}$. 
We already know that the following holds:
$$\Kbf(\scE)(T[-j],Y\up_{(i-1)})=0,\quad \Kbf(\scE)(T[-j],X\up)=0$$
for every $j\neq n+1$ and every $T\in\scT$ by the proof of Proposition~\ref{prop:n-exactness_from_splice}.
It suffices to show that the following morphism is an isomorphism:
\begin{equation}\label{eq:eqeqeq}
\Kbf(\scE)(T[-n-1],X\up)\xrightarrow{\cong}\Kbf(\scE)(T[-n-1],Y\up_{(n-1)})
\end{equation}
This immediately follows from the fact that $\ovl{\psi_{(i-1)}}^{n+1}$ and $\ovl{\psi_{(i-1)}}^{n}$ are identity morphisms in $H^0(\scE)$. Thus, $\psi\up_{(i-1)}$ induces an isomorphism \eqref{eq:eqeq} for every $T\in\scT$ and every $j\in\mathbb{Z}$. Since $\sig_{> n-i+1}\psi\up_{(i-1)}$ is an isomorphism in $\Kbf(\scE)$, we have 
$$\Kbf(\scE)(T[-n+i-1],\sig_{\le n-i+1}X\up)\xrightarrow{\cong} \Kbf(\scE)(T[-n+i-1],\sig_{\le n-i+1}Y\up_{(i-1)}).$$
By these arguments, the following morphism is an isomorphism:
$$\Kbf(\scE)(T[-n+i-1],X^{n-i+1}[-n+i-1])\xrightarrow{}\Kbf(\scE)(T[-n+i-1],M^{n-i+1}[-n+i-1])$$
and thus, $\ovl{\psi_{(i-1)}}^{n-i+1}\colon X^{n-i+1}\to M^{n-i+1}$ is a right $H^0(\scT)$-approximation.
Let us denote by $e^{n-i+1}:=\psi_{(i-1)}^{n-i+1,n-i+1}$. 
By Lemma~\ref{lem:defl_approx}, $e^{n-i+1}$ is a deflation. Therefore, we obtain a conflation
$$M^{n-i}\xrightarrow{m^{n-i}} X^{n-i+1}\xrightarrow{e^{n-i+1}} M^{n-i+1}\quad(h^{n-i}\colon M^{n-i}\to M^{n-i+1})$$
 in $\scE$. 

Next, we construct a morphism $\psi\up_{(i)}\colon X\up\to Y\up_{(i)}$. We have a morphism $\xi\up_{(i)}\colon \sig_{\ge n-i+1}X\up\to \sig_{\ge n-i+1}Y\up_{(i)}$, defined as follows:
$$
\xi_{(i)}^{i,j}:=
\begin{cases}
\id_{X^{n-i+1}} & \text{if } i=j=n-i+1\\
\psi_{(i-1)}^{i,j} & \text{if } n-i+1< i\le j\le n+1\\
0 & \text{otherwise}
\end{cases}
$$
By the construction of $Y\up_{(i)}$, one checks that $\xi\up_{(i)}\colon \sig_{\ge n-i+1}X\up\to \sig_{\ge n-i+1}Y\up_{(i)}$ is in $Z^0(\Cbf(\scE))$. By assumption, $\ovl{\xi_{(i)}}^{j}=\id_{X^j}$ for every $n-i+1\le j\le n+1$. Since $Y\up_{(i)}$ satisfies
$$\Kbf(\scE)(T[-j],Y\up_{(i)})=0$$
for every $T\in\scT$ and every $j\le n-i+1$, there exists a morphism
$$\varphi\up_{(i)}\colon \sig_{< n-i+1}X\up\to \sig_{< n-i+1}Y\up_{(i)}$$
that makes the following diagram commute in $\Kbf(\scE)$:
$$
\begin{tikzcd}
\sigma_{< n-i+1} X\up[-1] & \sigma_{\ge n-i+1}X\up & X\up & \sigma_{< n-i+1}X\up\\
\sigma_{< n-i+1} Y\up_{(i)}[-1] & \sigma_{\ge n-i+1}Y\up_{(i)} & Y\up_{(i)} & \sigma_{< n-i+1}Y\up_{(i)}
\Ar{1-1}{1-2}{}
\Ar{1-2}{1-3}{}
\Ar{1-3}{1-4}{}
\Ar{2-1}{2-2}{}
\Ar{2-2}{2-3}{}
\Ar{2-3}{2-4}{}
\Ar{1-1}{2-1}{"\varphi\up_{(i)}{[}-1{]}"}
\Ar{1-2}{2-2}{"\xi\up_{(i)}"}
\Ar{1-4}{2-4}{"\varphi\up_{(i)}"}
\end{tikzcd}
$$
Hence, by Corollary~\ref{cor:give_a_morph}, we can extend $\varphi\up_{(i)}[-1]$ and $\xi\up_{(i)}$ to a morphism $\psi\up_{(i)}\colon X\up\to Y\up_{(i)}$ that makes the following diagram commute.
$$
\begin{tikzcd}
\sigma_{< n-i+1} X\up[-1] & \sigma_{\ge n-i+1}X\up & X\up & \sigma_{< n-i+1}X\up\\
\sigma_{< n-i+1} Y\up_{(i)}[-1] & \sigma_{\ge n-i+1}Y\up_{(i)} & Y\up_{(i)} & \sigma_{< n-i+1}Y\up_{(i)}
\Ar{1-1}{1-2}{}
\Ar{1-2}{1-3}{}
\Ar{1-3}{1-4}{}
\Ar{2-1}{2-2}{}
\Ar{2-2}{2-3}{}
\Ar{2-3}{2-4}{}
\Ar{1-1}{2-1}{"\varphi\up_{(i)}{[}-1{]}"}
\Ar{1-2}{2-2}{"\xi\up_{(i)}"}
\Ar{1-3}{2-3}{"\psi\up_{(i)}"} 
\Ar{1-4}{2-4}{"\varphi\up_{(i)}"}
\end{tikzcd}
$$
By the construction of $\psi\up_{(i)}$, we have $\sig_{\ge n-i+1}\psi\up_{(i)}=\xi\up_{(i)}$; thus, $\ovl{\psi_{(i)}}^j=\id_{X^j}$ for every $n-i+1\le j\le n+1$.
\end{proof}

By this claim, we have a morphism of one-sided twisted complexes $\psi\up_{(n-1)}\colon X\up\to Y\up_{(n-1)}$ such that $\ovl{\psi_{(n-1)}}^j=\id$ for any $2\le j\le n+1$. Thus, we have the following commutative diagram in $\Kbf(\scE)$:
$$
\begin{tikzcd}
\sigma_{< 2} X\up[-1] & \sigma_{\ge 2}X\up & X\up & \sigma_{< 2}X\up\\
\sigma_{< 2} Y\up_{(n-1)}[-1] & \sigma_{\ge 2}Y\up_{(n-1)} & Y\up_{(n-1)} & \sigma_{< 2}Y\up_{(n-1)}
\Ar{1-1}{1-2}{}
\Ar{1-2}{1-3}{}
\Ar{1-3}{1-4}{}
\Ar{2-1}{2-2}{}
\Ar{2-2}{2-3}{}
\Ar{2-3}{2-4}{}
\Ar{1-1}{2-1}{"\sig_{< 2}\psi\up_{(n-1)}{[}-1{]}"}
\Ar{1-2}{2-2}{"\sig_{\ge 2}\psi\up_{(n-1)}"}
\Ar{1-3}{2-3}{"\psi\up_{(n-1)}"} 
\Ar{1-4}{2-4}{"\sig_{< 2}\psi\up_{(n-1)}"}
\end{tikzcd}
$$

Since $\sig_{\ge 2}\psi\up_{(n-1)}$ is an isomorphism in $\Kbf(\scE)$, we have 
$$\CoCone \psi\up_{(n-1)}\cong \CoCone\sig_{<2}\psi\up_{(n-1)}.$$
Note that $\CoCone\sig_{<2}\psi\up_{(n-1)}\in\ob(\Cbf_{[0,2]}(\scE))$ is illustrated as follows:
\begin{equation}\label{eq:q}
\begin{tikzcd}
X^0 & X^1 & M^1
\Ar{1-1}{1-2}{"d_X^{0,1}"'}
\Ar{1-2}{1-3}{"\psi_{(n-1)}^{1,1}"'}
\Ar{1-1}{1-3}{bend left=20, dashed, "\psi_{(n-1)}^{0,1}"}
\end{tikzcd}
\end{equation}
We show that $\CoCone \varphi\up_{(n-1)}$ becomes a conflation in $\scE$. By the same argument as in the proof of the previous claim, we can show that the morphism $e^1:=\psi_{(n-1)}^{1,1}$ induce a right $H^0(\scT)$-approximation $\ovl{e^1}$ and thus $e^1$ is a deflation in $\scE$. Thus, we have a conflation $Y_0\up\in\Kbf^3(\scE)$ in $\scE$ as follows:
\begin{equation}\label{eq:e}
M^0\xrightarrow{m^0} X^1\xrightarrow{e^1} M^1, \quad (h^0\colon M^0\to M^1)
\end{equation}
By construction of $Y\up_{(n-1)}$, 
we already have conflations 
\begin{equation}
\label{eq:Y_0}
M^{i-1}\xrightarrow{m^{i-1}} X^i \xrightarrow{e^i} M^{i}\quad (h^{i-1}\colon M^{i-1}\to M^{i})
\end{equation}
and thus, $\E^{i}(M^0,H^0(\scT))=0$ for any $1\leq i\leq n-1$ and  $M^0\in\scT$. 

Then, there exists a morphism $(f,\id,\id,s_0,0,t)$ from \eqref{eq:q} to \eqref{eq:e} in $\Kbf^{1,1,1}(\scE)$. If we can prove that $\ovl{f}\colon X^0\to M^0$ is an isomorphism in $H^0(\scE)$, then by Proposition~\ref{prop:give_a_morph}, the morphism $(f,\id,\id,s_0,0,t)$ is an isomorphism in $\Kbf(\scE)$. This implies that $\CoCone \varphi\up_{(n-1)}$ is left exact and, moreover, a conflation in $\scE$. 

We can see that $\psi_{(n-1)}\up$ induces the following isomorphim in $\Kbf(\scE)$ by a similar argument to proof \eqref{eq:eqeqeq}:
\begin{equation}\label{eq:eq}
\Kbf(\scE)(T[-j],X\up)\xrightarrow{\cong}\Kbf(\scE)(T[-j],Y\up_{(n-1)})
\end{equation}
for every $T\in\scT$ and every $j\in\mathbb{Z}$. 
Since $\CoCone \psi\up_{(n-1)}\cong \CoCone \sig_{<2}\psi\up_{(n-1)}$ and $\psi\up_{(n-1)}$ induces an isomorphism \eqref{eq:eq}, we have
$$\Kbf(\scE)(T[-j],\CoCone \sig_{<2}\psi\up_{(n-1)})=0$$
for every $T\in\scT$ and every $j\in\mathbb{Z}$. 
We also have 
$$\Kbf(\scE)(T[-j],Y_0\up)=0$$
for every $T\in\scT$ and every $j\in\mathbb{Z}$, since $Y_0\up$ is a conflation in $\scE$ and $\ovl{e^1}$ is a right $H^0(\scT)$-approximation. The morphism $(f,\id,\id,s_0,0,t)$ induces the following commutative diagram in $\Kbf(\scE)$:
$$\begin{tikzcd}
\Kbf(\scE)(T[-1],X^0[-1]) & \Kbf(\scE)(T[-1],\sig_{\ge 1}\CoCone \sig_{<2}\psi\up_{(n-1)}) \\
\Kbf(\scE)(T[-1],M^0[-1]) & \Kbf(\scE)(T[-1],\sig_{\ge 1}Y_0\up)
\Ar{1-1}{1-2}{"\cong"}
\Ar{2-1}{2-2}{"\cong"}
\Ar{1-1}{2-1}{"f{[}-1{]}\circ \blank"}
\Ar{1-2}{2-2}{equal}
\end{tikzcd}$$
Thus, the morphism $\ovl{f}\colon X^0\to M^0$ is an isomorphism in $H^0(\scT)$ and hence also in $H^0(\scE)$. This implies that $\CoCone \sig_{<2}\psi$ is a conflation in $\scE$. 

Hence, by Lemma~\ref{lem:splising1'}, we obtain a one-sided twisted complex $$Y\up_{(n)}:=(\CoCone \sig_{<2}\psi\up_{(n-1)})\ast Y\up_{(n-1)}.$$ 
We construct a morphism $\psi\up_{(n)}\colon X\up\to Y\up_{(n)}$ as follows:
$$\psi_{(n)}^{0,j}:=
\begin{cases}
\id_{X^0} & \text{if } j=0\\
0 & \text{if } j=1\\
\psi_{(n-1)}^{0,j} & \text{if } j\leq 2\\
0 & \text{otherwise}
\end{cases}, 
\quad
\psi_{(n)}^{i,j}:=
\begin{cases}
\id_{X^1} & \text{if } i=j=1\\
\psi_{(n-1)}^{i,j} & \text{if } i< j\le n+1\\
0 & \text{otherwise}
\end{cases}
$$
for $i\ge 1$. It is straightforward to check that $\psi\up_{(n)}$ is a morphism in $Z^0(\Cbf(\scE))$ and that $\ovl{\psi_{(n)}}^j=\id$ for every $0\le j\le n+1$. Thus, $\psi\up_{(n)}$ is an isomorphism in $\Kbf^{1,n,1}(\scT)$ by Corollary~\ref{cor:htpy_modif2}. This shows that (2) implies (1).

The proof of the equivalence between (1) and (3) is dual to the argument above. The equivalence between (1) and (4) follows immediately from the equivalence between (1) and (2) and that between (1) and (3).
\end{proof}

\begin{prop}\label{prop:const_n_pb}

For any conflation 
\begin{equation}\label{conf_N}
N'\xrightarrow{m_Y}Y\xrightarrow{e_Y}N\quad(h_Y\co N'\to N)
\end{equation}
in $\scE$ with $Y\in\scT$ and any $a\in Z^0(\scE)(M,N)$, the following holds.
\begin{enumerate}
\item We can construct the conflation
\begin{equation}\label{conf_M}
M'\xrightarrow{m_X} X\xrightarrow{e_X} M \quad (h_X\colon M'\to M)
\end{equation}
where $X\in\scT$
and a morphism of conflations 
$(a',f,a,c,b,t)$ from $(\ref{conf_M})$ to $(\ref{conf_N})$.
\item If $e_Y$ is a right $\scT$-approximation, then $e_X$ can be also taken as a right $\scT$-approximation.
\end{enumerate}
\normalcolor
\end{prop}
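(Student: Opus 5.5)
The plan is to obtain the conflation $(\ref{conf_M})$ in two steps. First pull $(\ref{conf_N})$ back along $a$ inside the exact dg-category $\scE$. Then replace the middle term of the pullback by an admissible right $H^0(\scT)$-approximation, composing with the deflation.

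\emph{Step 1: pullback along $a$.} Apply ($1$-Ex2$\op$) to the conflation $(\ref{conf_N})$ and $a\in Z^0(\scE)(M,N)$. By Remark~\ref{rem:strict_npo}~(1) this gives a conflation
\[
N'\xrightarrow{m_P}P\xrightarrow{e_P}M\quad(h_P\co N'\to M)
\]
together with a $1$-pullback morphism $u\up$ from it to $(\ref{conf_N})$, with $u^{0,0}=\id_{N'}$ and $u^{2,2}=a$.

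\emph{Step 2: replacing $P$.} Take an admissible right $H^0(\scT)$-approximation $g\co X\to P$ with $X\in\scT$; it exists because $H^0(\scT)$ is strongly functorially finite. Put $e_X=e_P\ci g$. This is a composition of deflations, hence a deflation by ($1$-Ex1). Complete it to a conflation $M'\xrightarrow{m_X}X\xrightarrow{e_X}M$ $(h_X)$.

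\emph{Step 3: comparison morphism.} The pair $(g,\id_M)$, with zero homotopy, is a strictly closed morphism $\sig_{\ge1}$(new conflation)$\to\sig_{\ge1}$(pullback conflation), since $e_X=e_P\ci g$ holds strictly. The pullback conflation is left $1$-exact, so Proposition~\ref{prp:uniqueness_n-kernel} (case $n=1$, using Lemma~\ref{lem:extend_morph}) extends it to a closed morphism $v\up$ between the two conflations with $v^{1,1}=g$ and $\ovl{v}^{2,2}=\id$. Lemma~\ref{lem:4A} then lets us choose $v^{2,2}=\id$ strictly. The composite $u\up\ci v\up$ is the required morphism of conflations $(a',f,a,c,b,t)$, with $f=u^{1,1}\ci g$.

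\emph{Part (2).} Assume $e_Y$ is a right $H^0(\scT)$-approximation, and let $\mathbf{x}\co T'\to M$ with $T'\in\scT$. Since $e_Y$ is an approximation, $\ovl{a}\ci\mathbf{x}=\ovl{e_Y}\ci\mathbf{y}$ for some $\mathbf{y}\co T'\to Y$. The key claim is that the pullback square gives an exact sequence
\[
H^0\scE(T',P)\to H^0\scE(T',Y)\oplus H^0\scE(T',M)\to H^0\scE(T',N).
\]
To get it, use Lemma~\ref{lem:equivalent_n-pbm1}~(3), or equivalently Proposition~\ref{prop:pushout_inflation}. These show that $\CoCone((\sig_{\ge1}u\up)[1])$, whose terms are $N'\to P\to Y\oplus M\to N$ up to the shape, lies in $\calS$ and so is left $1$-exact. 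Proposition~\ref{prp:long_ex_seq}~(1) with $n=1$, where the vanishing hypothesis is empty, then yields exactness of $H^0\scE(T',\blank)$ applied to $P\to Y\oplus M\to N$. Hence $(\mathbf{y},\mathbf{x})$ lifts to some $\mathbf{p}\co T'\to P$ with $\ovl{e_P}\ci\mathbf{p}=\mathbf{x}$. Since $\ovl g$ is a right approximation, $\mathbf{p}$ factors through $\ovl g$, so $\mathbf{x}$ factors through $\ovl{e_X}$, and $e_X$ is a right $H^0(\scT)$-approximation.

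I expect the main obstacle to be checking that the cocone object really realizes the homotopy pullback square with the right signs and terms, so that the weak universal property in $H^0(\scE)$ holds. A secondary point is verifying that the strict closedness in Step 3 satisfies the hypotheses of Corollary~\ref{cor:give_a_morph}.
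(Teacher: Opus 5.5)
Your proof is correct. Part (1) is the paper's construction: you pull back along $a$ via ($1$-Ex2$\op$), precompose $e_P$ with an admissible right $H^0(\scT)$-approximation $g\colon X\to P$, complete the deflation $e_X=e_P\ci g$ to a conflation, and compose the comparison morphism with the $1$-pullback morphism. The paper only asserts that the comparison morphism exists; you justify it.

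Part (2) takes a different route from the paper. The paper compares the exact sequences $H^0\scE(T,N')\to H^0\scE(T,P)\to H^0\scE(T,M)\to\E(T,N')$ of the two conflations. The $1$-pullback morphism links them with the identity on $\E(T,N')$. Surjectivity of $\ovl{e_Y}\ci\blank$ then kills the connecting map on $H^0\scE(T,M)$, so $\ovl{e_P}$ is already a right $H^0(\scT)$-approximation, and one composes with $\ovl{g}$. You prove the same intermediate fact, that every $\mathbf{x}\colon T'\to M$ lifts along $\ovl{e_P}$, from the weak pullback property of the square. That property comes from the left $1$-exactness of $\CoCone((\sig_{\ge1}u\up)[1])$ given by Lemma~\ref{lem:equivalent_n-pbm1}. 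Your version stays inside the dg-machinery of Sections~3--4 and needs neither $\E$ nor naturality of connecting morphisms; only harmless sign bookkeeping in the cocone is required. The paper's version is shorter once Chen's extriangulated structure on $H^0(\scE)$ is available.

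There are a few small inaccuracies, none of which affects validity.
\begin{itemize}
\item The cocone has three terms, $P\to M\oplus Y\to N$; the $N'$ is discarded by $\sig_{\ge1}$.
\item Proposition~\ref{prp:uniqueness_n-kernel} only makes the relevant square commute in $\Kbf(\scE)$. So $v^{1,1}=g$, and hence $f=u^{1,1}\ci g$, is not guaranteed strictly. Corollary~\ref{cor:give_a_morph} combined with Lemma~\ref{lem:vanishing} would give it, but for the statement only $v^{2,2}=\id$ matters.
\item Proposition~\ref{prp:long_ex_seq} sits under the standing assumption $n\ge2$ of Section~\ref{section:n-exangulated}. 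For the $n=1$ exactness you need, it is cleaner to note that the cocone is a conflation by Proposition~\ref{prop:pushout_inflation}. Then use exactness of $H^0\scE(T',\blank)$ on conflations of the extriangulated category $H^0(\scE)$.
\end{itemize}

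Finally, the paper's proof of (1) also records that $a'\sas\colon\E(T,M')\to\E(T,N')$ is injective for $T\in\scT$. This is not part of the statement, but it is what Corollary~\ref{cor:nex2op} later invokes. Your construction yields it by the same argument, using the conflation $M'\to N'\oplus X\to P$ and the approximation property of $\ovl{g}$.
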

\begin{proof}

{\rm (1)} Since $\scE$ is an exact dg-category, we can take a conflation
\[
N'\xrightarrow{m_L} L\xrightarrow{e_L} M \quad (h_L\colon N'\to M)
\]
and a $1$-pullback morphism $(\id, f_1, a, c_1, b_1, t_1)$ in $\Cbf^3(\scE)$, which can be depicted as follows,
\[
\begin{tikzcd}
N' \ar{r}{m_L} \ar[d, equals]& L \ar{r}{e_L} \ar{d}{f_1} & M \ar{d}{a} \\
N' \ar{r}[swap]{m_Y} & Y \ar{r}[swap]{e_Y} & N
\Ar{1-1}{2-2}{"c_1",dashed}
\Ar{1-2}{2-3}{"b_1",dashed}
\Ar{1-1}{1-3}{"h_L",bend left=30, dashed}
\Ar{2-1}{2-3}{"h_Y"',bend right=30, dashed}
\end{tikzcd}
\]
where $t_1$ is implicit.
Take $f_0\colon X\to L$ in $Z^0(\scE)$ such that $\ovl{f_0}$ is an admissible right $H^0(\scT)$-approximation.
By Lemma~\ref{lem:defl_approx}, it is a deflation. Put $e_X=e_L\ci f_0$.
Since $e_X$ is a deflation, we obtain a conflation $(\ref{conf_M})$
and a morphism of conflations $(a', f_0, \id, c_0, b_0, t_0)$ in $\Cbf^3(\scE)$, which can be depicted as below,
\[
\begin{tikzcd}
M'\ar{r}{m_X} \ar{d}[swap]{a'}& X \ar{r}{e_X} \ar{d}{f_0} & M \ar[d, equals] \\
N'\ar{r}[swap]{m_L} & L \ar{r}[swap]{e_L} & M
\Ar{1-1}{2-2}{"c_0",dashed}
\Ar{1-2}{2-3}{"b_0",dashed}
\Ar{1-1}{1-3}{"h_X",bend left=30, dashed}
\Ar{2-1}{2-3}{"h_L"',bend right=30, dashed}
\end{tikzcd}
\]
where $t_0$ is implicit.
Then,
\[
M'\xrightarrow{\begin{bmatrix}a'\\ m_X\end{bmatrix}}N'\oplus X\xrightarrow{\begin{bmatrix}-m_L&f_0\end{bmatrix}}L
\quad(c_0\co M'\to L)
\]
becomes a conflation by the dual of Proposition~\ref{prop:pushout_inflation}. Thus,
\scriptsize
\[
H^0(\scE)(T,N')\oplus H^0(\scE)(T,X)\xrightarrow{\begin{bmatrix} (-m_L)\sas & (f_0)\sas \end{bmatrix}}
H^0(\scE)(T,L)\to\E(T,M')\xrightarrow{\begin{bmatrix} a'\sas \\ (m_X)\sas \end{bmatrix}}\E(T,N')\oplus\E(T,X)
\]
\normalsize
is exact for any $T\in\scT$. Since $\ovl{f_0}$ is a right $H^0(\scT)$-approximation, this shows that the sequence $0\to \E(T,M')\xrightarrow{a'\sas}\E(T,N')$ is exact.
 
Thus the composite of $(a', f_0, \id,  c_0, b_0, t_0)$ and $(\id, f_1, a, c_1, b_1, t_1)$ gives a morphism of conflations $(a',f,a,c,b,t)$ with the required property.

\normalcolor

{\rm (2)} We prove that $\ovl{e_X}$ becomes a right $H^0(\scT)$-approximation, for the morphism $e_X$ obtained in the above construction. Let $T\in\ob(\scT)$ be any object.
From the morphism of conflations $(\id, f_1, a, b_1, c_1, t_1)$, we obtain a commutative diagram
\[
\begin{tikzcd}
H^0(\scE)(T,N')&H^0(\scE)(T,L) &H^0(\scE)(T,M)&\bbE(T,N') \\
H^0(\scE)(T,N')&H^0(\scE)(T,Y) &H^0(\scE)(T,N)&\bbE(T,N')
\Ar{1-1}{1-2}{"\ovl{m_L}\ci\blank"}
\Ar{1-2}{1-3}{"\ovl{e_L}\ci\blank"}
\Ar{1-3}{1-4}{}
\Ar{2-1}{2-2}{"\ovl{m_Y}\ci\blank"}
\Ar{2-2}{2-3}{"\ovl{e_Y}\ci\blank"}
\Ar{2-3}{2-4}{}
\Ar{1-1}{2-1}{equal}
\Ar{1-2}{2-2}{"\ovl{f_1}\ci\blank"}
\Ar{1-3}{2-3}{"\ovl{a}\ci\blank"}
\Ar{1-4}{2-4}{equal}
\end{tikzcd}
\]
whose rows are exact.
Since $\ovl{e_Y}\colon Y\to N$ is a right $H^0(\scT)$-approximation, the morphism $H^0(\scE)(T,Y)\xrightarrow{\ovl{e_Y}\ci\blank}H^0(\scE)(T,Y)$ is surjective. By the exactness, this implies that 
\[H^0(\scE)(T,L)\xrightarrow{\ovl{e_L}\ci\blank}H^0(\scE)(T,M)\]
is also surjective, which means that $\ovl{e_L}$ is a right $H^0(\scT)$-approximation. Since $\ovl{f_0}$ is a right $H^0(\scT)$-approximation, so is their composite $\ovl{e_X}=\ovl{e_L}\ci\ovl{f_0}$.  
\end{proof}

\begin{prop}\label{splicing_n_pb}
Let $(a^{i-1},f^i,a^i,c^{i-1},b^i,t^i)$ depicted as
\[
\begin{tikzcd}
M^{i-1} \ar{r}{m_X^{i-1}} \ar{d}[swap]{a^{i-1}} & X^{i} \ar{r}{e_X^{i}} \ar{d}{f^{i}} & M^{i} \ar{d}{a^{i}} \\
N^{i-1} \ar{r}[swap]{m_Y^i} & Y^{i} \ar{r}[swap]{e_Y^{i}} & N^{i}
\Ar{1-1}{2-2}{"c^{i-1}",dashed}
\Ar{1-2}{2-3}{"b^{i}",dashed}
\Ar{1-1}{1-3}{"h_X^{i-1}",bend left=30, dashed}
\Ar{2-1}{2-3}{"h_Y^{i-1}"',bend right=30, dashed}
\end{tikzcd}
\]
be morphisms of conflations in $\scE$ for $1\le i\le n$. Put $X^0=M^0$, $Y^0=N^0$, $X^{n+1}=M^{n+1}$, $Y^{n+1}=N^{n+1}$, and assume:
\begin{itemize}
  \item the morphisms $\ovl{e_X^i}$ and $\ovl{e_Y^i}$ are right $H^0(\scT)$-approximations for each $0\le i\le n-2$,
  \item $X^{i}, Y^{i}\in\scT$ for all $0\le i\le n+1$,
  \item $M^0=N^0$ and $\ovl{a^0}=\id$ in $H^0(\scT)$.
\end{itemize}

Let $\phi\up\in Z^0(\Cbf^{n+2}(\A))(X\up,Y\up)$ be the morphism obtained by Proposition~\ref{prop:splicing_morphism}, where $X\up$ and $Y\up$ are obtained from the rows by Proposition~\ref{prop:splicing1}. Then $\phi\up$ is an $n$-pullback morphism. In particular, we have $\ovl{\phi}^i=f^{i}$ for each $1\le i\le n$, $\ovl{\phi}^{n+1}=a^{n+1}$, and $\ovl{\phi}^0=a^0=\id$ in $H^0(\scE)$.
\end{prop}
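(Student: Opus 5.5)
Since $\phi\up$ is built by iterating Lemma~\ref{lem:splicing_morphism}, its diagonal components are $a^0,f^1,\dots,f^n,a^n$. Hence $\ovl{\phi}^0=\ovl{a^0}=\id$ holds by construction. What remains is the cohomological condition: $\Kbf(\scT)(T[-i],\CoCone\phi\up)=0$ for all $T\in\scT$ and $i\le n+1$. By Lemma~\ref{lem:equivalent_n-pbm1}, this is equivalent to the left $n$-exactness of $\CoCone((\sig_{\ge1}\phi\up)[1])$. I would work with the triangle $\CoCone\phi\up\to X\up\xrightarrow{\phi\up}Y\up\to$ in $\Kbf(\scE)$ and apply $\Kbf(\scE)(T[-i],\blank)$ for $T\in\scT$.

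The first step reuses the analysis in the proof of Proposition~\ref{prop:n-exactness_from_splice}. Under the approximation hypotheses, the maps $\eta\up_{(k)}$ give isomorphisms of $\Kbf(\scE)(T[-j],\blank)$ along the tower $Y\up_{(1)}\leftarrow\cdots\leftarrow Y\up_{(n)}$. Consequently $\Kbf(\scE)(T[-j],X\up)$ and $\Kbf(\scE)(T[-j],Y\up)$ vanish for $j\le n$. In degree $j=n+1$ they are identified, via the hard truncation $\be_{n+1}$, with the kernel-type group coming from the last conflation. Concretely, $\Kbf(\scE)(T[-(n+1)],X\up)\cong\Kbf(\scE)(T[-(n+1)],Y\up_{(1)})$, which is the homotopy kernel of $H^0(\scE)(T,X^n)\to H^0(\scE)(T,M^n)$. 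Via the conflation $M^{n-1}\to X^n\to M^n$, this is $H^0(\scE)(T,M^{n-1})$.

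So the long exact sequence reduces the vanishing for $i\le n$ to the injectivity of $\phi\up\circ\blank$ on $\Kbf(\scE)(T[-(n+1)],\blank)$ (the case $i=n+1$ contributes only through this injectivity as well). Under the identifications above, this map becomes $\ovl{a^{n-1}}\circ\blank\colon H^0(\scE)(T,M^{n-1})\to H^0(\scE)(T,N^{n-1})$. The identification should be natural in the morphisms of conflations, using the commuting squares of Proposition~\ref{prop:give_a_morph} for each splicing step.

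To prove this injectivity I would descend along the ladder, arguing as in Proposition~\ref{prop:const_n_pb}. From the morphism of conflations at level $i$, the long exact sequences give a map of exact sequences
\[
H^0(\scE)(T,X^i)\to H^0(\scE)(T,M^i)\to\E(T,M^{i-1})\to\E(T,X^i),
\]
and similarly for the $Y$-row. Since $\E(T,X^i)=0$ and $\ovl{e^i}$ is an approximation, we get $H^0(\scE)(T,M^{i-1})\hookrightarrow\ldots$ and inductively $\E^{k}(T,M^{i})\cong\E^{k+1}(T,M^{i-1})$. Thus injectivity of $\ovl{a^{n-1}}\circ\blank$ on $H^0(\scE)(T,M^{n-1})$ transfers, by dimension shifting, to injectivity of $(a^0)_\ast$ on $\E^{n-1}(T,M^0)$-type terms and related groups. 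There $\ovl{a^0}=\id$, so the maps are isomorphisms. I expect the main obstacle to be exactly this step: making the dimension-shift compatible with the vertical maps $a^i$ and checking that the dg-level identification of $\phi\up\circ\blank$ with $\ovl{a^{n-1}}\circ\blank$ holds, rather than merely agreeing up to the degree $-1$ components $c^i,b^i,t^i$. For that I would use the naturality of the octahedral diagrams from the proof of Proposition~\ref{prop:n-exactness_from_splice}.
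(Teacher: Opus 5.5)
Your reduction is sound and differs from the paper's route. Left $n$-exactness of $X\up$ and $Y\up$ (Proposition~\ref{prop:n-exactness_from_splice}), together with the triangle $\CoCone\phi\up\to X\up\to Y\up\to$, gives $\Kbf(\scE)(T[-i],\CoCone\phi\up)=0$ outright for $i\le n$. For $i=n+1$ everything reduces to injectivity of $\phi\up\ci\blank$ on $\Kbf(\scE)(T[-(n+1)],\blank)$.

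\textbf{The gap is the identification of this group.} By strict connectivity, a closed degree-$0$ morphism $T[-(n+1)]\to X\up$ is just an element of $Z^0(\scE)(T,X^{n+1})$. Null-homotopies come from $\scE(T,X^n)^0\oplus\scE(T,X^{n+1})^{-1}$. Hence, with $X^{n+1}=M^n$,
\[
\Kbf(\scE)(T[-(n+1)],X\up)\cong\mathrm{Coker}\bigl(H^0(\scE)(T,X^n)\xrightarrow{\ovl{e_X^n}\ci\blank}H^0(\scE)(T,M^n)\bigr)\cong\E(T,M^{n-1}).
\]
This is a \emph{cokernel}; the second isomorphism comes from the conflation $M^{n-1}\to X^n\to M^n$ and $\E(T,X^n)=0$. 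It is not $H^0(\scE)(T,M^{n-1})$.

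Moreover, $\phi\up\ci\blank$ is simply $\ovl{a^n}\ci\blank$ on these cokernels. Only $\phi^{n+1,n+1}=a^n$ enters, so there is no dg-level ambiguity. Under the connecting maps it becomes $(\ovl{a^{n-1}})\sas\colon\E(T,M^{n-1})\to\E(T,N^{n-1})$.

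The injectivity you set out to prove, of $\ovl{a^{n-1}}\ci\blank$ on $H^0(\scE)(T,M^{n-1})$, is not what is needed, and it is false in general. Take $n=2$ and $M^2=0$. Splice $X\up$ from the split conflation $M^0\to M^0\oplus M^1\to M^1$ and from $M^1\xrightarrow{\id}M^1\to 0$, with $0\neq M^1\in\scT$, and take $a^1=0$. All hypotheses hold, yet $\ovl{a^1}\ci\blank$ kills $\id_{M^1}$.

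Your descent also cannot produce this injectivity. When $\ovl{e^i_X}$ is an approximation, the exact sequence $H^0(\scE)(T,X^i)\to H^0(\scE)(T,M^i)\to\E(T,M^{i-1})\to\E(T,X^i)=0$ only yields $\E(T,M^{i-1})=0$. In addition, the top group of the dimension shift is $\E^n(T,M^0)$, not $\E^{n-1}(T,M^0)$.

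\textbf{The repair is short.} The composite of connecting maps $\E(T,M^{n-1})\to\E^2(T,M^{n-2})\to\cdots\to\E^n(T,M^0)$ is injective. At the $k$-th step the kernel is the image of $\E^k(T,X^{n-k})=0$, for $1\le k\le n-1$. The same holds for the $N^i$. The two ladders are intertwined by the $(\ovl{a^i})\sas$, by naturality of connecting maps along morphisms of conflations; this is the same kind of diagram as in Claim~\ref{claim:LM}. Since $(\ovl{a^0})\sas=\id$ on $\E^n(T,M^0)$, the map $(\ovl{a^{n-1}})\sas$ is injective, and you are done.

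So corrected, your argument is a direct computation with higher extensions that uses $\ovl{a^0}=\id$ only at the top level. The paper avoids higher $\E^k$ altogether:
\begin{enumerate}
\item It shows $\Cone(\sig_{\ge1}\phi\up)\cong\Cone\phi\up$ is right $n$-exact, from the triangle.
\item It factors the leftmost differential as $\begin{bmatrix}1&0\\0&m_X^1\end{bmatrix}\ci\begin{bmatrix}f^1\\-e_X^1\end{bmatrix}$. The second factor is an inflation by Proposition~\ref{prop:pushout_inflation}, applied to the level-one morphism of conflations.
\item It gets left $n$-exactness from Proposition~\ref{prop:chara_conf} and concludes by Lemma~\ref{lem:equivalent_n-pbm1}.
\end{enumerate}
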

\begin{proof}
By Proposition~\ref{prop:chara_conf}, we already know that $X\up$ and $Y\up$ are $n$-exact. By Lemma~\ref{lem:equivalent_n-pbm1}, it suffices to show that 
$\CoCone(\sig_{\ge 1}\phi\up[1])\cong\Cone(\sig_{\ge 1}\phi\up)\in\Kbf(\scT)$
is left $n$-exact in $\scT$. Moreover, by Proposition~\ref{prop:chara_conf}, it is enough to show that $\Cone(\sig_{\ge 1}\phi\up)$ is right $n$-exact in $\scT$ and that the leftmost differential of $\Cone(\sig_{\ge 1}\phi\up)$ is an inflation in $\scE$.

We first show that $\Cone(\sig_{\ge 1}\phi\up)\in \Cbf^{n+2}(\scT)$ is right $n$-exact in $\scT$. Since $\Cone(\sig_{\ge 1}\phi\up)$ is isomorphic to $\Cone(\phi\up)$ in $\Kbf(\scT)$ by Corollary~\ref{cor:devide_f}, it is enough to show 
\[
\Kbf(\scT)(\Cone(\phi\up),T[i-n])=0
\]
for every $T\in\scT$ and every $i\le n$. However, this follows from the following the existence of the distinguished triangle $X\up\xrightarrow{\phi\up} Y\up \to \Cone (\phi\up)\to X\up[1]$
in $\Kbf(\scT)$ and the fact that $X\up,Y\up\in\Cbf{\scT}$ are $n$-exact.

It remains to show that the leftmost differential of $\Cone(\sig_{\ge 1}\phi\up)$ is an inflation in $\scE$. More precisely, we show that the morphism
$$\begin{bmatrix}
f^1\\
-m_X^1\circ e_X^1
\end{bmatrix}\colon X^1\to Y^1\oplus X^2$$
is an inflation in $\scE$.
Note that we have
$$
\begin{bmatrix}
f^1 \\
-m_X^1\circ e_X^1
\end{bmatrix}=
\begin{bmatrix}
\id_{Y^1} & 0 \\
0 & m_X^1
\end{bmatrix}
\circ
\begin{bmatrix}
f^1 \\
-e_X^1
\end{bmatrix}.
$$
On the right-hand side, the morphism
$$
\begin{bmatrix}
\id_{Y^1} & 0 \\
0 & m_X^1
\end{bmatrix}\colon Y^1\oplus M^1\to Y^1\oplus X^2
$$
is an inflation in $\scE$ because $m^1_X$ is an inflation. Thus, it suffices to show that the morphism
$$
\begin{bmatrix}
f^1 \\
-e_X^1
\end{bmatrix}
\colon X^1\to Y^1\oplus M^1
$$
is an inflation in $\scE$. This follows from Proposition~\ref{prop:pushout_inflation}, since
\[
\begin{tikzcd}
X^0 \ar{r}{m_X^0} \ar[d,equals] & X^1 \ar{r}{e_X^1} \ar{d}{f^1} & M^1 \ar{d}{a^1} \\
Y^0 \ar{r}[swap]{m_Y^0} & Y^1 \ar{r}[swap]{e_Y^1} & M^1
\Ar{1-1}{2-2}{"c^0",dashed}
\Ar{1-2}{2-3}{"b^1",dashed}
\Ar{1-1}{1-3}{"h_X^1",bend left=30, dashed}
\Ar{2-1}{2-3}{"h_Y^1"',bend right=30, dashed}
\end{tikzcd}
\]
is a morphism of conflations.

Thus, we have checked that $\Cone(\sig_{\ge 1}\phi\up)$ is right $n$-exact in $\scT$ and that the leftmost differential of $\Cone(\sig_{\ge 1}\phi\up)$ is an inflation in $\scE$. This finishes the proof that $\phi\up$ is an $n$-pullback morphism.
\end{proof}

\begin{cor}\label{cor:nex2op}
Let $Y\up$ be an $n$-exact sequence in $\scT$ satisfying the equivalent conditions in Proposition~\ref{prop:chara_conf}. Then, for any morphism $c\colon M^n\to Y^{n+1}$ in $Z^0(\scT)$, there exists an $n$-pullback morphism $f\up\colon X\up\to Y\up$ such that $\ovl{f}^n=\ovl{c}$ in $H^0(\scT)$ and $X\up$ is an $n$-exact sequence in $\scT$ satisfying the equivalent conditions in Proposition~\ref{prop:chara_conf}.
\end{cor}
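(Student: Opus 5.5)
The plan is to reduce to the spliced presentation and then lift the given morphism level by level. (I read the statement's ``$c\colon M^n\to Y^{n+1}$'' and ``$\ovl{f}^n=\ovl{c}$'' as $c\colon C\to Y^{n+1}$ with $C\in\scT$ and $\ovl{f}^{n+1}=\ovl{c}$, in line with ($n$-Ex2$\op$).)

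\emph{Reduction.} By Proposition~\ref{prop:chara_conf}, $Y\up$ is isomorphic in $\Kbf^{1,n,1}(\scT)$ to a complex spliced via Proposition~\ref{prop:splicing1} from conflations
\[
N^{i-1}\xrightarrow{m_Y^{i-1}}Y^i\xrightarrow{e_Y^i}N^i\quad(1\le i\le n)
\]
in $\scE$, with $N^0=Y^0$, $N^n=Y^{n+1}$, and each $\ovl{e_Y^i}$ ($1\le i\le n-1$) a right $H^0(\scT)$-approximation by Proposition~\ref{prop:n-exactness_from_splice}. Composing with such an isomorphism $u\up$ preserves the $n$-pullback property, since cocones change only up to isomorphism in $\Kbf$ and $s(u\up)$, $t(u\up)$ are isomorphisms. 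If $u\up$ is chosen by Proposition~\ref{prop:chara_conf} with $\ovl{u}^0=\ovl{u}^{n+1}=\id$, the end terms are also unaffected. So I may assume $Y\up$ is spliced.

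\emph{Downward construction.} Put $M^n=C$ and $a^n=c$. For $i=n,n-1,\dots,1$, given $a^i\colon M^i\to N^i$, apply Proposition~\ref{prop:const_n_pb} to the $i$-th conflation of $Y\up$ and $a^i$. This yields a conflation
\[
M^{i-1}\xrightarrow{m_X^{i-1}}X^i\xrightarrow{e_X^i}M^i
\]
with $X^i\in\scT$, together with a morphism of conflations $(a^{i-1},f^i,a^i,\dots)$ into the $i$-th conflation of $Y\up$. Part~(2) of that proposition keeps $\ovl{e_X^i}$ a right approximation whenever $\ovl{e_Y^i}$ is one. Two points then need checking.
\begin{itemize}
\item $M^0\in\scT$. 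For $T\in\scT$, dimension shifting along the conflations, using that the $\ovl{e_X^i}$ are right approximations and $X^i,C\in\scT$, should give $\E^k(T,M^0)=0$ for $1\le k\le n-1$. Hence $M^0\in\scT$ by Definition~\ref{dfn:n-cluster-tilting-subcategory}.
\item $\ovl{a^0}=\id$. This is the step I expect to be hardest: Proposition~\ref{prop:const_n_pb} only produces some $a^0\colon M^0\to N^0$, not the identity.
\end{itemize}

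\emph{Fixing the left end.} My first attempt is to modify the last step $i=1$. Instead of applying Proposition~\ref{prop:const_n_pb} blindly, take a $1$-pullback in $\scE$ of the first conflation of $Y\up$ along $a^1$. This has left end $N^0=Y^0$ with left component $\id$, and its middle term $L$ lies in $\scT$: from the conflation $N^0\to L\to M^1$, $\E^k(T,L)$ is squeezed between $\E^k(T,N^0)=0$ and $\E^k(T,M^1)$, and the latter vanishes for $k\le n-1$ except possibly $k=1$. The $k=1$ term should be handled using that $\ovl{e_X^1}$ was arranged as an approximation; if that fails I would instead precompose with an approximation $X^1\to L$ and accept $a^0\neq\id$, then repair using Corollary~\ref{cor:univ_npb}-style factorization.

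\emph{Conclusion.} Splice the rows into $X\up$ and the squares into $f\up$ via Proposition~\ref{prop:splicing_morphism}. Proposition~\ref{splicing_n_pb} then shows $f\up$ is an $n$-pullback morphism with $\ovl{f}^{n+1}=\ovl{c}$. Finally, $X\up$ is spliced from conflations with all terms in $\scT$, so it satisfies the conditions of Proposition~\ref{prop:chara_conf} by Proposition~\ref{prop:n-exactness_from_splice}.
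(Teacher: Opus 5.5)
Your outline is the paper's own: top-down use of Proposition~\ref{prop:const_n_pb}, then a genuine $1$-pullback along $a^1$ at the last step so that the left component is $\id_{N^0}$, then Propositions~\ref{prop:splicing_morphism} and~\ref{splicing_n_pb}. However, the one step that carries the content is not established: you do not prove that the middle term $L$ of that final $1$-pullback lies in $\scT$. You have also placed the obstruction in the wrong degree. Fix $T\in\scT$. Since $\E^k(T,N^0)=0$ for $1\le k\le n-1$, you get $\E^k(T,L)\hookrightarrow\E^k(T,M^1)$. Dimension shifting along the conflations $M^{j}\to X^{j+1}\to M^{j+1}$ gives $\E^k(T,M^1)\cong\E^1(T,M^k)$ for $1\le k\le n-1$. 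Moreover $\E^1(T,M^k)=0$ for $k\le n-2$, because $\ovl{e_X^{k+1}}$ is a right approximation. So the only degree that can fail is $k=n-1$, not $k=1$ (the two coincide only when $n=2$). In that degree,
\[
\E^{n-1}(T,M^1)\cong\E(T,M^{n-1})\cong\mathrm{Coker}\bigl(H^0(\scE)(T,X^n)\to H^0(\scE)(T,C)\bigr),
\]
which need not vanish, since $\ovl{e_X^n}\colon X^n\to C$ is not an approximation. Your proposed remedy is also circular. In the modified last step, $\ovl{e_L}$ is not arranged to be an approximation; that it is one is a consequence of $L\in\scT$ together with $\E(T,N^0)=0$.

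The missing ingredient is an injectivity that comes from the proof of Proposition~\ref{prop:const_n_pb}, not from its statement. Because $\ovl{f_0}$ is chosen as a right approximation, the conflation $M'\to N'\oplus X\to L$ shows that $a'\sas\colon\E(T,M')\to\E(T,N')$ is injective.
\begin{enumerate}
\item At the top step this makes $a^{n-1}\sas$ injective on $\E(T,M^{n-1})$.
\item The morphisms of conflations make the dimension-shifting isomorphisms for the $M$'s and the $N$'s commute with the maps $a^i\sas$. Hence $a^1\sas\colon\E^{n-1}(T,M^1)\to\E^{n-1}(T,N^1)$ is injective.
\item The connecting map $\E^{n-1}(T,M^1)\to\E^n(T,N^0)$ of $N^0\to L\to M^1$ equals $a^1\sas$ followed by the connecting map of $N^0\to Y^1\to N^1$. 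The latter is injective because $\E^{n-1}(T,Y^1)=0$. Therefore $\E^{n-1}(T,L)=0$. The paper phrases this step via the conflation $L\to M^1\oplus Y^1\to N^1$.
\item Thus $L\in\scT$, and $\ovl{e_L}$ is a right approximation because $\E(T,N^0)=0$.
\end{enumerate}
Your fallback does not rescue the argument. A spliced morphism with $\ovl{\phi}^0=\ovl{a^0}\neq\id$ is not an $n$-pullback morphism. Corollary~\ref{cor:univ_npb} only compares $n$-pullback morphisms that already exist. Factoring through one via Proposition~\ref{prop:pullpush_factorization} already uses ($n$-Ex2$\op$), which is exactly what you are proving.
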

\begin{proof}
We may assume that $Y\up$ is obtained by splicing the conflations 
$$N^{i}\to Y^{i+1}\to N^{i+1} \quad (h_Y^i\colon N^i\to N^{i+1})$$
in $\scE$ for $0\le i\le n-1$ by Proposition~\ref{prop:splicing1}. By Proposition~\ref{prop:const_n_pb}, we can construct inductively morphisms of conflations $(a^i, f^{i+1}, a^{i+1}, c^i, b^{i+1}, t^i)$ depicted as
\begin{equation}\label{morph_conf_i}
\begin{tikzcd}
M^i \ar{r}{m_X^i} \ar{d}[swap]{a^i} & X^{i+1} \ar{r}{e_X^{i+1}} \ar{d}{f^{i+1}} & M^{i+1} \ar{d}{a^{i+1}} \\
N^i \ar{r}[swap]{m_Y^i} & Y^{i+1} \ar{r}[swap]{e_Y^{i+1}} & N^{i+1}
\Ar{1-1}{2-2}{"c^i",dashed}
\Ar{1-2}{2-3}{"b^{i+1}",dashed}
\Ar{1-1}{1-3}{"h_X^i",bend left=30, dashed}
\Ar{2-1}{2-3}{"h_Y^i"',bend right=30, dashed}
\end{tikzcd}
\end{equation}
for $1\le i\le n-1$.
Note that the morphisms $\ovl{e^i_X}$ and $\ovl{e^i_Y}$ are right $H^0(\scT)$-approximations for any $1\le i\le n-2$ by the same proposition.  

Consider the case $i=0$. Since we already have the following diagram:
$$\begin{tikzcd}
 {} & {} & M^1 \\
 N^0 & Y^1 & N^1
\Ar{2-1}{2-2}{"m^0_Y"}
\Ar{2-2}{2-3}{"e^1_Y"}
\Ar{1-3}{2-3}{"a^1"}
\end{tikzcd}
$$
we can take a $1$-pullback morphism that can be depicted as follows:
\[
\begin{tikzcd}
N^0 \ar{r}{m_X^0} \ar[d, equals] & X^1 \ar{r}{e_X^1} \ar{d}{f^1} & M^1 \ar{d}{a^1} \\
N^0 \ar{r}[swap]{m_Y^0} & Y^{1} \ar{r}[swap]{e_Y^{1}} & N^{1}
\Ar{1-1}{2-2}{"c^0",dashed}
\Ar{1-2}{2-3}{"b^1",dashed}
\Ar{1-1}{1-3}{"h_X^0",bend left=30, dashed}
\Ar{2-1}{2-3}{"h_Y^0"',bend right=30, dashed}
\end{tikzcd}
\]
To obtain the data required to apply Proposition~\ref{splicing_n_pb}, it remains to prove the following claim.
\begin{claim}\label{claim:LM}
The morphism $\ovl{e^1_X}\co X^1\to M^1$ is a right $H^0(\scT)$-approximation.
\end{claim}
\begin{proof}
First we show that $X^1$ belongs to $\scT$. We use an argument similar to that of Proposition~\ref{prop:n-exactness_from_splice}.
Take any object $T\in\scT$. It suffices to show $\E^i(T,X^1)=0$ for all $1\le i\le n-1$.
For any $1\le i\le n$, since $M^i\xrightarrow{m_X^i}X^{i+1}\xrightarrow{e_X^{i+1}}M^{i+1}$ is a conflation, the sequence
\begin{equation}\label{ex_seq_XMM}
\E^{j-1}(T,X^{i+1})\xrightarrow{(e_X^{i+1})\sas}\E^{j-1}(T,M^{i+1})\to\E^j(T,M^i)\to\E(T,X^{i+1})
\end{equation}
is exact for any $j\ge 1$. By $X^{i+1}\in\scT$, we obtain an isomorphism $\E^{j-1}(T,M^{i+1})\xrightarrow{\cong}\E^j(T,M^i)$ for $2\le j\le n-1$.
As for $j=1$, since $e_X^{i+1}$ is a $\scT$-approximation for $i\le n-2$, the exactness of $(\ref{ex_seq_XMM})$ gives $\E(T,M^i)=0$ for $1\le i\le n-2$.
Thus we obtain
\[
\E^i(T,M^1)\cong\E^{i-1}(T,M^2)\cong\cdots\cong\E(T,M^i)=0
\]
for $1\le i\le n-2$. Since $N^0\to X^1\to M^1$ is a conflation and $N^0\in\scT$, this shows $\E^i(T,X^1)=0$ for $1\le i\le n-2$.

It remains to show $\E^1(T,X^1)=0$. By the existence of morphisms of conflations $(\ref{morph_conf_i})$, we obtain a commutative diagram
\[
\begin{tikzcd}
\E(T,M^{n-1}) & \E^2(T,M^{n-2}) & \cdots &\E^{n-1}(T,M^1)\\
\E(T,N^{n-1}) & \E^2(T,N^{n-2}) & \cdots &\E^{n-1}(T,N^1)
\Ar{1-1}{1-2}{"\cong"}
\Ar{1-2}{1-3}{"\cong"}
\Ar{1-3}{1-4}{"\cong"}
\Ar{2-1}{2-2}{"\cong"}
\Ar{2-2}{2-3}{"\cong"}
\Ar{2-3}{2-4}{"\cong"}
\Ar{1-1}{2-1}{"a^{n-1}\sas"'}
\Ar{1-2}{2-2}{"a^{n-2}\sas"'}
\Ar{1-4}{2-4}{"a^{1}\sas"}
\end{tikzcd}
\]
in $\Ab$, in which the horizontal arrows are isomorphisms.
The leftmost vertical map $a^{n-1}\sas$ is injective by Proposition~\ref{prop:const_n_pb}, hence so is the rightmost $a^1\sas$.
Since 
\[
X^1\xrightarrow{\begin{bmatrix}-e_X^1\\ f^1\end{bmatrix}}M^1\oplus Y^1
\xrightarrow{\begin{bmatrix}a^1& e_Y^1\end{bmatrix}}N^1
\]
is a conflation and $Y^1\in\scT$, we obtain an exact sequence
\[
\E^{n-2}(T,M^1)\oplus\E^{n-2}(T,Y^1)
\xrightarrow{\begin{bmatrix}a^1\sas&(e_Y^1)\sas\end{bmatrix}}\E^{n-2}(T,N^1)\to\E^{n-1}(T,X^1)\to0
\]
by the injectivity of $a^1\sas\co\E^{n-1}(T,M^1)\to\E^{n-1}(T,N^1)$. Moreover, since $N^0\xrightarrow{m_Y}Y^1\xrightarrow{e_Y^1}N^1$ is a conflation and $N^0\in\scT$,
the map $(e_Y^1)\sas\co\E^{n-2}(T,Y^1)\to\E^{n-2}(T,N^1)$ is surjective. (In fact, we also have $\E^{n-2}(T,Y^1)=0$ if $n\ge3$.)
Thus the exactness of the above sequence shows $\E^{n-1}(T,X^1)=0$. 

We have shown $X^1\in\scT$.
It remains to show that the morphism $\ovl{e_X^1}\colon X^1\to M^1$ is a right $H^0(\scT)$-approximation. Consider the following conflation in $H^0(\scE)$:
$$N^0\to X^1\to M^1.$$
Since $N^0\in\scT$, the morphism $H^0(\scE)(-,X^1)\to H^0(\scE)(-,M^1)$ is surjective when restricted to $H^0(\scT)$. Thus, the morphism $e_X^1\colon X^1\to M^1$ is a right $H^0(\scT)$-approximation.
\end{proof}
\normalcolor

Thus Claim~\ref{claim:LM} is shown.
We have constructed the data in Proposition~\ref{splicing_n_pb}. Thus we have an $n$-pullback morphism $\phi\up\colon X\up\to Y\up$ which satisfies $\ovl{\phi}^{n+1}=\ovl{c}$ and $\ovl{\phi}^{0}=\id_{N^0}$ in $H^0(\scE)$ by Proposition~\ref{splicing_n_pb}.
\end{proof}

\begin{thm}\label{thm:n-CT_is_n-exact}
Let $\scE$ be an exact dg-category which is strictly connective and strictly additive, and let $\scT\subset\scE$ be an $n$-cluster tilting subcategory.
Let $\calS$ be the class of $n$-exact sequences in $\scT$ whose left and right end differentials are, respectively, an inflation and a deflation in $\scE$.
Then $(\scT,\calS)$ becomes an $n$-exact dg-category.
\end{thm}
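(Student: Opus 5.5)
We verify the four axioms of Definition~\ref{dfn:n-exact-dg} for $(\scT,\calS)$. Throughout, Proposition~\ref{prop:chara_conf} is the main tool: an object of $\calS$ is exactly an $n$-exact sequence in $\scT$ that is isomorphic in $\Kbf^{1,n,1}(\scT)$ to a splice of conflations in $\scE$ as in Proposition~\ref{prop:n-exactness_from_splice}. Equivalently, it is left $n$-exact with right end differential a deflation in $\scE$, or dually.

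For ($n$-Ex0), closure under isomorphisms in $\Kbf^{1,n,1}(\scT)$ is immediate from characterization (1) of Proposition~\ref{prop:chara_conf}. For the split sequence, take ${}_0N\up_0$: it is split and its end differentials are zero maps from or to $0$, which are inflations and deflations in $\scE$.

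For ($n$-Ex1), we show that $\calS$-deflations are exactly the morphisms of $Z^0(\scT)$ that are deflations in $\scE$. One inclusion is by definition. The converse is Corollary~\ref{cor:defl_are_defl}, which produces an $n$-exact sequence in $\scT$ with prescribed last differential $g$. Its first differential $m^0$ is an inflation, being the inflation of a conflation, so the sequence lies in $\calS$. Composition closure then follows from closure of deflations in the exact dg-category $\scE$ (Chen's Ex1). Inflations are handled dually.

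For ($n$-Ex2$\op$), take $Y\up\in\calS$ and $c\in Z^0(\scT)(C,Y^{n+1})$. Replacing $Y\up$ by an isomorphic splice, using ($n$-Ex0) and the fact that composing with an isomorphism in $\Kbf^{1,n,1}$ preserves $n$-pullback morphisms and $t$, we apply Corollary~\ref{cor:nex2op}. This gives an $n$-pullback morphism $f\up\colon X\up\to Y\up$ with $X\up\in\calS$ and $\ovl{f}^{n+1}=\ovl{c}$. (The corollary's "$\ovl f^n$" should be read as the last component.)

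We still need $X^{n+1}=C$ on the nose. Corollary~\ref{cor:nex2op} builds the last conflation by a pullback along $a^n=c$, so its last term is $C$. If only an isomorphism is obtained, we modify the end term using Corollary~\ref{cor:inflation_replace}, or its dual, together with ($n$-Ex0). ($n$-Ex2) is dual, applying the dual of Proposition~\ref{prop:const_n_pb} and the dual splicing.

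The main obstacle is checking that Corollary~\ref{cor:nex2op} really delivers the $n$-pullback property in $\scT$. That corollary relies on Proposition~\ref{splicing_n_pb}, and some hypotheses must be tracked there: the approximation conditions on the $\ovl{e^i}$ and the membership $X^1\in\scT$ (Claim~\ref{claim:LM}). We take these results as established.
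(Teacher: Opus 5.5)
Your proposal is correct and follows the paper's own proof. Closure of $\calS$ under isomorphisms comes from (WIC) via Proposition~\ref{prop:chara_conf}, ($n$-Ex1) from the fact that deflations and inflations of $\scE$ between objects of $\scT$ are exactly the $\calS$-deflations and $\calS$-inflations (Corollary~\ref{cor:defl_are_defl}), and ($n$-Ex2$\op$) from Corollary~\ref{cor:nex2op}, with ($n$-Ex2) dual. Your fallback via Corollary~\ref{cor:inflation_replace} is unnecessary, and that corollary adjusts a differential rather than an end term; the construction in Corollary~\ref{cor:nex2op} already gives $X^{n+1}=C$ on the nose.
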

\begin{proof}
First, we note that $\calS$ is closed under isomorphisms in $\Kbf^{1,n,1}(\scT)$. This follows immediately from condition~(WIC) on $\scE$. The other conditions in ($n$-Ex0) are also satisfied by the definition of $\calS$. Condition~($n$-Ex1) also follows from the definition of $\calS$, since inflations in $\scE$ correspond precisely to inflations in $\scT$. Condition~($n$-Ex2$\op$) follows immediately from Corollary~\ref{cor:nex2op}. Condition~($n$-Ex2) follows by a dual argument.
\end{proof}

\begin{cor}\label{cor:n_CT_of_stable}
If $\scE$ is a stable dg-category and $\scT\subset\scE$ is an $n$-cluster tilting subcategory, then $\scT$ becomes an $n$-stable dg-category.
\end{cor}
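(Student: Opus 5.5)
The plan is to use the second half of Proposition~\ref{prop:characterization_n-stable}. By Theorem~\ref{thm:n-CT_is_n-exact}, $(\scT,\calS)$ is an $n$-exact dg-category, where $\calS$ is the class of $n$-exact sequences in $\scT$ whose left end differential is an inflation in $\scE$ and whose right end differential is a deflation in $\scE$. Here we regard the stable dg-category $\scE$ as an exact dg-category with all conflations. We also assume, as throughout the section, that it is strictly connective and strictly additive and satisfies (WIC). (WIC) holds automatically, since every closed degree-$0$ morphism is both an inflation and a deflation. It therefore suffices to show that every closed morphism of degree $0$ in $\scT$ is both an $\calS$-inflation and an $\calS$-deflation.

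First, let $c\in Z^0(\scT)(Y,Z)$. Since $\scE$ is stable, $c$ is a deflation in $\scE$: its cocone gives a conflation in $\scE$. Corollary~\ref{cor:defl_are_defl} then yields an $n$-exact sequence $X\up$ in $\scT$ with $d_X^{n,n+1}=c$. By construction, $X\up$ is obtained by splicing as in Proposition~\ref{prop:n-exactness_from_splice}. Proposition~\ref{prop:chara_conf}, (1)$\Rightarrow$(4), then gives that $d_X^{0,1}$ is an inflation in $\scE$. So $X\up\in\calS$, and $c$ is an $\calS$-deflation.

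Dually, using the dual statement in Corollary~\ref{cor:defl_are_defl} and the fact that every morphism of $\scE$ is an inflation, $c$ is an $\calS$-inflation. Proposition~\ref{prop:characterization_n-stable} then gives (St1) and (St2), so $\scT$ is $n$-stable.

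The main point to check is that the cited corollary applies when $n\ge2$ as stated. Its proof needs the iterated conflations whose intermediate terms $X^i$ lie in $\scT$. These come from successively taking admissible right $H^0(\scT)$-approximations of the cocones, which is exactly the construction in Proposition~\ref{prop:chara_conf}. I would rely on that construction rather than redo it.
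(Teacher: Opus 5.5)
Your proposal is correct and follows the paper's route: Theorem~\ref{thm:n-CT_is_n-exact} combined with the converse half of Proposition~\ref{prop:characterization_n-stable}. Your argument via Corollary~\ref{cor:defl_are_defl} supplies the verification, left implicit in the paper, that every closed degree-$0$ morphism of $\scT$ is both an $\calS$-inflation and an $\calS$-deflation. The appeal to Proposition~\ref{prop:chara_conf} for $d_X^{0,1}$ is harmless but unnecessary, since in the stable $\scE$ every closed degree-$0$ morphism is already an inflation.
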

\begin{proof}
This follows immediately from Theorem~\ref{thm:n-CT_is_n-exact} and Proposition~\ref{prop:characterization_n-stable}.
\end{proof}

\begin{cor}\label{cor:nZ_CT}
Let $\scE$ be an exact dg-category, and let $\scT\subset\scE$ be an $n$-cluster tilting subcategory that satisfies $H^{i}(\scE)(T,T')=0$ for $-n<i<0$ for any $T,T'\in \scT$ (for example, if $n\mathbb{Z}$-cluster tilting subcategory). Then $(H^0(\scT),\E_{(\scT)},\sfr_{(\scT)})$ becomes an $n$-exangulated category, where $\E_{(\scT)}$ and $\sfr_{(\scT)}$ are as in Theorem~\ref{thm:H0_of_n-ex-dg}.
\end{cor}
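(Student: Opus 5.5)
The plan is to derive Corollary~\ref{cor:nZ_CT} directly from Theorem~\ref{thm:n-CT_is_n-exact} together with Theorem~\ref{thm:H0_of_n-ex-dg}. No new construction should be needed; the work is checking that the hypotheses of those two results are met by $\scT$ with the class $\calS$ of Theorem~\ref{thm:n-CT_is_n-exact}.

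First, after replacing $\scE$ by a quasi-equivalent strictly connective and strictly additive exact dg-category (as in the preliminaries), Theorem~\ref{thm:n-CT_is_n-exact} tells us that $(\scT,\calS)$ is an $n$-exact dg-category. Here $\calS$ is the class of $n$-exact sequences in $\scT$ whose end differentials are an inflation and a deflation in $\scE$. The replacement is legitimate because $n$-exangulated structures transport along equivalences by \cite[Theorem 3.9]{B-TS}. The cluster tilting condition and the cohomological vanishing are invariant under quasi-equivalence, and the standing assumption (WIC) of that subsection is inherited as well. I would note explicitly that $\scT$ is then strictly connective and strictly additive, by the remark following Definition~\ref{dfn:n-cluster-tilting-subcategory}.

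Next, I verify the remaining hypotheses of Theorem~\ref{thm:H0_of_n-ex-dg}.

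\begin{itemize}
\item Since $\scT$ is a full dg-subcategory, $H^i\scT(T,T')=H^i\scE(T,T')$. The assumption therefore says exactly that Condition~\ref{condition:H^i=0} holds for $\A=\scT$.
\item In the $n\Z$-cluster tilting case, $H^i\scE(T,T')=0$ for $i\notin n\Z$, and this covers $-n<i<0$.
\item $H^0(\scT)$ is skeletally small, since it is a full subcategory of the skeletally small $H^0(\scE)$.
\end{itemize}

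Theorem~\ref{thm:H0_of_n-ex-dg} is proved for $n\ge2$ (Caution~\ref{caution_n>1}). For $n=1$, an $1$-cluster tilting subcategory is $\scT=\scE$, and the claim reduces to Chen's result \cite[Theorem 4.26]{C1} via Proposition~\ref{prop:exact_dg_=_1-exact_dg}.

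Applying Theorem~\ref{thm:H0_of_n-ex-dg} then yields the $n$-exangulated category $(H^0(\scT),\E_{(\scT)},\sfr_{(\scT)})$. The only mildly delicate point is the bookkeeping of the strictification step and the $n=1$ case; everything else is immediate.
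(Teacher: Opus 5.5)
Your proposal is correct and follows the paper's argument: Theorem~\ref{thm:n-CT_is_n-exact} makes $(\scT,\calS)$ an $n$-exact dg-category. The vanishing hypothesis, including the $n\mathbb{Z}$-cluster tilting case, is exactly Condition~\ref{condition:H^i=0} for $\scT$, so Theorem~\ref{thm:H0_of_n-ex-dg} applies. Your remarks on strictification, skeletal smallness and $n=1$ are bookkeeping the paper leaves implicit, since that subsection already fixes $\scE$ strictly connective and strictly additive, with $H^0(\scE)$ skeletally small and satisfying (WIC).
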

\begin{proof}
If $\scT$ is an $n\mathbb{Z}$-cluster tilting subcategory of $\scE$, then the connective dg-category $\scT$ satisfies Condition~\ref{condition:H^i=0}. Thus, the statement follows from Theorem~\ref{thm:H0_of_n-ex-dg}.
\end{proof}

\subsection{More on $n\mathbb{Z}$-cluster tilting subcategories}

To explain the relationship between $n\mathbb{Z}$-cluster tilting subcategories and $(n+2)$-angulated categories, we recall the following from \cite{LZ}.

\begin{dfn}(\cite[Definition~3.2]{LZ})
Let $(\scF,\F,\sfr_{\F})$ be an $n$-exangulated category. We call $(\scF,\F,\sfr_{\F})$ a \emph{Frobenius $n$-exangulated category} if $\scF$ has enough projectives and enough injectives, and projective objects coincide with injective objects.
\end{dfn}

\begin{fact}\label{fact:frob_and_triang}(\cite[Theorem~3.13]{LZ})
Let $(\scF,\F,\sfr_{\F})$ be a Frobenius $n$-exangulated category. Then, the ideal quotient of $\scF$ by projective-injectives has a natural structure of an $(n+2)$-angulated category.
\end{fact}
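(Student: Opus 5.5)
This statement is quoted from \cite{LZ}, and we would prove it by following the standard route from Frobenius exact categories to triangulated categories, adapted to the $n$-exangulated setting. Write $\underline{\scF}$ for the ideal quotient of $\scF$ by the projective-injective objects $\calI$.

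\textbf{Step 1: the suspension functor.} For each $A\in\scF$ use enough injectives to choose a distinguished $n$-exangle
\[
A\to I^1\to\cdots\to I^n\to \Sigma A,\qquad \delta_A\in\F(\Sigma A,A),
\]
with every $I^k\in\calI$. Its existence comes from iterating inflations into injectives, using that inflations compose (EA1).
\begin{itemize}
\item The long exact sequences from the exact realization, together with $\F(\blank,I)=0=\F(I,\blank)$ for $I\in\calI$, show that $\delta_A\ssh\co \underline{\scF}(\blank,\Sigma A)\to\F(\blank,A)$ is an isomorphism.
\item Dually, using enough projectives, $\delta_A\ush\co\underline{\scF}(A,\blank)\to\F(\Sigma A,\blank)$ is an isomorphism.
\item Hence $\Sigma A$ is well defined up to unique isomorphism in $\underline{\scF}$. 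By Yoneda, $\Sigma$ extends to an additive functor on $\underline{\scF}$.
\item The construction using projective covers gives a quasi-inverse $\Omega$, so $\Sigma$ is an autoequivalence.
\end{itemize}
In particular every $\delta\in\F(C,A)$ corresponds to a unique $\underline{c}_\delta\in\underline{\scF}(C,\Sigma A)$ with $\delta=(c_\delta)\uas\delta_A$.

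\textbf{Step 2: the standard angles.} Declare an $(n+2)$-angle in $\underline{\scF}$ standard if it is isomorphic to the image of
\[
X^0\to X^1\to\cdots\to X^{n+1}\xrightarrow{\underline{c}_\delta}\Sigma X^0
\]
for some distinguished $n$-exangle $\dtri{X\up}{\delta}$. We then verify the axioms (N1)--(N4) of \cite{GKO}.

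\textbf{Step 3: (N1) and (N3).}
\begin{itemize}
\item (N1): the split exangle ${}_A0_0$ gives the trivial angle on $A$. Every morphism $X^0\to X^1$ is completed by pushing $\delta_{X^0}$ along it via (EA2), so that the resulting inflation is $[x\ \iota]^{T}\co X^0\to X^1\oplus I^1$, whose image in $\underline{\scF}$ is $x$. Closure under direct sums and isomorphisms is formal.
\item (N3): given a commutative square on the first terms, (R0) lifts the pair of end morphisms to a morphism of $n$-exangles. Compatibility with $\underline{c}_\delta$ follows from naturality of $\delta\mapsto\underline{c}_\delta$ with respect to morphisms of extensions.
\end{itemize}

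\textbf{Step 4: (N2) rotation.} Compare $\dtri{X\up}{\delta}$ with $\delta_{X^0}$. There is a morphism of extensions $(\id,c_\delta)\co\delta\to\delta_{X^0}$, hence a morphism of $n$-exangles. Its mapping cone, taken in the sense of \cite{HLN1}, is a distinguished $n$-exangle beginning at $X^1$ whose middle terms are $X^k\oplus I^{k}$, with $\Sigma X^0$ appearing as its last term. Discarding the injective summands gives the rotated angle $X^1\to\cdots\to X^{n+1}\to\Sigma X^0\to\Sigma X^1$, up to sign. The inverse rotation is obtained dually.

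\textbf{Step 5: (N4), the main obstacle.} Instead of the octahedral form, use the equivalent mapping cone axiom of \cite{GKO}: a morphism of angles extending a given square can be chosen so that its cone is again an angle. We would first lift the morphism to a morphism of distinguished $n$-exangles whose pushout--pullback factorization comes from (EA2) and (EA2$\op$). We would then invoke the result of \cite{HLN1} that a \emph{good lift} of $(a,c)$ has a mapping cone which is a distinguished $n$-exangle, and pass this to $\underline{\scF}$ via Step~2.

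The delicate point is independence of the choice of lift modulo $\calI$: projective-injective summands introduced by different lifts must be shown to vanish in $\underline{\scF}$, and the signs in the connecting map $\underline{c}_\delta$ must match the cone's connecting morphism. This sign and summand bookkeeping is where we expect most of the work to lie.
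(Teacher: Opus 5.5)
The paper does not prove this statement; it is quoted from \cite[Theorem~3.13]{LZ}. Your overall plan (suspension via $n$-exangles with injective middle terms, standard angles, verification of the axioms of \cite{GKO}) is the Happel-type route used there. As a proof, however, Step~5 has a genuine gap.

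The result of \cite{HLN1} you invoke does not exist for a general morphism of extensions $(a,c)$. Good lifts with distinguished mapping cone are only provided for $(1_A,c)$ by (EA2), and, via mapping cocones, for $(a,1_C)$ by (EA2$\op$). Factoring into a pushout part and a pullback part does not repair this. The cone of a composite is not determined by the cones of the factors; that is octahedral-type information, i.e.\ precisely the content of (N4), not sign bookkeeping. The shapes also differ: the cone of $\varphi\co X\up\to Y\up$ in \cite{GKO} has terms $X^{i+1}\oplus Y^i$, running from $X^1\oplus Y^0$ to $\Sigma X^0\oplus Y^{n+1}$, while the cone of a lift of $(1_A,c)$ runs from $X^1$ to $Y^{n+1}$. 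Moreover, (N4) prescribes $\varphi_0,\varphi_1$, which the lift supplied by (EA2) does not control.

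One way to close the gap is to apply (EA2) once. Take the morphism $\dtri{X\up}{\delta}\oplus\dtri{{}_{Y^0}N\up_0}{0}\to\dtri{Y\up}{\rho}\oplus\dtri{I\up_{X^0}}{\delta_{X^0}}$, where $I\up_{X^0}$ is your chosen $n$-exangle $X^0\to I^1\to\cdots\to I^n\to\Sigma X^0$ and $a,c$ are as in (N3) below. Its first component is the isomorphism $\begin{bmatrix}a&1\\1&0\end{bmatrix}\co X^0\oplus Y^0\to Y^0\oplus X^0$; transport the source along it so that it becomes an identity. Its last component is $\begin{bmatrix}c\\c_\delta\end{bmatrix}$. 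The resulting distinguished cone has terms $X^1\oplus Y^0$, $X^{i+1}\oplus Y^i\oplus I^i$ and $Y^{n+1}\oplus\Sigma X^0$. Once a homotopy correction as in (N3) makes its second component $\varphi_1$, one checks that its image in $\underline{\scF}$ is isomorphic to the cone of \cite{GKO}.

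Smaller points. In (N3), (R0) needs both end morphisms of a morphism of extensions, while (N3) only supplies $\varphi_0,\varphi_1$; moreover, the lift from (R0) need not have second component $\varphi_1$. Choose representatives $a,b$. The defect $b\ci d_X^0-d_Y^0\ci a$ factors through an injective, hence has the form $w\ci d_X^0$, so you may replace $b$ by $b-w$ and assume the square commutes in $\scF$. Then $(d_Y^0)\sas a\sas\delta=b\sas(d_X^0)\sas\delta=0$, so exactness of $\scF(\blank,Y^{n+1})\xrightarrow{\rho\ssh}\F(\blank,Y^0)\xrightarrow{(d_Y^0)\sas}\F(\blank,Y^1)$ gives $c$ with $a\sas\delta=c\uas\rho$. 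Finally, for the (R0)-lift $f\up$ of $(a,c)$ we have $(b-f^1)\ci d_X^0=0$. So you can write $b-f^1=k\ci d_X^1$ and add the null-homotopic map determined by $k\co X^2\to Y^1$.

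In (N1), the axiom also demands closure under direct summands, which you do not address. Lastly, the $n$-exangle $A\to I^1\to\cdots\to I^n\to\Sigma A$ with all $I^k$ injective is provided by the definition of enough injectives for $n$-exangulated categories. Composing inflations via (EA1) would not make the middle terms injective.
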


As a special case, if projective objects are only zero objects in a Frobenius $n$-exangulated category $\scF$, then $\scF$ itself has a structure of $(n+2)$-angulated categories. The following corollary is an immediate consequence of this; we record it here for the reader's convenience.
\begin{cor}\label{chara_tri}
Let $(\scF,\F,\sfr_{\F})$ be an $n$-exangulated category such that $\scF$ is weakly idempotent complete. Then the following are equivalent.
\begin{enumerate}
\item $(\scF,\F,\sfr_{\F})$ is a Frobenius $n$-exangulated category and projective objects are only zero objects.
\item Any morphism $f\colon X\to Y$ in $\scF$ is inflation and deflation.
\end{enumerate}
\end{cor}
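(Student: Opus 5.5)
We prove the two implications separately, using throughout the elementary characterizations of projectives and injectives in an $n$-exangulated category: an object $P$ is projective iff $\F(P,\blank)=0$, and $I$ is injective iff $\F(\blank,I)=0$. Weak idempotent completeness is used to move between "split" conditions on morphisms and on objects, via the characterization of split $\F$-extensions as those whose realizations have $d^n$ split epi and $d^0$ split mono.

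(1)$\Rightarrow$(2). Let $f\colon X\to Y$ be arbitrary. Since $\scF$ has enough injectives and the only injective is $0$, every object $X$ admits a deflation $X\to 0$ (equivalently, an inflation $X\to I=0$ whose cone completes a distinguished $n$-exangle). Concretely, enough injectives gives a distinguished $n$-exangle $X\to 0\to X^2\to\cdots\to X^{n+1}$, so $X\to 0$ is an inflation. Dually $0\to Y$ is a deflation. To show $f$ is an inflation, I would write $f$ as the composite
\[
X\xrightarrow{\left[\begin{smallmatrix}1\\ f\end{smallmatrix}\right]}X\oplus Y\xrightarrow{[0\ 1]}Y
\]
and instead use $\left[\begin{smallmatrix}0\\ f\end{smallmatrix}\right]\colon X\to 0\oplus Y$. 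This equals the direct sum of the inflation $X\to 0$ and the identity of $Y$, precomposed with the split inflation $\left[\begin{smallmatrix}1\\ f\end{smallmatrix}\right]\colon X\to X\oplus Y$ and followed by the isomorphism into $0\oplus Y\cong Y$. Using that inflations are closed under composition (EA1) and finite direct sums, and that split monomorphisms and isomorphisms are inflations, it follows that $f$ is an inflation. The dual argument shows that $f$ is a deflation.

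(2)$\Rightarrow$(1). Take any object $X$ and consider the morphism $X\to 0$. It is an inflation, so there is a distinguished $n$-exangle $X\to 0\to X^2\to\cdots\to X^{n+1}\overset{\delta}{\dashrightarrow}$. Now test projectivity of an object $P$. Given $\rho\in\F(P,A)$, realize it as $A\xrightarrow{x}\cdots$. By (2), the morphism $0\colon A\to 0$ factors through everything, so the morphism of extensions $(\id,0)$ exhibits $\rho$ as $0^*\rho'=0$, i.e.\ $\rho=0$. More carefully, $\id_A$ factors through the inflation $A\to 0$ via the exact sequence $\scF(A^1,\blank)\to\scF(A,\blank)\xrightarrow{\rho^\sharp}\F(C,\blank)$. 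Applying (R1) to an exangle whose first differential is $A\to 0$ shows that $\scF(0,A)\to\scF(A,A)$ fails to be surjective unless $A=0$.

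I would reorganize the argument as follows. Every object is injective: for $\delta\in\F(C,A)$, the deflation $0\to C$ (available by (2)) gives an extension $\theta\in\F(C,K)$ with $\theta^\sharp$ surjective onto $\F(C,\blank)$ precomposed by $\scF(0,\blank)=0$. The exactness $\scF(0,A)\to\F(C,A)$... This step is the main obstacle: the cleanest route is via (R1). For an exangle $\langle X^\bullet,\delta\rangle$ with $d^n_X$ factoring through $0$, the exact sequence $\scF(\blank,X^n)\to\scF(\blank,X^{n+1})\xrightarrow{\delta_\sharp}\F(\blank,X^0)$ shows that $\delta_\sharp(\id)=\delta$ lies in the image of $(d^n_X)^*$, which is $\delta\circ 0=0$ using $0\to X^{n+1}$ as a deflation. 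Hence $\F=0$ identically, so every object is both projective and injective. This gives enough of both, and the Frobenius property holds trivially. Finally, the claim that the only projectives are zero comes from $X\to 0$ being an inflation with $\F=0$. In that case the exangle is split, so $d^0$ is a split monomorphism, forcing $X\cong 0$.
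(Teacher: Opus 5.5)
Your proof of (1)$\Rightarrow$(2) is correct and differs only cosmetically from the paper's. The paper obtains the inflation $\left[\begin{smallmatrix} f\\ 0\end{smallmatrix}\right]\colon X\to Y\oplus 0$ directly from the inflation $X\to 0$. You instead factor it as the split inflation $\left[\begin{smallmatrix} 1\\ f\end{smallmatrix}\right]$ followed by $(X\to 0)\oplus\id_Y$, and then invoke (EA1) and additivity of $\sfr$. One small slip: in your first sentence you mean that $X\to 0$ is an \emph{inflation}, not a deflation.

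Your proof of (2)$\Rightarrow$(1), however, rests on a false claim, namely that (2) forces $\F=0$ identically.
\begin{itemize}
\item A triangulated category viewed as a $1$-exangulated category satisfies (2), and so does any $(n+2)$-angulated category viewed as an $n$-exangulated one. Yet $\F$ is nonzero in general, e.g.\ $\F(C,A)=\scF(C,\Sigma A)$ in the triangulated case. Combined with your last step, your argument would even show that every object is zero.
\item The error is in your use of (R1). Exactness of $\scF(\blank,X^n)\to\scF(\blank,X^{n+1})\xrightarrow{\delta\ssh}\F(\blank,X^0)$ only identifies the image of $d^n_X\ci\blank$ with the kernel of $\delta\ssh$. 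It gives no reason for $\delta=\delta\ssh(\id)$ to vanish. The existence of some other deflation $0\to X^{n+1}$ says nothing about this particular extension $\delta$.
\item Your final step also uses the wrong direction. An inflation $X\to 0$ with extension in $\F(X^{n+1},X)$ splits when $X$ is injective, not when $X$ is projective.
\end{itemize}

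The correct argument, essentially the paper's, never needs $\F=0$.
\begin{itemize}
\item By (2), for each $C$ the morphism $0\to C$ is a deflation. This gives a distinguished $n$-exangle $A\to 0\to\cdots\to 0\to C\overset{\theta}{\dashrightarrow}$ whose middle terms are all $0$, hence projective. So there are enough projectives, and dually enough injectives.
\item If $P$ is projective, take $C=P$. Then $\theta\in\F(P,A)=0$; equivalently, $\id_P$ must lift along $0\to P$. So $\id_P$ factors through $0$, and $P\cong 0$.
\item Dually every injective is zero. Hence projectives and injectives both coincide with the zero objects, which is (1).
\end{itemize}
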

\begin{proof}
Assume {\rm (2)}. Since any morphism is an inflation and a deflation, for any $C\in \scF$, we can take a deflation $0\to C$. In this case, we can take an admissible $n$-exangle
$$A\to 0\to 0\to \cdots \to 0\to C\dashrightarrow$$
for some $A\in\scF$. 
This shows that $\scF$ has enough projectives. Dually, we can also show that $\scF$ has enough injectives. Next, we show that any projectives are zero. Let $P$ be a projective object in $\scF$. Since the morphism $P\to 0$ is a deflation, this morphism splits. Thus, $P$ is a direct summand of the zero object, and hence $P$ is a zero object. Dually, any injective object is also a zero object. Thus, {\rm (1)} holds.

Conversely, we assume {\rm (1)}. Let $f\colon X\to Y$ be any morphism in $\scF$. Since $X\to 0$ is an inflation, we get the inflation $\begin{bmatrix}f\\0\end{bmatrix}\colon X\to Y\oplus 0$. This morphism is isomorphic to $f\colon X\to Y$ and hence $f$ is an inflation. Dually, $f$ is also a deflation. Thus, {\rm (2)} holds.
\end{proof}

We can characterize $n$-stable dg-categories in terms of Frobenius $n$-exangulated categories as follows.

\begin{prop}\label{prop:chara_stable_2}
Let $\scT$ be an $n$-exact dg-category and assume that $\scT$ has $n$-sparse cohomologies. Let $(H^0(\scT),\E_{(\scT)},\sfr_{(\scT)})$ be the $n$-exangulated category obtained from Theorem~\ref{thm:H0_of_n-ex-dg}. Assume that $H^0(\scT)$ is weakly idempotent complete.
Then the following are equivalent.
\begin{enumerate}
\item $(H^0(\scT),\E_{(\scT)},\sfr_{(\scT)})$ is a Frobenius $n$-exangulated category and projective objects are only zero objects,
\item $\scT$ is an $n$-stable dg-category.
\end{enumerate}
\end{prop}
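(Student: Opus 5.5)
The plan is to reduce both directions to Corollary~\ref{chara_tri} and Proposition~\ref{prop:characterization_n-stable}. Since $\scT$ has $n$-sparse cohomologies, Condition~\ref{condition:H^i=0} holds, so Theorem~\ref{thm:H0_of_n-ex-dg} applies and $(H^0(\scT),\E_{(\scT)},\sfr_{(\scT)})$ is an $n$-exangulated category. Because $H^0(\scT)$ is weakly idempotent complete, Corollary~\ref{chara_tri} says that (1) is equivalent to the following statement.
\begin{enumerate}
\item[(1$'$)] Every morphism of $H^0(\scT)$ is both an inflation and a deflation for $(\E_{(\scT)},\sfr_{(\scT)})$.
\end{enumerate}
So it suffices to show that (1$'$) is equivalent to (2).

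First, I would translate (1$'$) to the dg-level. The proof of Theorem~\ref{thm:H0_of_n-ex-dg}, via Lemma~\ref{lem:nea-1}, shows the following: $\mathbf{d}\in H^0(\scT)(A,B)$ is a deflation in the $n$-exangulated sense if and only if $\mathbf{d}=\ovl{d}$ for some $\calS$-deflation $d\in Z^0(\scT)(A,B)$, and dually for inflations. Combining this with Remark~\ref{rem:strict_npo}~(2), an $\calS$-deflation may be replaced by any closed morphism with the same class in $H^0$. Hence (1$'$) is equivalent to saying that every closed degree-$0$ morphism of $\scT$ is both an $\calS$-inflation and an $\calS$-deflation.

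Second, the direction (1)$\Rightarrow$(2) is then exactly the converse part of Proposition~\ref{prop:characterization_n-stable}: an $n$-exact dg-category in which every closed degree-$0$ morphism is an inflation and a deflation is $n$-stable.

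Third, for (2)$\Rightarrow$(1), the subtle point is that $\calS$ is the given $n$-exact structure, not a priori the class of all $n$-exact sequences. Assume $\scT$ is $n$-stable and take $a\in Z^0(\scT)(A,B)$. By (St1), $a$ has an $n$-kernel, namely a left $n$-exact $X\up$ with $d_X^{n,n+1}=a$, and by (St2) this $X\up$ is $n$-exact. To conclude that $a$ is an $\calS$-deflation via Lemma~\ref{lem:pushout_inflation}, one would need $a$ to already be a deflation, which is circular. I would break the circularity as follows. By Corollary~\ref{cor:deflation_examples}~(2) and Proposition~\ref{prop:S_is_closed_by_oplus}, the morphism $[a\ \ \id_B]\co A\oplus B\to B$ is a deflation: it is the projection composed with the isomorphism $\bsm 1&0\\a&1\esm$, using ($n$-Ex1). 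Then I would apply ($n$-Ex2$\op$) together with Proposition~\ref{prop:pushout_inflation} to an admissible sequence ending in $[a\ \ \id_B]$, and use (St2) to identify the resulting $n$-exact sequences.

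This last step is the main obstacle. The cleanest route I would try first is to use Proposition~\ref{prop:pushout_inflation}~(2)$\Leftrightarrow$(3) in reverse. Start from $Y\up\in\calS$ ending in $\id_B$, namely a split sequence, and from the $n$-exact $X\up$ with $d_X^{n,n+1}=a$. Build a morphism $X\up\to Y\up$ with $\ovl{f}^0=\id$ using that $Y\up$ is left $n$-exact (Lemma~\ref{lem:extend_morph}). Then check that $\CoCone((\sig_{\ge1}f\up)[1])$ is left $n$-exact with last differential a deflation by the decomposition argument in Proposition~\ref{prop:pushout_inflation}; this yields $X\up\in\calS$. If that fails, I would instead show directly that the class of all $n$-exact sequences equals $\calS$, using Corollary~\ref{cor:uniqueness_n-kernel}. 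Once (1$'$) holds, Corollary~\ref{chara_tri} gives (1).
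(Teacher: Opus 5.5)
Your (1)$\Rightarrow$(2) direction is exactly the paper's argument. Corollary~\ref{chara_tri} reduces (1) to ``every morphism of $H^0(\scT)$ is an inflation and a deflation''. Lemma~\ref{lem:nea-1} with Remark~\ref{rem:strict_npo}~(2) transfers this to closed degree-$0$ morphisms and $\calS$. The converse half of Proposition~\ref{prop:characterization_n-stable} then gives $n$-stability.

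The gap is in (2)$\Rightarrow$(1). You rightly observe that $\calS$ is the given structure. But what you then try to prove is false: that for $n$-stable $\scT$ every closed degree-$0$ morphism is an $\calS$-deflation, equivalently that $\calS$ contains every $n$-exact sequence. Let $\calS_{\mathrm{sp}}$ be the class of all split sequences.
\begin{itemize}
\item It satisfies ($n$-Ex0), since being zero in $\Kbf^{n+2}(\scT)$ is invariant under isomorphism in $\Kbf^{1,n,1}(\scT)$.
\item By (St1) and Proposition~\ref{prp:characterization_split_seq}, its deflations (inflations) are exactly the closed morphisms whose class is a split epimorphism (monomorphism), so ($n$-Ex1) holds.
\item For ($n$-Ex2), compose the inverse of the isomorphism ${}_{X^0}N\up_C\to X\up$ of Proposition~\ref{prop:NtoX} with the strict map ${}_{X^0}N\up_C\to{}_AN\up_C$ given by $a,a,\id_C,\id_C$. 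The composite is an $n$-pushout morphism because its cone is zero in $\Kbf(\scT)$. ($n$-Ex2$\op$) is dual.
\end{itemize}
For $(\scT,\calS_{\mathrm{sp}})$, Lemma~\ref{lem:Equivtozero} gives $\E_{(\scT)}=0$. So every object is projective, and (1) fails whenever $H^0(\scT)\neq0$, while (2) holds. Read literally with an arbitrary $\calS$, the statement is false in this direction.

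Your main route breaks exactly at ``check that $\CoCone((\sig_{\ge1}f\up)[1])$ is left $n$-exact''. The decomposition from Proposition~\ref{prop:pushout_inflation} only shows that its last differential is a deflation. Its left $n$-exactness is equivalent to $f\up$ being an $n$-pullback morphism (Lemma~\ref{lem:equivalent_n-pbm1}). Since $Y\up$ is split, $\CoCone f\up\cong X\up$ in $\Kbf(\scT)$, so you would need $\Kbf(\scT)(B[-(n+1)],X\up)=0$. But $\be_{X,n+1}\up$ is a nonzero element of that group unless $\ovl{a}$ is a split epimorphism: a null-homotopy $\varphi\up$ would give $\id_B=\ovl{a}\ci\ovl{\varphi}^{n+1}$, as in the proof of Proposition~\ref{prp:characterization_split_seq}. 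Your fallback via Corollary~\ref{cor:uniqueness_n-kernel} would establish the same false claim, so it cannot work either.

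What is missing is the right reading of (2), not a cleverer argument. The paper's one-line proof (citing Proposition~\ref{prop:characterization_n-stable}, Lemma~\ref{lem:nea-1} and Corollary~\ref{chara_tri}) takes $\calS$ in (2) to be the natural structure of Proposition~\ref{prop:characterization_n-stable}, namely all $n$-exact sequences. With that reading, (2)$\Rightarrow$(1) is immediate:
\begin{itemize}
\item any closed $a$ has an $n$-kernel by (St1), which is $n$-exact by (St2);
\item that $n$-kernel therefore lies in $\calS$, so $a$ is an $\calS$-deflation, and dually an $\calS$-inflation;
\item now apply Lemma~\ref{lem:nea-1} and Corollary~\ref{chara_tri}.
\end{itemize}
Conversely, under (1), Lemma~\ref{lem:pushout_inflation} forces $\calS$ to consist of all $n$-exact sequences. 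So the accurate formulation is: (1) holds if and only if $\scT$ is $n$-stable and $\calS$ is the class of all $n$-exact sequences. The extra condition on $\calS$ cannot be dropped.
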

\begin{proof}
It immediately follows from Proposition~\ref{prop:characterization_n-stable},  Lemma~\ref{lem:nea-1} and Corollary~\ref{chara_tri}.
\end{proof}

\begin{cor}
Let $\scE$ be a stable dg-category, and let $\scT\subset\scE$ be an $n\mathbb{Z}$-cluster tilting subcategory. Then $\scT$ becomes an $n$-stable dg-category and its homotopy category $H^0(\scT)$ has a structure of an $(n+2)$-angulated category.
\end{cor}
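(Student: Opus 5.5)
The statement combines two results already in hand. Corollary~\ref{cor:n_CT_of_stable} makes $\scT$ an $n$-stable dg-category, and the homotopy-category part should follow from Proposition~\ref{prop:chara_stable_2} together with Fact~\ref{fact:frob_and_triang}.

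\textbf{The dg-level claim.} A stable dg-category is in particular an exact dg-category. It also satisfies (WIC), since every closed degree-$0$ morphism is both an inflation and a deflation, so the hypotheses of the preceding subsection hold. An $n\mathbb{Z}$-cluster tilting subcategory is by definition $n$-cluster tilting. Hence Corollary~\ref{cor:n_CT_of_stable} applies directly and shows that $\scT$ is an $n$-stable dg-category. By Proposition~\ref{prop:characterization_n-stable}, $(\scT,\calS)$ is then an $n$-exact dg-category in which every closed degree-$0$ morphism is both an inflation and a deflation.

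\textbf{The $(n+2)$-angulated structure.} The $n\mathbb{Z}$ condition says $H^i\scE(T,T')=0$ for $i\notin n\bbZ$. This is exactly $n$-sparseness of $\scT$, and it implies Condition~\ref{condition:H^i=0}. So Corollary~\ref{cor:nZ_CT} (or Theorem~\ref{thm:H0_of_n-ex-dg}) makes $(H^0(\scT),\E_{(\scT)},\sfr_{(\scT)})$ an $n$-exangulated category.

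\textbf{Main obstacle: weak idempotent completeness of $H^0(\scT)$.} Proposition~\ref{prop:chara_stable_2} requires $H^0(\scT)$ to be weakly idempotent complete, so this must be checked. I would argue as follows.
\begin{enumerate}
\item $H^0(\scE)$ is triangulated, so it is idempotent complete up to split idempotents having kernels. In any case, a retraction in $H^0(\scE)$ has a kernel in $H^0(\scE)$, namely the cone shifted.
\item $\scT$ is closed under direct summands in $H^0(\scE)$, because it is defined by the vanishing of $\E^i(-,H^0(\scT))$.
\item Therefore a retraction $T\to T'$ in $H^0(\scT)$ has kernel $K$, a direct summand of $T$, so $K\in\scT$.
\end{enumerate}
If this route proves delicate, I would avoid it. By Lemma~\ref{lem:nea-1}, every morphism of $H^0(\scT)$ is both an inflation and a deflation in the exangulated sense. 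The direction (2)$\Rightarrow$(1) of Corollary~\ref{chara_tri} can then be checked directly: projectives $P$ are zero because $P\to 0$ is a deflation and hence splits, and enough projectives comes from the admissible $n$-exangle $A\to0\to\cdots\to0\to C$. As far as I can see, neither step uses weak idempotent completeness.

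Either way we obtain a Frobenius $n$-exangulated category whose only projectives are zero. Fact~\ref{fact:frob_and_triang} then gives an $(n+2)$-angulated structure on the quotient by the zero ideal, which is $H^0(\scT)$ itself.
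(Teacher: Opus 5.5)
Your main argument is correct and is the paper's own proof: Corollary~\ref{cor:n_CT_of_stable} gives $n$-stability, the $n\mathbb{Z}$ condition gives $n$-sparseness, and then Proposition~\ref{prop:chara_stable_2} and Fact~\ref{fact:frob_and_triang} finish. Your check that $H^0(\scT)$ is weakly idempotent complete is also correct: split epimorphisms in the triangulated $H^0(\scE)$ have kernels, these are direct summands, and $\scT$ is closed under summands. This supplies a hypothesis of Proposition~\ref{prop:chara_stable_2} that the paper's one-line proof never verifies.

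Keep that route and drop the fallback, which has two problems. First, projectives vanish because $0\to P$ (not $P\to 0$) is a deflation. Second, a deflation $0\to C$ only yields an $n$-exangle $X^0\to X^1\to\cdots\to X^{n-1}\to 0\to C$ whose middle terms need not vanish. Producing $A\to 0\to\cdots\to 0\to C$ requires splitting off a contractible summand, which is where weak idempotent completeness is used, so your claim that this route avoids it is not justified.
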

\begin{proof}
Since $\scT$ is an $n\mathbb{Z}$-cluster tilting subcategory of stable dg-category $\scE$, it becomes an $n$-stable dg-category by Corollary~\ref{cor:n_CT_of_stable} and $n$-sparse. Thus, the second statement follows from Proposition~\ref{prop:chara_stable_2} and Fact~\ref{fact:frob_and_triang}.
\end{proof}

Finally, we will explain the relationship between pre-$(n+2)$-angulated categories defined in \cite{JKM} and $n$-stable categories. The following notion is defined in \cite{JKM}.
\begin{dfn}\label{dfn:pretriangulated-dg}(\cite[Definition~3.2.1.]{JKM})
A dg-category $\A$ is \emph{pre-$(n+2)$-angulated} if the canonical inclusion $\A\to\perdg\A$ defines an $n\Z$-cluster tilting subcategory $H^0(\A)\subset \per\A$ where $\perdg\A$ is a natural dg-enhancement of perfect derived category of $\per\A$.
\end{dfn}

\begin{rem}
Let $\A$ be a pre-$(n+2)$-angulated dg-category. Then, by definition, $\tau_{\leq 0}\A$ becomes a strictly connective and strictly additive dg-category. It is because $\perdg\A$ is a strictly additive.
\end{rem}

\begin{prop}\label{prop:pre-n-angulated_truncates_to_n-stable}
Let $\A$ be a pre-$(n+2)$-angulated dg-category. Then, its truncation $\tau_{\le 0}\A$ is an $n$-stable dg category.
\end{prop}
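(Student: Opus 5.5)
The plan is to realize $\tau_{\le 0}\A$ as an $n$-cluster tilting subcategory of a stable dg-category and then invoke Corollary~\ref{cor:n_CT_of_stable}.

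Put $\scE=\tau_{\le0}\perdg\A$. This is the connective truncation of the pretriangulated dg-category $\perdg\A$, and it is a stable dg-category in the sense of \cite[Definition~6.1]{C1}, i.e.\ an exact dg-category in which all closed degree-$0$ morphisms are inflations and deflations. Taking $\tau_{\le0}$ does not change $H^0$ or the negative cohomologies of Hom-complexes. Hence the homotopy category $H^0(\scE)=\per\A$ carries its triangulated structure, viewed as an extriangulated category with $\E=\Hom(-,-[1])$ and $\E^i=\Hom(-,-[i])$. By Definition~\ref{dfn:pretriangulated-dg}, $H^0(\A)\subset\per\A$ is an $n\Z$-cluster tilting subcategory. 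The first step is therefore to check that the full dg-subcategory $\scT\subset\scE$ on the objects of $\A$ is $n$-cluster tilting in the sense of Definition~\ref{dfn:n-cluster-tilting-subcategory}. Remark~\ref{rem:nZ-cluster} says this matches the notion of \cite{JKM} for stable $\scE$.

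There is one identification to make here. The truncation commutes with restricting to the full dg-subcategory of $\A$, so $\scT=\tau_{\le0}\A$. Moreover, $\tau_{\le0}\A\to\tau_{\le0}\perdg\A$ is fully faithful on nonpositive cohomology, since $\A\to\perdg\A$ is a quasi-fully faithful Yoneda embedding. Condition (WIC) holds for $\scE$ because $\per\A$ is idempotent complete, in particular weakly idempotent complete; this gives the standing assumption of the subsection. Strict connectivity and strict additivity can be arranged by replacing with quasi-equivalent models as in the preliminaries. Strong functorial finiteness is automatic in the triangulated setting, since every morphism is a deflation and an inflation there.

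With this in place, Corollary~\ref{cor:n_CT_of_stable} applied to $\scT\subset\scE$ gives that $\scT=\tau_{\le0}\A$ is an $n$-stable dg-category. I expect the main obstacle to be this bookkeeping step: confirming that the truncated dg-category $\tau_{\le0}\perdg\A$ is genuinely a stable (exact) dg-category in Chen's sense, that its $\E^i$ agree with $\Hom(-,-[i])$ in $\per\A$, and that the $n\Z$-cluster tilting condition of \cite{JKM} transfers to the conditions of Definition~\ref{dfn:n-cluster-tilting-subcategory}. For the stable-dg claim I would cite \cite[Section~6]{C1}; for the cluster-tilting transfer I would rely on Remark~\ref{rem:nZ-cluster}.
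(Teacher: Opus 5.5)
Your proposal follows the paper's proof: you realize $\tau_{\le0}\A$ as an $n$-cluster tilting subcategory of the stable dg-category $\tau_{\le0}\perdg\A$ and then apply Corollary~\ref{cor:n_CT_of_stable}; the paper cites \cite[Example~6.2]{C2} for the stability of $\tau_{\le0}\perdg\A$. One small correction: in the triangulated setting only the \emph{admissibility} of approximations is automatic, while their \emph{existence} comes from the functorial finiteness that is part of the $n\Z$-cluster tilting hypothesis on $H^0(\A)\subset\per\A$.
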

\begin{proof}
Since $\tau_{\leq 0}(\perdg\A)$ is a stable dg-category by \cite[Example~6.2.]{C2} and $H^0(\tau_{\leq 0}\A)=H^0(\A)$ is an $n\Z$-cluster tilting subcategory of $\per\A$, it follows from Corollary~\ref{cor:n_CT_of_stable}.
\end{proof}

\end{document}